\definecolor{darkblue}{rgb}{0,0,0.6}
\definecolor{darkred}{rgb}{0.6,0,0}
\newcommand{\indicator}[1]{\mathbbm{1}_{#1}}
\newcommand{\at}[1]{{#1}\rvert}
\journalname{Archive for Rational Mechanics and Analysis}
\begin{document}

\title{Curvature-driven wrinkling of thin elastic shells\thanks{This work was supported
by National Science Foundation Awards DMS-1812831, DMS-1813003, and DMS-2025000, and a University of Michigan Van Loo Postdoctoral Fellowship.}
}

\titlerunning{Curvature-driven wrinkling}        

\author{Ian Tobasco}


\institute{I.\ Tobasco \at Department of Mathematics, Statistics, and Computer Science, University of Illinois at Chicago, Chicago, IL 60607\\ 
Department of Mathematics, University of Michigan, Ann Arbor, MI 48109\\
              \email{itobasco@uic.edu}           
           }

\date{Received: date / Accepted: date}

\maketitle

\begin{abstract}
How much energy does it take to stamp a thin elastic shell flat? Motivated by recent experiments on the wrinkling patterns of floating shells, we develop a rigorous method via $\Gamma$-convergence for answering this question to leading order in the shell's thickness and other small parameters. The observed patterns involve ``ordered'' regions of well-defined wrinkles alongside ``disordered'' regions whose local features are less robust; as little to no tension is applied, the preference for order is not \emph{a priori} clear. Rescaling by the energy of a typical pattern, we derive a limiting variational problem for the effective displacement of the shell. It asks, in a linearized way, to cover up a maximum area with a length-shortening map to the plane. Convex analysis yields a boundary value problem characterizing the accompanying patterns via their defect measures. Partial uniqueness and regularity theorems follow from the method of characteristics on the ordered part of the shell. In this way, we can deduce from the principle of minimum energy the leading order features of stamped elastic shells.
\end{abstract}

\tableofcontents{}


\section{Introduction}

Thin elastic sheets subject to compressive boundary conditions or loads readily take on shapes far from their own. 
Sometimes such changes in shape lead to the development of fine-scale \emph{wrinkling patterns},
indicating the presence of residual strain or excess material that
is ``wrinkled away''. Other times, \emph{crumpling patterns
}occur such as those seen in everyday, crumpled paper sheets \cite{witten2007stress}.
An intriguing and widely open problem is to devise a method for predicting
the features of the often disordered network of creases or ``minimal
ridges'' \cite{lobkovsky1997properties} that forms. This and other
simplified versions of the crumpling problem, which ask for sharp
\emph{a priori} lower bounds on the energy required to crumple \cite{conti2008confining,venkataramani2004lower},
remain far from being understood (despite some striking recent phenomenological
progress identifying as a possible state variable the total length
of the plastically damaged set \cite{gottesman2018state}).

A cousin of the crumpling problem is the stamping one studied here, named after the manufacturing process of the same name. In stamping,
a thin elastic sheet is pressed into a target shape. If the
mid-surface of the sheet embeds isometrically into the target,
the sheet may simply take on the imposed shape. If no such
embedding exists, a pattern can instead appear \cite{hure2012stamping}.
The situation reminds of the isometric embedding theorem of Nash and Kuiper, which guarantees the existence of a sequence of continuously differentiable isometric embeddings converging uniformly to any length-shortening map \cite{kuiper1955C1,nash1954C1}.
However, in this paper we will not be concerned with such ``pure'' isometries, but rather with maps exhibiting small amounts of strain.

Our motivation to study stamping stems from our desire to understand
the patterns that form when a thin elastic shell is placed onto an otherwise planar water bath \cite{aharoni2017smectic,albarran2018curvature}.
The water adheres to the underside of the shell, and capillary and gravitational
forces act to stamp it flat. Stretching forces prefer isometric deformations, while bending forces limit the curvature that results. The authors in \cite{aharoni2017smectic,albarran2018curvature} report on the striking formation of ``wrinkle domains''
made up of sinusoidal oscillations in a piecewise constant or otherwise
slowly-varying direction. A typical floating shell divides into finitely
many domains. At the interfaces are ``walls'', across which the
direction of wrinkling changes rapidly, or ``folds'', wherein material
is lost beneath the surface. The particular arrangement of wrinkles into domains is observed to depend strongly on the initial features of the shell --- namely, its Gaussian curvature and boundary shape --- and the authors wonder about the possibility of designing patterns at will.

The appearance of wrinkle domains in floating elastic shells is
remarkable. It reminds of a key feature of other, more well-studied 
pattern forming systems such as shape memory alloys \cite{bhattacharya2003microstructure},
micromagnets \cite{desimone2006recent}, and liquid crystals \cite{ball2017mathematics}.
The authors in \cite{aharoni2017smectic} highlight in particular
a connection between wrinkles and the layers of a smectic liquid crystal.
They describe a coarse-graining procedure in which the wrinkle direction plays the role of a director field, and the wrinkle peaks and troughs are encoded in the level sets of a phase
field function $\varphi$. Setting an ansatz into the total energy
$E$, the authors extract a coarse-grained or ``effective''
energy $E_{\text{eff}}(\varphi)$. 
Carrying over known results on liquid crystals, the authors make scaling
predictions for various quantities such as the size of a typical domain and the width of its walls. 
It remains unclear, however, whether the
overall layout of the wrinkles, i.e., their particular arrangement into domains,
can be recovered by this approach.  There is reason to doubt it can be done. 
Careful examination of the ansatz in \cite{aharoni2017smectic}
shows it assumes the shell deforms by an oscillatory perturbation
of some leading order deformation which is implicitly defined. As such, it and the corresponding phase field function $\varphi$ may be prohibitively difficult to recover. 

The situation becomes even more complicated as the thickness of the shell tends to zero. 
Forthcoming experiments on ultrathin shells \cite{tobasco2020principles},
having thicknesses several orders of magnitude less than those in
\cite{aharoni2017smectic,albarran2018curvature}, show that sometimes
no coherent wrinkling pattern occurs.
 In particular, spherical caps produce a ``disordered'', crumpling-like
response whose local features are sensitive to perturbation and vary between trials.
Other less-symmetric spherical shells (e.g., triangles cut from
spheres) display a mixed ordered\textendash disordered response,
in which one part remains ordered --- being covered with wrinkle domains --- while another part exhibits
the crumpling-like response. Upon perturbation, the local features of the disordered parts tend to rearrange while their overall layout remains the same. In contrast, the ordered parts remain more or less unchanged. Notably, the opposite response occurs for saddle-shaped shells:
ultrathin negatively curved shells exhibit the same ordered wrinkle domains as do their thicker counterparts. 

The task of determining the features of wrinkled thin elastic
sheets has been the subject of much research. When wrinkles occur in response to applied tensile forces, certain directions are stabilized and one may deduce
the direction of wrinkling from tension-field theory \cite{reissner1938tension,steigmann1990tension,wagner1929ebene}, 
also known as the relaxed energy approach \cite{pipkin1986relaxed,pipkin1993relaxed,pipkin1994relaxed}.
The relaxed energy density $W_\text{rel}$ for a sheet with zero thickness
(a ``membrane'') is a function of its effective strain, which vanishes
on bi-axially compressed states and is otherwise strictly positive.
When applied to the \emph{tension-driven wrinkling} of 
thin elastic sheets \cite{bella2014wrinkles,davidovitch2011prototypical},
one finds that the extent of the un-wrinkled region is determined,
as well as the direction of the wrinkles, by solving a \emph{relaxed
problem} of the form
\begin{equation}
\min_{\Phi_{\text{eff}}}\,\int_{S}W_\text{rel}(D\Phi_{\text{eff}})\,dA \label{eq:relaxed}
\end{equation}
subject to boundary conditions and loads. Here, $\Phi_{\text{eff}}$
denotes the limiting or effective deformation of the mid-sheet $S$ that arises in the vanishing thickness limit. 
A recent focus in tension-driven problems has been on identifying the scaling behavior(s) with respect to thickness (and other parameters)
of the higher order terms in the expansion
\begin{equation}
\min\,E=C_{0}+\text{higher order terms} \label{eq:tensionexpansion}
\end{equation}
as the thickness tends to zero. The constant $C_{0}$ is given by the minimum value of \prettyref{eq:relaxed} and it amounts to the work done at leading order to stretch the sheet. 
Evaluating the higher order terms requires identifying the lengthscale and amplitude of the wrinkles whose existence is implied. In general these quantities can vary throughout the sheet, making their analysis rather involved.
Examples include the ``wrinkling cascades'' seen in uni-axially
compressed floating sheets pulled taught by surface tension \cite{huang2010smooth},
as well as in hanging drapes pulled taught by gravity \cite{bella2017coarsening}.

Wrinkling patterns also occur in situations devoid of strong tensile loads or even lacking them altogether. This is the case for the stamped and floating shells introduced above. The hallmarks of such \emph{curvature-driven wrinkling} are
the presence of geometric incompatibilities driving the patterned response, and a lack of coherence in certain
parameter regimes. The transition from ordered wrinkle domains
in moderately thin floating shells \cite{aharoni2017smectic} to an
ordered wrinkling--disordered crumpling-like response in the ultrathin limit \cite{tobasco2020principles}
is an example of this phenomenon. Other examples include the ordered
``herringbone'' patterns and their disordered ``labyrinthine''
counterparts occurring in bi-axially compressed sheets
on a planar substrate \cite{cai2011periodic,chen2004family,huang2004evolution,huang2005nonlinear},
as well as the hexagonal tiling and labyrinthine patterns occurring in compressed thin elastic spheres
bonded to a spherical core
\cite{stoop2015curvature,terwagne2014smart}. 

In any case where tension fails to dominate, the relaxed
problem \prettyref{eq:relaxed} offers little guidance as to the patterns that occur (we refer to a situation where $C_0=0$). Various authors working on problems for
which surface tension is a small but non-negligible effect have suggested
\cite{paulsen2018optimal,paulsen2017geometry,yao2013planar} that
the shell's response can be determined instead at leading order by solving a \emph{limiting} or \emph{effective
area problem} of the form
\begin{equation}
\max_{\Phi_{\text{eff}}}\,\text{Area}(\Phi_{\text{eff}}(S)). \label{eq:effectiveArea}
\end{equation}
In analogy to \prettyref{eq:tensionexpansion}, the minimum energy is expected to expand as
\begin{equation}
\min\,E= C_{1}\cdot \gamma +\text{higher order terms}\label{eq:min_E_expansion2}
\end{equation}
as the surface tension coefficient $\gamma$ of the exposed interface tends to zero. The shell is therefore predicted to maximize the area it covers at leading order. 
A natural question is regarding constraints: in \cite{yao2013planar} where a flat disc is confined
to a liquid saddle surface, the perimeter of the
sheet is taken to be fixed; in \cite{paulsen2018optimal} where a flat disc wraps a water droplet, the effective deformation $\Phi_{\text{eff}}$ is understood to be a length-shortening map. 

In this paper, we take the first step towards a mathematical analysis of curvature-driven wrinkling. We adopt the viewpoint of energy minimization (even global minimization to simplify) and set ourselves the following tasks: 
prove the validity of an effective area problem such as \prettyref{eq:effectiveArea} for the leading order behavior of (almost) minimizers, 
and deduce from its solutions the patterns that form. We achieve these goals for a class of \emph{weakly curved} or \emph{shallow} shells whose intrinsic geometries are close to flat.
This simplifying assumption facilitates analysis since it allows the use of a geometrically
linear, von Karman-like energy. Geometrically linear models are standard in the literature on elastic pattern formation, though they have yet to enjoy the same level of rigorous derivation from fully nonlinear elasticity as have plate and shell models for finite bending deformations (for a recent review, see \cite{muller2017mathematical}). 
Motivated by a recent suggestion \cite{davidovitch2019geometrically} that
there exists a ``bending-induced'' tension proportional to the geometric mean of the shell's bending modulus $B$ and the substrate stiffness
$K$, we rescale our energy functionals by 
\[
\gamma_{\text{eff}}=2\sqrt{BK}+\gamma\label{eq:effsurfacetension}
\]
and obtain their $\Gamma$-limit as $\gamma_\text{eff}\to 0$ in a topology well-suited to the formation of patterns. A linearized version of \prettyref{eq:effectiveArea} and \prettyref{eq:min_E_expansion2} results, in which $\gamma$ is replaced by the ``effective surface tension'' $\gamma_\text{eff}$. In proving these results, we will not assume that minimizers obey any particular ansatz, or even that they exhibit ordered wrinkle domains. 

This brings us to what may be the most important contribution of this paper: via convex analysis of the limiting, linearized area problem, we derive a new and far-reaching method for proving that almost minimizers must tend towards an ordered--possibly disordered state, one whose ordered part consists of known wrinkle domains, and whose possibly disordered part is left unconstrained. Our method consists of two steps: first, we solve for a set of \emph{stable lines} along which any oscillations (and concentrations) are ruled out; second, we recover the amplitude of the oscillations that do occur via a
 second order linear partial differential equation (PDE) for which the stable lines are characteristic curves. Thus, we have found a way to treat wrinkles as the characteristic curves of a family of differential operators, rather than as the level sets of some unknown phase field function as proposed in \cite{aharoni2017smectic}. The upcoming \prettyref{fig:library_of_patterns} \vpageref{fig:library_of_patterns}
presents various arrangements of our stable lines. We were pleased to learn
that the predicted ordered parts where they exist (shown as striped) compare favorably with the
experiments that motivated our work. Even the leftover, possibly disordered parts (shown in blank)
look to align. A separate paper is currently in preparation, where we plan to report on experimental and numerical tests of our predictions \cite{tobasco2020principles}. 
We turn to introduce the model we use and to state our main results. 

\subsection{Preliminaries}

\prettyref{subsec:weaklycurved_model} introduces a geometrically
linear model of elastic shells.
\prettyref{subsec:non-dimensionalization} passes to its non-dimensional
form and identifies the parameter regime of our results. Finally,
in \prettyref{subsec:Bounded-deformation-maps} we recall some basic
facts about functions of bounded deformation and bounded Hessian. The formal statement of our results is in  \prettyref{subsec:statementofresults}.

\subsubsection{Weakly curved floating shells\label{subsec:weaklycurved_model}}

We consider the model problem of a thin elastic shell floating on an otherwise planar liquid
bath. Let the undeformed mid-shell $S$ be the graph of a function
$p$ over some planar reference domain $\Omega\subset\mathbb{R}^{2}$, i.e.,
\[
S=\left\{ (x_{1},x_{2},p(x)):x\in\Omega\right\} .
\]
Given a deformation $\Phi:S\to\mathbb{R}^{3}$ of the shell, 
introduce its in- and out-of-plane displacements
$u:\Omega\to\mathbb{R}^{2}$ and $w:\Omega\to\mathbb{R}$ according to
\[
\Phi(x_1,x_2,p(x))=(x_{1}+u_{1}(x),x_{2}+u_{2}(x),w(x)),\quad x\in\Omega.
\]
The plane being referenced is that of the undeformed liquid bath.
So long as the shell is weakly curved, meaning that its typical
``slope'' $|\nabla p|\ll1$, its deformation can be modeled as a minimizer of the energy\footnote{We picked up the term ``weakly curved'' from \cite{howell2009applied}. It indicates a family of shells also referred to  as ``shallow'', the deformations of which can be modeled using the Donnel--Mushtari--Vlasov theory \cite{niordson1985shell,ventsel2001thin} or Marguerre's theory of shallow shells \cite{sanders1963nonlinear}. Our stretching and bending terms become the ones from \cite{sanders1963nonlinear} under the substitution $w \to w+p$, and the ones from \cite{niordson1985shell,ventsel2001thin} under the further substitution $u\to u-w\nabla p$.  
}

\begin{equation}\label{eq:energy_dimensional}
\begin{aligned}
E &=\frac{Y}{2}\int_{\Omega}|e(u)+\frac{1}{2}\nabla w\otimes\nabla w-\frac{1}{2}\nabla p\otimes\nabla p|^{2}\,dx+\frac{B}{2}\int_{\Omega}|\nabla\nabla w-\nabla\nabla p|^{2}\,dx\\
 &\quad\qquad +\frac{K}{2}\int_{\Omega}|w|^{2}\,dx+\gamma_{\text{lv}}\left(\int_{\Omega}\frac{1}{2}|\nabla p|^{2}\,dx-\int_{\partial\Omega}u\cdot\hat{\nu}\,ds\right).
 \end{aligned}
 \end{equation} 
The notation $e(u)=\frac{1}{2}(\nabla u+\nabla u^{T})$ stands for the symmetrized gradient of the displacement $u$. We use $x\otimes y$ to denote the outer product of $x$ and $y$, and $\hat{\nu}$ for the outwards-pointing unit normal vector at $\partial\Omega$.  
Our formula for the energy is directly analogous to the one used in \cite{taffetani2017regimes} to study the wrinkling of an internally pressurized spherical shell, as well as the one used in \cite{bella2017wrinkling,davidovitch2019geometrically,hohlfeld2015sheet}
to study the wrinkling of a flat disc on a spherical substrate; it is a geometrically linearized version of the energy used in \cite{aharoni2017smectic} for general floating shells.
Here, to fix ideas, we focus on the setup of a weakly curved shell on a planar liquid substrate,
noting that our analysis can be adapted to the more general setup of a weakly curved shell on a weakly curved substrate. 
Underlying the energy $E$ is a certain ``geometric linearization'' procedure we shall describe. But first,
let us introduce each of the terms in \prettyref{eq:energy_dimensional}. 

The formula \prettyref{eq:energy_dimensional} accounts for the potential
energy of the shell and liquid bath. 
The first two terms are the ``stretching'' and ``bending'' terms. They comprise the elastic energy of the shell.
The parameters $Y=E_{\text{s}}t$
and $B=\frac{1}{12}E_{\text{s}}t^{3}$ are its stretching and bending moduli, where $E_{\text{s}}$ is its Young's modulus and $t$ is its dimensional
thickness. For simplicity, and as it contains the essential mathematical details, we treat the case of an isotropic Hooke's law with 
Poisson ratio $\nu=0$. That is, we take $|\cdot|$ to denote either  
the standard Euclidean or Frobenius matrix norm.  
With this choice, the stretching energy is proportional to the
sum of the squares of the components of the \emph{geometrically linear strain} 
\begin{equation}
\varepsilon=e(u)+\frac{1}{2}\nabla w\otimes\nabla w-\frac{1}{2}\nabla p\otimes\nabla p\label{eq:strain_vK}
\end{equation}
which it prefers to remain small. The bending energy is proportional to the sum of the squares of the
components of $\nabla\nabla w-\nabla\nabla p$. It limits the curvature
that develops. The remaining terms in \prettyref{eq:energy_dimensional}
account for the energy of the liquid bath. The parameter
$K=\rho g$ sets its ``stiffness'' to out-of-plane displacements ($\rho$ is the density of the liquid and $g$ is the gravitational acceleration), while $\gamma_{\text{lv}}$ sets
the strength of the liquid\textendash vapor surface tension 
pulling at the shell's edge. Note in treating only the surface tension of the liquid--vapor interface, we assume  the shell adheres completely to the surface of the bath (see  \cite{hohlfeld2015sheet} for more on this point).

Before non-dimensionalizing, we pause to discuss the fact that \prettyref{eq:energy_dimensional}
 does not report the true energy of the shell and liquid bath, but only approximates it to leading order in a ``geometrically
linear'' setting where
\begin{equation}
|\nabla u|\sim|\nabla w|^{2}\sim|\nabla p|^{2}\ll1.\label{eq:geometrically_linear_regime}
\end{equation}
The use of a more nonlinear model (``geometrically nonlinear'' as
in \cite{aharoni2017smectic} or ``fully nonlinear'' as in \cite{bella2014wrinkles})
would of course yield more accurate results, but would require several
significant mathematical advances beyond the ones achieved here. As
remarked above, we are not the first to make such a simplification
in the study of elastic patterns: other
authors including those of \cite{bella2017wrinkling,davidovitch2019geometrically,hohlfeld2015sheet,taffetani2017regimes}
have used geometrically linear models as well. The picture
that has emerged is that, whereas the quantitative predictions of such
models are only asymptotically correct, their qualitative predictions do often reflect those of a more nonlinear model. So while we expect
the analysis of \prettyref{eq:energy_dimensional} to reveal much
about the experiments that motivated this work, we warn that it
may fail to capture the parts of those experiments that are not weakly curved. The analysis
of general floating shells is the subject of current research.

To illustrate this point further, let us briefly indicate how the geometrically linear energy \prettyref{eq:energy_dimensional}
arises, informally, from a more nonlinear one. We focus
on the stretching term, as the rest can be explained similarly. As in
\cite{aharoni2017smectic,efrati2009elastic}, we note that the (geometrically)
nonlinear stretching energy of the shell is given by
\begin{equation}
E_{\text{stretch}}=\frac{Y}{2}\int_{S}|\varepsilon_{\text{NL}}|_{S}^{2}\,dA\label{eq:NLstretching}
\end{equation}
where $\varepsilon_{\text{NL}}$ is the strain of $\Phi$, $dA$ is the area element of $S$, and $|\cdot|_{S}$
is a suitable matrix norm. Pulling back to $\Omega$, we introduce the deformed and reference
metrics $g=D\Phi^{T}D\Phi$ and $g_{0}=D\Phi_{0}^{T}D\Phi_{0}$ where
$\Phi_{0}(x)=(x_{1},x_{2},p(x))$, and write 
\[
\varepsilon_{\text{NL}}=\frac{1}{2}(g-g_{0}),\quad dA=\sqrt{\det g_{0}}\,dx,\quad\text{and}\quad|\cdot|_{S}=|g_{0}^{-1}\cdot|.
\]
Taylor expanding about the trivial displacements $(u,w)=(0,0)$ and
the trivial shell $p=0$ yields 
\[
\varepsilon_{\text{NL}}=\varepsilon+\text{h.o.t.},\quad dA=dx+\text{h.o.t.},\quad\text{and}\quad|\cdot|_{S}=|\cdot|+\text{h.o.t.}
\]
where we have neglected higher order terms per \prettyref{eq:geometrically_linear_regime}.
Replacing $\varepsilon_{\text{NL}}$, $dA$, and $|\cdot|_{S}$ in \prettyref{eq:NLstretching}  
with their leading order approximations $\varepsilon$, $dx$, and $|\cdot|$ results in the stretching energy from \prettyref{eq:energy_dimensional}.

\subsubsection{Bendability, deformability, and confinement\label{subsec:non-dimensionalization}}

We are interested in the minimizers of the energy \prettyref{eq:energy_dimensional},
and especially in their dependence on its parameters.
Here, we collapse these into three non-dimensional
groups whose inverses are known as ``bendability'', ``deformability'',
and ``confinement''. Similar groups appear whenever elastic, surface tension, and  substrate forces interact, as has been shown in many other works including  \cite{davidovitch2011prototypical,hohlfeld2015sheet,king2012elastic,taffetani2017regimes}. 

Let $L$ be a representative lengthscale of the reference domain $\Omega$,
and let $R$ be a representative radius of curvature for the shell.
Consider the change of variables
\begin{equation}
u(x)=\epsilon^{2}L\hat{u}(\frac{x}{L}),\quad w(x)=\epsilon L\hat{w}(\frac{x}{L}),\quad\text{and}\quad p(x)=\epsilon L\hat{p}(\frac{x}{L})\quad\text{where}\quad\epsilon=\frac{L}{R}.\label{eq:non-dimensionalization}
\end{equation}
Hats denote dimensionless variables defined on the unit domain
$\hat{\Omega}=L^{-1}\Omega$. That the shell is weakly curved corresponds
to taking $\epsilon\ll1$. 
Setting \prettyref{eq:non-dimensionalization} into \prettyref{eq:energy_dimensional}
and changing variables, we find that 
\[
\hat{E}(\hat{u},\hat{w})=\frac{1}{YL^{2}}\frac{1}{\epsilon^{4}}E(u,w)
\]
satisfies
\begin{align*}
\hat{E} & =\frac{1}{2}\int_{\hat{\Omega}}|e(\hat{u})+\frac{1}{2}\nabla\hat{w}\otimes\nabla\hat{w}-\frac{1}{2}\nabla\hat{p}\otimes\nabla\hat{p}|^{2}+\frac{1}{2}\frac{B}{YL^{2}}\frac{1}{\epsilon^{2}}\int_{\hat{\Omega}}|\nabla\nabla\hat{w}-\nabla\nabla\hat{p}|^{2}\\
 & \quad\qquad+\frac{1}{2}\frac{KL^{2}}{Y}\frac{1}{\epsilon^{2}}\int_{\hat{\Omega}}|\hat{w}|^{2}+\frac{\gamma_{\text{lv}}}{Y}\frac{1}{\epsilon^{2}}\left(\int_{\hat{\Omega}}\frac{1}{2}|\nabla\hat{p}|^{2}-\int_{\partial\hat{\Omega}}\hat{u}\cdot\hat{\nu}\right).
\end{align*}
Evidently, minimizing $E$ is equivalent to minimizing $\hat{E}$,
but the latter version has the benefit of collapsing the six original
parameters $Y$, $B$, $K$, $\gamma_{\text{lv}}$, $L$, and $R$ into three non-dimensional groups:
\[
b=\frac{B}{Y L^2}\cdot\frac{R^{2}}{L^{2}}\quad\text{(bendability\ensuremath{^{-1}})},\quad k=\frac{KR^{2}}{Y}\quad\text{(deformability\ensuremath{^{-1}})},\quad\text{and}\quad\gamma=\frac{\gamma_{\text{lv}}}{Y}\cdot\frac{R^{2}}{L^{2}}\quad\text{(confinement\ensuremath{^{-1}})}.
\]
Henceforth, we drop the hats and consider the non-dimensionalized
energy
\begin{equation}\label{eq:non-dim_energy}
\begin{aligned}
E_{b,k,\gamma}(u,w)  &=\frac{1}{2}\int_{\Omega}|e(u)+\frac{1}{2}\nabla w\otimes\nabla w-\frac{1}{2}\nabla p\otimes\nabla p|^{2}\,dx+\frac{b}{2}\int_{\Omega}|\nabla\nabla w-\nabla\nabla p|^{2}\,dx\\
 &\quad\qquad +\frac{k}{2}\int_{\Omega}|w|^{2}\,dx+\gamma\left(\int_{\Omega}\frac{1}{2}|\nabla p|^{2}\,dx-\int_{\partial\Omega}u\cdot\hat{\nu}\,ds\right) 
\end{aligned}
\end{equation}
with its parameters $b,k>0$ and $\gamma\geq0$. 

Having non-dimensionalized, we can now introduce the asymptotic regime
of our results. This paper studies the asymptotics of $E_{b,k,\gamma}$
and its minimizers in any limit $b\to0$, $k\to\infty$, $\gamma\to0$
such that
\begin{equation}
\frac{b}{k},\ \frac{\gamma}{k},\ 2\sqrt{bk}+\gamma\ll1\quad\text{and}\quad\left(\frac{b}{k}\right)^{1/10}\ll2\sqrt{bk}+\gamma.\label{eq:asymptoticregime}
\end{equation}
These conditions arise from the search for a parameter regime where minimizers
satisfy
\begin{equation}\label{eq:nearlystrainfree-nearlyflat}
\varepsilon\approx0\quad\text{and}\quad w\approx0
\end{equation}
so that they are nearly strain-free and nearly flat. One expects this to hold if stretching and substrate forces dominate the response. As with isometric embeddings, there
exist infinitely many nearly strain-free displacements to any neighborhood of the plane. (We will construct such displacements later on. See also  \cite{lewicka2017convex} for the case $\varepsilon =0$.)
With so much freedom, it is reasonable to think of minimizing the
bending, substrate, and surface energies while treating \prettyref{eq:nearlystrainfree-nearlyflat} as a ``constraint''; this is
an instance of the ``Gauss\textendash Euler elastica'' variational
principle proposed recently in \cite{davidovitch2019geometrically}.
Following the line of reasoning there, one predicts the minimum energy to scale $\sim2\sqrt{bk}$
or $\gamma$, whichever is the larger. The typical values of the
stretching and substrate forces are $\sim1$ and $k$, so we are
lead to take  $2\sqrt{bk}+\gamma\ll1$ and $k$ as in the first part of \prettyref{eq:asymptoticregime}.

The last part of \prettyref{eq:asymptoticregime} is harder
to explain. It comes from the fact that in order to justify the claim that minimizers satisfy \prettyref{eq:nearlystrainfree-nearlyflat}, 
we must be able to prove the existence of in- and out-of-plane displacements satisfying
\[
\int_{\Omega}|\varepsilon|^{2}\ll \min\,E_{b,k,\gamma}\quad\text{and}\quad k\int_{\Omega}|w|^{2}\ll \min\,E_{b,k,\gamma}
\]
in a regime where the minimum energy is expected to scale $\sim2\sqrt{bk}+\gamma$. Furthermore, as we
intend to prove a $\Gamma$-convergence result, we must accomplish
this for any possible limiting in-plane displacement $u_{\text{eff}}$.
This is not a straightforward task, and it becomes all the more difficult
(perhaps eventually impossible) as $2\sqrt{bk}+\gamma\to0$. Our choice to impose the second part of \prettyref{eq:asymptoticregime}  arises from the 
details of our herringbone-based recovery sequences. See the discussion
following \prettyref{thm:gamma-lim} for more on this and \prettyref{sec:Gamma_limsup}
for the details.

\subsubsection{Functions of bounded deformation and bounded Hessian\label{subsec:Bounded-deformation-maps}}

The goals of this paper are to obtain and analyze the $\Gamma$-limit of $\frac{1}{2\sqrt{bk}+\gamma}E_{b,k,\gamma}$ in the parameter regime \prettyref{eq:asymptoticregime}. To this end, we make use of
the spaces of \emph{bounded deformation} and \emph{bounded Hessian}
functions
\begin{align*}
BD(\Omega) =\left\{ u\in L^{1}(\Omega;\mathbb{R}^{2}):e(u)\in\mathcal{M}(\Omega;\text{Sym}_{2})\right\} \quad\text{and}\quad
HB(\Omega) =\left\{ \varphi\in L^{1}(\Omega):\nabla\nabla\varphi\in\mathcal{M}(\Omega;\text{Sym}_{2})\right\} 
\end{align*}
where $\mathcal{M}(\Omega;\text{Sym}_{2})$ is the space of finite,
symmetric $2$-by-$2$ matrix-valued Radon measures on the given reference domain $\Omega\subset \mathbb{R}^2$. As these spaces 
 may not be immediately familiar to all, we recall their basic
properties and refer to \cite{demengel1989compactness,temam2018mathematical,temam1980functions} for more details.
The reader wishing to skip forward to our results should go to \prettyref{subsec:statementofresults}.

The spaces $BD(\Omega)$ and $HB(\Omega)$ are Banach spaces under the norms 
\[
||u||_{BD(\Omega)}=||u||_{L^{1}(\Omega)}+\int_{\Omega}|e(u)|_{1}\quad \text{and} \quad
||\varphi||_{HB(\Omega)}=||\varphi||_{L^{1}(\Omega)}+\int_{\Omega}|\nabla\nabla\varphi|_{1},
\]
where we define
\[
\int_{\Omega}|\mu|_{1}=\sup_{\substack{\sigma\in C_{c}(\Omega;\text{Sym}_{2})\\
|\sigma(x)|_{\infty}\leq1\ \forall\,x\in\Omega
}
}\,\int_{\Omega}\left\langle \sigma,\mu\right\rangle 
\]
for  $\mu\in\mathcal{M}(\Omega;\text{Sym}_{2})$. Although the norm $\int_{\Omega}|\mu|_{1}$ is equivalent to the more common total variation one $\sum_{ij}|\mu_{ij}|(\Omega)$, we use the former simply because  it appears in our results.
Note $\int_{\Omega}|\mu|_{1}=\text{tr}\,\mu(\Omega)$ if
$\mu\geq0$.
The natural injections $BD(\Omega)\hookrightarrow L^{2}(\Omega;\mathbb{R}^{2})$
and $HB(\Omega)\hookrightarrow C(\overline{\Omega})$ are continuous
in the strong topologies induced by the given norms. As it turns out,
$BD(\Omega)$ and $HB(\Omega)$ are dual spaces and so 
possess weak-$*$ topologies. Between these and the strong topologies
lie the so-called \emph{intermediate} topologies metrized by the distances
\[
||u-v||_{L^{1}(\Omega)}+\left|\int_{\Omega}|e(u)|_{1}-\int_{\Omega}|e(v)|_{1}\right| \quad \text{and} \quad
||\varphi-\psi||_{L^{1}(\Omega)}+\left|\int_{\Omega}|\nabla\nabla\varphi|_{1}-\int_{\Omega}|\nabla\nabla\psi|_{1}\right|.
\]
The trace maps $BD(\Omega)\to L^{1}(\partial\Omega,\mathcal{H}^{1})$,
$u\mapsto u|_{\partial\Omega}$ and $HB(\Omega)\to(C(\partial\Omega),L^{1}(\partial\Omega,\mathcal{H}^{1}))$,
$\varphi\mapsto(\varphi|_{\partial\Omega},\nabla\varphi|_{\partial\Omega})$
are intermediately continuous, and are defined by continuous extension from the intermediately dense set $C^{\infty}(\overline{\Omega};\mathbb{R}^{2})$. 
We often drop the notation $\cdot|_{\partial\Omega}$
when the meaning is clear, as in the integration-by-parts identities
\begin{equation}
\int_{\Omega}\left\langle \sigma,e(u)\right\rangle =-\int_{\Omega}\text{div}\,\sigma\cdot u\,dx+\int_{\partial\Omega}\left\langle \sigma,u\otimes\hat{\nu}\right\rangle ds,\quad
\int_{\Omega}\left\langle \sigma,\nabla\nabla\varphi\right\rangle =-\int_{\Omega}\text{div}\,\sigma\cdot\nabla\varphi\,dx+\int_{\partial\Omega}\left\langle \sigma,\nabla\varphi\otimes\hat{\nu}\right\rangle ds\label{eq:IBP1}
\end{equation}
which hold for all $\sigma\in C^{1}(\overline{\Omega};\text{Sym}_{2})$.

Finally, we introduce the quotient space $BD(\Omega)/\mathcal{R}$. Setting $\sigma=Id$
into the first identity in \prettyref{eq:IBP1} shows that $\int_{\partial\Omega}u\cdot\hat{\nu}$
is unchanged under the replacement $u\to u+r$ if $e(r)=0$.
By definition, 
\[
\mathcal{R} =\{ r\in BD(\Omega) : e(r) = 0\}
\]
is the space of \emph{linearly strain-free} maps. It consists of all maps $x\mapsto Rx+b$ where $R$ is
anti-symmetric and $b\in\mathbb{R}^{2}$. Although Korn's inequality
fails on $BD(\Omega)$, the Poincar\'e-type inequality
\[
\min_{r\in\mathcal{R}}\,||u-r||_{L^{1}(\Omega)}\lesssim_{\Omega}\int_{\Omega}|e(u)|_{1}\quad\forall\,u\in BD(\Omega)
\]
holds. Thus, $\int_{\Omega}|e(u)|_{1}$ defines a norm on the quotient space
\[
BD(\Omega)/\mathcal{R}=\left\{ u+r:u\in BD(\Omega),r\in\mathcal{R}\right\}
\]
under which it is a Banach space. 
By Banach--Alaoglu, norm-bounded subsets of $BD(\Omega)/\mathcal{R}$ are weakly-$*$ precompact. Note $u_{n}\overset{*}{\rightharpoonup}u$ weakly-$*$ in $BD(\Omega)/\mathcal{R}$ if and only if $e(u_{n})\overset{*}{\rightharpoonup}e(u)$
weakly-$*$ in $\mathcal{M}(\Omega;\text{Sym}_{2})$. In such a case, there exists $\{r_{n}\}_{n\in\mathbb{N}}\subset\mathcal{R}$
so that $u_{n}+r_{n}\to u$ strongly in $L^{1}(\Omega;\mathbb{R}^{2})$. 

\subsection{Statement and discussion of results}\label{subsec:statementofresults}

Having introduced the (non-dimensionalized) energies $E_{b,k,\gamma}$ in \prettyref{eq:non-dim_energy},
we proceed to state our results. We start in \prettyref{subsec:The-limiting-variational}
 by deriving the sought after effective energy $E_\text{eff}$ of the floating shell as the $\Gamma$-limit of the rescaled energies $\frac{1}{2\sqrt{bk} + \gamma} E_{b,k,\gamma}$. There we produce a first statement of the limiting problem in terms of the effective in-plane displacement $u_\text{eff}$. A second statement appears in \prettyref{subsec:Defect-measures} in terms of a new variable called the ``defect measure'' $\mu$. We think of it  as encoding the patterns. 
\prettyref{subsec:The-geometry-of} obtains a dual description via an ``Airy potential'' function $\varphi$, and produces a boundary value problem for optimal $\mu$ whose coefficients depend on an optimal choice of $\varphi$. Finally,  \prettyref{subsec:method-of-stable-lines}
presents our method of stable lines. 
For a short list of open questions, see \prettyref{subsec:openquestions}.

\vspace{.5em}

\paragraph*{\uline{Assumptions}. \label{par:Assumptions} \quad{}} 
Here we collect for the reader's convenience a list of assumptions that will reappear throughout. The following assumptions are basic to what we do:
\begin{subequations}\label{eq:A-basic}
\begin{gather}
\Omega\subset\mathbb{R}^2\text{ is a bounded, Lipschitz domain}\quad\text{and}\quad p\in W^{2,2}(\Omega)\label{eq:A1a} \\
\{(b_{n},k_{n},\gamma_{n})\}_{n\in\mathbb{N}}\subset(0,1]\times(0,\infty)\times[0,1]\quad\text{satisfies}\quad\frac{b_{n}}{k_{n}},\ \frac{\gamma_{n}}{k_{n}},\ 2\sqrt{b_{n}k_{n}}+\gamma_{n}\to0\quad\text{as }n\to\infty.\label{eq:A2a}
\end{gather}
\end{subequations}
Certain of our results require the following additional assumptions to hold: 
\begin{subequations}\label{eq:A-additional}
\begin{gather}
\Omega \text{ is strictly star-shaped}\quad\text{and}\quad p\in W^{2,\infty}(\Omega) \label{eq:A1b}\\ 
\frac{(b_{n}/k_{n})^{1/10}}{2\sqrt{b_{n}k_{n}}+\gamma_{n}}\to0\quad\text{as }n\to\infty. \label{eq:A2b}
\end{gather}
\end{subequations}
Unless otherwise stated, any asymptotic statement involving $b$, $k$, or $\gamma$ is understood to hold on a sequence satisfying \prettyref{eq:A2a} and \prettyref{eq:A2b}. We often mute the subscript $n$. 
Recall $\Omega$ is said to be  \emph{strictly star-shaped} if there exists $x\in\Omega$ so that for all $y\in\partial\Omega$ the open line segment from $x$ to $y$ belongs to $\Omega$. Sometimes, we make use of the hypothesis that $\Omega$ is simply connected to simplify the statements of certain results.

These and other assumptions enter at various steps in our analysis. Briefly, the situation is as follows: while for our complete $\Gamma$-convergence result  we must impose all of the assumptions in \prettyref{eq:A-basic} and \prettyref{eq:A-additional}, each of its components hold in greater generality; so do our results regarding the analysis of the limiting problems. To help the reader navigate, we have included statements at the top of \prettyref{sec:Gamma_liminf}-\prettyref{sec:examples} clarifying the set of assumptions that are needed there.

\subsubsection{The limiting area problem\label{subsec:The-limiting-variational}}

Our first result is a formula for the effective energy $E_\text{eff}$ of a weakly curved, floating shell along with the limiting (linearized) area problem it implies. 
Anticipating the minimum energy to scale $\sim2\sqrt{bk}+\gamma$, we divide by this amount and pass to the limit in the sense of $\Gamma$-convergence.
As usual, we fix the admissible set and extend the energies $E_{b,k,\gamma}:BD(\Omega)\times W^{1,2}(\Omega)\to(-\infty,\infty]$ by taking
\[
E_{b,k,\gamma}(u,w)=\begin{cases}
\prettyref{eq:non-dim_energy} & (u,w)\in W^{1,2}(\Omega)\times W^{2,2}(\Omega)\\
\infty & \text{otherwise}
\end{cases}.
\]
Define $E_{\text{eff}}:BD(\Omega)\times W^{1,2}(\Omega)\to(-\infty,\infty]$ by
\[
E_{\text{eff}}(u,w)=\begin{cases}
\int_{\Omega}\frac{1}{2}|\nabla p|^{2}\,dx-\int_{\partial\Omega}u\cdot\hat{\nu}\,ds & e(u)\leq\frac{1}{2}\nabla p\otimes\nabla p\,dx,\ w=0\\
\infty & \text{otherwise}
\end{cases}
\]
where $\mu\leq\tilde{\mu}$ if $\tilde{\mu}-\mu \in\mathcal{M}_{+}(\Omega;\text{Sym}_{2})$, the space of finite, non-negative, $\text{Sym}_{2}$-valued Radon measures on $\Omega$.   
\begin{theorem}
\label{thm:gamma-lim} Let $\Omega$, $p$, and $\{(b,k,\gamma)\}$ satisfy the assumptions \prettyref{eq:A-basic} and \prettyref{eq:A-additional}. The $\Gamma$-convergence
\[
\frac{1}{2\sqrt{bk}+\gamma}E_{b,k,\gamma}\overset{\Gamma}{\longrightarrow}E_{\emph{{eff}}}\quad\text{holds with respect to the weak-}*\text{ }BD(\Omega)/\mathcal{R}\times W^{1,2}(\Omega)\text{ topology}
\]
and the rescaled energies are equi-coercive on that space. More precisely,
we have the following results:
\begin{enumerate}
\item ($\Gamma$-liminf inequality) Given any weakly-$*$ converging 
sequence
\[
(u_{b,k,\gamma},w_{b,k,\gamma})\stackrel{*}{\rightharpoonup}(u,w)\quad\text{weakly-\ensuremath{*} in }BD(\Omega)/\mathcal{R}\times W^{1,2}(\Omega),
\]
there holds
\[
\liminf\,\frac{E_{b,k,\gamma}(u_{b,k,\gamma},w_{b,k,\gamma})}{2\sqrt{bk}+\gamma}\geq E_{\emph{{eff}}}(u,w);
\]
\item (recovery sequences) Given any $(u,w)\in BD(\Omega)\times W^{1,2}(\Omega)$,
there exists a sequence
\[
(u_{b,k,\gamma},w_{b,k,\gamma})\stackrel{*}{\rightharpoonup}(u,w)\quad\text{weakly-\ensuremath{*} in }BD(\Omega)/\mathcal{R}\times W^{1,2}(\Omega)
\]
such that
\[
\lim\,\frac{E_{b,k,\gamma}(u_{b,k,\gamma},w_{b,k,\gamma})}{2\sqrt{bk}+\gamma}=E_{\emph{{eff}}}(u,w);
\]
\item (equi-coercivity) Any sequence $\{(u_{b,k,\gamma},w_{b,k,\gamma})\}$
that satisfies
\[
\limsup\,\frac{E_{b,k,\gamma}(u_{b,k,\gamma},w_{b,k,\gamma})}{2\sqrt{bk}+\gamma}<\infty
\]
admits a sub-sequence that converges weakly-$*$ in $BD(\Omega)/\mathcal{R}\times W^{1,2}(\Omega)$.
\end{enumerate}
\end{theorem}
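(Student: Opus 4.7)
The plan is to establish the three claims in sequence: equi-coercivity, $\Gamma$-liminf, then recovery sequences. The unifying algebraic input for the first two is a rewriting of $E_{b,k,\gamma}$ that converts the boundary surface-tension term into a bulk one. Using the divergence theorem together with \prettyref{eq:strain_vK} to replace $\int_{\partial\Omega}u\cdot\hat{\nu}\,ds = \int_\Omega\text{tr}(\varepsilon)\,dx + \int_\Omega\tfrac{1}{2}|\nabla p|^{2}\,dx - \int_\Omega\tfrac{1}{2}|\nabla w|^{2}\,dx$, and completing the square in $\varepsilon$ against $\gamma I$, one rewrites \prettyref{eq:non-dim_energy} in the manifestly nonnegative form
\[
E_{b,k,\gamma} + \gamma^{2}|\Omega| = \tfrac{1}{2}\int_\Omega|\varepsilon - \gamma I|^{2}\,dx + \tfrac{b}{2}\int_\Omega|\nabla\nabla w - \nabla\nabla p|^{2}\,dx + \tfrac{k}{2}\int_\Omega w^{2}\,dx + \tfrac{\gamma}{2}\int_\Omega|\nabla w|^{2}\,dx.
\]
This is a bookkeeping form of the bending-induced tension from \cite{davidovitch2019geometrically}, and it is the main input to both equi-coercivity and $\Gamma$-liminf.

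For equi-coercivity, I assume the rescaled energies are bounded. Each of the four positive terms above is $O(2\sqrt{bk}+\gamma)$, so $\varepsilon\to 0$ and $w\to 0$ in $L^{2}$ (using $\gamma/k,\sqrt{b/k}\to 0$), while $b\|\nabla\nabla w\|_{L^{2}}^{2}$ and $\gamma\|\nabla w\|_{L^{2}}^{2}$ stay bounded. A Gagliardo--Nirenberg interpolation $\|\nabla w\|_{L^{2}}^{2}\lesssim\|w\|_{L^{2}}\|\nabla\nabla w\|_{L^{2}}+\|w\|_{L^{2}}^{2}$, used together with either the bending-substrate balance (when $\gamma\lesssim\sqrt{bk}$) or the $\gamma$-bound (when $\gamma\gtrsim\sqrt{bk}$), gives a uniform bound on $\|\nabla w\|_{L^{2}}$. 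Then $\|e(u)\|_{L^{1}}\leq|\Omega|^{1/2}\|\varepsilon\|_{L^{2}}+\tfrac{1}{2}\|\nabla w\|_{L^{2}}^{2}+\tfrac{1}{2}\|\nabla p\|_{L^{2}}^{2}$ is bounded, and Banach--Alaoglu provides weakly-$*$ convergent subsequences in $BD(\Omega)/\mathcal{R}\times W^{1,2}(\Omega)$.

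For the $\Gamma$-liminf, fix $(u_{n},w_{n})\stackrel{*}{\rightharpoonup}(u,w)$ with bounded rescaled energies (otherwise there is nothing to show). Coercivity gives $w=0$. Letting $n\to\infty$ in the strain identity and using that $\nabla w_{n}\otimes\nabla w_{n}\,dx$ is a bounded family of positive semidefinite matrix-valued measures that sub-converges weakly-$*$ to some $\mu\geq 0$, one gets $e(u) = \tfrac{1}{2}\nabla p\otimes\nabla p\,dx - \tfrac{1}{2}\mu\leq\tfrac{1}{2}\nabla p\otimes\nabla p\,dx$, so $(u,0)$ is admissible with $E_{\text{eff}}(u,0) = B := \int\tfrac{1}{2}|\nabla p|^{2}\,dx - \int u\cdot\hat{\nu}\,ds$. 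Writing $E_{b,k,\gamma}(u_{n},w_{n}) = A_{n}+\gamma B_{n}$ with $B_{n}\to B$ (by testing $e(u_{n})\stackrel{*}{\rightharpoonup}e(u)$ against $I$), it suffices to show $A_{n}\geq 2\sqrt{bk}\,B_{n} - o(2\sqrt{bk}+\gamma)$. Two pointwise completions of the square do this: $\tfrac{1}{2}|\varepsilon|^{2}\geq -2\sqrt{bk}\,\text{tr}(\varepsilon) - 4bk$ from $\tfrac{1}{2}|\varepsilon+2\sqrt{bk}I|^{2}\geq 0$, and $b(\Delta w)^{2}+kw^{2}\geq -2\sqrt{bk}\,w\Delta w$ from $(\sqrt{b}\Delta w+\sqrt{k}w)^{2}\geq 0$. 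After integration, the null-Lagrangian identity $(\Delta w)^{2} = |\nabla\nabla w|^{2}+2\det(\nabla\nabla w)$ together with integration by parts turns the second into $\sqrt{bk}\int|\nabla w|^{2}\,dx$ modulo boundary terms controlled via trace estimates and the bending bound. The cross term $b\int\nabla\nabla w:\nabla\nabla p$ arising from $|\nabla\nabla w - \nabla\nabla p|^{2}$ is handled by Cauchy--Schwarz and $b = o(2\sqrt{bk}+\gamma)$.

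For the recovery sequence, the plan is first to reduce by a diagonal argument to a dense subclass of admissible data---e.g., $u$ smooth with $\mu := \tfrac{1}{2}\nabla p\otimes\nabla p\,dx - e(u)$ having a smooth, strictly positive density---then construct oscillatory $w_{n}$ whose $\tfrac{1}{2}\nabla w_{n}\otimes\nabla w_{n}$ realizes $\mu$ on average. Over cells of diameter $\sim(b/k)^{1/4}$ I would place orthogonal pairs of one-dimensional sinusoids (a local herringbone), tuning amplitudes and directions so that cell-averaged outer products reproduce the density of $\mu$ at the optimal bending+substrate cost $2\sqrt{bk}\int\text{tr}\,\mu = 2\sqrt{bk}\,B$. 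Adding in-plane correctors $u_{n}$ to null out the effective strain yields the total energy $(2\sqrt{bk}+\gamma)B + o(2\sqrt{bk}+\gamma)$. The main obstacle lies exactly here: approximating an arbitrary $\mu\in\mathcal{M}_{+}(\Omega;\text{Sym}_{2})$ by herringbone $\tfrac{1}{2}\nabla w\otimes\nabla w$ fields while keeping $\|\varepsilon_{n}\|_{L^{2}}^{2} = o(2\sqrt{bk}+\gamma)$ is delicate, and the trade-off between the fineness of the cell decomposition and the residual strain is what produces the sharp asymptotic condition \prettyref{eq:A2b}.
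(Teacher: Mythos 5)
Your rewriting of the energy in the nonnegative form $E_{b,k,\gamma}+\gamma^{2}|\Omega|=\frac{1}{2}\int|\varepsilon-\gamma Id|^{2}+\frac{\gamma}{2}\int|\nabla w|^{2}+\frac{b}{2}\int|\nabla\nabla w-\nabla\nabla p|^{2}+\frac{k}{2}\int|w|^{2}$ is exactly \prettyref{eq:shifted_energy}, and your equi-coercivity argument (Gagliardo--Nirenberg for $\nabla w$, triangle inequality for $e(u)$ in $L^{1}$, Banach--Alaoglu) matches \prettyref{lem:aprioribds} and the first half of \prettyref{prop:gam_liminf}. That part is fine.

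The $\Gamma$-liminf step has a genuine gap: you integrate the pointwise inequality $b|\Delta w|^{2}+k|w|^{2}\geq -2\sqrt{bk}\,w\Delta w$ over all of $\Omega$ and integrate by parts globally, which produces the boundary term $2\sqrt{bk}\int_{\partial\Omega}w\,\partial_{\hat\nu}w\,ds$ (and a boundary term from $\int_{\Omega}\det\nabla\nabla w$). The \emph{a priori} bounds give $\|w\|_{L^{2}}\lesssim(b/k)^{1/4}$, $\|\nabla w\|_{L^{2}}\lesssim 1$, $\|\nabla\nabla w\|_{L^{2}}\lesssim(k/b)^{1/4}$, so the trace interpolation you invoke only yields $\|w\|_{L^{2}(\partial\Omega)}\|\nabla w\|_{L^{2}(\partial\Omega)}\lesssim 1$: the boundary term is $O(\sqrt{bk})$, the same order as the quantity you are trying to capture, not $o(\sqrt{bk})$. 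This is not merely a lost estimate --- the cutoff-free inequality $b\int|\Delta w|^{2}+k\int|w|^{2}\geq 2\sqrt{bk}\int_{\Omega}|\nabla w|^{2}-o(\sqrt{bk})$ is \emph{false} for admissible sequences (a boundary layer $w=\delta^{1/2}f(d_{\partial\Omega}/\delta)$ with $\delta=(b/k)^{1/4}$ reduces it to the half-line Landau--Kolmogorov inequality, whose sharp constant is $\sqrt{2}$, not $2$). The correct resolution is to test the two pointwise identities against a fixed $\chi\in C_{c}^{\infty}(\Omega)$, so that only commutator errors $2\sqrt{bk}\|\nabla\chi\|_{\infty}\|w\|_{L^{2}}\|\nabla w\|_{L^{2}}+b\|\nabla\nabla\chi\|_{\infty}\|\nabla w\|_{L^{2}}^{2}=o(\sqrt{bk})$ appear, pass to the limit in $\int\langle\chi Id,\frac12\nabla w_{n}\otimes\nabla w_{n}\rangle\to\frac12\int\langle\chi Id,\mu\rangle$ (note $E_{\text{eff}}(u,0)=\frac12|\mu|_{1}(\Omega)$ only counts interior mass, so boundary-concentrated oscillation is harmlessly discarded), and only then send $\chi\uparrow1$; this is \prettyref{lem:sharp_GN}. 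Relatedly, your claim that $B_{n}\to B$ ``by testing $e(u_{n})\overset{*}{\rightharpoonup}e(u)$ against $I$'' is unjustified --- $Id$ is not compactly supported, so only $\liminf B_{n}\geq B$ follows (from nonnegativity of $\frac12\nabla p\otimes\nabla p\,dx-e(u_{n})+\varepsilon_{n}\,dx$); that weaker statement does suffice for your reduction.

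Finally, the recovery-sequence part is an outline, not a proof, and it is where the bulk of the theorem's difficulty lives: the reduction to smooth, \emph{uniformly} tension-free $u$ (which is where strict star-shapedness of $\Omega$ enters, via dilation plus mollification as in \prettyref{lem:smoothapproximation}), the actual herringbone construction (twinned wrinkles superposed on alternating in-plane shear bands --- a plain superposition of two orthogonal sinusoids does not reproduce a rank-two target $\mu$ with $o(1)$ residual strain), and the quantitative balance of wall costs against the $\|\nabla\mu\|_{\infty}l_{\text{avg}}$ approximation error that yields the exponent in \prettyref{eq:A2b}. Also note the cell size is $l_{\text{avg}}\sim(b/k)^{1/20}$, far larger than the wrinkle wavelength $(b/k)^{1/4}$; taking cells of diameter $\sim(b/k)^{1/4}$ as you propose would make the wall area fraction order one and destroy the leading-order energy.
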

\begin{remark} At first glance, it may seem surprising that the space $BD$, which was originally introduced in connection with plasticity (see, e.g., \cite{temam2018mathematical}), should arise in a problem devoid of plastic effects. It can, however, be anticipated on the grounds that our energies are geometrically linear. As we expect patterns to form, it is natural that $\nabla w$  should be bounded \emph{a priori} in $L^2$. The scaling $|e(u)|\sim|\nabla w|^2$ then indicates a bound on $e(u)$ in $L^1$, implying weak-$*$ pre-compactness in $BD$ up to a linearly strain-free map. The equi-coercivity result above justifies these claims. A similar observation was made in \cite{conti2006rigorous} where $BD$ appeared in a $\Gamma$-limit analysis of clamped elastic membranes.
\end{remark}
It is well-known that $\Gamma$-convergence combined with equi-coercivity
implies the convergence of minimum values along with minimizers \cite{dalmaso1993introduction,degiorgi1975gammaconv}. Here,
we deduce that
\[
\lim\,\frac{\min\,E_{b,k,\gamma}}{2\sqrt{bk}+\gamma}=\min_{(u_{\text{eff}},w_{\text{eff}})\in BD(\Omega)\times W^{1,2}(\Omega)}\,E_{\text{eff}}(u_{\text{eff}},w_{\text{eff}}).
\]
Furthermore, the displacements $(u_{\text{eff}},w_{\text{eff}})$
appearing on the righthand side are optimal if and only if they are
the weak-$*$ limit of a sequence of \emph{almost minimizers} $\{(u_{b,k,\gamma},w_{b,k,\gamma})\}$ of $E_{b,k,\gamma}$. Such sequences satisfy
\[
E_{b,k,\gamma}(u_{b,k,\gamma},w_{b,k,\gamma})=\min\,E_{b,k,\gamma}+o(2\sqrt{bk}+\gamma)
\]
by definition. Reducing to the finite part of $E_{\text{eff}}$ yields the following
result:
\begin{corollary}
\label{cor:limiting_pblm_effectivedisplacement} Given the assumptions \prettyref{eq:A-basic} and \prettyref{eq:A-additional}, the rescaled minimum
energies satisfy
\begin{equation}
\lim\,\frac{\min\,E_{b,k,\gamma}}{2\sqrt{bk}+\gamma}=\min_{\substack{u_{\emph{eff}}\in BD(\Omega)\\
e(u_{\emph{eff}})\leq\frac{1}{2}\nabla p\otimes\nabla p\,dx
}
}\,\int_{\Omega}\frac{1}{2}|\nabla p|^{2}\,dx-\int_{\partial\Omega}u_{\emph{eff}}\cdot\hat{\nu}\,ds.\label{eq:limitingpblm}
\end{equation}
Furthermore, $(u_{\emph{eff}},w_{\emph{eff}})$ arises as the weak-$*$
$BD(\Omega)/\mathcal{R}\times W^{1,2}(\Omega)$ limit of almost minimizers of $E_{b,k,\gamma}$ if and only if $u_{\emph{eff}}$ solves the limiting
problem on the righthand side, and
$w_{\emph{eff}}=0$. 
\end{corollary}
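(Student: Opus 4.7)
The plan is to deduce this corollary directly from Theorem \ref{thm:gamma-lim} by invoking the standard consequence of $\Gamma$-convergence with equi-coercivity: the $\Gamma$-limit $F$ of an equi-coercive sequence $\{F_n\}$ attains its minimum, $\min F_n \to \min F$, and the cluster points of any sequence of almost-minimizers of $F_n$ are exactly the minimizers of $F$ (see, e.g., \cite{dalmaso1993introduction,degiorgi1975gammaconv}). All three hypotheses---$\Gamma$-convergence, equi-coercivity, and the existence of recovery sequences---are furnished by Theorem \ref{thm:gamma-lim}, so the deduction is mostly bookkeeping, and I do not foresee any serious obstacle.

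First I would reduce the righthand side of Theorem \ref{thm:gamma-lim} to the variational problem \prettyref{eq:limitingpblm}. This amounts to two observations. Since $E_{\text{eff}}(u,w) < \infty$ forces both $w = 0$ and $e(u) \leq \frac{1}{2}\nabla p \otimes \nabla p\,dx$, minimizing $E_{\text{eff}}$ reduces to minimizing $\int\frac{1}{2}|\nabla p|^2\,dx - \int_{\partial\Omega}u\cdot \hat\nu\,ds$ over $u \in BD(\Omega)$ subject to the strain constraint, with $w_{\text{eff}}=0$ forced automatically. This admissible set is non-empty since $u\equiv 0$ satisfies $e(0) = 0 \leq \frac{1}{2}\nabla p\otimes\nabla p\,dx$, so the righthand side of \prettyref{eq:limitingpblm} makes sense. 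Well-definedness on the quotient $BD(\Omega)/\mathcal{R}$ is also easy: setting $\sigma = Id$ in \prettyref{eq:IBP1} gives $\int_{\partial\Omega} r \cdot \hat\nu\,ds = 0$ for every $r \in \mathcal{R}$, so both the objective and the strain constraint descend to $BD(\Omega)/\mathcal{R}$.

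Next I would apply the abstract theorem. Picking any finite-energy trial for $E_{\text{eff}}$ --- say $(0,0)$, for which $E_{\text{eff}}(0,0) = \int\frac{1}{2}|\nabla p|^2\,dx < \infty$ --- the recovery-sequence statement Theorem \ref{thm:gamma-lim}(2) furnishes the \emph{a priori} upper bound $\frac{\min E_{b,k,\gamma}}{2\sqrt{bk}+\gamma} \leq E_{\text{eff}}(0,0) + o(1)$. Hence along any almost-minimizing sequence $\frac{E_{b,k,\gamma}(u_{b,k,\gamma},w_{b,k,\gamma})}{2\sqrt{bk}+\gamma}$ is bounded, and equi-coercivity yields a weak-$*$ subsequential limit $(u_{\text{eff}},w_{\text{eff}})$. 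The $\Gamma$-liminf inequality forces $E_{\text{eff}}(u_{\text{eff}},w_{\text{eff}}) \leq \liminf \frac{E_{b,k,\gamma}}{2\sqrt{bk}+\gamma} \leq \inf E_{\text{eff}}$ (the second inequality by applying the recovery-sequence bound to a near-infimizer of $E_{\text{eff}}$), proving simultaneously that $E_{\text{eff}}$ attains its minimum, that $\lim\frac{\min E_{b,k,\gamma}}{2\sqrt{bk}+\gamma} = \min E_{\text{eff}}$, and that weak-$*$ limits of almost-minimizers must be of the form $(u_{\text{eff}},0)$ with $u_{\text{eff}}$ solving \prettyref{eq:limitingpblm}.

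For the ``if'' direction of the characterization, given any such minimizer $(u_{\text{eff}},0)$ the recovery-sequence statement Theorem \ref{thm:gamma-lim}(2) produces a weak-$*$ approximating sequence whose rescaled energies converge to $\min E_{\text{eff}} = \lim\frac{\min E_{b,k,\gamma}}{2\sqrt{bk}+\gamma}$; rearranging gives $E_{b,k,\gamma}(u_{b,k,\gamma},w_{b,k,\gamma}) = \min E_{b,k,\gamma} + o(2\sqrt{bk}+\gamma)$, so the recovery sequence is itself almost-minimizing. This closes the ``if and only if'' and completes the proof.
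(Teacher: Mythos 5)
Your proposal is correct and follows exactly the route the paper takes: the authors simply invoke the standard fact that $\Gamma$-convergence plus equi-coercivity implies convergence of minimum values and of (almost-)minimizers, then reduce to the finite part of $E_{\text{eff}}$, noting the corollary "follows more or less immediately" from \prettyref{thm:gamma-lim}. Your write-up merely fills in the routine bookkeeping the paper omits.
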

We wish to make two remarks, on the geometric meaning of the limiting
problem just derived, and on the proof of the $\Gamma$-convergence result. 
First, we demonstrate how the limiting problem in \prettyref{cor:limiting_pblm_effectivedisplacement} can be recovered by linearizing the proposed area problem \prettyref{eq:effectiveArea}. Just as we may associate to a deformation $\Phi:S\to\mathbb{R}^{3}$
the in- and out-of-plane displacements $u$ and $w$, we may
associate to the limiting or \emph{effective} displacements $u_{\text{eff}}$
and $w_{\text{eff}}=0$ the effective deformation $\Phi_{\text{eff}}:S\to\mathbb{R}^{3}$
given by
\[
\Phi_{\text{eff}}\left(x,p(x)\right)=\left(x+u_{\text{eff}}(x),0\right),\quad x\in\Omega.
\]
Whereas the area of the undeformed mid-shell $S$ satisfies
\[
A(S)=\int_{\Omega}\sqrt{1+|\nabla p|^{2}}\,dx=|\Omega|+\int_{\Omega}\frac{1}{2}|\nabla p|^{2}\,dx+\text{h.o.t.}
\]
the area of its image under $\Phi_{\text{eff}}$ satisfies
\[
A(\Phi_{\text{eff}}(S))=\int_{S}\sqrt{\det D\Phi_{\text{eff}}^T D\Phi_\text{eff}} \, dA=|\Omega|+\int_{\Omega}\text{div}\,u_{\text{eff}}\,dx+\text{h.o.t.}
\]
to leading order in $\nabla p$ and $\nabla u_\text{eff}$. (This ignores the possibility that $\Phi_\text{eff}$ may not be one-to-one.) 
Subtracting and applying the divergence theorem yields the expansion
\begin{equation}
A(S)-A(\Phi_{\text{eff}}(S))=\int_{\Omega}\frac{1}{2}|\nabla p|^{2}\,dx-\int_{\partial\Omega}u_{\text{eff}}\cdot\hat{\nu}\,ds+\text{h.o.t.}\label{eq:Taylorexp_area}
\end{equation}
Similarly, the one-sided constraint 
\[
e(u_\text{eff}) \leq \frac{1}{2} \nabla p\otimes \nabla p \,dx 
\]
from \prettyref{eq:limitingpblm}
--- which we refer to henceforth as the statement that $u_{\text{eff}}$
is \emph{(linearly) tension-free}  ---  can be recovered by linearizing the statement that $\Phi_{\text{eff}}$ is \emph{short}, i.e.,
\[
d_{\mathbb{R}^{2}}\left(\Phi_{\text{eff}}(x),\Phi_{\text{eff}}(y)\right)\leq d_{S}\left(x,y\right)\quad\forall\,x,y \in S.
\]
In this way, the limiting problem from \prettyref{cor:limiting_pblm_effectivedisplacement} manifests as the leading order part of the geometric variational problem
\[
\min_{\substack{\Phi_{\text{eff}}:S\to\mathbb{R}^{2}\\
\text{that are short}
}
}\,A(S)-A(\Phi_{\text{eff}}(S))
\]
which asks to cover up as much area as possible with a length-shortening map of $S$ to the plane. 

We turn to discuss the key ingredients in the proof of \prettyref{thm:gamma-lim}. It requires establishing \emph{a priori} lower bounds on
$E_{b,k,\gamma}$, and verifying that they are asymptotically sharp. Behind
the $\Gamma$-liminf part is a sort of ``geometric interpolation inequality'' that quantifies the
fact that two regular enough embedded surfaces cannot be both extrinsically close and intrinsically far. Here, the
surfaces in question are those of the nearly isometrically deformed
mid-shell $\Phi(S)$, and of its projection to the plane. In terms of the displacements $u$ and $w$,
the inequality states that 
\begin{equation}
\left(\int_{\Omega}|\frac{1}{2}\Delta w|^{2}\right)^{1/2}\left(\int_{\Omega}|w|^{2}\right)^{1/2}\geq\int_{\Omega}\frac{1}{2}|\nabla p|^{2}-\int_{\partial\Omega}u\cdot\hat{\nu}+\text{h.o.t.}\label{eq:geometric_interpolation_inequality}
\end{equation}
whenever $\varepsilon\approx0$ and $w\approx0$.
On the lefthand side we see a trade-off between the linearized
mean curvature $H\approx\frac{1}{2}\Delta w$ and the out-of-plane
displacement $w$. On the righthand side we recognize from \prettyref{eq:Taylorexp_area}
the difference between the intrinsic and planar projected areas of the shell.
Taking the trace of the statement that $\varepsilon\approx0$ we see
from \prettyref{eq:strain_vK} that 
\[
\text{div}\,u+\frac{1}{2}|\nabla w|^{2}\approx\frac{1}{2}|\nabla p|^{2}.
\]
Thus, \prettyref{eq:geometric_interpolation_inequality} reminds of
the classic Gagliardo--Nirenberg interpolation inequality
\begin{equation}
C\left(\int_{\Omega}|\nabla\nabla w|^{2}\right)^{1/2}\left(\int_{\Omega}|w|^{2}\right)^{1/2}\geq\int_{\Omega}|\nabla w|^{2}+\text{h.o.t.}\label{eq:GNinequality}
\end{equation}
which holds for $w\approx 0$ and independently of the strain (see, e.g., \cite{gilbarg2001elliptic}). While \prettyref{eq:GNinequality}
implies the equi-coercivity part of \prettyref{thm:gamma-lim},
it is not strong enough to establish its $\Gamma$-liminf part.  
Thinking of replacing the full Hessian $\nabla\nabla w$ with $\Delta w$, which is justified when $\varepsilon \approx 0$, we were led
to its sharpened form \prettyref{eq:geometric_interpolation_inequality}. Though we are certainly not
the first to apply a Gagliardo--Nirenberg interpolation inequality to the study of  elastic patterns ---
such inequalities play an organizing role throughout the subject of energy-driven
pattern formation  \cite{kohn2007energy} ---
we know of only one other analysis of wrinkling in which an
optimal prefactor is known \cite{bella2015transition}. The suggestion
that the geometric interpolation inequality \prettyref{eq:geometric_interpolation_inequality}
should be used in place of \prettyref{eq:GNinequality} appears to
be new. See \prettyref{sec:Gamma_liminf} for more details.

 Much of our work is devoted to the construction of recovery sequences
verifying the optimality of our lower bounds. Given any candidate
tension-free displacement $u_{\text{eff}}$, we construct in \prettyref{sec:Gamma_limsup}
an admissible sequence $\{(u_{b,k,\gamma},w_{b,k,\gamma})\}$ converging
weakly-$*$ to $(u_{\text{eff}},0)$ and whose energy satisfies
\[
E_{b,k,\gamma}(u_{b,k,\gamma},w_{b,k,\gamma})=(2\sqrt{bk}+\gamma)\left(\int_{\Omega}\frac{1}{2}|\nabla p|^{2}-\int_{\partial\Omega}u\cdot\hat{\nu}\right)+O\left((\frac{b}{k})^{1/10}\right).
\]
The out-of-plane parts of three such constructions are depicted in
\prettyref{fig:three_constructions}. Their essential character is
given by 
\begin{equation}
w(x)=\sqrt{2\text{tr}\left\langle \varepsilon_{\text{eff}}\right\rangle }\cdot l_\text{wr}\cos\left(\frac{x\cdot\hat{\eta}_\text{herr}(x)}{l_\text{wr}}\right)\label{eq:ansatz}
\end{equation}
where we denote the \emph{effective strain} of $u_{\text{eff}}$ by 
\begin{equation}
\varepsilon_{\text{eff}}=e(u_{\text{eff}})-\frac{1}{2}\nabla p\otimes\nabla p\,dx.\label{eq:effstrain}
\end{equation}
We envision a ``piecewise
herringbone'' pattern consisting of multiple 
herringbones, one of which appears in each bold square in Panel (a) of \prettyref{fig:three_constructions}.
Herringbones are made of twinned uni-directional wrinkles superimposed on alternating bands of in-plane shear. We select them as our basic building blocks as they are highly effective at accommodating
constant bi-axial compressive strains \cite{kohn2013analysis}. Simply put, our
idea is that with enough herringbones, one should be
able accommodate any non-constant $\varepsilon_{\text{eff}}$ ---
even the measure-valued ones in \prettyref{thm:gamma-lim}. 

\begin{figure}
\centering
\subfloat[]{\includegraphics[width=0.33\paperwidth]{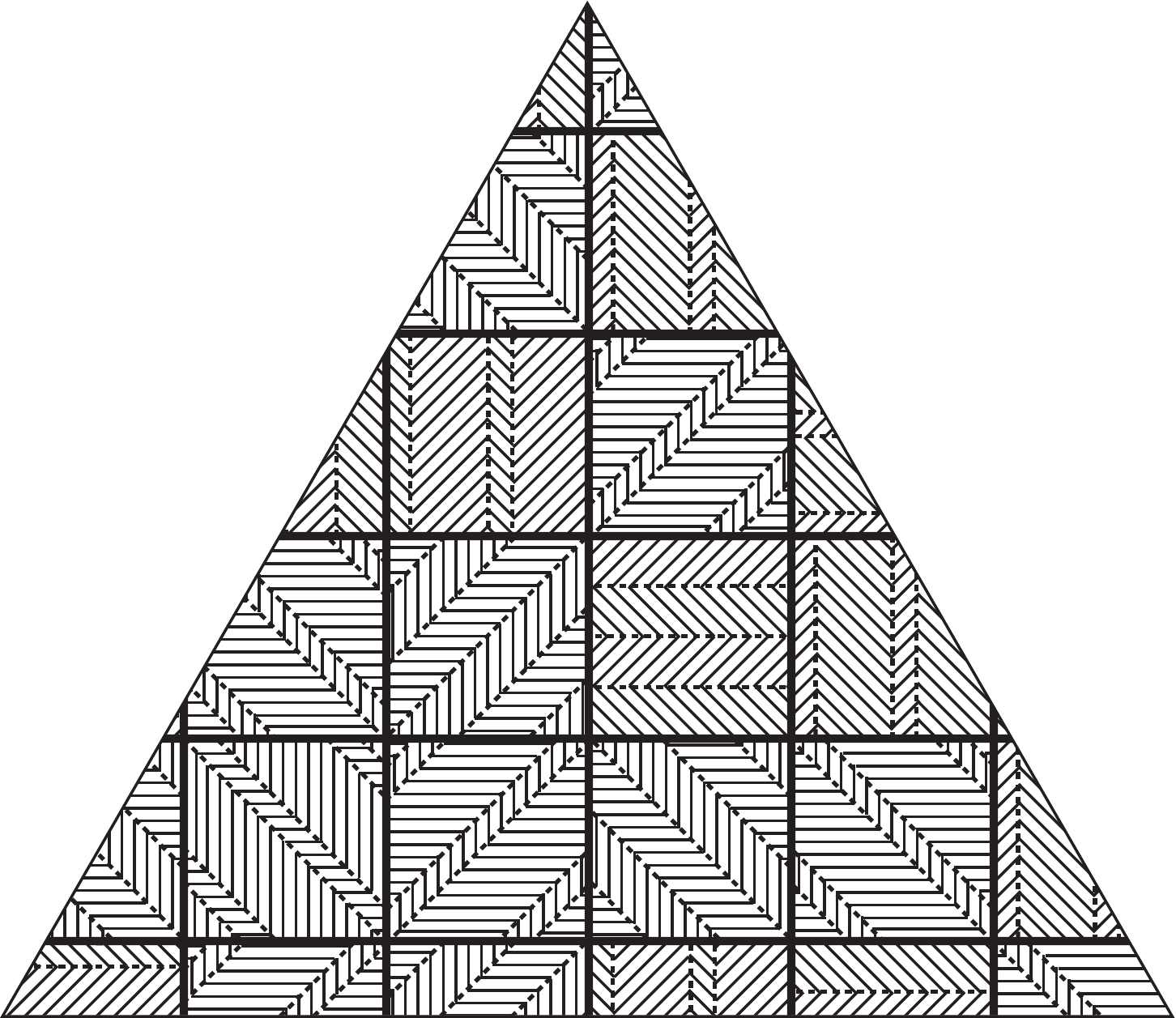}}\hspace{.10\textwidth}\subfloat[]{\includegraphics[width=0.33\paperwidth]{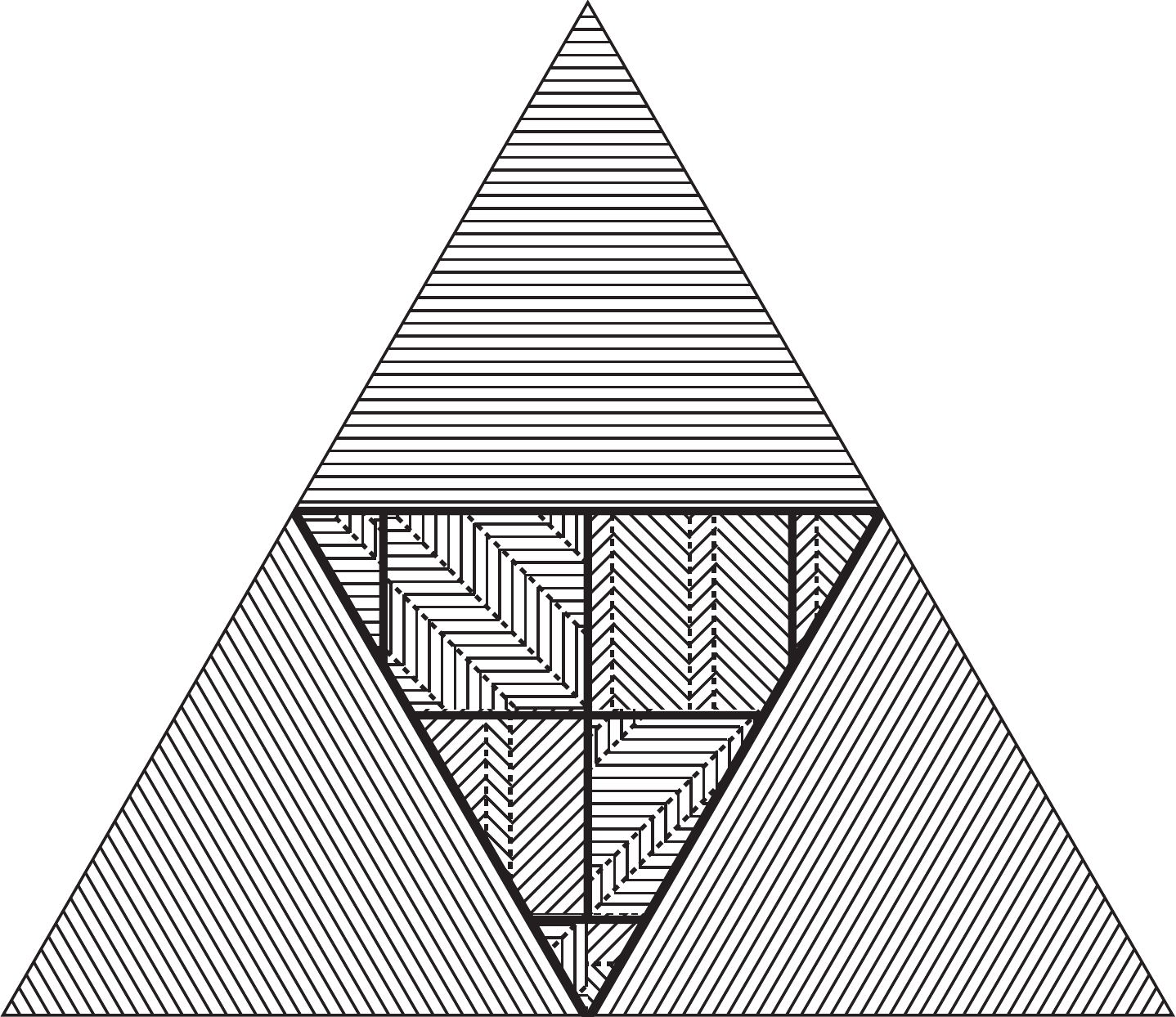}}\hspace{.10\textwidth}\subfloat[]{\includegraphics[width=0.33\paperwidth]{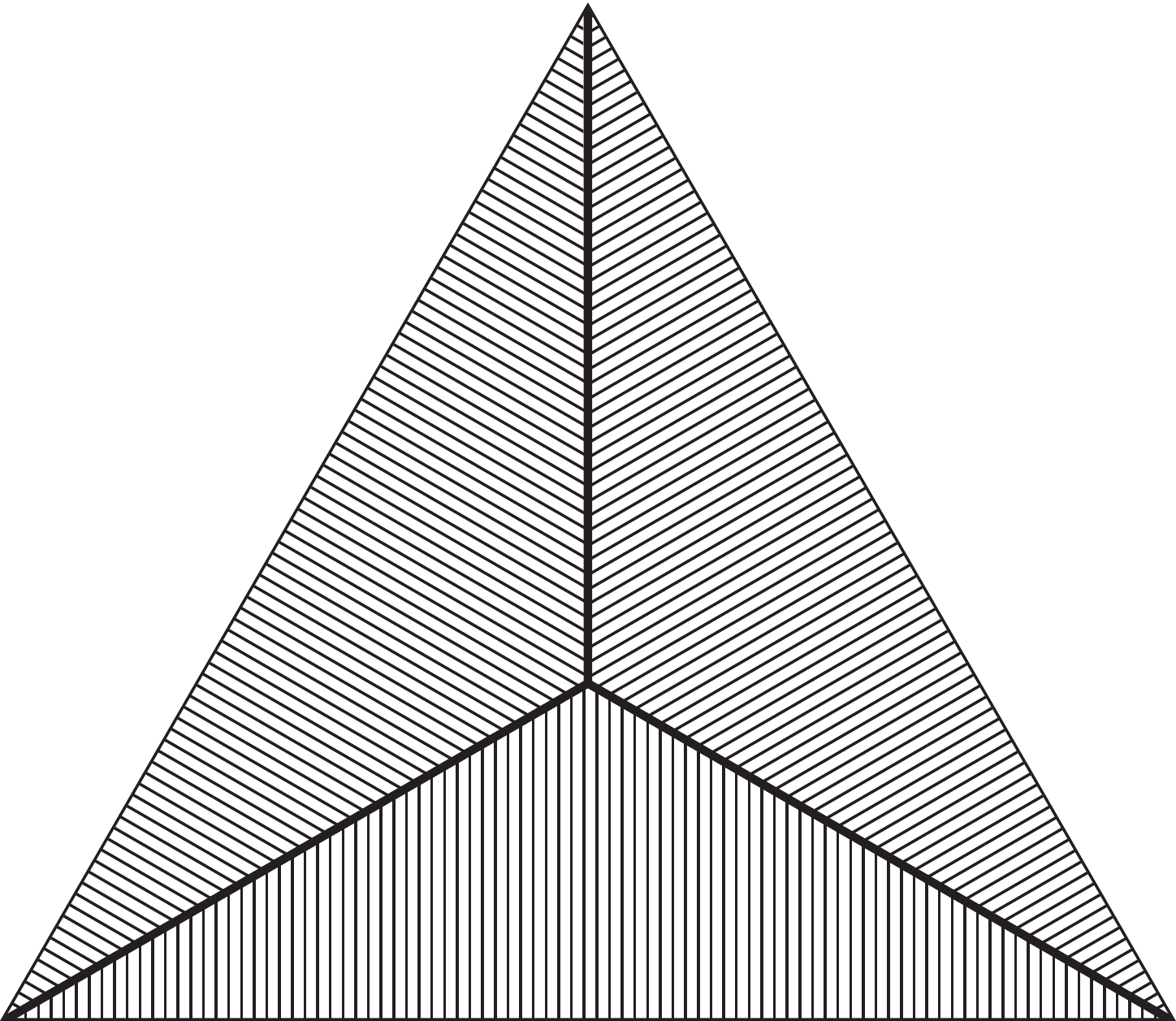}}
\caption{Three possible patterns formed by a floating triangular shell. 
Panel (a) depicts the ``piecewise herringbone'' pattern we use to
construct arbitrary recovery sequences. It consists of multiple herringbones,
one per square, each of which is made up of twinned wrinkles and alternating
in-plane shear. Panel (b) depicts an almost minimal pattern in the positively curved case, consisting of ordered, uni-directional wrinkles
and a piecewise herringbone to model its disordered part. Panel (c) depicts an almost minimal pattern in the negatively curved case. The emergence of ordered ``wrinkle domains'' such as in (b) and (c) will be shown to follow from the principle of minimum energy.\label{fig:three_constructions}}
\end{figure}

At the smallest scales, the ansatz \prettyref{eq:ansatz} features
 uni-directional wrinkles at a lengthscale $l_\text{wr}$
and in the direction of $\hat{\eta}_\text{herr}$. Other
larger lengthscales include one associated with the size of the individual
herringbones (also with the ``averaging'' operator $\left\langle \cdot\right\rangle $
through which $\varepsilon_{\text{eff}}$ will be approximated as 
piecewise constant), and one associated with the oscillations in $\hat{\eta}_\text{herr}$ representing the wrinkle twins.
Before moving on, we would like to motivate the locally sinusoidal
character of our piecewise herringbones --- which are closer to the herringbones treated in \cite{audoly2008buckling_b,kohn2013analysis} than, say, the origami-based ones in \cite{audoly2008buckling_c} --- on the grounds of energy minimization. Consider what it
takes for the two sides of the geometric interpolation inequality
\prettyref{eq:geometric_interpolation_inequality} to be nearly the
same: in its additive form, the inequality can be improved to say
that 
\[
\frac{b}{2}\int_{\Omega}|\Delta w|^{2}+\frac{k}{2}\int_{\Omega}|w|^{2}-2\sqrt{bk}\left(\int_{\Omega}\frac{1}{2}|\nabla p|^{2}-\int_{\partial\Omega}u\cdot\hat{\nu}\right)\geq\frac{1}{2}\int_{\Omega}|b^{1/2}\Delta w+k^{1/2}w|^{2}+\text{h.o.t.}
\]
so long as $\varepsilon\approx0$ and $w\approx0$. Hence, minimizers must
satisfy 
\[
-\Delta w\approx\sqrt{\frac{k}{b}}w
\]
consistent with the locally sinusoidal character of \prettyref{eq:ansatz}.
Note this also explains the choice $l_\text{wr}=(b/k)^{1/4}$ we will eventually make. 
It is well-appreciated in the literature on elastic pattern formation that such a lengthscale should  
emerge from a competition between bending and substrate effects (see, e.g.,
\cite{cerda2003geometry}). We refer to \prettyref{sec:Gamma_liminf} for more
on the geometric interpolation inequality and \prettyref{sec:Gamma_limsup}
for the details of our piecewise herringbones.

\subsubsection{Defect measures\label{subsec:Defect-measures}}

Thus far, our results have focused on the effective displacements that arise as limits of almost minimizers of $E_{b,k,\gamma}$. 
As explained in \prettyref{cor:limiting_pblm_effectivedisplacement}, these can be found by solving the  (linearized) area problem 
\begin{equation}
\min_{\substack{u_{\text{eff}}\in BD(\Omega)\\
e(u_{\text{eff}})\leq\frac{1}{2}\nabla p\otimes\nabla p\,dx
}
}\,\int_{\Omega}\frac{1}{2}|\nabla p|^{2}\,dx-\int_{\partial\Omega}u_{\text{eff}}\cdot\hat{\nu}\,ds\label{eq:linearized_area_problem}
\end{equation}
for the effective in-plane displacement $u_\text{eff}$, and recalling that $w_\text{eff}= 0$. 
In light of our previous discussion of the experiments on floating shells from \cite{aharoni2017smectic,albarran2018curvature,tobasco2020principles}, the reader may wonder whether solving \prettyref{eq:linearized_area_problem} actually recovers the observed wrinkle domains and possibly disordered parts.  
Indeed, deducing these is the goal of the rest of our results, which culminate in \prettyref{subsec:method-of-stable-lines} with our method of stable lines.
We start by rewriting \prettyref{eq:linearized_area_problem} as a minimization over the effective strain $\varepsilon_{\text{eff}}$ from \prettyref{eq:effstrain} or, as we prefer to think of it, over a quantity called the ``defect measure''. 

Defect measures are a basic tool for encoding the properties of high-frequency oscillations (and concentrations) governed by PDEs  \cite{lions1985concetration-I,lions1985concetration-II}. We define them in the present context as follows. 
Whenever a sequence $\{(u_n,w_n)\}_{n\in\mathbb{N}}$ converges
weakly-$*$ in $BD(\Omega)/\mathcal{R}\times W^{1,2}(\Omega)$ to $(u_{\text{eff}},0)$
and satisfies 
\begin{equation}\label{eq:asymptotically-strain-free}
e(u_n)+\frac{1}{2}\nabla w_n\otimes\nabla w_n\to\frac{1}{2}\nabla p\otimes\nabla p\quad\text{strongly in }L^{2}(\Omega)
\end{equation}
so that it is \emph{asymptotically strain-free}, we may associate
to it a non-negative $\text{Sym}_{2}$-valued \emph{defect
measure} 
\begin{equation}
\mu=\text{weak-\ensuremath{*}}\lim\,\nabla w_n\otimes\nabla w_n\,dx\quad\text{in }\mathcal{M}(\Omega;\text{Sym}_{2}).\label{eq:defect_measure_firstdefn}
\end{equation}
Taking limits, we deduce the important identity
\begin{equation}
e(u_{\text{eff}})+\frac{1}{2}\mu=\frac{1}{2}\nabla p\otimes\nabla p\,dx\label{eq:limiting_strain_free}
\end{equation}
which couples $\mu$ back to $u_{\text{eff}}$ thus guaranteeing it is well-defined. In particular, the limit in \prettyref{eq:defect_measure_firstdefn} holds \emph{a posteriori} since any converging sub-sequence must yield the same result (for a complete proof, see \prettyref{lem:defectmeasure}). 
Combining this with \prettyref{eq:effstrain}, we see that
\[
\mu=-2\varepsilon_{\text{eff}}\quad \text{where}\quad \varepsilon_\text{eff} = e(u_\text{eff})-\frac{1}{2}\nabla p\otimes \nabla p\,dx.
\]
Evidently, solving for the defect measure of a given sequence is tantamount to finding its effective strain.

Some examples are in order. Consider a uni-directional wrinkling pattern with lengthscale $l_\text{wr} \ll 1$ and constant direction $\hat{\eta}$. Thinking that the out-of-plane part should satisfy 
\[
w(x) = \sqrt{2} l_\text{wr}\cos\left(\frac{x\cdot\hat{\eta}}{l_{\text{wr}}}\right) \quad\text{yields the defect measure}\quad \mu = \hat{\eta}\otimes\hat{\eta}\,dx
\]
as $l_{\text{wr}} \to 0$. The same measure results for non-constant $\hat{\eta}$, so long as its variations are sufficiently mild. If $\hat{\eta}$  varies rapidly, as it does for the piecewise herringbone patterns in \prettyref{eq:ansatz},  $\mu$ can end up being rank two. Folds with various profiles can also be handled. Let $l_\text{f} \ll 1$ and fix $\hat{\eta}$. Taking 
\[
w(x)=\frac{l^{\frac{1}{2}}_{\text{f}}}{(2\pi)^{\frac{1}{4}}}e^{-\frac{1}{2}\left(\frac{x\cdot\hat{\eta}}{l_{\text{f}}}\right)^{2}} \quad\text{yields the defect measure}\quad \mu = \hat{\eta}\otimes\hat{\eta}\,\mathcal{H}^1\lfloor\{x:x\cdot \hat{\eta} = 0\}
\]
as $l_\text{f} \to 0$. The notation on the right indicates the restriction of the one-dimensional Hausdorff measure $\mathcal{H}^1$ to the given line.
The motifs of wrinkles and folds are ubiquitous in thin elastic sheets \cite{brau2013wrinkle,pocivavsek2008stress}. We propose to model them using defect measures in the vanishing thickness limit. 

Returning to the context of weakly curved, floating shells, we now change variables from $u_\text{eff}$ to $\mu$. We do so by identifying the set of defect measures associated to the recovery sequences from \prettyref{thm:gamma-lim}. 
In fact, all recovery sequences are asymptotically strain-free (see \prettyref{sec:Gamma_liminf}). Hence, $\mu\in \mathcal{M}(\Omega;\text{Sym}_{2})$ arises as the defect measure of a recovery sequence if and only if  \prettyref{eq:limiting_strain_free} holds for some tension-free $u_\text{eff}\in BD(\Omega)$. That $u_\text{eff}$ is tension-free is equivalent to the statement that $\mu \geq 0$.
Recall the Saint-Venant compatibility conditions
which state, for simply connected domains, that a $\text{Sym}_{2}$-valued
matrix field $m$ is a linear strain, i.e., $m=e(u)$ for some $u$ if and only if
\begin{equation}
\partial_{11}m_{22}+\partial_{22}m_{11}-2\partial_{12}m_{12}=0. \label{eq:SVcompat}
\end{equation}
That this holds in the smooth setting appears in standard references on elasticity  (see, e.g., \cite{love1944treatise}). By a straightforward
approximation argument, it also holds when $m\in\mathcal{M}$
and $u\in BD$. Denote 
\begin{equation}
\text{curl}\text{curl}\,m=\partial_{11}m_{22}+\partial_{22}m_{11}-2\partial_{12}m_{12} \label{eq:curlcurl}
\end{equation}
and observe the ``very weak Hessian'' identity 
\begin{equation}
-\frac{1}{2}\,\text{curl}\text{curl}\,\nabla w\otimes\nabla w=\det\nabla\nabla w,\label{eq:veryweak_hessian}
\end{equation}
so named as it defines $\det\nabla\nabla w$
even for $w\in W^{1,2}$ \cite{iwaniec2001concept,lewicka2017convex}. 
Combining \prettyref{eq:SVcompat} and \prettyref{eq:veryweak_hessian} yields the following fact: 
provided $\Omega$ is simply connected, there exists $u\in BD(\Omega)$
satisfying \prettyref{eq:limiting_strain_free} if and only if $\mu\in\mathcal{M}(\Omega;\text{Sym}_{2})$
satisfies
\begin{equation}
-\frac{1}{2}\text{curl}\text{curl}\,\mu=\det\nabla\nabla p\label{eq:PDEmu}
\end{equation}
in the sense of distributions. Therefore, we may exchange the set of admissible
$u_{\text{eff}}$ from \prettyref{eq:linearized_area_problem}
with the new set of admissible defect measures $\mu$ characterized
by their non-negativity and the PDE \prettyref{eq:PDEmu}. To finish the exchange, note the identity 

\[
\frac{1}{2}\int_{\Omega}|\mu|_{1}=\int_{\Omega}\frac{1}{2}|\nabla p|^{2}\,dx-\int_{\partial\Omega}u_{\text{eff}}\cdot\hat{\nu}\,ds
\]
which follows from \prettyref{eq:limiting_strain_free} upon
integrating its trace. The following result is proved:
\begin{corollary}
\label{cor:limiting_pblm_defect} Given the assumptions  \prettyref{eq:A-basic} and \prettyref{eq:A-additional}, the rescaled minimum energies satisfy
\begin{equation}
\lim\,\frac{\min\,E_{b,k,\gamma}}{2\sqrt{bk}+\gamma}=\min_{\substack{\mu\in\mathcal{M}_{+}(\Omega;\emph{Sym}_{2})\\
-\frac{1}{2}\emph{curl}\emph{curl}\,\mu=\det\nabla\nabla p
}
}\,\frac{1}{2}\int_{\Omega}|\mu|_{1}.\label{eq:limitingpblm-defect}
\end{equation}
Furthermore, $\mu$ arises as the defect measure of almost minimizers of $E_{b,k,\gamma}$ 
if and only if it solves the limiting problem on the righthand side.
\end{corollary}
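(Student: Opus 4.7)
The plan is to reduce \prettyref{cor:limiting_pblm_defect} to \prettyref{cor:limiting_pblm_effectivedisplacement} via the change of variables $\mu = \nabla p \otimes \nabla p\,dx - 2e(u_{\text{eff}})$, or equivalently the identity $e(u_\text{eff}) + \frac{1}{2}\mu = \frac{1}{2}\nabla p\otimes\nabla p\,dx$ from \prettyref{eq:limiting_strain_free}. I need to check that this defines a bijection between tension-free displacements $u_\text{eff}\in BD(\Omega)$ and admissible defect measures $\mu\in\mathcal{M}_+(\Omega;\text{Sym}_2)$ satisfying the PDE $-\frac{1}{2}\text{curl}\text{curl}\,\mu = \det\nabla\nabla p$, and that the two cost functionals match.

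First, I would verify the bijection. Going from $u_\text{eff}$ to $\mu$, the tension-freeness constraint $e(u_\text{eff})\leq \frac{1}{2}\nabla p\otimes\nabla p\,dx$ translates exactly to $\mu\geq 0$, and the PDE follows by applying $\text{curl}\text{curl}$ to both sides, using that $\text{curl}\text{curl}\,e(u_\text{eff}) = 0$ by Saint-Venant compatibility \prettyref{eq:SVcompat} together with the very weak Hessian identity \prettyref{eq:veryweak_hessian} applied to the fixed profile $p\in W^{2,2}(\Omega)$. Going the other way, given an admissible $\mu$, the measure $m := \frac{1}{2}\nabla p\otimes\nabla p\,dx - \frac{1}{2}\mu$ satisfies $\text{curl}\text{curl}\,m = 0$ distributionally, and a Poincar\'e-type lemma for symmetric-matrix-valued Radon measures on simply connected Lipschitz domains (which applies since strictly star-shaped domains are simply connected) produces a $u_\text{eff}\in BD(\Omega)$, unique modulo $\mathcal{R}$, with $e(u_\text{eff}) = m$; tension-freeness is then automatic from $\mu\geq 0$. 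The cost match comes from taking the trace of $e(u_\text{eff}) + \frac{1}{2}\mu = \frac{1}{2}\nabla p\otimes\nabla p\,dx$ and integrating against $1$: the $BD$-trace theory recalled in \prettyref{subsec:Bounded-deformation-maps} gives
\[
\int_\Omega \frac{1}{2}|\nabla p|^2\,dx - \int_{\partial\Omega} u_\text{eff}\cdot\hat{\nu}\,ds = \frac{1}{2}\int_\Omega \text{tr}\,\mu = \frac{1}{2}\int_\Omega |\mu|_1,
\]
the second equality holding since $\mu\geq 0$. Combined with \prettyref{cor:limiting_pblm_effectivedisplacement} this yields the first claim.

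For the second claim about defect measures, I would first argue that any almost minimizing sequence is asymptotically strain-free in the sense of \prettyref{eq:asymptotically-strain-free}: the $\Gamma$-liminf argument behind \prettyref{thm:gamma-lim} gives separate control on the stretching term $\int_\Omega|\varepsilon|^2$, which must be $o(2\sqrt{bk}+\gamma)$ for almost minimizers since otherwise the energy would exceed the minimum by a non-negligible amount. Granted this, \prettyref{eq:defect_measure_firstdefn} and \prettyref{eq:limiting_strain_free} ensure that the defect measure $\mu$ of any almost minimizing subsequence corresponds via the bijection above to the weak-$*$ limit $u_\text{eff}$, which is optimal by \prettyref{cor:limiting_pblm_effectivedisplacement}; hence $\mu$ minimizes the righthand side of \prettyref{eq:limitingpblm-defect}. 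Conversely, given any admissible $\mu$, the recovery sequences from \prettyref{thm:gamma-lim} applied to the associated $u_\text{eff}$ are asymptotically strain-free by construction and almost minimizing, with defect measure $\mu$.

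The main technical obstacle I anticipate is the Poincar\'e-type lemma for $\text{Sym}_2$-valued Radon measures: solving $e(u) = m$ with $u\in BD(\Omega)$ given the distributional condition $\text{curl}\text{curl}\,m = 0$ on a simply connected bounded Lipschitz domain. The natural approach is to mollify $m$, apply classical smooth Saint-Venant to obtain approximating potentials $u_\varepsilon \in W^{1,2}(\Omega_\varepsilon;\mathbb{R}^2)$, and use the Korn--Poincar\'e inequality on $BD$ recalled in \prettyref{subsec:Bounded-deformation-maps} to extract a weak-$*$ limit modulo $\mathcal{R}$. Everything else is either routine integration-by-parts or a direct appeal to the results already in hand.
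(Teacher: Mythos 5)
Your proposal is correct and follows essentially the same route as the paper: the paper derives \prettyref{cor:limiting_pblm_defect} from \prettyref{cor:limiting_pblm_effectivedisplacement} via exactly this change of variables $e(u_{\text{eff}})+\frac{1}{2}\mu=\frac{1}{2}\nabla p\otimes\nabla p\,dx$, using Saint-Venant compatibility (extended to $\mathcal{M}$/$BD$ by the approximation argument you sketch) and the very weak Hessian identity for the equivalence of the admissible sets, the trace identity for the equivalence of the costs, and asymptotic strain-freeness of almost minimizers (from the first inequality in \prettyref{lem:aprioribds}) plus \prettyref{lem:defectmeasure} for the statement about defect measures. One word to fix: in your converse direction, ``given any admissible $\mu$'' should read ``given any \emph{optimal} $\mu$'' --- recovery sequences associated to a non-optimal admissible $\mu$ are not almost minimizing.
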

The limiting problems in \prettyref{cor:limiting_pblm_effectivedisplacement}
and \prettyref{cor:limiting_pblm_defect} are two sides of the same
coin: whereas \prettyref{cor:limiting_pblm_effectivedisplacement}
determines the limiting displacement of the shell via optimal $u_{\text{eff}}$, \prettyref{cor:limiting_pblm_defect}
determines the limiting features of its patterns via optimal $\mu$. 
It should be noted that defect measures play a similar role for curvature-driven wrinkling to that of the ``wrinkling strain'' identified in \cite{pipkin1994relaxed} for tension-driven wrinkling. Both quantities specify how much material must be ``gotten rid of'' in an appropriate limit. However, $\mu$ does not derive from finding the relaxation of a fixed energy density, but rather from identifying the $\Gamma$-limit of a sequence of suitably rescaled energies. 

Before proceeding to discuss the optimizers of the limiting problems in detail, we pause to answer the question of whether the minimum energy actually scales $\sim 2\sqrt{bk} + \gamma$, under the assumptions \vpageref{par:Assumptions}. Rearranging \prettyref{eq:limitingpblm}
or \prettyref{eq:limitingpblm-defect} yields the expansion
\[
\min\,E_{b,k,\gamma}=C_{1}\cdot (2\sqrt{bk}+\gamma)+o(2\sqrt{bk}+\gamma)
\]
where $C_{1}$ is the minimum value of the limiting problems.
As \prettyref{eq:limitingpblm-defect} makes clear,
\[
C_{1}=0\quad\iff\quad\det\nabla\nabla p=0\quad\text{a.e. on }\Omega.
\]
Thus, the minimum energy scales $\sim2\sqrt{bk}+\gamma$ if and only
if the initial Gaussian curvature of the shell, which is proportional to $\det \nabla \nabla p$, is other than zero. 

\subsubsection{Convex analysis of the limiting problems\label{subsec:The-geometry-of}}

The previous results, in particular \prettyref{cor:limiting_pblm_effectivedisplacement}
and \prettyref{cor:limiting_pblm_defect}, established the role of the
limiting problems
\begin{equation}
\min_{\substack{u_{\text{eff}}\in BD(\Omega)\\
e(u_{\text{eff}})\leq\frac{1}{2}\nabla p\otimes\nabla p\,dx
}
}\,\int_{\Omega}\frac{1}{2}|\nabla p|^{2}\,dx-\int_{\partial\Omega}u_{\text{eff}}\cdot\hat{\nu}\,ds\quad\text{and}\quad\min_{\substack{\mu\in\mathcal{M}_{+}(\Omega;\text{Sym}_{2})\\
-\frac{1}{2}\text{curl}\text{curl}\,\mu=\det\nabla\nabla p
}
}\,\frac{1}{2}\int_{\Omega}|\mu|_{1}\label{eq:primal}
\end{equation}
for the leading order response of weakly curved, floating elastic shells. We turn to study their
minimizers. Each of the problems in
\prettyref{eq:primal} is convex. On general grounds,
such ``primal'' minimization problems should admit a ``dual'' maximization problem, the solutions of which are paired via ``complementary slackness'' conditions.  
What distinguishes the present discussion of convex duality from the typical example
(as in, e.g., \cite{ekeland1999convex}) is that, here, the natural pairing
will turn out to require an ``inner product'' between matrix-valued measures. The situation reminds of the duality between stress
and strain in Hencky plasticity, where similar issues arise \cite{kohn1983dual}
(see also \cite{arroyo-rabasa2017relaxation}).

Let $\rho\in C_{c}^{\infty}(B_{1})$ be non-negative and supported
on the open unit disc $B_{1}=B(0,1)$, and let $\int_{B_1} \rho\,dx = 1$. Given any $\mu\in\mathcal{M}(\Omega;\text{Sym}_{2})$, we
define its \emph{mollifications} $\{\mu_{\delta}\}_{\delta>0}\subset C^{\infty}(\overline{\Omega};\text{Sym}_{2})$
by
\begin{equation}
\mu_{\delta}(x)=\int_{\Omega}\frac{1}{\delta^{2}}\rho\left(\frac{x-y}{\delta}\right)\,d\mu(y),\quad x\in\overline{\Omega}.\label{eq:mollified_measure_defn-intro}
\end{equation}
Denote 
\begin{equation}
\nabla^{\perp}\nabla^{\perp}=\text{cof}\,\nabla\nabla=\left(\begin{array}{cc}
\partial_{22} & -\partial_{12}\\
-\partial_{12} & \partial_{11}
\end{array}\right).\label{eq:Airyoperator}
\end{equation}
\begin{theorem}
\label{thm:duality} Let $\Omega$ and $p$ satisfy the assumption \prettyref{eq:A1a} and suppose  that $\Omega$ is simply connected. 
The minimization problems in \prettyref{eq:primal}
are dual to the maximization problem
\begin{equation}
\max_{\substack{\varphi:\mathbb{R}^{2}\to\mathbb{R}\\
\varphi\text{ is convex}\\
\varphi=\frac{1}{2}|x|^{2}\text{ on }\mathbb{R}^{2}\backslash\Omega
}
}\,\int_{\Omega}(\varphi-\frac{1}{2}|x|^{2})\det\nabla\nabla p\,dx.\label{eq:dual}
\end{equation}
In particular, the optimal values in \prettyref{eq:primal} and \prettyref{eq:dual}
are the same, and admissible $\mu$ and $\varphi$ are optimal if
and only if the complementary slackness conditions
\begin{equation}
0=\lim_{\delta\to0}\,\int_{\Omega}|\left\langle \mu_{\delta},\nabla^{\perp}\nabla^{\perp}\varphi\right\rangle |\quad\text{and}\quad0=\lim_{\delta\to0}\,\int_{\partial\Omega}|\hat{\nu}\cdot [\nabla\varphi] \left\langle \hat{\tau}\otimes\hat{\tau},\mu_{\delta}\right\rangle |\,ds\label{eq:compl_slackness}
\end{equation}
hold; the same is true for admissible $u_{\emph{eff}}$ with $\nabla p\otimes\nabla p\,dx-2e(u_{\emph{eff}})$
 in place of $\mu$. 
Here, $[\nabla \varphi]$ denotes the jump in $\nabla \varphi$ across $\partial\Omega$ in the direction of $\hat{\nu}$. It equals to  $x - \nabla \varphi |_{\partial\Omega}$ where the trace is taken from $\Omega$. 
\end{theorem}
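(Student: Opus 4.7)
My plan is to prove the duality via a clean integration-by-parts identity combined with Fenchel--Rockafellar. The key identity, for smooth admissible $\mu$ and $\varphi$, reads
\begin{equation*}
\tfrac{1}{2}\int_\Omega |\mu|_1 - \int_\Omega (\varphi - \tfrac{1}{2}|x|^2) \det\nabla\nabla p\,dx = \tfrac{1}{2}\int_\Omega \langle \nabla^\perp\nabla^\perp\varphi, \mu\rangle + \tfrac{1}{2}\int_{\partial\Omega} (\hat\nu\cdot [\nabla\varphi])\,\langle \hat\tau\otimes\hat\tau, \mu\rangle\,ds,
\end{equation*}
and has manifestly non-negative right-hand side. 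This encodes both weak duality (drop the right side) and the conditions in \prettyref{eq:compl_slackness} (equality forces each right-hand term to vanish).

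To derive the identity, write $\text{curl}\text{curl}\,\mu = \text{div}\,\text{div}(\text{cof}\,\mu)$ and integrate by parts twice in $\int_\Omega \varphi\,\text{curl}\text{curl}\,\mu$, then subtract the same computation with $\varphi_0 = \tfrac{1}{2}|x|^2$ in place of $\varphi$; the cancellation exploits $\nabla^\perp\nabla^\perp \varphi_0 = I$ and the matching $\varphi|_{\partial\Omega} = \varphi_0|_{\partial\Omega}$. The boundary contribution $[\nabla\varphi]\cdot(\text{cof}\,\mu)\hat\nu$ further reduces to $(\hat\nu\cdot[\nabla\varphi])\langle \hat\tau\otimes\hat\tau,\mu\rangle$ because the tangential jump of $\nabla\varphi$ vanishes. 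The three positivity claims are then straightforward: $\mu\geq 0$ by hypothesis; $\nabla^\perp\nabla^\perp\varphi = \text{cof}\,\nabla\nabla\varphi \geq 0$ because convexity gives $\nabla\nabla\varphi \geq 0$ and the $2\times 2$ cofactor permutes eigenvalues; and $\hat\nu\cdot[\nabla\varphi]\geq 0$ because the normal derivative of a convex function cannot decrease across a level set of $\varphi - \tfrac{1}{2}|x|^2$. To extend to general $\mu\in\mathcal{M}_+(\Omega;\text{Sym}_2)$ and convex $\varphi$, I would run the identity on the mollification $\mu_\delta$ from \prettyref{eq:mollified_measure_defn-intro}, pass to the $\delta\to 0$ limit on the left (using weak-$*$ convergence and continuity of $\varphi - \tfrac{1}{2}|x|^2$), and drop the non-negative right side to obtain weak duality. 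The two complementary slackness conditions then emerge as the statement that each non-negative term on the right vanishes in the limit whenever equality holds.

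The main obstacle is closing the loop via strong duality. I would apply Fenchel--Rockafellar to the primal, viewed as the minimization of the weak-$*$ lower semicontinuous convex functional $\tfrac{1}{2}\int|\mu|_1$ over the closed cone $\mathcal{M}_+(\Omega;\text{Sym}_2)$ subject to the linear constraint $-\tfrac{1}{2}\text{curl}\text{curl}\,\mu = \det\nabla\nabla p$. A constraint qualification is met by exhibiting an admissible $\mu$ of finite total mass---for instance, the defect measure of any piecewise herringbone recovery sequence from \prettyref{sec:Gamma_limsup}. The subtle point is identifying the abstract dual variable produced by Fenchel--Rockafellar with a convex scalar $\varphi$ satisfying $\varphi = \tfrac{1}{2}|x|^2$ on $\R^2\setminus\Omega$: this uses the two-dimensional characterization of symmetric divergence-free tensor fields as the Airy-type rotations $\nabla^\perp\nabla^\perp\varphi$, while the extension condition and convexity come from the boundary data and the positivity constraint encoded in the identity above. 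Once an optimizer $\varphi^\star$ is produced, equality of extrema closes the loop and the complementary slackness relations drop out.
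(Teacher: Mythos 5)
Your first half---the integration-by-parts identity with manifestly non-negative right-hand side, the three positivity facts ($\mu\geq 0$, $\nabla^{\perp}\nabla^{\perp}\varphi=\text{cof}\,\nabla\nabla\varphi\geq 0$ from convexity, $\hat{\nu}\cdot[\nabla\varphi]\geq 0$), the regularization via $\mu_{\delta}$, and the reading-off of \prettyref{eq:compl_slackness} from the vanishing of each term---is essentially the paper's own route to weak duality and complementary slackness (its Lemmas on the IBP identity and on mollification of measures do exactly this, integrating by parts against the smooth approximants rather than against $\mu$ itself).

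The gap is in your strong duality step. You propose to apply Fenchel--Rockafellar with the \emph{measure} formulation as the primal, i.e.\ minimizing $\frac{1}{2}\int_{\Omega}|\mu|_{1}$ over the cone $\mathcal{M}_{+}(\Omega;\text{Sym}_{2})$ subject to $-\frac{1}{2}\text{curl}\text{curl}\,\mu=\det\nabla\nabla p$, and you assert that exhibiting one admissible $\mu$ of finite mass is a sufficient constraint qualification. It is not: in infinite dimensions mere feasibility does not close the duality gap; one needs a continuity/interior-point condition for one of the two convex functionals at a point where the other is finite, \emph{in the topology of the space over which the infimum is taken}. On the measure side this fails---$\mathcal{M}_{+}(\Omega;\text{Sym}_{2})$ has empty interior in the total variation norm---and moreover Fenchel--Rockafellar would then guarantee attainment only in $\mathcal{M}(\Omega;\text{Sym}_{2})^{*}$, so the "subtle point" you defer (identifying the abstract dual optimizer with a convex extension of $\frac{1}{2}|x|^{2}$) is precisely where the argument breaks, not a technicality. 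The paper sidesteps both problems by running the minimax the other way: the infimum is taken over \emph{continuous} stresses $\zeta\in C_{c}(\Omega;\text{Sym}_{2})$ with $\zeta\leq Id$, where the qualification is trivially checkable at $\zeta=0$ (which lies in the interior of the constraint set for the uniform norm), so that the measure-valued problem appears as the Fenchel \emph{dual} and attains automatically; only afterwards does it change variables $\sigma=\nabla^{\perp}\nabla^{\perp}\varphi$ and prove existence of a maximizing $\varphi$ by a separate, direct compactness argument (uniform Lipschitz bounds on convex extensions of $\frac{1}{2}|x|^{2}$ plus Arzel\`a--Ascoli), since the stress side of the Fenchel pair is the one whose attainment is \emph{not} automatic. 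A second, smaller confusion: in the $\mu$-formulation the Lagrange multiplier for the scalar constraint $-\frac{1}{2}\text{curl}\text{curl}\,\mu=\det\nabla\nabla p$ is already a scalar test function, so the "characterization of symmetric divergence-free tensor fields as Airy rotations" you invoke is not what is needed there; that characterization is what converts the \emph{tensor} multiplier $\sigma$ of the displacement formulation into $\varphi$, which is the route the paper actually takes.
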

\begin{remark}\label{rem:bcs}
The admissible set in \prettyref{eq:dual} consists of all convex
extensions of $\frac{1}{2}|x|^{2}$ from $\mathbb{R}^{2}\backslash\Omega$
into $\Omega$. The use of $\mathbb{R}^{2}$ is immaterial,
as it can be replaced by any convex neighborhood of $\Omega$. In fact, $\varphi$ can be shown to be admissible if and only if it restricts to $\Omega$ as an element of $HB$ with $\nabla\nabla\varphi\geq 0$ and such that
the boundary conditions
\begin{equation}
\varphi=\frac{1}{2}|x|^{2}\quad\text{and}\quad\hat{\nu}\cdot\nabla\varphi\leq\hat{\nu}\cdot x\quad\text{at }\partial\Omega \label{eq:bdry_conditions}
\end{equation}
hold in the sense of trace. See \prettyref{lem:restriction}. Regarding traces at $\partial\Omega$, our convention will be that whenever we refer to the values of a quantity there, we mean those of its trace from $\Omega$ unless otherwise explicitly specified.
\end{remark}
\begin{remark}
\label{rem:complementary-slackness-freedom}The complementary slackness
conditions \prettyref{eq:compl_slackness} hold regardless of the choice of the kernel $\rho$ in  \prettyref{eq:mollified_measure_defn-intro}, so long as it belongs to $C_c^\infty(B_1)$, is
non-negative, and integrates to one. Other equivalent statements hold using the approximating sequences in \prettyref{prop:duality}.
The freedom to approximate $\mu$ as needed will come in handy later
on in \prettyref{sec:method_of_characteristics} when it comes time
to justify the upcoming assertions of our method of stable lines. 
Even more equivalent statements of complementary slackness can be obtained by approximating $\varphi$. We leave this to the reader.  
\end{remark}
\begin{remark} Other, more general versions of the dual problem appear in \prettyref{sec:dualitytheory}, including ones that apply when $\Omega$ is multiply connected. See \prettyref{prop:duality} and the discussion immediately thereafter.
\end{remark}
While we explain in \prettyref{sec:dualitytheory} how one
can anticipate the form of the dual problem \prettyref{eq:dual} on
general grounds --- it follows from a minimax procedure using
the divergence-free ``stress''  $\sigma=\nabla^{\perp}\nabla^{\perp}\varphi$
as a Lagrange multiplier for the tension-free constraint ---
here we demonstrate how the complementary slackness conditions \prettyref{eq:compl_slackness}
arise. As the primal problems \prettyref{eq:primal} are convex, their
solutions are completely characterized by first order optimality.
The key is an integration by parts identity
that says, roughly speaking, that
\begin{equation}
\frac{1}{2}\int_{\Omega}|\mu|_{1}-\int_{\Omega}(\varphi-\frac{1}{2}|x|^{2})\det\nabla\nabla p=\frac{1}{2}\int_{\Omega}\left\langle \nabla^{\perp}\nabla^{\perp}\varphi,\mu\right\rangle +\frac{1}{2}\int_{\partial\Omega}\hat{\nu}\cdot[\nabla\varphi]\left\langle \hat{\tau}\otimes\hat{\tau},\mu\right\rangle \label{eq:ibp_identity}
\end{equation}
whenever $\mu$ and $\varphi$ are admissible for \prettyref{eq:primal}
and \prettyref{eq:dual}. The integrands on the righthand
side are non-negative, and the lefthand side vanishes if and only if
$\mu$ and $\varphi$ are optimal. Hence, optimality should be equivalent to the complementary slackness conditions
\begin{equation}
\left\langle \nabla^{\perp}\nabla^{\perp}\varphi,\mu\right\rangle =0\quad\text{on }\Omega\quad\text{and}\quad\hat{\nu}\cdot[\nabla\varphi]\left\langle \hat{\tau}\otimes\hat{\tau},\mu\right\rangle =0\quad\text{at }\partial\Omega.\label{eq:complslackness_illposed}
\end{equation}
The only problem with this is that the terms appearing in \prettyref{eq:complslackness_illposed}
are not obviously well-defined: evaluating the first one requires making sense of an ``inner product'' between the matrix-valued measures $\mu$ and $\nabla^{\perp}\nabla^{\perp}\varphi$; evaluating the second one requires assigning boundary values to the $\hat{\tau}\hat{\tau}$-component of $\mu$. While it may be possible
to take advantage of the relationship between the formally adjoint
operators $\nabla^{\perp}\nabla^{\perp}$ and $\text{curl}\text{curl}$
to treat \prettyref{eq:complslackness_illposed} in some more intrinsic way,
 we choose to regularize instead. Integrating by parts with the mollifications $\{\mu_{\delta}\}$ from \prettyref{eq:mollified_measure_defn-intro}, we obtain \prettyref{eq:ibp_identity} upon sending $\delta\to0$. The asserted complementary slackness conditions
 follow. See \prettyref{sec:dualitytheory} for the complete proof of \prettyref{thm:duality}, as well as for a discussion of duality for general domains. 

\prettyref{thm:duality} separates the problem of determining the
overall layout of the patterns encoded by optimal $\mu$ 
from that of determining their amplitude. 
We envision a two-step procedure, where in the
first step an optimal \emph{Airy potential} $\varphi$ is found by solving the dual problem \prettyref{eq:dual}, and in the second step
the complementary slackness conditions are systematically applied.
To lighten the notation, we use \prettyref{eq:complslackness_illposed}
from now on to refer to the complementary slackness conditions \prettyref{eq:compl_slackness}
with a remark that they hold \emph{in the regularized sense}. Note the meaning of this is independent of the choice of the mollifying kernel $\rho$.
\begin{corollary}
\label{cor:two-step-program} Let $\Omega$, $p$, and $\{(b,k,\gamma)\}$ satisfy the assumptions  \prettyref{eq:A-basic} and \prettyref{eq:A-additional}, and let $\varphi$ solve the dual problem \prettyref{eq:dual}. Then $\mu \in \mathcal{M}_+(\Omega;\emph{Sym}_2)$ arises as the defect measure of a sequence of almost minimizers $\{(u_{b,k,\gamma},w_{b,k,\gamma})\}$
of $E_{b,k,\gamma}$, i.e., 
\begin{equation}\label{eq:defect-measure-interpretation}
\mu=\emph{weak-\ensuremath{*}}\lim\,\nabla w_{b,k,\gamma}\otimes\nabla w_{b,k,\gamma}\,dx\quad\text{in }\mathcal{M}(\Omega;\emph{Sym}_{2})
\end{equation}
 if and
only if
\begin{equation}
\begin{cases}
-\frac{1}{2}\emph{curl}\emph{curl}\,\mu=\det\nabla\nabla p & \text{on }\Omega\\
\left\langle \nabla^{\perp}\nabla^{\perp}\varphi,\mu\right\rangle =0 & \text{on }\Omega\\
\hat{\nu}\cdot[\nabla\varphi]\left\langle \hat{\tau}\otimes\hat{\tau},\mu\right\rangle =0 & \text{at }\partial\Omega
\end{cases}.\label{eq:measure_valued_bvp}
\end{equation}
The first equation holds in the sense of distributions, while the
second and third ones hold in the regularized sense.
\end{corollary}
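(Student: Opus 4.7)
The plan is to combine \prettyref{cor:limiting_pblm_defect} with \prettyref{thm:duality} to identify defect measures of almost minimizers as exactly those admissible $\mu$ which satisfy the complementary slackness conditions against the given optimal $\varphi$. No new analysis is needed beyond unpacking the duality and rewriting admissibility in the form of \prettyref{eq:measure_valued_bvp}.

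First, I would recall from \prettyref{cor:limiting_pblm_defect} that $\mu \in \mathcal{M}(\Omega;\text{Sym}_2)$ arises as the defect measure \prettyref{eq:defect-measure-interpretation} of almost minimizers of $E_{b,k,\gamma}$ if and only if $\mu$ is a minimizer of the primal problem in \prettyref{eq:limitingpblm-defect}. Admissibility for that primal problem is precisely $\mu \in \mathcal{M}_+(\Omega;\text{Sym}_2)$ together with the distributional equation $-\frac{1}{2}\text{curl}\text{curl}\,\mu = \det\nabla\nabla p$ on $\Omega$, which is the first line of the boundary value problem \prettyref{eq:measure_valued_bvp}. Thus the reduction is: $\mu$ is a defect measure of almost minimizers if and only if it is admissible for the primal and attains the primal minimum.

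Next, I would invoke \prettyref{thm:duality} with the chosen optimal $\varphi$. Since the primal and dual values coincide, an admissible $\mu$ is optimal for the primal if and only if it pairs with some optimal $\varphi$ via the complementary slackness conditions \prettyref{eq:compl_slackness}, read in the regularized sense; and in fact these conditions hold against any optimal $\varphi$, because the integration by parts identity \prettyref{eq:ibp_identity} expresses the duality gap as a sum of two non-negative terms whose vanishing is equivalent to the regularized pointwise conditions. In particular, given the fixed optimal $\varphi$ in the hypothesis, the optimality of an admissible $\mu$ is equivalent to the second and third lines of \prettyref{eq:measure_valued_bvp}. Chaining this equivalence with the reduction from the previous paragraph proves both implications of the corollary.

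The only subtle point, and the step I would pay most attention to, is the ``any optimal $\varphi$ works'' feature: one must be careful that \prettyref{thm:duality} is not misread as asserting CS only for some matched pair. This is handled by noting that whenever $\mu$ is primal optimal and $\varphi$ is dual optimal, the left-hand side of \prettyref{eq:ibp_identity} vanishes by strong duality, and the two non-negative regularized boundary and interior terms on the right-hand side must therefore vanish separately. This yields \prettyref{eq:compl_slackness} for the given $\varphi$, independently of which optimal $\mu$ is chosen, and conversely produces primal optimality from admissibility plus CS. All remaining technical matters --- independence of the regularization from the choice of kernel $\rho$, and the proper interpretation of traces --- are already settled in \prettyref{thm:duality} and \prettyref{rem:bcs}, so no further work is required.
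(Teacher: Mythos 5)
Your proposal is correct and follows exactly the route the paper intends: the paper explicitly states that \prettyref{cor:two-step-program} follows "more or less immediately" from \prettyref{cor:limiting_pblm_defect} and \prettyref{thm:duality}, which is precisely the chain of equivalences you spell out. Your attention to the fact that complementary slackness holds against any fixed optimal $\varphi$ (via the vanishing of the two non-negative terms in \prettyref{eq:ibp_identity}) is the right reading of the biconditional in \prettyref{thm:duality}.
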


\subsubsection{Stable lines\label{subsec:method-of-stable-lines}}

We come at last to our method of stable lines. This is a way to deduce from knowledge of an optimal $\varphi$ solving the dual problem \prettyref{eq:dual} that optimal $\mu$ solving the primal problem \prettyref{eq:primal} are rank one, absolutely continuous, and uniquely determined on a subset of $\Omega$. In other words, the method asserts the existence of an ``ordered'' part, where stable lines exist and the convergence of almost minimizers towards known patterns is implied. (As usual, any reference to the almost minimizers of $E_{b,k,\gamma}$ is contingent on the $\Gamma$-convergence in \prettyref{thm:gamma-lim}.) 
At its heart is an analysis of \prettyref{eq:measure_valued_bvp} as a boundary value problem for $\mu$ via the method of characteristics. As such, it is a bit difficult to describe the method in a manner that is both general and precise. The following contains only the essence of what we achieve in  \prettyref{sec:method_of_characteristics} and \prettyref{sec:examples}. See \prettyref{subsec:openquestions} for open questions that remain.

The first task is to explain what we mean by the ``stable lines'' and the ``ordered part'' of the shell. The definitions we present here are only preliminary, as they require more regularity than generally holds. More general definitions are given in \prettyref{sec:method_of_characteristics}.
Suppose, for the sake of argument, that $\mu$ and $\varphi$ are not only optimal in \prettyref{eq:primal} and \prettyref{eq:dual} but are also smooth, at least off of some small (say, Hausdorff one-dimensional) set. Then, the first complementary slackness condition in \prettyref{eq:measure_valued_bvp} implies that
\[
\nabla^{\perp}\nabla^{\perp}\varphi\perp\mu
\]
in the pointwise sense. Since $\nabla\nabla\varphi$ and $\mu$ are non-negative, it follows that the sum of their ranks is at most two.
Where $\text{rank}\,\nabla\nabla\varphi=2$, it must be that $\mu=0$. On the other hand, where $\text{rank}\,\mu=2$ we see that $\varphi$ is affine. The part where $\text{rank}\, \nabla\nabla\varphi=1$ can be said to be \emph{ordered}, as there 
\begin{equation}
\mu=\lambda\hat{\eta}\otimes\hat{\eta}\quad\text{for some }\lambda\geq0\text{ and }\hat{\eta}\in R(\nabla\nabla\varphi).\label{eq:mu_lambda}
\end{equation}
Given the interpretation of $\mu$ as a defect measure of almost minimizers in \prettyref{eq:defect-measure-interpretation}, we see that
\[
\hat{\eta}^{\perp}\cdot\nabla w_{b,k,\gamma}\to0\quad\text{strongly in }L^{2}\text{ on the ordered part}.
\]
Put another way, the  peaks and troughs of any wrinkles that persist must become asymptotically perpendicular to the unit vector field $\hat{\eta}$ throughout the ordered part. 
To help keep track of this, we propose the following geometric construction: given an optimal $\varphi$, plot its
\begin{align}
\text{\emph{stable lines} --- curves parallel to }N(\nabla\nabla\varphi)\text{ where }\text{rank}\,\nabla\nabla\varphi=1. \label{eq:stableline_prelimin}
\end{align}
In an asymptotic sense, these are the wrinkle peaks and troughs. They form domains, as is apparent in \prettyref{fig:library_of_patterns}. Naturally, one wonders if their geometry can
be described. First, let us give an argument for their existence.

\begin{figure}
\centering
\subfloat[]{\includegraphics[width=0.33\paperwidth]{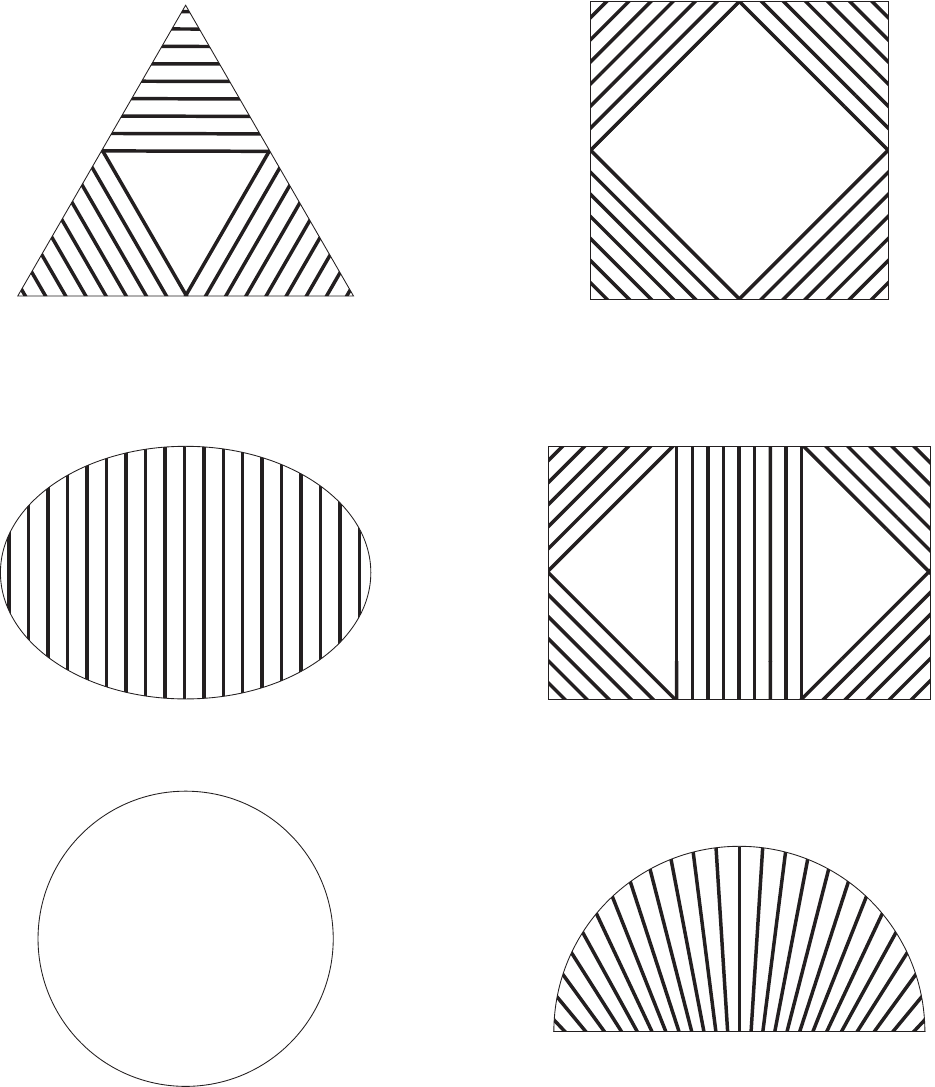}}\hspace{.12\textwidth}\subfloat[]{\includegraphics[width=0.33\paperwidth]{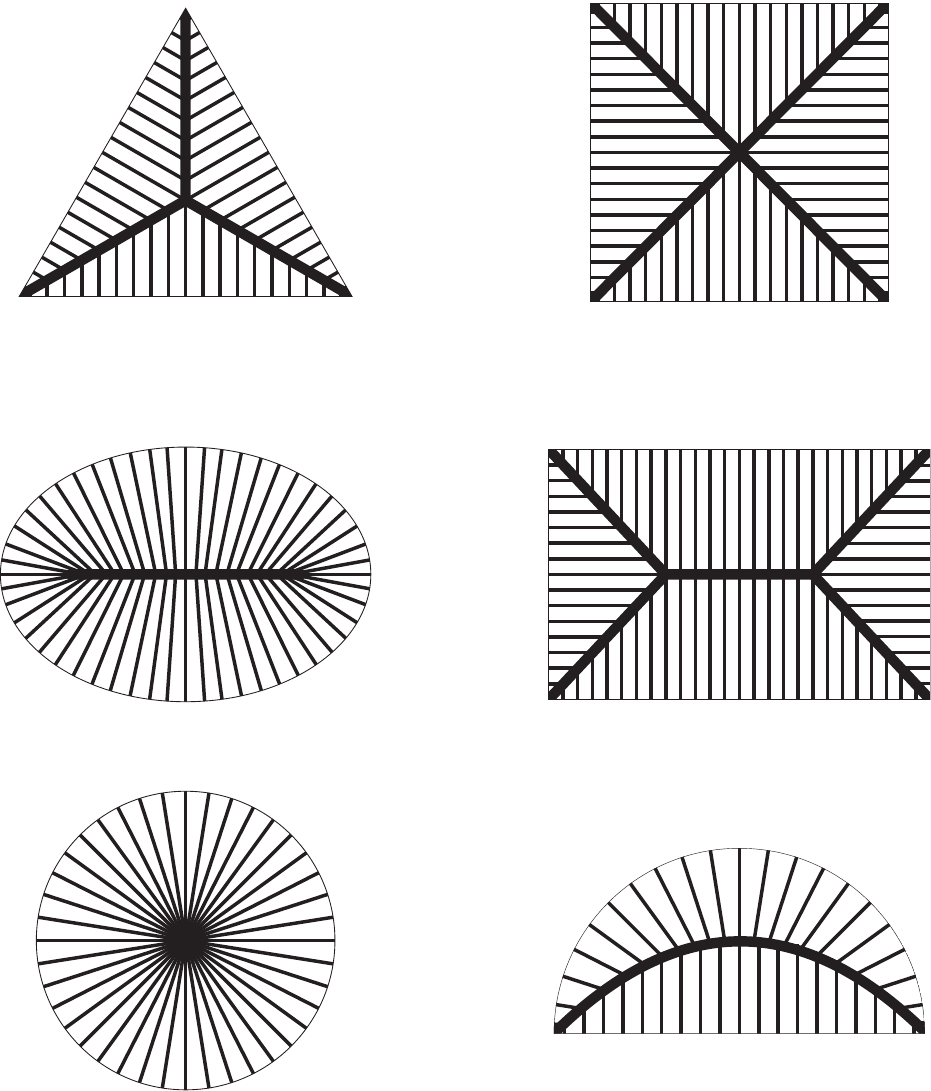}}
\caption{Optimal wrinkle patterns arrange themselves according to the plotted ``stable lines''. Panel (a) depicts the stable lines of various positively curved shells, and Panel (b) does the same but for negatively curved shells. By definition, stable lines fill out the ordered part of the shell;  
any disordered response is confined to regions absent these lines (shown as blank).
Stable lines are also the characteristic curves of a certain second order, linear PDE governing the defect measures of almost minimizers. When combined with appropriate boundary conditions, uniqueness and regularity theorems can be proved throughout the ordered part. 
\label{fig:library_of_patterns}}
\end{figure}

It is a well-known fact of differential geometry that any smooth enough
\emph{developable surface} --- which by definition has vanishing Gaussian curvature throughout its extent --- is the disjoint union of planar regions and an otherwise \emph{ruled} part consisting of line segments that extend between boundary points. Such segments define the \emph{generators} or \emph{ruling
lines} of the given surface (see, e.g., \cite{stoker1989differential,struik1988lectures}). Now where
$
\det\nabla\nabla\varphi=0
$
the Gaussian curvature of the graph of $\varphi$ vanishes, so that
it describes a developable surface. Where $\text{rank}\,\nabla\nabla\varphi=1$,
it consists of ruling lines. Upon projection to the plane we recover the desired stable lines. In fact, this argument shows a little more: any smooth curve picked out by \prettyref{eq:stableline_prelimin} is the planar projection of a ruling line. 

Evidently the layout of the stable lines, and so that of the ordered wrinkle domains they describe, is tied up with the geometry of developable surfaces. 
We note the ruled part of any (piecewise) smooth developable surface can be decomposed further into developable pieces of the following three elementary types: cylinders, whose ruling lines are parallel; cones, whose ruling lines intersect at a point; and ``tangential developables'', whose ruling lines are tangent to a space curve. 
Upon projection, we deduce the following classification of stable lines: we say that
\begin{itemize}
\item a family of stable lines is of the \emph{cylindrical} type if its consists of parallel segments;
\item a family of stable lines is of the \emph{conical} type if its segments, when extended, meet at a point; and
\item a family of stable lines is of the \emph{tangential} type if its segments, when extended, meet along a curve.
\end{itemize} 
General arrangements of stable lines are built from these.
From the twelve shells depicted in \prettyref{fig:library_of_patterns} we count seven consisting of only the cylindrical type; two with only the conical type (the positively curved half-disc and the negatively curved disc); one featuring both cylindrical and conical types (the negative half-disc); and one with only the tangential type (the negative ellipse). 

The following result is the key to \prettyref{fig:library_of_patterns}. It contains recipes for solving the dual problem \prettyref{eq:dual} when the initial Gaussian curvature is of one sign. 
\begin{proposition}
\label{prop:explicit_formulas} Let $\Omega$ and $p$ satisfy the assumption \prettyref{eq:A1a} and suppose that $\Omega$ is simply connected. If $\det\nabla\nabla p\geq0$ a.e., the dual problem is solved by the largest convex extension
$\varphi_{+}$ of $\frac{1}{2}|x|^{2}$ into $\Omega$. It satisfies
\begin{equation}
\varphi_{+}(x)=\min_{\{y_{i}\}\subset\partial\Omega}\,\sum_{i=1}^{3}\theta_{i}\frac{1}{2}|y_{i}|^{2}\quad \text{for }x\in \Omega\label{eq:convexroof_inf-1}
\end{equation}
where the minimization is taken over all pairs
and triples $\{y_{i}\}\subset\partial\Omega$ such that 
\[
x=\sum_{i}\theta_{i}y_{i}\quad\text{where }\{\theta_{i}\}\subset(0,1)\text{ satisfies }\sum_{i}\theta_{i}=1.
\]
If $\det\nabla\nabla p\leq0$ a.e., the dual problem
is solved by the smallest convex extension $\varphi_{-}$ of $\frac{1}{2}|x|^{2}$
into $\Omega$. It satisfies 
\begin{equation}
\varphi_{-}(x)=\frac{1}{2}|x|^{2}-\frac{1}{2}d_{\partial\Omega}^{2}(x)\quad\text{where}\quad d_{\partial\Omega}(x)=\min_{y\in\partial\Omega}\,|x-y|\quad \text{for } x\in \Omega.\label{eq:squareddistance}
\end{equation}
\end{proposition}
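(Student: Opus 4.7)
The approach I would take exploits the fact that the dual objective
\[
J(\varphi) = \int_{\Omega}(\varphi - \tfrac{1}{2}|x|^2)\det\nabla\nabla p\,dx
\]
depends pointwise monotonically on $\varphi$ whenever $\det\nabla\nabla p$ has a definite sign: nondecreasing if $\det\nabla\nabla p \geq 0$, nonincreasing if $\det\nabla\nabla p \leq 0$. Hence the dual is optimized by the pointwise largest (respectively smallest) admissible $\varphi$, and by the characterization in Remark~\ref{rem:bcs} the bulk of the proof reduces to two tasks: verify that the explicit candidates $\varphi_\pm$ are admissible, and show they sandwich every admissible $\varphi$ on $\Omega$.

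For the lower bound, I would fix any admissible $\varphi$ and use that it agrees with $\tfrac{1}{2}|z|^2$ on the exterior, so $\nabla\varphi(z) = z$ for $z \in \mathbb{R}^2\setminus\overline\Omega$. Convexity of $\varphi$ then gives the supporting inequality $\varphi(x) \geq z\cdot x - \tfrac{1}{2}|z|^2$ for all $x$, and sending $z$ to any $y \in \partial\Omega$ and taking a supremum yields
\[
\varphi(x) \geq \max_{y \in \partial\Omega}\!\left(y\cdot x - \tfrac{1}{2}|y|^2\right) = \tfrac{1}{2}|x|^2 - \tfrac{1}{2}d^2_{\partial\Omega}(x).
\]
The right-hand side, being a supremum of affine functions, is convex; at $x_0 \in \partial\Omega$ the argmax is $y^{\ast} = x_0$, so it takes the correct boundary values with normal derivative equal to $\hat\nu \cdot x_0$, fulfilling both boundary conditions of Remark~\ref{rem:bcs}. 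Thus it is admissible and coincides with the minimal $\varphi_-$, recovering \eqref{eq:squareddistance}.

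For the upper bound, convexity of any admissible $\varphi$ combined with $\varphi = \tfrac{1}{2}|y|^2$ on $\partial\Omega$ gives $\varphi(x) \leq \sum_i \theta_i \tfrac{1}{2}|y_i|^2$ for every representation $x = \sum_i \theta_i y_i$ with $y_i \in \partial\Omega$. Minimizing over such representations and restricting to pairs and triples via Carath\'eodory's theorem in the plane yields formula \eqref{eq:convexroof_inf-1}. To show the resulting function $\varphi_+$ is itself admissible, I would check: convexity on $\overline\Omega$, the standard property of the convex envelope of boundary data; the bound $\varphi_+(x) \geq \tfrac{1}{2}|x|^2$ on $\overline\Omega$, which follows from Jensen's inequality applied to the convex function $\tfrac{1}{2}|\cdot|^2$; and the boundary gradient condition $\hat\nu\cdot\nabla\varphi_+ \leq \hat\nu\cdot x$, which follows because $\varphi_+ \geq \tfrac{1}{2}|x|^2$ together with equality at $\partial\Omega$ forces the inward directional derivative of $\varphi_+$ there to be no steeper than that of $\tfrac{1}{2}|x|^2$.

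With $\varphi_\pm$ established as pointwise extremal among admissibles on $\Omega$, monotonicity of $J$ closes the argument: if $\det\nabla\nabla p \geq 0$ then $J(\varphi_+) \geq J(\varphi)$ for any admissible $\varphi$, and symmetrically for $\varphi_-$ when $\det\nabla\nabla p \leq 0$. The main obstacle I anticipate is verifying admissibility of the convex roof $\varphi_+$ when $\Omega$ is not convex: the roof is naturally defined on the convex hull of $\overline\Omega$, and extending it by $\tfrac{1}{2}|x|^2$ on $\mathbb{R}^2\setminus\Omega$ could threaten global convexity across non-convex arcs of $\partial\Omega$. I would handle this by working in the equivalent characterization of Remark~\ref{rem:bcs}, which requires convexity only on $\Omega$ plus the gradient trace condition, or by defining $\varphi_+$ intrinsically as the pointwise supremum of all admissibles (automatically admissible since suprema of convex functions are convex and the admissibility conditions pass to the supremum) and matching it to the formula via the upper-bound argument above.
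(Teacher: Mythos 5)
Your proposal is correct and follows essentially the same route as the paper: exploit the pointwise monotonicity of the dual functional in $\varphi$ to reduce to finding the pointwise largest and smallest admissible extensions, identify $\varphi_-$ as a supremum of affine functions equal to $\frac{1}{2}|x|^{2}-\frac{1}{2}d_{\partial\Omega}^{2}$, and identify $\varphi_{+}$ via the convex-envelope (Carath\'eodory) characterization, with admissibility secured by defining $\varphi_{\pm}$ intrinsically as the sup/inf of all admissible extensions. The minor variations (supporting hyperplanes at exterior points versus convexity along segments for the lower bound) do not change the substance of the argument.
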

\begin{remark}\label{rem:uniqueness}
Optimal $\varphi$ are not in general unique. However, if $\det\nabla\nabla p$
is strictly positive or strictly negative a.e.,  then $\varphi_{+}$ or $\varphi_{-}$
is the unique solution of \prettyref{eq:dual}.
See \prettyref{subsec:Solution-formulas-optimal-Airys} for a proof.
\end{remark}

We just finished describing how stable lines dictate the geometry of optimal $\mu$, and how we were able to solve for the ones in \prettyref{fig:library_of_patterns}. There is a second, equally as important role played by the stable lines. We claim that they are characteristic curves along which the PDE in \prettyref{eq:measure_valued_bvp} becomes an ordinary differential equation (ODE) for the only possibly non-zero eigenvalue $\lambda$ of $\mu$ on the ordered part.
Going back to our previous assumptions of smoothness for $\mu$ and $\varphi$, and again postponing precise statements until  \prettyref{sec:method_of_characteristics}, we note the existence of a function $\varrho>0$
such that 
\[
-\frac{1}{2}\text{curl}\text{curl}(\hat{\eta}\otimes\hat{\eta}\cdot)=-\frac{1}{2\varrho}\partial_{\hat{\eta}^{\perp}}^{2}(\varrho\cdot)
\]
where $\partial_{\hat{\eta}^{\perp}}=\hat{\eta}^{\perp}\cdot\nabla$. Since by its definition $\hat{\eta}$ points perpendicularly to the stable lines, we recognize this operator as a directional derivative along their extent. 
Setting \prettyref{eq:mu_lambda} into the first part of \prettyref{eq:measure_valued_bvp}, we deduce that 
\begin{equation}
-\frac{1}{2\varrho}\partial_{\hat{\eta}^{\perp}}^{2}(\varrho\lambda)=\det\nabla\nabla p\quad\text{along the stable lines}.\label{eq:ODE_stablelines}
\end{equation}
Thus, we have arrived at a second order linear ODE for $\lambda$. Boundary data can be extracted from \prettyref{eq:measure_valued_bvp}, after which integration yields uniqueness, regularity, and even explicit solution formulas across the ordered part. 

Let us briefly comment on the sort of boundary data that can be deduced. 
In some cases, stable lines pass between boundary points, such as for the  positively curved shells in Panel (a) of \prettyref{fig:library_of_patterns}.
Provided $\varphi$ is suitably non-degenerate, the last part of \prettyref{eq:measure_valued_bvp} can be shown to imply the Dirichlet-type condition
\begin{equation}
\varrho\lambda=0\quad\text{where stable lines meet }\partial\Omega.\label{eq:Dirichlet-data}
\end{equation}
Our proof of this assumes in particular that the coefficient $\hat{\nu}\cdot[\nabla\varphi]$ is non-zero, and also that the stable lines meet $\partial\Omega$ transversely. 
Together, \prettyref{eq:ODE_stablelines} and \prettyref{eq:Dirichlet-data} constitute a family of two-point boundary value problems indexed by the stable lines.  
See \prettyref{cor:non-intersecting-chars} for a precise statement of this result.

In other cases, stable lines meet in the interior. This happens for the negatively curved shells in Panel (b) of \prettyref{fig:library_of_patterns}.
Since we expect $\nabla\nabla\varphi$ to explode where stable lines meet, the second equation in \prettyref{eq:measure_valued_bvp} should provide Dirichlet data. The first equation should yield matching conditions. Altogether we will show that
\begin{equation}
\varrho\lambda=0\quad\text{and}\quad\partial_{\hat{\eta}^{\perp}}(\varrho\lambda)=0\quad\text{where stable lines meet}, \label{eq:Cauchy-data}
\end{equation}
again subject to non-degeneracy conditions on $\varphi$ (e.g., if the stable lines meet along a curve, we assume they meet it transversely). Combining  \prettyref{eq:ODE_stablelines} and \prettyref{eq:Cauchy-data} yields a family of Cauchy problems indexed by the stable lines. 
See \prettyref{cor:chars-meet-along-curve} and \prettyref{cor:chars-meet-at-point} for precise statements of this result. 

The preceding observations suggest a significantly more general result: optimal $\mu$ should be uniquely determined wherever stable lines exist. While we do not know how to prove this for general shells, we will show in \prettyref{sec:examples} that it holds for each the shells in \prettyref{fig:library_of_patterns}, along with other related ones as well. That section also contains the proof of \prettyref{prop:explicit_formulas}. We hope our general description of the method of stable lines here and in \prettyref{sec:method_of_characteristics} helps the reader see the bigger picture behind what it implies at the level of the examples in \prettyref{sec:examples}.

\subsubsection{Open questions}\label{subsec:openquestions}

We close this introduction with a few open questions. Besides the
obvious ones regarding the extension of our results beyond the assumptions
 \vpageref{par:Assumptions} and beyond the realm of weakly
curved shells, there are some important issues that remain
regarding the method of stable lines. 

First, it is an admittedly awkward point throughout that $\nabla\nabla\varphi$
is only \emph{a priori} a measure.
For this reason, we do not yet have a generally useful definition of stable lines. 
What we lack is a regularity theory for optimal $\varphi$ solving \prettyref{eq:dual} or, failing that, a classification of developable surfaces of regularity $HB$. For now, we note that each of the examples
in \prettyref{sec:examples} enjoys the following additional regularity: optimal $\varphi$ are smooth off of a singular set of finite length. This is more than enough to justify our approach. More generally, building off of the theory of $W^{2,2}$ developable surfaces in \cite{hornung2011approximation,hornung2011fine,pakzad2004sobolev} we show how to make sense of it where $\varphi$ is (locally) $W^{2,2}$. See \prettyref{sec:method_of_characteristics}.

Second, we wonder if optimal $\mu$ solving \prettyref{eq:primal} are unique under the condition that there exists an optimal $\varphi$ that is nowhere  affine. \prettyref{conj:uniqueness_neg} at the very end gives a concrete version of this question for negatively curved shells. Our reasoning is simply that stable lines (suitably defined) should be characteristic curves for the boundary value problem \prettyref{eq:measure_valued_bvp}, and that the given hypothesis on $\varphi$ should imply their density in $\Omega$. This should lead to the uniqueness of $\mu$. Though we do not have a general theorem to this effect, we have achieved it in the context of several examples, including the positively curved ellipse and half-disc in Panel (a) of \prettyref{fig:library_of_patterns}, as well as each of the negatively curved shells in Panel (b). See \prettyref{subsec:Positively-curved-shells} and \prettyref{subsec:Negatively-curved-shells}. 

Conversely, we wonder if there must exist infinitely many optimal $\mu$
provided there exists a region on which optimal $\varphi$ are affine. Only when $\varphi$ is affine on all of $\Omega$ have we shown this to
be true --- see \prettyref{ex:positively-curved-disc} for the highly degenerate case of a positively curved disc. When combined with our $\Gamma$-convergence results, the existence of infinitely many optimal $\mu$ implies the existence of infinitely many almost minimizing
sequences for $E_{b,k,\gamma}$, a situation that could perhaps explain the disorder seen in 
 ultrathin shells \cite{tobasco2020principles}. Whether
this disorder arises from an overall flatness of the energy landscape,
or instead to a prevalence of local minimizers remains to be understood.

\subsection{Outline of the paper}

The remainder of the paper establishes the results outlined above. \prettyref{sec:Gamma_liminf}
covers the $\Gamma$-liminf and equi-coercivity parts of \prettyref{thm:gamma-lim},
while \prettyref{sec:Gamma_limsup} handles the recovery sequence part. \prettyref{sec:dualitytheory} establishes \prettyref{thm:duality}.
\prettyref{sec:method_of_characteristics} discusses the method of
stable lines. Finally, \prettyref{sec:examples} proves \prettyref{prop:explicit_formulas} and presents the details behind the patterns sketched in \prettyref{fig:library_of_patterns}. Since \prettyref{cor:limiting_pblm_effectivedisplacement}-\prettyref{cor:two-step-program} follow more or less immediately from the theorems as above, we do not repeat their proofs below.

\subsection{Notation}

We use big $O$ and little $o$ notation as well as their abbreviations
$\lesssim$ and $\ll$. We write $f=o(g)$ and $f\ll g$
 to mean that the functions $f$ and $g$ satisfy $\frac{f}{g}\to0$ in a relevant limit, and $f=O(g)$ and $f\lesssim g$ to mean that there exists a constant
$C>0$ such that $f\leq Cg$. If $C=C(\alpha)$ we indicate this using a subscript, as in $f\lesssim_{\alpha}g$.  
We write $f\sim g$ to mean that $f\lesssim g$ and $g\lesssim f$. We abbreviate $f\vee g=\max\{f,g\}$ and $f\wedge g=\min\{f,g\}$.

Dots and angle brackets denote the Euclidean vector and Frobenius matrix inner
products $x\cdot y=\sum_{i}x_{i}y_{i}$ and $\langle A,B \rangle = \sum_{ij}A_{ij}B_{ij}$.
Single lines without subscripts denote Euclidean and Frobenius
norms. The open Euclidean ball centered at $x$ with radius $r$ is $B(x,r)=B_r(x)=\{y: |x-y|<r\}$. The shortest Euclidean distance from $x$ to a set $S$ is $d(x,S)=d_S(x) = \inf_{y\in S} |x-y|$. We use the matrix norms
\[
|A|=|A|_{2}=\sqrt{\sum_{ij}|A_{ij}|^{2}},\quad|A|_{1}=\sum_{ij}|A_{ij}|,\quad\text{and}\quad|A|_{\infty}=\max_{ij}\,|A_{ij}|
\]
throughout. Double lines $||\cdot||$ are reserved for function space norms. 

Regarding function spaces, we use $C^{k}(X)$ and $\text{Lip}(X)$ to mean the spaces of real-valued, $k$-times differentiable and Lipschitz continuous functions on some appropriate domain or metric space $X$.  
Subscripts are used as normal, with $b$ for uniformly bounded functions and $c$ for compactly supported ones. Semi-colons indicate ranges other than $\mathbb{R}$. For instance, $C_b(\Omega;\text{Sym}_{d})$ indicates the space of continuous, uniformly bounded, symmetric $d$-by-$d$ matrix-valued functions on $\Omega$. The Sobolev spaces $W^{k,r}(\Omega)$ and their local versions $W^{k,r}_\text{loc}(\Omega)$ are defined as usual. We take the convention of referring to the (unique) continuous representative of a  function that is a.e.\ defined, provided it exists.  
See \prettyref{subsec:Bounded-deformation-maps} for $BD(\Omega)$ and $HB(\Omega)$. 

Regarding measures, we write $\mathcal{M}(X)$ to mean the space of finite, real-valued Radon measures on some locally compact Hausdorff space $X$. Semi-colons indicate values in vector spaces other than $\mathbb{R}$. The subscript $+$ indicates non-negativity. Given $\sigma\in C_{b}(X;\text{Sym}_{d})$ and $\mu\in\mathcal{M}(X;\text{Sym}_{d})$,
we denote their Frobenius inner product and its integral on $X$ by 
\[
\left\langle \sigma,\mu\right\rangle =\sum_{ij}\sigma_{ij}\mu_{ij}\quad\text{and}\quad\int_{X}\left\langle \sigma,\mu\right\rangle =\left\langle \sigma,\mu\right\rangle (X).
\]
Given a Borel measurable set $S$, we write $\mu\lfloor S$ to
mean the restriction of $\mu$ to $S$ defined by $\mu\lfloor S(\cdot)=\mu(S\cap \cdot)$. 
The two-dimensional Lebesgue and one-dimensional Hausdorff measures $\mathcal{L}^{2}$  and $\mathcal{H}^{1}$ appear throughout. We often use the notations $dx$ and $ds$. 
When a property is stated simply as holding ``a.e.'',
we mean that it holds a.e.\ with respect to Lebesgue unless the situation dictates otherwise. 
We denote $|S|=\mathcal{L}^{2}(S)$. 

Finally, by a ``curve'' we mean a homeomorphic copy of an open interval $I\subset \mathbb{R}$, i.e., its image under  
a continuous and one-to-one map. Such a map ``parameterizes'' the given curve. Any further regularity will be specified, e.g., a smooth curve is one that admits a $C^\infty$ parameterization. Given a Lipschitz curve $\Gamma\subset \mathbb{R}^2$, its tangent lines $T_{s}\Gamma$ are defined for $\mathcal{H}^{1}$-a.e.\ $s\in\Gamma$, along with a choice of unit tangent and unit normal vector $\hat{\tau}_{\Gamma}$ and $\hat{\nu}_{\Gamma}$ satisfying
\[
\hat{\tau}_{\Gamma}(s)||T_{s}\Gamma\quad\text{and}\quad\hat{\nu}_{\Gamma}(s)\perp T_{s}\Gamma\quad\text{for }\mathcal{H}^{1}\text{-a.e. }s\in \Gamma.
\]
We refer to the unit tangent and outwards-pointing unit normal at $\partial\Omega$ simply as $\hat{\tau}$ and $\hat{\nu}$. By convention, $\hat{\tau}=\hat{\nu}^{\perp}$ where $\perp$ denotes counterclockwise rotation by $\pi/2$.

\section{\emph{A priori} lower bounds and tension-free limits\label{sec:Gamma_liminf}}

This section establishes the equi-coercivity and $\Gamma$-liminf 
parts of \prettyref{thm:gamma-lim}. These results do not rely on the full set of assumptions listed \vpageref{par:Assumptions}, and instead make use of only the basic ones in \prettyref{eq:A-basic}. For the reader's convenience, we recall these assumptions in the formal statement of what we prove. 
\begin{proposition}
\label{prop:gam_liminf} (equi-coercivity and $\Gamma$-liminf inequality)
Suppose 
\begin{equation}\label{eq:assumptions-Prop2.1}
\Omega\text{ is bounded and Lipschitz},\quad p\in W^{2,2}(\Omega),\quad\text{and}\quad
\frac{b}{k},\ \frac{\gamma}{k},\ 2\sqrt{bk}+\gamma\ll1
\end{equation}
and let the sequence $\{(u_{b,k,\gamma},w_{b,k,\gamma})\}$
satisfy 
\begin{equation}
\limsup\,\frac{E_{b,k,\gamma}(u_{b,k,\gamma},w_{b,k,\gamma})}{2\sqrt{bk}+\gamma}<\infty.\label{eq:energybd}
\end{equation}
Then the following statements hold:
\begin{enumerate}
\item $\{(u_{b,k,\gamma},w_{b,k,\gamma})\}$ is weakly-$*$
pre-compact in $BD(\Omega)/\mathcal{R}\times W^{1,2}(\Omega)$;
\item each of its weak-$*$ limit points are of the form $(u_{\emph{eff}},0)$
where $u_{\emph{eff}}$ is tension-free, i.e.,
\[
e(u_{\emph{eff}})\leq\frac{1}{2}\nabla p\otimes\nabla p\,dx.
\]
\end{enumerate}
Moreover, if
\[
(u_{b,k,\gamma},w_{b,k,\gamma})\stackrel{*}{\rightharpoonup}(u_{\emph{eff}},0)\quad\text{weakly-}*\text{ in }BD(\Omega)/\mathcal{R}\times W^{1,2}(\Omega)
\]
then
\[
\liminf\,\frac{E_{b,k,\gamma}(u_{b,k,\gamma},w_{b,k,\gamma})}{2\sqrt{bk}+\gamma}\geq\int_{\Omega}\frac{1}{2}|\nabla p|^{2}\,dx-\int_{\partial\Omega}u_{\emph{eff}}\cdot\hat{\nu}\,ds.
\]
\end{proposition}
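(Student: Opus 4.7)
The plan is to extract \emph{a priori} bounds from the energy hypothesis \prettyref{eq:energybd}, use these to establish equi-coercivity and characterize weak-$*$ limits as tension-free, and then derive the $\Gamma$-liminf via the geometric interpolation inequality \prettyref{eq:geometric_interpolation_inequality} combined with an AM-GM estimate on the bending-plus-substrate terms.

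First, the energy bound yields $\|\varepsilon\|_{L^2}^2 \lesssim 2\sqrt{bk}+\gamma$, $b\,\|\nabla\nabla w - \nabla\nabla p\|_{L^2}^2 \lesssim 2\sqrt{bk}+\gamma$, and $k\,\|w\|_{L^2}^2 \lesssim 2\sqrt{bk}+\gamma$. The last of these tends to zero by the assumption $(2\sqrt{bk}+\gamma)/k\to 0$. The classical Gagliardo-Nirenberg inequality \prettyref{eq:GNinequality} on the Lipschitz domain $\Omega$ upgrades these into a uniform $W^{1,2}$ bound on $\{w_{b,k,\gamma}\}$. Writing $e(u) = \varepsilon - \tfrac{1}{2}\nabla w\otimes \nabla w + \tfrac{1}{2}\nabla p\otimes \nabla p$ from \prettyref{eq:strain_vK} and using the $L^2$ bounds just obtained then gives $\int_\Omega|e(u)|_1 \lesssim 1$, so $\{u_{b,k,\gamma}\}$ is norm-bounded in $BD(\Omega)/\mathcal{R}$. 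Weak-$*$ pre-compactness follows from Banach-Alaoglu. For the tension-free property, $\|w\|_{L^2}\to 0$ forces $w_{\text{eff}} = 0$; since $\|\varepsilon\|_{L^2}\to 0$ while $\nabla w_{b,k,\gamma}\otimes \nabla w_{b,k,\gamma}\,dx$ is non-negative and uniformly bounded in $\mathcal{M}(\Omega;\text{Sym}_2)$, passing to the weak-$*$ limit of the strain identity gives $e(u_{\text{eff}}) + \tfrac{1}{2}\mu = \tfrac{1}{2}\nabla p\otimes \nabla p\,dx$ with $\mu \geq 0$, hence $e(u_{\text{eff}}) \leq \tfrac{1}{2}\nabla p\otimes\nabla p\,dx$.

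Next, I establish the geometric interpolation inequality \prettyref{eq:geometric_interpolation_inequality}. Taking the trace of \prettyref{eq:strain_vK} and integrating, the divergence theorem gives
\begin{equation*}
\int_\Omega \tfrac{1}{2}|\nabla w|^2\,dx = \int_\Omega \tfrac{1}{2}|\nabla p|^2\,dx - \int_{\partial\Omega} u\cdot\hat{\nu}\,ds + \int_\Omega \text{tr}\,\varepsilon\,dx.
\end{equation*}
Integration by parts against $-\Delta w$ together with Cauchy-Schwarz bounds the left-hand side by $2\,(\int|\tfrac{1}{2}\Delta w|^2)^{1/2}(\int|w|^2)^{1/2}$ plus a boundary contribution, which is in turn controlled via trace estimates and the uniform bounds already established. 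Combining yields \prettyref{eq:geometric_interpolation_inequality} modulo errors controlled by $\|\varepsilon\|_{L^2}$ and vanishing boundary corrections. An AM-GM bound on the bending-plus-substrate terms then gives
\begin{equation*}
\tfrac{b}{2}\int_\Omega |\Delta w|^2\,dx + \tfrac{k}{2}\int_\Omega |w|^2\,dx \geq \sqrt{bk}\Bigl(\int_\Omega |\Delta w|^2\Bigr)^{1/2}\Bigl(\int_\Omega |w|^2\Bigr)^{1/2} \geq 2\sqrt{bk}\Bigl(\int_\Omega\tfrac{1}{2}|\nabla p|^2\,dx - \int_{\partial\Omega} u\cdot\hat{\nu}\,ds\Bigr) + \text{h.o.t.},
\end{equation*}
which when added to the (non-negative) stretching term and the surface tension contribution $\gamma(\int\tfrac{1}{2}|\nabla p|^2 - \int_{\partial\Omega} u\cdot\hat{\nu})$ yields the desired $\Gamma$-liminf bound. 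Passing to the liminf along the weakly-$*$ converging sequence is justified by intermediate continuity of the $BD$-trace, which ensures $\int_{\partial\Omega} u_{b,k,\gamma}\cdot\hat{\nu}\,ds \to \int_{\partial\Omega} u_{\text{eff}}\cdot\hat{\nu}\,ds$.

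The hardest step will be exchanging the full bending term $\tfrac{b}{2}\int|\nabla\nabla w - \nabla\nabla p|^2$ for $\tfrac{b}{2}\int|\Delta w|^2$ at leading order, without losing a factor in the sharp constant $2\sqrt{bk}$. The pointwise identity $|\nabla\nabla w|^2 = (\Delta w)^2 - 2\det\nabla\nabla w$ introduces a Gauss-type curvature integral, and the cross term $\int\langle\nabla\nabla w,\nabla\nabla p\rangle$ must also be controlled. The key is the very weak Hessian identity \prettyref{eq:veryweak_hessian}: Saint-Venant compatibility of $e(u)$ applied to the near-vanishing strain $\varepsilon \approx 0$ forces $\det\nabla\nabla w \approx \det\nabla\nabla p$ in the sense of distributions, with the discrepancy controlled by $\|\varepsilon\|_{L^2}$. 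Combined with $p \in W^{2,2}$ to handle the cross term via integration by parts, this closes the liminf estimate.
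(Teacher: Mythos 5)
Your equi-coercivity and tension-free arguments are essentially the paper's: complete the square to make the energy coercive despite the surface term, use Gagliardo--Nirenberg for the $W^{1,2}$ bound on $w$, bound $e(u)$ in $L^1$ via the strain identity, and pass to the limit of $\nabla w\otimes\nabla w\,dx$ to get a non-negative defect measure. That part is sound.

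The $\Gamma$-liminf part has two genuine gaps, both stemming from working globally on $\Omega$ instead of localizing. First, your route to the interpolation inequality integrates $\int_\Omega|\nabla w|^2=-\int_\Omega w\Delta w+\int_{\partial\Omega}w\,\partial_{\hat\nu}w$ by parts over all of $\Omega$ and claims the boundary term is controlled "via trace estimates and the uniform bounds already established." It is not: the available bounds give $\|w\|_{L^2}\lesssim (b/k)^{1/4}$ and $\|\nabla\nabla w\|_{L^2}\lesssim (k/b)^{1/4}$, so the trace interpolation yields $\|w\|_{L^2(\partial\Omega)}\|\nabla w\|_{L^2(\partial\Omega)}\lesssim (b/k)^{1/8}(k/b)^{1/8}=O(1)$ — exactly borderline, not $o(1)$, and an $O(1)$ error here destroys the sharp constant $2\sqrt{bk}$. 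The paper avoids this entirely by testing the pointwise identities $|\nabla\nabla w|^2=|\Delta w|^2-2\det\nabla\nabla w$ and $b|\Delta w|^2+k|w|^2=2\sqrt{bk}(|\nabla w|^2-\mathrm{div}(w\nabla w))+|b^{1/2}\Delta w+k^{1/2}w|^2$ against a fixed cutoff $\chi\in C_c^\infty(\Omega)$, so that all integrations by parts produce only bulk errors $2\sqrt{bk}\|\nabla\chi\|_\infty\|w\|_{L^2}\|\nabla w\|_{L^2}$ and $b\|\nabla\nabla\chi\|_\infty\|\nabla w\|_{L^2}^2$, both $o(\sqrt{bk})$; the cutoff is sent to $1$ only after the measure-valued limit, using $\mu\geq 0$. (This also makes your Saint-Venant detour unnecessary: with the cutoff, $-2\int\det\nabla\nabla w\,\chi=\int\langle\nabla w\otimes\nabla w,\nabla^\perp\nabla^\perp\chi\rangle=O(1)$ regardless of any relation to $\det\nabla\nabla p$, and the cross term $b\int\langle\nabla\nabla w,\nabla\nabla p\rangle$ is already $o(\sqrt{bk})$ by Cauchy--Schwarz.)

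Second, you justify $\int_{\partial\Omega}u_{b,k,\gamma}\cdot\hat\nu\,ds\to\int_{\partial\Omega}u_{\mathrm{eff}}\cdot\hat\nu\,ds$ by "intermediate continuity of the $BD$-trace," but the sequence only converges weakly-$*$ in $BD(\Omega)/\mathcal{R}$, and the trace is not weak-$*$ continuous (the paper states this explicitly). The fix is the same localization-plus-defect-measure device: along the sequence one never passes to the limit in a boundary integral, but instead uses $\int_\Omega\frac{1}{2}|\nabla p|^2-\int_{\partial\Omega}u\cdot\hat\nu=\int_\Omega\mathrm{tr}\bigl(\frac{1}{2}\nabla w\otimes\nabla w-\varepsilon\bigr)$ to convert everything into bulk quantities, bounds the energy below by $(2\sqrt{bk}+\gamma)\int_\Omega\langle\chi\,Id,\frac{1}{2}\nabla w\otimes\nabla w\rangle-o(\sqrt{bk}\vee\gamma)$, passes to the weak-$*$ limit $\frac{1}{2}\int_\Omega\langle\chi\,Id,\mu\rangle$, lets $\chi\uparrow 1$, and only then identifies $\frac{1}{2}\int_\Omega|\mu|_1$ with $\int_\Omega\frac{1}{2}|\nabla p|^2-\int_{\partial\Omega}u_{\mathrm{eff}}\cdot\hat\nu$ via the divergence theorem applied to the limit $u_{\mathrm{eff}}$ alone.
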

\begin{remark}
In the course of proving this result, we will show
that any sequence that obeys \prettyref{eq:energybd}
is asymptotically strain-free in that  \prettyref{eq:asymptotically-strain-free} holds --- this follows in particular from the first inequality in \prettyref{lem:aprioribds}.
\prettyref{lem:defectmeasure} then explains how a unique defect measure $\mu$ can be associated to any asymptotically strain-free and weakly-$*$ converging sequence. Together, these facts justify the introduction of defect measures in \prettyref{subsec:Defect-measures}. 
\end{remark}

We begin with a list of \emph{a priori} bounds. Recall the definition of the strain $\varepsilon$ from \prettyref{eq:strain_vK}.
\begin{lemma}
\label{lem:aprioribds} The inequalities
\begin{align*}
\int_{\Omega}|\varepsilon|^{2} & \lesssim_{\Omega}E_{b,k,\gamma}+\gamma^{2},\quad\int_{\Omega}|e(u)|\lesssim_{\Omega,p}1+\sqrt{E_{b,k,\gamma}}+\frac{E_{b,k,\gamma}}{\sqrt{bk}\vee\gamma}\\
\int_{\Omega}|w|^{2} & \lesssim_\Omega \frac{E_{b,k,\gamma}+\gamma^2}{k},
\quad\int_{\Omega}|\nabla w|^{2}\lesssim_{\Omega,p}1+\frac{E_{b,k,\gamma}}{\sqrt{bk}\vee\gamma},\quad\int_{\Omega}|\nabla\nabla w|^{2}\lesssim_{\Omega,p}1+\frac{E_{b,k,\gamma}+\gamma^2}{b}
\end{align*}
hold for all $0<b\leq k$ and $0\leq\gamma\leq1$.
\end{lemma}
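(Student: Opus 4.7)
The plan is to first rewrite the energy so that every term is manifestly non-negative. Since any admissible $(u,w)$ has $u\in W^{1,2}(\Omega;\mathbb{R}^2)$, the divergence theorem combined with $\text{div}\,u = \text{tr}\,e(u) = \text{tr}\,\varepsilon - \frac{1}{2}|\nabla w|^2 + \frac{1}{2}|\nabla p|^2$ gives
\[
\int_{\partial\Omega} u\cdot\hat\nu\,ds = \int_\Omega \text{tr}\,\varepsilon\,dx - \frac{1}{2}\int_\Omega |\nabla w|^2\,dx + \frac{1}{2}\int_\Omega |\nabla p|^2\,dx.
\]
Substituting into the energy formula and completing the square on $\varepsilon$ (using $\frac{1}{2}|\varepsilon|^2 - \gamma\,\text{tr}\,\varepsilon = \frac{1}{2}|\varepsilon - \gamma I|^2 - \gamma^2$ in two dimensions) yields the identity
\[
E_{b,k,\gamma} + \gamma^2|\Omega| = \frac{1}{2}\int_\Omega|\varepsilon-\gamma I|^2\,dx + \frac{b}{2}\int_\Omega |\nabla\nabla w-\nabla\nabla p|^2\,dx + \frac{k}{2}\int_\Omega |w|^2\,dx + \frac{\gamma}{2}\int_\Omega |\nabla w|^2\,dx,
\]
with every term on the right non-negative.

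From this single identity I directly read off the bounds on $\int|\varepsilon|^2$ and $\int|w|^2$; the bound on $\int|\nabla\nabla w|^2$ follows via $|\nabla\nabla w|^2 \leq 2|\nabla\nabla w -\nabla\nabla p|^2 + 2|\nabla\nabla p|^2$ together with $b\int|\nabla\nabla p|^2 \lesssim_p 1$ (using $b\leq 1$ and $p\in W^{2,2}$). The identity also immediately yields $\int |\nabla w|^2 \lesssim 1 + E_{b,k,\gamma}/\gamma$, using $\gamma\leq 1$ to absorb the $\gamma^2$ remainder into a constant.

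To recover the sharp $\sqrt{bk}$ scaling for $\int|\nabla w|^2$, I apply AM--GM to the bending and substrate terms to obtain $\sqrt{bk}\, \|\nabla\nabla w - \nabla\nabla p\|_{L^2}\|w\|_{L^2} \lesssim E_{b,k,\gamma} + \gamma^2$, absorb the $\|\nabla\nabla p\|_{L^2}\|w\|_{L^2}$ error via a small Young inequality, and invoke the Gagliardo--Nirenberg interpolation $\|\nabla w\|_{L^2}^2 \lesssim_{\Omega} \|w\|_{L^2}\|\nabla\nabla w\|_{L^2} + \|w\|_{L^2}^2$, valid on bounded Lipschitz domains for $w\in W^{2,2}(\Omega)$. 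This gives $\int|\nabla w|^2 \lesssim 1 + (E_{b,k,\gamma} + \gamma^2)/\sqrt{bk}$. A short case split then produces the sharp bound: when $\gamma \leq \sqrt{bk}$, $\gamma^2/\sqrt{bk} \leq \gamma \leq 1$ and this reads $\int|\nabla w|^2 \lesssim 1 + E_{b,k,\gamma}/\sqrt{bk}$, while when $\gamma > \sqrt{bk}$ the $\gamma$-bound from the preceding paragraph applies; in both cases $\int|\nabla w|^2 \lesssim 1 + E_{b,k,\gamma}/(\sqrt{bk}\vee\gamma)$.

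The $L^1$ bound on $e(u)$ then follows from $e(u) = \varepsilon - \frac{1}{2}\nabla w\otimes\nabla w + \frac{1}{2}\nabla p\otimes\nabla p$ and the triangle inequality, combining Cauchy--Schwarz $\int|\varepsilon| \lesssim_\Omega \sqrt{E_{b,k,\gamma} + \gamma^2} \lesssim \sqrt{E_{b,k,\gamma}} + 1$ with the $\int|\nabla w|^2$ bound just derived. The one genuinely non-obvious step in the whole argument --- and the main obstacle --- is the opening manipulation: using the divergence theorem to convert the boundary integral $\int_{\partial\Omega} u\cdot\hat\nu$ into interior quantities, and then exploiting a completion of the square to uncover the ``hidden'' coercive term $\frac{\gamma}{2}\|\nabla w\|_{L^2}^2$ responsible for the sharp $\gamma$-dependence while simultaneously preventing the manifestly negative $-\gamma\int\text{tr}\,\varepsilon$ contribution from spoiling the \emph{a priori} bounds.
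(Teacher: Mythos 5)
Your proof is correct and follows essentially the same route as the paper: both hinge on the identity $E_{b,k,\gamma}+\gamma^{2}|\Omega|=\frac{1}{2}\int|\varepsilon-\gamma Id|^{2}+\frac{\gamma}{2}\int|\nabla w|^{2}+\frac{b}{2}\int|\nabla\nabla w-\nabla\nabla p|^{2}+\frac{k}{2}\int|w|^{2}$ (you reach it by applying the divergence theorem first and then completing the square, the paper in the reverse order), and both then combine the direct $\gamma$-bound with the Gagliardo--Nirenberg interpolation between the bending and substrate terms to get the $\sqrt{bk}\vee\gamma$ scaling, finishing $e(u)$ by the triangle inequality. The only cosmetic difference is your unnecessary appeal to $b\leq1$ when absorbing $\|\nabla\nabla p\|_{L^{2}}^{2}$ into the constant, which is not needed and not among the lemma's hypotheses.
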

\begin{proof}
First, we add a suitable constant to $E_{b,k,\gamma}$ to make
the result non-negative. Integrating the pointwise identity
\begin{align*}
\frac{1}{2}|\varepsilon-\gamma Id|^{2} & =\frac{1}{2}|\varepsilon|^{2}-\left\langle \varepsilon,\gamma Id\right\rangle +\frac{1}{2}|\gamma Id|^{2}=\frac{1}{2}\left|\varepsilon\right|^{2}-\gamma\text{tr}\,\varepsilon+\gamma^{2}\\
 & =\frac{1}{2}|\varepsilon|^{2}+\gamma\left(\frac{1}{2}|\nabla p|^{2}-\text{div}\,u\right)-\frac{\gamma}{2}|\nabla w|^{2}+\gamma^{2}
\end{align*}
and applying the divergence theorem, there results
\[
\frac{1}{2}\int_{\Omega}|\varepsilon-\gamma Id|^{2}+\frac{\gamma}{2}\int_{\Omega}|\nabla w|^{2}=\frac{1}{2}\int_{\Omega}|\varepsilon|^{2}+\gamma\left(\int_{\Omega}\frac{1}{2}|\nabla p|^{2}-\int_{\partial\Omega}u\cdot\hat{\nu}\right)+\gamma^{2}|\Omega|.
\]
Therefore, 
\begin{equation}
\tilde{E}_{b,k,\gamma}=E_{b,k,\gamma}+\gamma^{2}|\Omega|=\frac{1}{2}\int_{\Omega}|\varepsilon-\gamma Id|^{2}+\frac{\gamma}{2}\int_{\Omega}|\nabla w|^{2}+\frac{b}{2}\int_{\Omega}|\nabla\nabla w-\nabla\nabla p|^{2}+\frac{k}{2}\int_{\Omega}|w|^{2}\geq0.\label{eq:shifted_energy}
\end{equation}
Being a sum of squares, $\tilde{E}_{b,k,\gamma}$ easily admits lower bounds. Bounds on the original energy follow.

We proceed to prove the inequalities from the claim. It follows directly from
\prettyref{eq:shifted_energy} that
\[
\int_{\Omega}|w|^{2}\lesssim\frac{\tilde{E}_{b,k,\gamma}}{k}
\]
and so the third inequality holds. The first and last inequalities are just as easily shown. 
Using the triangle inequality and \prettyref{eq:shifted_energy} we see that
\begin{align}
\int_{\Omega}|\varepsilon|^{2} & \lesssim\int_{\Omega}|\varepsilon-\gamma Id|^{2}+\gamma^{2}|\Omega|\lesssim\tilde{E}_{b,k,\gamma}+\gamma^{2}|\Omega|\label{eq:strain-bound},\\
\int_{\Omega}|\nabla\nabla w|^{2} & \lesssim\int_{\Omega}|\nabla\nabla w-\nabla\nabla p|^{2}+\int_{\Omega}|\nabla\nabla p|^{2}\lesssim\frac{\tilde{E}_{b,k,\gamma}}{b}+||\nabla\nabla p||_{L^{2}}^{2}.\label{eq:second-deriv-bound}
\end{align}
The first and last inequalities follow. We turn now to control $\nabla w$ and $e(u)$.

Two separate arguments yield bounds on $\nabla w$, depending on whether $\gamma \geq \sqrt{bk}$ or not. The inequality
\begin{equation}
||\nabla w||_{L^{2}}^{2}\lesssim\frac{\tilde{E}_{b,k,\gamma}}{\gamma}\label{eq:gamma-lowerbd}
\end{equation}
follows directly from \prettyref{eq:shifted_energy} in any case. At the same time, we can interpolate between the bending and substrate terms to obtain another bound. Using the triangle inequality with \prettyref{eq:shifted_energy} as we did in the proof of \prettyref{eq:second-deriv-bound}, we note that
\begin{equation}
\tilde{E}_{b,k,\gamma}+b ||\nabla\nabla p||_{L^{2}}^{2} \gtrsim b||\nabla\nabla w||_{L^{2}}^{2}+k||w||_{L^{2}}^{2}
 \gtrsim\sqrt{bk}||\nabla\nabla w||_{L^{2}}||w||_{L^{2}}+k||w||_{L^{2}}^{2}\label{eq:interpolation_step}
\end{equation} 
by an elementary Young's inequality. Recall the Gagliardo--Nirenberg interpolation inequality
\begin{equation}
||\nabla w||_{L^{2}(\Omega)}\lesssim||\nabla\nabla w||_{L^{2}(\Omega)}^{1/2}||w||_{L^{2}(\Omega)}^{1/2}+C(\Omega)||w||_{L^{2}(\Omega)}\label{eq:GagliardoNirenberg}
\end{equation}
which holds for all $w\in W^{2,2}(\Omega)$ \cite{gilbarg2001elliptic}.
Since by hypothesis $b\leq k$, it follows from \prettyref{eq:interpolation_step}
and \prettyref{eq:GagliardoNirenberg} that
\begin{equation}
||\nabla w||_{L^{2}}^{2}\lesssim_{\Omega}\frac{\sqrt{bk}||\nabla\nabla w||_{L^{2}}||w||_{L^{2}}+k||w||_{L^{2}}^{2}}{\sqrt{bk}}\lesssim\frac{\tilde{E}_{b,k,\gamma}}{\sqrt{bk}}+||\nabla\nabla p||_{L^{2}}^{2}.\label{eq:bk-lowerbd}
\end{equation}
Combining \prettyref{eq:gamma-lowerbd} and \prettyref{eq:bk-lowerbd}
yields the fourth inequality from the claim.

Finally, we handle $e(u)$. By the definition of $\varepsilon$ and H{\"o}lder's inequality,
\begin{align*}
\int_{\Omega}|e(u)| & \leq\int_{\Omega}|\varepsilon|+\int_{\Omega}|\frac{1}{2}\nabla w\otimes\nabla w|+\int_{\Omega}|\frac{1}{2}\nabla p\otimes\nabla p|\lesssim |\Omega|^{1/2} ||\varepsilon||_{L^{2}}+||\nabla w||_{L^{2}}^{2}+||\nabla p||_{L^{2}}^{2}\\
 & \lesssim_\Omega \sqrt{\tilde{E}_{b,k,\gamma}+\gamma^{2}}+\frac{\tilde{E}_{b,k,\gamma}}{\sqrt{bk}\vee \gamma}+||\nabla\nabla p||_{L^{2}}^{2}+||\nabla p||_{L^{2}}^{2}
\end{align*}
where in the last line we used \prettyref{eq:strain-bound}, \prettyref{eq:gamma-lowerbd}, and \prettyref{eq:bk-lowerbd}.
The remaining inequality follows.
\qed\end{proof}
Next, we verify that the weak-$*$ limits of asymptotically strain-free
sequences are tension-free. At the same time, we justify the notion
of defect measures introduced in \prettyref{subsec:Defect-measures}. 
\begin{lemma}
\label{lem:defectmeasure} Let 
\[
(u_{n},w_{n})\overset{*}{\rightharpoonup}(u,0)\quad\text{weakly-\ensuremath{*} in }BD(\Omega)/\mathcal{R}\times W^{1,2}(\Omega)
\]
and suppose it is asymptotically strain-free in that
\[
e(u_{n})+\frac{1}{2}\nabla w_{n}\otimes\nabla w_{n}\to\frac{1}{2}\nabla p\otimes\nabla p\quad\text{stongly in }L^{2}(\Omega;\emph{Sym}_{2}).
\]
Then $\{\nabla w_{n}\otimes\nabla w_{n}\,dx\}$ converges weakly-$*$
in $\mathcal{M}(\Omega;\emph{Sym}_{2})$ to a non-negative, $\emph{Sym}_{2}$-valued Radon measure $\mu$ called the
defect measure of the given sequence. The defect measure
satisfies
\begin{equation}
e(u)+\frac{1}{2}\mu=\frac{1}{2}\nabla p\otimes\nabla p\,dx.\label{eq:defect_eqn-1}
\end{equation}
As a result, the limiting in-plane displacement $u$ must be
tension-free.
\end{lemma}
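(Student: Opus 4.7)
The plan is to identify the candidate limit explicitly by solving the asymptotic strain-free identity for $\nabla w_n \otimes \nabla w_n$, then invoke uniqueness to upgrade subsequential weak-$*$ convergence to full convergence, and finally read off non-negativity from the pointwise structure of $\nabla w_n \otimes \nabla w_n$ as a rank-one outer product.

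First, I would note that $\{w_n\}$, being weakly convergent in $W^{1,2}(\Omega)$, is bounded there, so
\[
\sup_{n} \int_\Omega |\nabla w_n \otimes \nabla w_n|_1 \,dx \leq \sup_{n}  \|\nabla w_n\|_{L^2(\Omega)}^{2} < \infty,
\]
i.e., $\{\nabla w_n \otimes \nabla w_n \,dx\}$ is bounded in $\mathcal{M}(\Omega;\text{Sym}_2)$. By Banach--Alaoglu, any subsequence admits a further weak-$*$ convergent sub-subsequence. To pin down the limit, I would rewrite the asymptotic strain-free hypothesis as
\[
\tfrac{1}{2}\nabla w_n \otimes \nabla w_n \,dx \;=\; \tfrac{1}{2}\nabla p \otimes \nabla p \,dx \;-\; e(u_n) \;+\; o(1)_{L^2(\Omega;\text{Sym}_2)}.
\]
The right-hand side converges weakly-$*$ in $\mathcal{M}(\Omega;\text{Sym}_2)$: the $o(1)_{L^2}$ term tends to zero in $L^1$ (since $\Omega$ is bounded) and hence weakly-$*$ in $\mathcal{M}$, while $e(u_n) \overset{*}{\rightharpoonup} e(u)$ by the characterization of weak-$*$ convergence in $BD(\Omega)/\mathcal{R}$ recalled in \prettyref{subsec:Bounded-deformation-maps}. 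Thus every weak-$*$ subsequential limit of $\{\nabla w_n \otimes \nabla w_n \,dx\}$ equals the fixed measure
\[
\mu \;=\; \nabla p \otimes \nabla p \,dx \;-\; 2 e(u),
\]
and uniqueness of the limit implies that the whole sequence converges weakly-$*$ to $\mu$. This identity is precisely \prettyref{eq:defect_eqn-1}.

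It remains to check $\mu \geq 0$ and to conclude that $u$ is tension-free. For any $\sigma \in C_c(\Omega;\text{Sym}_2)$ with $\sigma(x) \geq 0$ as a symmetric matrix for all $x$, one has
\[
\int_\Omega \langle \sigma, \nabla w_n \otimes \nabla w_n\rangle \,dx \;=\; \int_\Omega \nabla w_n \cdot \sigma \nabla w_n \,dx \;\geq\; 0,
\]
and passing to the weak-$*$ limit yields $\int_\Omega \langle \sigma, \mu\rangle \geq 0$. This is exactly what is meant by $\mu \in \mathcal{M}_+(\Omega;\text{Sym}_2)$. Combining with \prettyref{eq:defect_eqn-1} gives $e(u) \leq \tfrac{1}{2}\nabla p \otimes \nabla p \,dx$, i.e., $u$ is tension-free.

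There is no serious obstacle here: the argument is essentially bookkeeping among three modes of convergence (strong $L^2$, weak-$*$ in $\mathcal{M}$, and weak-$*$ in $BD/\mathcal{R}$). The only point that deserves care is that the asymptotic strain-free condition is what forces the weak-$*$ limit of $\nabla w_n \otimes \nabla w_n \,dx$ to be determined \emph{a priori} in terms of $u$ and $p$ alone, so that the whole sequence (and not merely subsequences) converges.
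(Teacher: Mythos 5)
Your argument is correct and is essentially the paper's proof: both hinge on writing $\nabla w_n\otimes\nabla w_n\,dx$ as the difference of the strongly convergent strain combination and $2e(u_n)$ (which converges weakly-$*$ by the definition of weak-$*$ convergence in $BD(\Omega)/\mathcal{R}$), and both obtain $\mu\geq0$ because non-negativity is preserved under weak-$*$ limits. The preliminary Banach--Alaoglu/subsequence step is harmless but redundant, since the identity already forces the full sequence to converge weakly-$*$ to the stated limit.
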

\begin{proof}
Evidently,
\[
\nabla w_{n}\otimes\nabla w_{n}\,dx=2\left(e(u_{n})+\frac{1}{2}\nabla w_{n}\otimes\nabla w_{n}\,dx\right)-2e(u_{n})\quad\forall\,n.
\]
On the righthand side, we see the difference between a sequence converging
strongly to $\nabla p\otimes\nabla p\,dx$,
and another converging weakly-$*$ to $2e(u)$. Therefore, the lefthand side converges weakly-$*$. Passing to the limit and rearranging yields \prettyref{eq:defect_eqn-1}. 
Non-negativity is preserved by weak-$*$ convergence. Therefore, $\mu\geq0$
and the result is proved.
\qed\end{proof}

At this point, we have enough to deduce the first part of \prettyref{prop:gam_liminf}
on compactness. In order to prove the second part 
on \emph{a priori} lower bounds, we must identify the optimal
prefactor in the bound on $\nabla w$ from \prettyref{lem:aprioribds}.
That bound was a consequence of the Gagliardo--Nirenberg interpolation
inequality \prettyref{eq:GagliardoNirenberg} so, naturally, we seek
a sharpened version of it now.
\begin{lemma}
\label{lem:sharp_GN} Let $\chi\in C_{c}^{\infty}(\Omega)$ satisfy
$0\leq\chi\leq1$. Then for all $b,k>0$ and $w\in W^{2,2}(\Omega)$
we have that
\begin{align*}
b\int_{\Omega}|\nabla\nabla w|^{2}+k\int_{\Omega}|w|^{2} & \geq2\sqrt{bk}\int_{\Omega}|\nabla w|^{2}\chi+\int_{\Omega}|b^{1/2}\Delta w+k^{1/2}w|^{2}\chi\\
 & \qquad-2\sqrt{bk}||\nabla\chi||_{L^{\infty}(\Omega)}||w||_{L^{2}(\Omega)}||\nabla w||_{L^{2}(\Omega)}-b||\nabla\nabla\chi||_{L^{\infty}(\Omega)}||\nabla w||_{L^{2}(\Omega)}^{2}.
\end{align*}
\end{lemma}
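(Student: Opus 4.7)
The plan is to complete the square, integrate by parts, and exploit a Bochner-type identity to pass from $(\Delta w)^2$ to $|\nabla \nabla w|^2$. The compact support of $\chi$ is what makes every boundary term disappear, which is why the inequality comes out clean.

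First I would expand $\int_\Omega \chi |b^{1/2}\Delta w + k^{1/2} w|^2$ as the sum $b\int \chi(\Delta w)^2 + k\int \chi|w|^2 + 2\sqrt{bk}\int \chi\, w\,\Delta w$, and then integrate the cross term by parts. Since $\chi\in C_c^\infty(\Omega)$,
\begin{equation*}
\int_\Omega \chi\, w\,\Delta w = -\int_\Omega \chi|\nabla w|^2 - \int_\Omega w\,\nabla\chi\cdot\nabla w.
\end{equation*}
Moving $-2\sqrt{bk}\int \chi|\nabla w|^2$ to the other side produces the desired wrinkling term, and the remaining $-2\sqrt{bk}\int w\,\nabla\chi\cdot\nabla w$ is controlled by $2\sqrt{bk}\|\nabla\chi\|_{L^\infty}\|w\|_{L^2}\|\nabla w\|_{L^2}$ via Cauchy--Schwarz. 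This accounts for the first error term in the claim.

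The second, more interesting step is to trade $b\int\chi(\Delta w)^2$ for $b\int |\nabla\nabla w|^2$. I would use the pointwise two-dimensional identity $(\Delta w)^2 - |\nabla\nabla w|^2 = 2\det\nabla\nabla w$ combined with the very weak Hessian identity $2\det\nabla\nabla w = -\mathrm{curl}\,\mathrm{curl}(\nabla w\otimes\nabla w)$ recorded in \prettyref{eq:veryweak_hessian}. Multiplying by $\chi$ and integrating by parts twice (again, no boundary terms because $\chi$ is compactly supported) gives
\begin{equation*}
\int_\Omega \chi(\Delta w)^2 = \int_\Omega \chi|\nabla\nabla w|^2 - \int_\Omega \langle \nabla^\perp\nabla^\perp\chi,\,\nabla w\otimes\nabla w\rangle.
\end{equation*}
Since $0\leq\chi\leq 1$, the first term is bounded above by $\int |\nabla\nabla w|^2$, and the second is bounded in absolute value by $\|\nabla\nabla\chi\|_{L^\infty}\|\nabla w\|_{L^2}^2$ (the Frobenius norms of $\nabla^\perp\nabla^\perp\chi$ and $\nabla\nabla\chi$ agree, and $|\nabla w\otimes\nabla w|=|\nabla w|^2$ pointwise). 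This produces the second error term.

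Assembling the pieces, using $k\int \chi|w|^2\leq k\int|w|^2$, and rearranging yields exactly the inequality in the claim. There is no real obstacle here beyond bookkeeping; the one place where a sign could easily be mislaid is the determinant identity, so I would verify that step explicitly. The essential content is that the difference between $b\int(\Delta w)^2$ and $b\int|\nabla\nabla w|^2$ is a total curvature correction of order $b\|\nabla\nabla\chi\|_{L^\infty}\|\nabla w\|_{L^2}^2$, which is precisely the form needed to feed into the equi-coercivity and $\Gamma$-liminf arguments of \prettyref{prop:gam_liminf}.
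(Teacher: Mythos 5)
Your proof is correct and follows essentially the same route as the paper's: the paper states the two pointwise identities $|\nabla\nabla w|^{2}=|\Delta w|^{2}-2\det\nabla\nabla w$ and $b|\Delta w|^{2}+k|w|^{2}=2\sqrt{bk}\left(|\nabla w|^{2}-\text{div}(w\nabla w)\right)+|b^{1/2}\Delta w+k^{1/2}w|^{2}$ upfront and tests them against $\chi$, which is exactly your complete-the-square-and-integrate-by-parts computation together with the very weak Hessian identity. The sign in the determinant identity checks out, and the error terms land where you say they do.
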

\begin{remark}
Eventually, we shall apply this to sequences $\{w_{n}\}$ converging
weakly-$*$ to zero in $W^{1,2}(\Omega)$, under the condition that
$b\ll k$. Dividing by $\sqrt{bk}$ we see that the terms appearing
on the second line above behave as errors. These arise, respectively, from estimates on $\text{div}(w\nabla w)$
and $\det\nabla\nabla w$ in suitable negative norms.
\end{remark}
\begin{remark}
Following up on the previous remark, we note that if $\varepsilon \approx 0$ then $\det \nabla\nabla w\approx \det \nabla \nabla p$ as a result of the very weak Hessian identity \prettyref{eq:veryweak_hessian}, the Saint-Venant compatibility conditions \prettyref{eq:SVcompat}, and the definition of the strain in \prettyref{eq:strain_vK}. Hence, $|\nabla \nabla w|\approx |\Delta w|$ explaining the appearance of the mean curvature $H\approx\frac{1}{2}\Delta w$ on the lefthand side of the geometric interpolation inequality \prettyref{eq:geometric_interpolation_inequality} in the introduction.
\end{remark}
\begin{proof}
Note the pointwise identities 
\begin{align}
|\nabla\nabla w|^{2} & =|\Delta w|^{2}-2\det\nabla\nabla w\label{eq:identityone}\\
b|\Delta w|^{2}+k|w|^{2} & =2\sqrt{bk}\left(|\nabla w|^{2}-\text{div}(w\nabla w)\right)+|b^{1/2}\Delta w+k^{1/2}w|^{2}\label{eq:identitytwo}
\end{align}
as well as the very weak Hessian identity \prettyref{eq:veryweak_hessian}.
Now let $\chi$ be as in the statement. Testing the first identity
\prettyref{eq:identityone} against $\chi$ and integrating by parts
using \prettyref{eq:veryweak_hessian}, we obtain that
\begin{align*}
\int_{\Omega}|\nabla\nabla w|^{2}\chi & =\int_{\Omega}|\Delta w|^{2}\chi-2\det\nabla\nabla w\chi=\int_{\Omega}|\Delta w|^{2}\chi+\left\langle \nabla w\otimes\nabla w,\nabla^{\perp}\nabla^{\perp}\chi\right\rangle \\
 & \geq\int_{\Omega}|\Delta w|^{2}\chi-||\nabla\nabla\chi||_{L^{\infty}}||\nabla w||_{L^{2}}^{2}.
\end{align*}
Testing the second identity \prettyref{eq:identitytwo} against $\chi$
and integrating by parts, there follows
\begin{align*}
\int_{\Omega}\left(b|\Delta w|^{2}+k|w|^{2}\right)\chi & =2\sqrt{bk}\int_{\Omega}|\nabla w|^{2}\chi+w\nabla w\cdot\nabla\chi+\int_{\Omega}|b^{1/2}\Delta w+k^{1/2}w|^{2}\chi\\
 & \geq2\sqrt{bk}\int_{\Omega}|\nabla w|^{2}\chi+\int_{\Omega}|b^{1/2}\Delta w+k^{1/2}w|^{2}\chi-2\sqrt{bk}||\nabla\chi||_{L^{\infty}}||w||_{L^{2}}||\nabla w||_{L^{2}}.
\end{align*}
Combining these and using that $0\leq\chi\leq1$, we deduce that
\begin{align*}
b\int_{\Omega}|\nabla\nabla w|^{2}+k\int_{\Omega}|w|^{2} & \geq\int_{\Omega}\left(b|\nabla\nabla w|^{2}+k|w|^{2}\right)\chi\\
 & \geq\int_{\Omega}\left(b|\Delta w|^{2}+k|w|^{2}\right)\chi-b||\nabla\nabla\chi||_{L^{\infty}}||\nabla w||_{L^{2}}^{2}\\
 & \geq2\sqrt{bk}\int_{\Omega}|\nabla w|^{2}\chi+\int_{\Omega}|b^{1/2}\Delta w-k^{1/2}w|^{2}\chi\\
 & \qquad-2\sqrt{bk}||\nabla\chi||_{L^{\infty}}||w||_{L^{2}}||\nabla w||_{L^{2}}-b||\nabla\nabla\chi||_{L^{\infty}}||\nabla w||_{L^{2}}^{2}.
\end{align*}
This completes the proof.
\qed\end{proof}
We are ready to prove the $\Gamma$-liminf and equi-coercivity parts
of \prettyref{thm:gamma-lim}. 

\vspace{1em}
\noindent\emph{Proof of \prettyref{prop:gam_liminf}} First, consider
an admissible sequence $\{(u_{b,k,\gamma},w_{b,k,\gamma})\}$ whose
energy satisfies
\begin{equation}
E_{b,k,\gamma}(u_{b,k,\gamma},w_{b,k,\gamma})\lesssim \sqrt{bk}\vee\gamma\ll1\label{eq:energy_bd}
\end{equation}
but is otherwise arbitrary. We must prove that it is weakly-$*$ pre-compact
and identify its limit points. According to \prettyref{lem:aprioribds}
and the assumptions on the parameters in \prettyref{eq:assumptions-Prop2.1},
any sequence satisfying \prettyref{eq:energy_bd} enjoys the estimates
\begin{align}
\int_{\Omega}|e(u_{b,k,\gamma})+\frac{1}{2}\nabla w_{b,k,\gamma}\otimes\nabla w_{b,k,\gamma}-\frac{1}{2}\nabla p\otimes\nabla p|^{2} & \lesssim_{\Omega}\sqrt{bk}\vee\gamma+\gamma^{2}\ll1,\label{eq:bound1}\\
\int_{\Omega}|e(u_{b,k,\gamma})| & \lesssim_{\Omega,p}1+\sqrt{\sqrt{bk}\vee\gamma}+\frac{\sqrt{bk}\vee\gamma}{\sqrt{bk}\vee\gamma}\lesssim1,\label{eq:bound2}\\
\int_{\Omega}|w_{b,k,\gamma}|^{2} & \lesssim_\Omega\frac{\sqrt{bk}\vee\gamma+\gamma^2}{k} \lesssim \sqrt{\frac{b}{k}}\vee\frac{\gamma}{k} \ll1,\label{eq:bound3}\\
\int_{\Omega}|\nabla w_{b,k,\gamma}|^{2} & \lesssim_{\Omega,p}1+\frac{\sqrt{bk}\vee\gamma}{\sqrt{bk}\vee\gamma}\lesssim1,\label{eq:bound4}\\
\int_{\Omega}|\nabla\nabla w_{b,k,\gamma}|^{2} &  \lesssim_{\Omega,p}1+\frac{\sqrt{bk}\vee\gamma+\gamma^2}{b} \lesssim \sqrt{\frac{k}{b}}\vee \frac{\gamma}{b}.\label{eq:bound5}
\end{align}
The first estimate shows that $\{(u_{b,k,\gamma},w_{b,k,\gamma})\}$
is asymptotically strain-free. The second one proves that $u_{b,k,\gamma}$ remains uniformly
bounded in $BD/\mathcal{R}$. The third and fourth estimates show that $w_{b,k,\gamma}$ remains uniformly bounded in $W^{1,2}$ and converges
to zero strongly in $L^{2}$. In combination with the Banach--Alaoglu
theorem, these imply that $\{(u_{b,k,\gamma},w_{b,k,\gamma})\}$ is
weakly-$*$ pre-compact in $BD/\mathcal{R}\times W^{1,2}$. Applying
\prettyref{lem:defectmeasure} we learn that its limit points are
of the form $(u_{\text{eff}},0)$ where the in-plane part $u_{\text{eff}}$
is tension-free. The first part of \prettyref{prop:gam_liminf}
is proved.

Now we consider an admissible sequence satisfying 
\[
(u_{b,k,\gamma},w_{b,k,\gamma})\stackrel{*}{\rightharpoonup}(u_{\text{eff}},0)\quad\text{weakly-}*\text{ in }BD/\mathcal{R}\times W^{1,2}
\]
and for which the bound \prettyref{eq:energy_bd} holds. In terms of the defect
measure $\mu$ from \prettyref{lem:defectmeasure}, we must prove
that
\begin{equation}
\liminf\,\frac{E_{b,k,\gamma}(u_{b,k,\gamma},w_{b,k,\gamma})}{2\sqrt{bk}+\gamma}\geq\frac{1}{2}\int_{\Omega}|\mu|_{1}.\label{eq:defectmeasure_LB}
\end{equation}
Indeed, according to \prettyref{eq:defect_eqn-1} and the first integration
by parts identity in \prettyref{eq:IBP1}, there holds
\[
\frac{1}{2}\int_{\Omega}|\mu|_{1}=\frac{1}{2}\int_{\Omega}\left\langle Id,\mu\right\rangle =\int_{\Omega}\left\langle Id,\frac{1}{2}\nabla p\otimes\nabla p\,dx-e(u_{\text{eff}})\right\rangle =\int_{\Omega}\frac{1}{2}|\nabla p|^{2}-\int_{\partial\Omega}u_{\text{eff}}\cdot\hat{\nu}\,ds.
\]
Our plan is to pass to the limit in the bending and substrate terms
from $E_{b,k,\gamma}$ using the sharp Gagliardo--Nirenberg
inequality from \prettyref{lem:sharp_GN}. Passing to the limit in the surface energy will present
no additional difficulties.

Consider the bending term. Due to \prettyref{eq:bound5} and our assumption from \prettyref{eq:assumptions-Prop2.1} that $b\ll k$, it satisfies
\begin{align*}
b\int_{\Omega}|\nabla\nabla w_{b,k,\gamma}-\nabla\nabla p|^{2} & \geq b\int_{\Omega}|\nabla\nabla w_{b,k,\gamma}|^{2}-2b||\nabla\nabla w_{b,k,\gamma}||_{L^{2}}||\nabla\nabla p||_{L^{2}}-b||\nabla\nabla p||_{L^{2}}^{2}\\
 & \geq b\int_{\Omega}|\nabla\nabla w_{b,k,\gamma}|^{2}-o(\sqrt{bk}).
\end{align*}
Fix a cutoff function $\chi\in C_{c}^{\infty}(\Omega)$ that satisfies
$0\leq\chi\leq1$ but is otherwise arbitrary. Using the sharp Gagliardo--Nirenberg
inequality from \prettyref{lem:sharp_GN} and the bound just obtained,
we conclude that 
\begin{align*}
b\int_{\Omega}|\nabla\nabla w_{b,k,\gamma}-\nabla\nabla p|^{2}+k\int_{\Omega}|w_{b,k,\gamma}|^{2} & \geq2\sqrt{bk}\int_{\Omega}|\nabla w_{b,k,\gamma}|^{2}\chi-2\sqrt{bk}||\nabla\chi||_{L^{\infty}}||w||_{L^{2}}||\nabla w||_{L^{2}}\\
 & \qquad-b||\nabla\nabla\chi||_{L^{\infty}}||\nabla w||_{L^{2}}^{2}-o(\sqrt{bk})\\
 & \geq2\sqrt{bk}\int_{\Omega}|\nabla w_{b,k,\gamma}|^{2}\chi-o(\sqrt{bk})
\end{align*}
by \prettyref{eq:bound3} and \prettyref{eq:bound4}. Combining this with the definition \prettyref{eq:non-dim_energy} of the energy and \prettyref{eq:bound1}, there results
\begin{align*}
E_{b,k,\gamma}(u_{b,k,\gamma},w_{b,k,\gamma}) & \geq\frac{b}{2}\int_{\Omega}|\nabla\nabla w_{b,k,\gamma}-\nabla\nabla p|^{2}+\frac{k}{2}\int_{\Omega}|w_{b,k,\gamma}|^{2}+\gamma\int_{\Omega}\frac{1}{2}|\nabla p|^{2}-\text{div}\,u_{b,k,\gamma}\\
 & \geq(2\sqrt{bk}+\gamma)\int_{\Omega}\left\langle \chi Id,\frac{1}{2}\nabla w_{b,k,\gamma}\otimes\nabla w_{b,k,\gamma}\right\rangle -o(\sqrt{bk}\vee\gamma).
\end{align*}
Dividing by $2\sqrt{bk}+\gamma$ and passing to the limit via the
defect measure $\mu$ from \prettyref{lem:defectmeasure} yields
\[
\liminf\,\frac{E_{b,k,\gamma}(u_{b,k,\gamma},w_{b,k,\gamma})}{2\sqrt{bk}+\gamma}\geq \lim\, \int_{\Omega}\left\langle \chi Id,\frac{1}{2}\nabla w_{b,k,\gamma}\otimes\nabla w_{b,k,\gamma}\right\rangle
= \frac{1}{2}\int_{\Omega}\left\langle \chi Id,\mu\right\rangle .
\]
Letting $\chi\uparrow1$ and noting that $\mu\geq0$ we obtain \prettyref{eq:defectmeasure_LB}.
The proof is complete.\qed

\section{The piecewise herringbone construction \label{sec:Gamma_limsup}}

The previous section established the $\Gamma$-liminf and equi-coercivity
parts of \prettyref{thm:gamma-lim}. 
Here, we complete its proof by producing the required recovery sequences. 
Note to do so we will need to make use of all but one of the assumptions \vpageref{par:Assumptions}.
The main result of this section is as follows:
\begin{proposition}
\label{prop:recovery} (recovery sequences)
Suppose 
\begin{equation}\label{eq:assumptions-Prop3.1}
\Omega\text{ is bounded, Lipschitz, and strictly star-shaped},\quad p\in W^{2,\infty}(\Omega),
\quad\text{and }\quad\left(\frac{b}{k}\right)^{1/10}\ll2\sqrt{bk}+\gamma\ll1
\end{equation}
and let $u_{\emph{eff}}\in BD(\Omega)$ be tension-free, meaning that
\[
e(u_{\emph{eff}})\leq\frac{1}{2}\nabla p\otimes\nabla p\,dx.
\]
Then there exists
\[
(u_{b,k,\gamma},w_{b,k,\gamma})\overset{*}{\rightharpoonup}(u_{\emph{eff}},0)\quad\text{weakly-\ensuremath{*} in }BD(\Omega)\times W^{1,2}(\Omega) 
\]
such that
\begin{equation}
\lim\,\frac{E_{b,k,\gamma}(u_{b,k,\gamma},w_{b,k,\gamma})}{2\sqrt{bk}+\gamma}=\int_{\Omega}\frac{1}{2}|\nabla p|^{2}\,dx-\int_{\partial\Omega}u_{\emph{eff}}\cdot\hat{\nu}\,ds.\label{eq:limit_UB}
\end{equation}
\end{proposition}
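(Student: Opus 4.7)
The plan is to construct $(u_{b,k,\gamma}, w_{b,k,\gamma})$ by implementing the piecewise herringbone ansatz from \prettyref{eq:ansatz} with wrinkle scale $l_{\text{wr}}=(b/k)^{1/4}$, and then to choose the in-plane displacement so as to kill the leading order stretching strain. The construction proceeds in three stages: (i) approximate the given tension-free $u_{\text{eff}}$ by smoother maps whose effective strain is piecewise constant on a mesoscopic grid; (ii) build a herringbone on each square that accommodates the local effective strain; (iii) glue the patches, cut off near $\partial\Omega$, and estimate the resulting energy.

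For (i), I would exploit the strict star-shape of $\Omega$ together with $p\in W^{2,\infty}$: dilating $u_{\text{eff}}$ slightly toward a star-center and then mollifying produces smooth approximants $u^{\text{sm}}_{\text{eff}}$, where Lipschitz regularity of $\nabla p$ is used to absorb the $O(\lambda-1)$ error in the upper obstacle $\tfrac{1}{2}\nabla p\otimes\nabla p\,dx$ via a small additive positive perturbation so that tension-freeness is preserved. Averaging the effective strains on a grid of side $L\gg l_{\text{wr}}$ produces the piecewise constant field $\langle\varepsilon_{\text{eff}}\rangle\le 0$ appearing in \prettyref{eq:ansatz}, and a diagonal argument will let $L$ shrink to zero compatibly with $b,k,\gamma$. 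For (ii), on each square where $M=-\langle\varepsilon_{\text{eff}}\rangle\ge 0$ is constant, decompose $M/\text{tr}\,M=\tfrac{1}{2}(\hat{\eta}_1\otimes\hat{\eta}_1+\hat{\eta}_2\otimes\hat{\eta}_2)$ for unit vectors $\hat{\eta}_i$; this is possible since the eigenvalues of the left side lie in $[0,1]$ and sum to $1$. Twinning between $\hat{\eta}_1$ and $\hat{\eta}_2$ at an intermediate scale $L_{\text{twin}}$ (with $l_{\text{wr}}\ll L_{\text{twin}}\ll L$) yields a smooth direction field $\hat{\eta}_{\text{herr}}$ and the sinusoidal $w$. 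A direct computation shows that the average of $\tfrac{1}{2}\nabla w\otimes\nabla w$ equals $-\langle\varepsilon_{\text{eff}}\rangle$ to leading order in $l_{\text{wr}}$, with oscillatory remainder that can be written as $-e(v)$ for an explicit in-plane corrector $v$ of size $O(l_{\text{wr}})$; setting $u_{b,k,\gamma}=u^{\text{sm}}_{\text{eff}}+v$ then renders the pointwise stretching strain $\varepsilon$ small.

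For (iii), standard cutoff constructions interpolate between adjacent herringbones across transition layers of width $\sim L_{\text{twin}}$, and between the interior pattern and the prescribed boundary trace across a layer of width $\delta$ just inside $\partial\Omega$. Energy accounting then goes as follows: bending plus substrate, minimized over $l_{\text{wr}}$ precisely at $l_{\text{wr}}=(b/k)^{1/4}$, contribute $2\sqrt{bk}\cdot\tfrac{1}{2}\int_{\Omega}\text{tr}\,\mu+o(\sqrt{bk})$, which by the trace identity $\tfrac{1}{2}\int\text{tr}\,\mu=\int_{\Omega}\tfrac{1}{2}|\nabla p|^2\,dx-\int_{\partial\Omega}u_{\text{eff}}\cdot\hat{\nu}\,ds$ yields the target $2\sqrt{bk}$-proportional piece; the surface-tension term produces its $\gamma$-proportional piece exactly once the trace of $u_{b,k,\gamma}$ matches $u_{\text{eff}}|_{\partial\Omega}$; the stretching term is beaten by $\sqrt{bk}$ by construction. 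The main obstacle will be the quantitative error analysis: four scales ($l_{\text{wr}}$, $L_{\text{twin}}$, $L$, $\delta$) compete, and the contributions of (a) the transition layers between patches, (b) the boundary cutoff, (c) the non-smoothness of the original $\varepsilon_{\text{eff}}$, and (d) the lower-order curvature corrections in the ansatz must all be simultaneously $o(2\sqrt{bk}+\gamma)$. Balancing the four scales as small powers of $b/k$ produces the sharp condition $(b/k)^{1/10}\ll 2\sqrt{bk}+\gamma$ recorded in \prettyref{eq:A2b}.
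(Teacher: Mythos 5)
Your proposal is correct and follows essentially the same route as the paper: smooth, uniformly tension-free approximation via dilation toward a star-center plus mollification (Lemma \ref{lem:smoothapproximation}), piecewise-constant averaging of the target defect on a mesoscopic lattice, a twinned-wrinkle-plus-shear herringbone on each square with $l_{\text{wr}}=(b/k)^{1/4}$, cutoff gluing across internal and external walls, and a multiscale balance yielding the $(b/k)^{1/10}$ error. The only (immaterial) variation is your herringbone parametrization — equal-fraction twins along two non-eigenvector directions rather than the paper's $\theta$-weighted twins along the eigenvectors of $\mu$ — which produces the same laminate-compatible shear corrector and the same leading-order energy.
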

\begin{remark}
The missing hypothesis is that $\gamma \ll k$ (see \prettyref{eq:A2a}). While the form of the $\Gamma$-limit does depend on this hypothesis --- as can be anticipated from the discussion surrounding \prettyref{eq:nearlystrainfree-nearlyflat} --- it is not necessary here as the recovery sequences we describe do not depend on $\gamma$. Nevertheless, we include $\gamma$ as a subscript to keep the notation consistent, and also to remind that other recovery sequences may very well depend on all three parameters. 
\end{remark}
Our proof of \prettyref{prop:recovery} centers around the notion
of the \emph{target defect measure} $\mu$ specified by $u_{\text{eff}}$.
Following the discussion in \prettyref{subsec:Defect-measures}
in the introduction, we note that any recovery sequence must satisfy
\begin{gather*}
\nabla w_{b,k,\gamma}\otimes\nabla w_{b,k,\gamma}\,dx\stackrel{*}{\rightharpoonup}\mu\quad\text{weakly-\ensuremath{*} in }\mathcal{M}(\Omega;\text{Sym}_{2})\\
\text{where}\quad\mu=-2\varepsilon_{\text{eff}}\quad\text{and}\quad\varepsilon_{\text{eff}}=e(u_{\text{eff}})-\frac{1}{2}\nabla p\otimes\nabla p\,dx.
\end{gather*}
We think of $\frac{1}{2}\mu$ as a ``misfit'' to be alleviated by some well-chosen pattern. 
Note $\mu \geq 0$ as $u_{\text{eff}}$ is tension-free.

The proof proceeds in three steps. First, we reduce to the case where $\mu$ is Lipschitz and strictly positive. The key lemma is in \prettyref{subsec:smooth-approx}, where we show how to approximate tension-free $u\in BD$ with $u\in C^{\infty}$
that are \emph{uniformly tension-free}, meaning that
\begin{equation}
e(u)\leq\frac{1}{2}\nabla p\otimes\nabla p-\lambda Id \quad\text{for some }\lambda >0.\label{eq:uniformly_tensionfree}
\end{equation}
Though our proof of this result relies crucially on the supposed strict star-shapedness of $\Omega$, we wonder whether it holds in greater generality.  
It is not difficult to understand why we would like $\mu$ to be Lipschitz, as it can then be approximated
by a piecewise constant target defect $\left\langle \mu\right\rangle $
obtained from averaging $\mu$ on a suitable lattice of squares (shown in \prettyref{fig:disc_herringbone} in bold). On the other hand, we pass from $\mu\geq0$ to $\mu>0$ only because it shortens the proof. Note this does not preclude the possibility that optimal $\mu$ --- i.e., those minimizing the righthand side of \prettyref{eq:limit_UB}, see also  \prettyref{eq:limitingpblm-defect} --- may turn out to be rank one or even to vanish somewhere.

\begin{figure}[tbh]
\centering
\includegraphics[width=0.4\paperwidth]{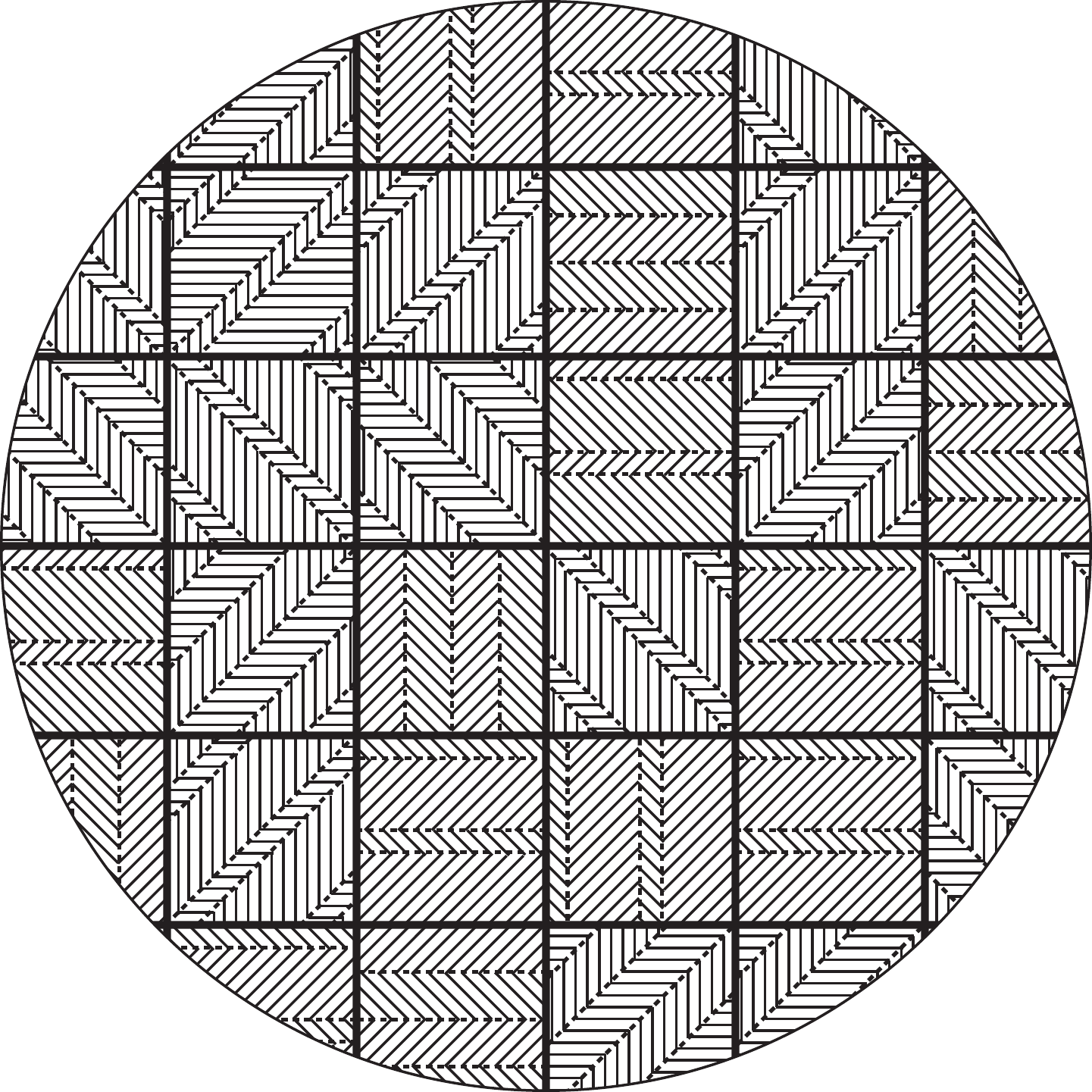}\caption{A ``piecewise herringbone'' pattern representative of the ones we use to construct recovery sequences. Herringbones adapted to constant target defects occupy
individual squares. Each herringbone consists of twinned uni-directional
wrinkles and bands of in-plane shear. Wrinkles are indicated by thin lines, and dashed lines indicate ``internal
walls'' across which their direction rapidly varies. Bold lines
indicate ``external walls'' separating the herringbones.
The number of squares, the number of twins, and the width of the walls
will be optimized.\label{fig:disc_herringbone}}
\end{figure}

The second step zooms into the squares where $\mu \approx \langle \mu \rangle $.
\prettyref{subsec:herringbone-patterns} produces a two-scale
wrinkling pattern known alternatively as the ``herringbone'', ``chevron'', or ``zigzag'' one, and which can be adapted to any constant target defect (e.g., the ones prescribed by $\langle \mu \rangle$).
Such patterns  occur naturally in bi-axially compressed sheets whose displacements are suppressed \cite{cai2011periodic,chen2004family,huang2004evolution,huang2005nonlinear}. 
We were inspired by their analyses in \cite{audoly2008buckling_b,audoly2008buckling_c,kohn2013analysis}, the last of which
comes the closest to what we do here. That reference identifies the scaling law of the minimum energy in a model favoring herringbones. In \prettyref{subsec:herringbone-patterns}, we sharpen this result with a version of the herringbone ansatz whose energy is optimal to leading order.

Finally, \prettyref{subsec:piecewise-herringbone-patterns}  assembles the individually herringboned squares into the
``piecewise herringbone''  pattern depicted
in \prettyref{fig:disc_herringbone}, and estimates its energy. 
We discuss ``walls'' of two types where the direction of wrinkling can rapidly change: ``internal walls'' that reside within the squares, and ``external walls'' at the interfaces between neighboring squares.
The total cost of the walls increases in proportion to their area. It scales, in particular, with the total number of squares. On the other hand, the cost associated with the approximation $\mu\approx\left\langle \mu\right\rangle $
decays with an increasing number of squares. Balancing these, we eventually
deduce that the excess energy implicit in \prettyref{eq:limit_UB} can be made $\lesssim(b/k)^{1/10}$ by using $\sim(k/b)^{1/10}$
squares --- see \prettyref{cor:optimized_ph_pattern}
for the details. \prettyref{subsec:recovery-sequences} concludes with  the formal proof of \prettyref{prop:recovery}.

\subsection{Smooth approximation of tension-free displacements\label{subsec:smooth-approx}}

We start by showing how to approximate tension-free displacements
by smooth and uniformly tension-free ones. It will be important later on that we  work in a topology
for which the functional on the righthand side of \prettyref{eq:limit_UB} is continuous.
Although the trace map $u\mapsto u|_{\partial\Omega}$ fails to be continuous in the weak-$*$ topology on $BD$, it is continuous
in the intermediate topology there. See \prettyref{subsec:Bounded-deformation-maps} for more details.
\begin{lemma}
\label{lem:smoothapproximation} Let $\Omega\subset \mathbb{R}^2$ be bounded, Lipschitz, and strictly star-shaped, and let $p\in W^{2,\infty}(\Omega)$. 
The set of smooth and uniformly tension-free displacements is intermediately dense in the tension-free ones. That is, given any tension-free $u\in BD(\Omega)$,
there exists $\{u_{n}\}_{n\in\mathbb{N}}\subset C^{\infty}(\overline{\Omega};\mathbb{R}^{2})$
satisfying \prettyref{eq:uniformly_tensionfree} such that
\[
u_{n}\to u\quad\text{strongly in }L^{1}(\Omega)\quad\text{and}\quad\int_{\Omega}|e(u_{n})|_{1}\to\int_{\Omega}|e(u)|_{1}\quad\text{as }n\to\infty.
\]
\end{lemma}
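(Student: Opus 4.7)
The plan is a three-step construction combining a scaling trick that exploits strict star-shapedness, standard mollification, and a small linear correction to enforce the strict inequality \prettyref{eq:uniformly_tensionfree}. Throughout, WLOG $\Omega$ is strictly star-shaped with respect to $0$. In the first step, strict star-shapedness (combined with the Lipschitz hypothesis on $\partial\Omega$) gives $\overline{\Omega}\subset\subset \tau\Omega$ for all $\tau>1$. For such $\tau$, define $v_\tau(y) = \tau u(y/\tau)$ on $\tau\Omega$. A direct duality computation for $\sigma \in C_c(\tau\Omega; \text{Sym}_2)$ yields
\[
\int_{\tau\Omega}\langle \sigma, e(v_\tau)\rangle = \tau^{2}\int_{\Omega}\langle \sigma\circ T_\tau,\, e(u)\rangle, \qquad T_\tau(y) = \tau y.
\]
Testing against positive semi-definite $\sigma$ and using tension-freeness of $u$ gives the matrix-measure bound $e(v_\tau) \leq \tfrac{1}{2}\nabla p(\cdot/\tau)\otimes \nabla p(\cdot/\tau)\,dx$ on $\tau\Omega$. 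The same change of variables shows $v_\tau \to u$ in $L^1(\Omega)$ and $\int_{\Omega}|e(v_\tau)|_1 \to \int_{\Omega}|e(u)|_1$ as $\tau \to 1^+$.

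In the second step, choose a standard non-negative symmetric mollifier $\rho_\epsilon$ of radius $\epsilon < \mathrm{dist}(\overline{\Omega}, \partial(\tau\Omega))$ and set $v_{\tau,\epsilon} = v_\tau * \rho_\epsilon \in C^\infty(\overline{\Omega}; \mathbb{R}^2)$. Since $\rho_\epsilon \geq 0$ and mollification commutes with $e(\cdot)$, the inequality from Step 1 passes to $v_{\tau,\epsilon}$. Using $p\in W^{2,\infty}(\Omega)$, the Lipschitz continuity of $\nabla p$ gives $|\nabla p(y/\tau) - \nabla p(x)| \leq C(\tau - 1 + \epsilon)$ on $|y-x|\leq \epsilon$ with $x\in\Omega$, and expanding the rank-one tensor yields the Loewner bound
\[
e(v_{\tau,\epsilon})(x) \leq \tfrac{1}{2}\nabla p(x) \otimes \nabla p(x) + C_0 (\tau - 1 + \epsilon)\, Id \quad \text{on } \Omega,
\]
with $C_0$ depending only on $\|\nabla p\|_{L^\infty}$, $\|\nabla^2 p\|_{L^\infty}$, and $\mathrm{diam}\,\Omega$.

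In the third step, absorb the error with an affine correction: set $u_{\tau,\epsilon}(x) = v_{\tau,\epsilon}(x) - 2C_0(\tau - 1 + \epsilon)\, x$, so that $e(u_{\tau,\epsilon}) \leq \tfrac{1}{2}\nabla p \otimes \nabla p - C_0 (\tau - 1 + \epsilon)\, Id$. This is uniformly tension-free in the sense of \prettyref{eq:uniformly_tensionfree}. Passing to a diagonal sequence $(\tau_n, \epsilon_n)\to (1^+,0^+)$, $L^1$-convergence follows from Step 1 together with the $L^1$-continuity of mollification on strict superdomains and the trivial vanishing of the linear correction. The total-variation convergence $\int_\Omega |e(u_n)|_1 \to \int_\Omega |e(u)|_1$ combines three facts: Step 1's convergence $\int_\Omega|e(v_\tau)|_1 \to \int_\Omega|e(u)|_1$; the standard intermediate continuity $\int_\Omega|e(v_\tau * \rho_\epsilon)|_1 \to \int_\Omega|e(v_\tau)|_1$ as $\epsilon \to 0$ (valid since $v_\tau$ is defined on a strict neighborhood of $\overline{\Omega}$); and the triangle inequality, which absorbs the correction at cost $O(\tau_n - 1 + \epsilon_n) \to 0$.

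The main obstacle is Step 2: controlling the error that arises when mollification meets the rank-one matrix field $\tfrac{1}{2}\nabla p \otimes \nabla p$. This is precisely where the strong hypothesis $p\in W^{2,\infty}(\Omega)$ becomes essential, rather than the weaker $W^{2,2}$ of \prettyref{eq:A1a}: it is the uniform Lipschitz estimate on $\nabla p$ that produces the clean $O(\tau - 1 + \epsilon)\, Id$ error term, allowing Step 3 to absorb everything with a single linear correction. Without this regularity the error would depend on the local modulus of continuity of $\nabla p$, which could vary spatially and would obstruct the construction of a uniform, smooth, and intermediately convergent sequence.
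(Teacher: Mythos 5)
Your construction is essentially the same as the paper's: dilate outward using strict star-shapedness so that $u$ lives on a neighborhood of $\overline{\Omega}$, mollify, use the $W^{2,\infty}$ bound on $p$ to show the mollified constraint field differs from $\frac{1}{2}\nabla p\otimes\nabla p$ by $O(\tau-1+\epsilon)\,Id$, and absorb that error with a linear map to get the uniform margin \prettyref{eq:uniformly_tensionfree}. The only structural difference is cosmetic --- the paper subtracts $\lambda x$ at the outset to reduce to uniformly tension-free $u$ and lets the pre-existing margin absorb the mollification error, whereas you add the correction at the end --- and this changes nothing.

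One step is under-justified. For the convergence $\int_\Omega|e(v_\tau\ast\rho_\epsilon)|_1\to\int_\Omega|e(v_\tau)|_1$ as $\epsilon\to0$, weak-$*$ convergence only gives $\liminf\geq$ on the open set $\Omega$, while the matching upper bound $\int_\Omega|e(v_\tau)\ast\rho_\epsilon|_1\leq\int_{\Omega+B_\epsilon}|e(v_\tau)|_1$ converges to $|e(v_\tau)|_1(\overline{\Omega})$, which exceeds $|e(v_\tau)|_1(\Omega)$ whenever the dilated measure charges $\partial\Omega$. Being defined on a strict neighborhood of $\overline{\Omega}$ does not rule this out: $|e(v_\tau)|(\partial\Omega)$ equals (up to the Jacobian) $|e(u)|(\frac{1}{\tau}\partial\Omega)$, which can be positive for particular values of $\tau$. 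The fix is the paper's countability argument: since $|e(u)|(\Omega)<\infty$ and the rescaled boundaries are pairwise disjoint, only countably many $\tau$ are bad, so one selects $\tau_n\to1^+$ avoiding them before diagonalizing in $\epsilon$. With that patch your argument is complete.
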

\begin{proof}
After a translation we can take $\Omega$ to be strictly
star-shaped with respect to the origin. Also, since $u$
is the intermediate limit of $u_{\lambda}=u-\lambda x$ as $\lambda\to0$,
it suffices to prove the result for displacements that are uniformly
tension-free. So let $u\in BD(\Omega)$ be uniformly tension-free and let
$\lambda>0$ be as in \prettyref{eq:uniformly_tensionfree}. We 
construct the desired approximations $\{u_{n}\}$ via a two step
process involving dilation and mollification. 

First, we dilate: given $\tau\in(0,1)$, set $\Omega_{\tau}=\frac{1}{\tau}\Omega$
and let $u_{\tau}:\Omega_{\tau}\to\mathbb{R}^{2}$ and $p_{\tau}:\Omega_{\tau}\to\mathbb{R}$
be given by
\begin{equation}
u_{\tau}(x)=\frac{1}{\tau}u(\tau x)\quad\text{and}\quad p_{\tau}(x)=\frac{1}{\tau}p(\tau x)\quad\text{for }x\in\Omega_{\tau}.\label{eq:defn_u_p_shifted}
\end{equation}
Since $e(u_{\tau})=e(u)(\tau\cdot)$ it follows from \prettyref{eq:uniformly_tensionfree}
that
\begin{equation}
e(u_{\tau})\leq(\frac{1}{2}\nabla p_{\tau}\otimes\nabla p_{\tau}-\lambda Id)\,dx\quad\text{on }\Omega_{\tau}.\label{eq:inequality_shifted}
\end{equation}
Next, we mollify: fix  $\rho\in C_{c}^{\infty}(B_{1})$ with $\rho \geq 0$ and $\int_{B_1} \rho\,dx = 1$, and denote by $(\cdot)_{\delta}$ the standard mollification
\[
(f)_{\delta}(x)=\int_{\mathbb{R}^{2}}\frac{1}{\delta^{2}}\rho(\frac{x-y}{\delta})f(y)\,dy\quad\text{for }\delta>0.
\]
Recall  $\Omega$ was taken to be strictly star-shaped with respect
to $0$. Thus, there exists $c_{0}(\Omega)>0$ such that 
\begin{equation}
0<\delta<c_{0}(\Omega)(1-\tau)\implies\overline{\Omega}+B_{\delta}\subset\Omega_{\tau}.\label{eq:inclusion}
\end{equation}
So long as $\tau$ and $\delta$ satisfy \prettyref{eq:inclusion},
we may define $u_{\tau,\delta}:\overline{\Omega}\to\mathbb{R}^{2}$
by writing
\[
u_{\tau,\delta}(x)=(u_{\tau})_{\delta}(x),\quad x \in \overline{\Omega}.
\]
Evidently, these are smooth. We proceed to take $\delta\to0$ and $\tau\to1$.

We claim that $u_{\tau,\delta}$ is uniformly tension-free so long
as $\tau$ is close enough to one and  $\delta$ is sufficiently small. 
To see this, note it follows from \prettyref{eq:inequality_shifted}
and our choice to take $\rho\geq0$ that 
\[
e(u_{\tau,\delta})=\left(e(u_{\tau})\right)_{\delta}\leq\left(\frac{1}{2}\nabla p_{\tau}\otimes\nabla p_{\tau}-\lambda Id\right)_{\delta}=\frac{1}{2}\left(\nabla p_{\tau}\otimes\nabla p_{\tau}\right)_{\delta}-\lambda Id\quad\text{on }\overline{\Omega}.
\]
Recalling the definition of $p_{\tau}$ from \prettyref{eq:defn_u_p_shifted}
and applying the triangle inequality, we see that 
\begin{align*}
\left|\left(\nabla p_{\tau}\otimes\nabla p_{\tau}\right)_{\delta}(x)-\nabla p\otimes\nabla p(x)\right| 
 & \leq\int\rho(y)\left|\nabla p\otimes\nabla p(\tau(x-\delta y))-\nabla p\otimes\nabla p(x)\right|\,dy\\
 & \leq||\nabla p||_{L^{\infty}(\Omega)}||\nabla\nabla p||_{L^{\infty}(\Omega)}(|\tau-1||x|+|\tau||\delta|)\lesssim_{\Omega,p}|\tau-1|+|\delta|
\end{align*}
for  $x\in\overline{\Omega}$. Taking $\tau$ close enough to one and $\delta$ sufficiently small ensures that
\[
e(u_{\tau,\delta})\leq\frac{1}{2}\nabla p\otimes\nabla p-\frac{\lambda}{2}Id\quad\text{on }\overline{\Omega}
\]
as desired. 

It remains to choose sequences $\tau_{n}\to1$ and $\delta_{n}\to0$
such that
\[
u_{\tau_{n},\delta_{n}}\to u\ \text{strongly in }L^{1}(\Omega)\quad\text{and}\quad\int_{\Omega}|e(u_{\tau_{n},\delta_{n}})|_{1}\to\int_{\Omega}|e(u)|_{1}\quad\text{as }n\to\infty.
\]
The desired $L^{1}$-convergence holds in any case. For the second
convergence, note that 
\[
e(u_{\tau,\delta})\stackrel{*}{\rightharpoonup}e(u_{\tau})\quad\text{weakly-\ensuremath{*} in }\mathcal{M}(\Omega_{\tau};\text{Sym}_{2})\quad\text{as }\delta\to0
\]
for each fixed $\tau\in(0,1)$. As $|e(u)|(\Omega)<\infty$,
there exist at most countably many $\tau$ for which $|e(u_{\tau})|(\partial\Omega)\neq0$.
Therefore, we can find $\tau_{n}\to1$ and $\delta_{n}\to0$ such
that
\[
\lim_{n\to\infty}\,\int_{\Omega}|e(u_{\tau_{n},\delta_{n}})|_{1}=\lim_{n\to\infty}\lim_{\delta\to0}\,\int_{\Omega}|e(u_{\tau_{n},\delta})|_{1}=\lim_{n\to\infty}\,\int_{\Omega}|e(u_{\tau_{n}})|_{1}=\int_{\Omega}|e(u)|_{1}.
\]
Taking $n$ large enough and setting $u_{n}=u_{\tau_{n},\delta_{n}}$
proves the  result. 
\qed\end{proof}
\prettyref{lem:smoothapproximation} allows us to restrict the proof
of \prettyref{prop:recovery} to $u_{\text{eff}}$ that are smooth
and uniformly tension-free, in which case the target defect can be
given the pointwise definition
\begin{equation}
\mu(x)=\nabla p\otimes\nabla p(x)-2e(u_{\text{eff}})(x)\quad\forall\,x\in\Omega.\label{eq:targetdefect}
\end{equation}
Note $\mu>0$ uniformly on $\Omega$. Thanks to our assumption that $p\in W^{2,\infty}$ so that $\nabla p\in\text{Lip}$, we see that $\mu\in\text{Lip}$.

We now begin the process of constructing admissible displacements
 satisfying
\[
u\approx u_{\text{eff}},\quad w\approx0,\quad\text{and}\quad e(u)+\frac{1}{2}\nabla w\otimes\nabla w\approx\frac{1}{2}\nabla p\otimes\nabla p
\]
and whose energy $E_{b,k,\gamma}$ is nearly minimized. Consider the
change of variables $u \to u_{\text{eff}}+v$.
Since $e(\cdot)$ is linear, the energy depends on $(v,w)$ as
\begin{align*}
E_{b,k,\gamma}(u_{\text{eff}}+v,w) & =\frac{1}{2}\int_{\Omega}|e(v)+\frac{1}{2}\nabla w\otimes\nabla w-\frac{1}{2}\mu|^{2}+\frac{b}{2}\int_{\Omega}|\nabla\nabla w-\nabla\nabla p|^{2}+\frac{k}{2}\int_{\Omega}|w|^{2}\\
 & \qquad+\gamma\left(\int_{\Omega}\frac{1}{2}|\nabla p|^{2}-\int_{\partial\Omega}u_{\text{eff}}\cdot\hat{\nu}\right)-\gamma\int_{\partial\Omega}v\cdot\hat{\nu}
\end{align*}
where we have introduced $\mu$ from \prettyref{eq:targetdefect}
into the stretching term. We treat the simplest case where $\mu$
is constant in \prettyref{subsec:herringbone-patterns}, and then
proceed to discuss more general $\mu$ in \prettyref{subsec:piecewise-herringbone-patterns}.

\subsection{Herringbone patterns adapted to constant defect\label{subsec:herringbone-patterns}} 

Let $Q\subset\mathbb{R}^{2}$ be a square and consider the case of a constant target defect $\mu\in\text{Sym}_{2}$ where $\mu>0$. 
Here, we describe a family of displacements
\[
\{(v_{\text{herr}},w_{\text{herr}})\}\subset W^{1,\infty}(Q;\mathbb{R}^{2})\times W^{2,\infty}(Q)
\]
adapted to $\mu$ in that
\[
v_{\text{herr}}\approx 0,\quad w_{\text{herr}}\approx 0,\quad\text{and}\quad e(v_\text{herr})+\frac{1}{2}\nabla w_\text{herr}\otimes\nabla w_\text{herr} \approx \frac{1}{2}\mu
\]
and that can be made to have nearly minimal energy. \prettyref{fig:herringbone-patterns} depicts  the herringbone patterns we intend to construct. Solid lines indicate wrinkle peaks and troughs. Their direction alternates in twin pairs, in tandem with bands of alternating in-plane shear. 
The ``area fraction'' referred
to there is set by the parameter
\begin{equation}
\theta=\frac{\lambda_{1}}{\lambda_{1}+\lambda_{2}}\in(0,\frac{1}{2}]\quad\text{where }0<\lambda_{1}\leq\lambda_{2}\text{ are the eigenvalues of } \mu.\label{eq:area_fraction}
\end{equation}
Panel (a) depicts the isotropic case $\theta=\frac{1}{2}$ in which $\mu$ is a multiple of the identity. 
Panel (b) shows an anisotropic case where $\theta\in(0,\frac{1}{2})$.
Sending $\theta \to 0$ recovers uni-directional wrinkles as in Panel (c).

\begin{figure}[tb]
\centering
\subfloat[]{\includegraphics[width=0.2\paperwidth]{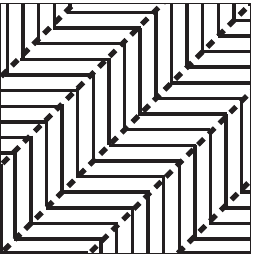}}\hspace{.06\textwidth}\subfloat[]{\includegraphics[width=0.2\paperwidth]{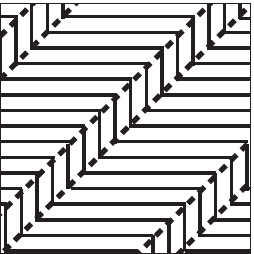}}\hspace{.06\textwidth}\subfloat[]{\includegraphics[width=0.2\paperwidth]{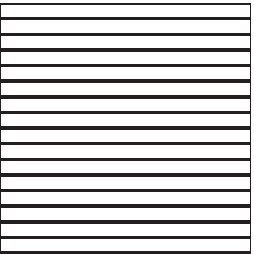}}\caption{Herringbone patterns with variable area fractions adapted to constant
defects. Solid lines depict wrinkle peaks and troughs, while dashed
lines indicate the presence of ``internal walls'' across which the
wrinkling direction changes. Panel (a) depicts an ``isotropic'' herringbone for isotropic defect. 
Panel (b) shows an ``anisotropic'' herringbone for
anisotropic defect. Panel (c) depicts uni-directional wrinkles arising for defect of rank
one. \label{fig:herringbone-patterns}}
\end{figure}

Our herringbones will be parameterized by 
\begin{equation}
l_\text{wr}\in(0,\infty),\quad l_\text{sh}\in(0,\infty),\quad\text{and}\quad\delta_\text{int}\in(0,\frac{1}{2}\theta l_\text{sh}).\label{eq:parameters_herringbone_reqs}
\end{equation}
The first parameter $l_\text{wr}$ sets the lengthscale
of the wrinkles. The second parameter $l_\text{sh}$ sets the magnitude
of the in-plane shear. There is an energetic
cost associated to changing the direction of wrinkling, and $\delta_\text{int}$
sets the thickness of the associated ``internal walls''. 
(Such walls are internal in the sense that they lie
within the herringbone, as opposed to the ``external walls'' introduced in \prettyref{subsec:piecewise-herringbone-patterns}.) 
The energy estimates obtained in this section apply so long as \prettyref{eq:parameters_herringbone_reqs} holds. However, it will be convenient going forward to keep in mind the special case where
\[
\left(\frac{b}{k}\right)^{1/4}=l_\text{wr}\ll l_\text{sh}\ll\text{diam}\,Q\quad\text{and}\quad l_\text{wr}\lesssim\delta_\text{int}\lesssim_{\mu}l_\text{sh}
\]
in which case there holds
\[
E_{b,k,\gamma}(v_{\text{herr}},w_{\text{herr}})=(2\sqrt{bk}+\gamma)\cdot \frac{1}{2}\text{tr}\,\mu|Q|+O(\frac{\delta_\text{int}}{l_\text{sh}}).
\]
The error term is due to the internal walls. Combining this estimate with the \emph{a priori} lower bounds from \prettyref{sec:Gamma_liminf}, we see that herringbones for which $\frac{\delta_\text{int}}{l_\text{sh}}\ll2\sqrt{bk}+\gamma$ are optimal at leading order. We turn to construct a general herringbone now, and to estimate its energy.

\subsubsection{Constructing the herringbone\label{subsec:Constructing-the-herringbone}}

Decompose the target defect as
\[
\mu=\lambda_{1}\hat{\eta}_{1}\otimes\hat{\eta}_{1}+\lambda_{2}\hat{\eta}_{2}\otimes\hat{\eta}_{2}
\]
where $\{\hat{\eta}_{1},\hat{\eta}_{2}\}$ are orthonormal eigenvectors
corresponding to the eigenvalues $\{\lambda_{1},\lambda_{2}\}$.

\vspace{.5em}

\paragraph*{\uline{Step 1: produce bands of alternating in-plane shear at
scale \mbox{$l_\emph{sh}$}}. \quad}

We start by introducing an in-plane displacement to transform the target
defect from rank two to rank one. Define $v_\text{sh}:\mathbb{R}^{2}\to\mathbb{R}^{2}$
by 
\begin{equation}
v_\text{sh}(x)=\sqrt{2}l_\text{sh}A\left(\frac{x\cdot(\hat{\eta}_{2}-\hat{\eta}_{1})}{\sqrt{2}l_\text{sh}}\right)(\hat{\eta}_{2}+\hat{\eta}_{1}),\quad x\in\mathbb{R}^{2}\label{eq:shearbands}
\end{equation}
where $A:\mathbb{R}\to\mathbb{R}$ is the one-periodic extension of
\[
A(t)=\begin{cases}
\frac{\lambda_{2}}{2}t & 0\leq t<\theta\\
\frac{\lambda_{2}}{2}\theta-\frac{\lambda_{1}}{2}(t-\theta) & \theta\leq t\leq1
\end{cases},\quad t\in[0,1].
\]
It follows from the definition of $\theta$ in \prettyref{eq:area_fraction}
that $A$ is Lipschitz. Indeed, $A'(t)$ equals to $\frac{\lambda_{2}}{2}$ for $t\in (0,\theta)$  and $-\frac{\lambda_{1}}{2}$ for $t\in (\theta,1)$, so that it integrates to zero.

Now as
\begin{equation}
\nabla v_\text{sh}=A'\left(\frac{x\cdot(\hat{\eta}_{2}-\hat{\eta}_{1})}{\sqrt{2}l_\text{sh}}\right)(\hat{\eta}_{2}+\hat{\eta}_{1})\otimes(\hat{\eta}_{2}-\hat{\eta}_{1})\label{eq:shearbands_gradient}
\end{equation}
we see that
\[
e(v_\text{sh})-\frac{1}{2}\mu 
  =\left[A'\left(\frac{x\cdot(\hat{\eta}_{2}-\hat{\eta}_{1})}{\sqrt{2}l_\text{sh}}\right)-\frac{\lambda_{2}}{2}\right]\hat{\eta}_{2}\otimes\hat{\eta}_{2}-\left[A'\left(\frac{x\cdot(\hat{\eta}_{2}-\hat{\eta}_{1})}{\sqrt{2}l_\text{sh}}\right)+\frac{\lambda_{1}}{2}\right]\hat{\eta}_{1}\otimes\hat{\eta}_{1}.
\]
Recalling that $A'$ is alternatively equal to $\frac{\lambda_{2}}{2}$
or $-\frac{\lambda_{1}}{2}$, we deduce that
\begin{equation}
e(v_\text{sh})-\frac{1}{2}\mu=-\frac{1}{2}\text{tr}\,\mu\cdot\hat{\eta}_\text{herr}\otimes\hat{\eta}_\text{herr}\quad\text{on }\mathbb{R}^{2}\label{eq:leftover_strain}
\end{equation}
where the unit vector field $\hat{\eta}_\text{herr}:\mathbb{R}^{2}\to S^{1}$
satisfies
\begin{equation}
\hat{\eta}_\text{herr}(x)=\begin{cases}
\hat{\eta}_{1} & 0\leq x\cdot\frac{\hat{\eta}_{2}-\hat{\eta}_{1}}{\sqrt{2}}<\theta l_\text{sh}\\
\hat{\eta}_{2} & \theta l_\text{sh}\leq x\cdot\frac{\hat{\eta}_{2}-\hat{\eta}_{1}}{\sqrt{2}}\leq1
\end{cases}\quad\text{when}\quad x\cdot\frac{\hat{\eta}_{2}-\hat{\eta}_{1}}{\sqrt{2}}\in[0,1]\label{eq:herringbone_unitvectorfield}
\end{equation}
and is otherwise periodic. Thus $v_\text{sh}$ transforms $\mu$ into a
defect which is piecewise constant and rank one.

In the next step, we introduce uni-directional wrinkles in the direction
of $\hat{\eta}_\text{herr}$. Note the jump set of $\hat{\eta}_\text{herr}$ is 
\begin{equation}
J_{\hat{\eta}_\text{herr}}=\left\{ x\in\mathbb{R}^{2}:x\cdot\frac{\hat{\eta}_{2}-\hat{\eta}_{1}}{\sqrt{2}}=0+l_\text{sh}\mathbb{Z}\right\} \cup\left\{ x\in\mathbb{R}^{2}: x\cdot \frac{\hat{\eta}_{2}-\hat{\eta}_{1}}{\sqrt{2}} =\theta l_\text{sh}+l_\text{sh}\mathbb{Z}\right\}.\label{eq:jumpset}
\end{equation}
It consists of (countably many) parallel lines at distances $\theta l_\text{sh}$
and $(1-\theta)l_\text{sh}$ apart. The pointwise estimates
\begin{equation}
||v_\text{sh}||_{L^{\infty}}\lesssim\text{tr}\,\mu\cdot l_\text{sh}\quad\text{and}\quad||\nabla v_\text{sh}||_{L^{\infty}}\lesssim\text{tr}\,\mu\label{eq:pointwise_shear_estimates}
\end{equation}
follow from \prettyref{eq:shearbands} and \prettyref{eq:shearbands_gradient}.

\vspace{.5em}

\paragraph*{\uline{Step 2: superimpose twin bands of wrinkles at scale \mbox{$l_\emph{wr}$}}. \quad}

In this step we construct uni-directional wrinkles to alleviate the strain
left over from Step 1. Define $v_\text{wr}:\mathbb{R}^{2}\to\mathbb{R}^{2}$
and $w_\text{wr}:\mathbb{R}^{2}\to\mathbb{R}$ by 
\begin{equation}
v_\text{wr}(x)=\frac{1}{2}\text{tr}\,\mu\cdot l_\text{wr}V\left(\frac{x\cdot\hat{\eta}_\text{herr}(x)}{l_\text{wr}}\right)\hat{\eta}_\text{herr}(x)\quad\text{and}\quad w_\text{wr}(x)=\sqrt{\text{tr}\,\mu}\cdot l_\text{wr}W\left(\frac{x\cdot\hat{\eta}_\text{herr}(x)}{l_\text{wr}}\right),\quad x\in\mathbb{R}^{2}.\label{eq:1dwrinkles}
\end{equation}
Here, $W:\mathbb{R}\to\mathbb{R}$ is given by
\[
W(t)=\sqrt{2}\cos(t),\quad t\in\mathbb{R}
\]
and $V:\mathbb{R}\to\mathbb{R}$ is the unique $2\pi$-periodic
solution of
\[
V'(t)+|W'(t)|^{2}=1\quad\forall\,t\in\mathbb{R},\quad\text{with}\quad V(0)=0.
\]
Such a solution exists as $\fint_{0}^{2\pi}|W'|^{2}=1$.

Evidently, there holds
\begin{equation}
\nabla v_\text{wr}=\frac{1}{2}\text{tr}\,\mu\cdot V'\left(\frac{x\cdot\hat{\eta}_\text{herr}}{l_\text{wr}}\right)\hat{\eta}_\text{herr}\otimes\hat{\eta}_\text{herr}\quad\text{and}\quad\nabla w_\text{wr}=\sqrt{\text{tr}\,\mu}\cdot W'\left(\frac{x\cdot\hat{\eta}_\text{herr}}{l_\text{wr}}\right)\hat{\eta}_\text{herr}\quad\text{on }\mathbb{R}^{2}\backslash J_{\hat{\eta}_\text{herr}}\label{eq:1dwrinkles_gradient}
\end{equation}
so that 
\begin{equation}
e(v_\text{wr})+\frac{1}{2}\nabla w_\text{wr}\otimes\nabla w_\text{wr}=\frac{1}{2}\text{tr}\,\mu\cdot\hat{\eta}_\text{herr}\otimes\hat{\eta}_\text{herr}\quad\text{on }\mathbb{R}^{2}\backslash J_{\hat{\eta}_\text{herr}}.\label{eq:wrinkledstrain}
\end{equation}
Adding up \prettyref{eq:leftover_strain} and \prettyref{eq:wrinkledstrain}
we see that 
\begin{equation}
e(v_\text{sh}+v_\text{wr})+\frac{1}{2}\nabla w_\text{wr}\otimes\nabla w_\text{wr}=\frac{1}{2}\mu\quad\text{on }\mathbb{R}^{2}\backslash J_{\hat{\eta}_\text{herr}},\label{eq:addedstrain}
\end{equation}
although as $v_\text{wr}$ and $w_\text{wr}$ may jump across $J_{\hat{\eta}_\text{herr}}$ this identity may fail to hold there.
The pointwise estimates
\begin{align}
 & ||v_\text{wr}||_{L^{\infty}}\lesssim\text{tr}\,\mu\cdot l_\text{wr},\quad||\nabla v_\text{wr}||_{L^{\infty}(\mathbb{R}^{2}\backslash J_{\hat{\eta}_\text{herr}})}\lesssim\text{tr}\,\mu,\label{eq:in-plane-wr}\\
 & ||w_\text{wr}||_{L^{\infty}}\lesssim\sqrt{\text{tr}\,\mu}\cdot l_\text{wr},\quad||\nabla w_\text{wr}||_{L^{\infty}(\mathbb{R}^{2}\backslash J_{\hat{\eta}_\text{herr}})}\lesssim\sqrt{\text{tr}\,\mu},\quad||\nabla\nabla w_\text{wr}||_{L^{\infty}(\mathbb{R}^{2}\backslash J_{\hat{\eta}_\text{herr}})}\lesssim\frac{\sqrt{\text{tr}\,\mu}}{l_\text{wr}}\label{eq:out-of-plane-wr}
\end{align}
follow from \prettyref{eq:1dwrinkles} and \prettyref{eq:1dwrinkles_gradient}.

\vspace{.5em}

\paragraph*{\uline{Step 3: join the wrinkles across internal walls at scale
\mbox{$\delta_\emph{int}$}}. \quad}

Finally, in order to ensure their bending energy is finite, we must smooth the wrinkles across the jump set $J_{\hat{\eta}_\text{herr}}$.
For simplicity, and because it will not affect the estimates 
at leading order, we use a cutoff function $\chi_\text{int}$ supported away from $J_{\hat{\eta}_\text{herr}}$ to define the
internal walls. (Our choice to use a simple cutoff here is one of the factors behind the assumption \prettyref{eq:A2b}.)

The shortest distance between two lines belonging to $J_{\hat{\eta}_{\text{herr}}}$
is $\theta l_\text{sh}$. Since by hypothesis $\delta_\text{int}<\theta l_\text{sh}$,
there exists a smooth cutoff function $\chi_\text{int}\in C^{\infty}(\mathbb{R}^{2})$
such that
\begin{itemize}
\item $0\leq\chi_\text{int}\leq1$ 
\item $\chi_\text{int}(x)=0$ if $d(x,J_{\hat{\eta}_{\text{herr}}})\leq\frac{1}{2}\delta_\text{int}$
and $\chi_\text{int}(x)=1$ if $d(x,J_{\hat{\eta}_{\text{herr}}})\geq\delta_\text{int}$,
\item $||\nabla\chi_\text{int}||_{L^{\infty}}\lesssim\frac{1}{\delta_\text{int}}$
and $||\nabla\nabla\chi_\text{int}||_{L^{\infty}}\lesssim\frac{1}{\delta_\text{int}^{2}}$ 
\end{itemize}
where the constants implicit in the above are independent of all
parameters. Let $v_{\text{herr}}:Q\to\mathbb{R}^{2}$
and $w_{\text{herr}}:Q\to\mathbb{R}$ be given by 
\[
v_{\text{herr}}(x)=v_\text{sh}(x)+v_\text{wr}(x)\cdot\chi_\text{int}(x)\quad\text{and}\quad w_{\text{herr}}(x)=w_\text{wr}(x)\cdot\chi_\text{int}(x),\quad x\in Q.
\]
This completes our construction of the herringbone. Note it follows from \prettyref{eq:addedstrain} and the definition of $\chi_\text{int}$ that
\begin{equation}
e(v_{\text{herr}})+\frac{1}{2}\nabla w_{\text{herr}}\otimes\nabla w_{\text{herr}}=\frac{1}{2}\mu\quad\text{on }d(\cdot,J_{\hat{\eta}_\text{herr}})\geq\delta_\text{int}.\label{eq:vanishing-strain-bulk}
\end{equation}
The pointwise estimates 
\begin{align}
 & ||v_\text{herr}||_{L^{\infty}}\lesssim\text{tr}\,\mu\cdot l_\text{sh}\left(1\vee\frac{l_\text{wr}}{l_\text{sh}}\right),\quad||\nabla v_\text{herr}||_{L^{\infty}}\lesssim\text{tr}\,\mu\cdot\left(1\vee\frac{l_\text{wr}}{\delta_\text{int}}\right)\label{eq:pointwise_estimates_herr_v}\\
 & ||w_\text{herr}||_{L^{\infty}}\lesssim\sqrt{\text{tr}\,\mu}\cdot l_\text{wr},\quad||\nabla w_\text{herr}||_{L^{\infty}}\lesssim\sqrt{\text{tr}\,\mu}\cdot\left(1\vee\frac{l_\text{wr}}{\delta_\text{int}}\right)\label{eq:pointwise_estimates_herr_w1}\\
 & ||\nabla\nabla w_\text{herr}||_{L^{\infty}}\lesssim\frac{\sqrt{\text{tr}\,\mu}}{l_\text{wr}}\cdot\left(1\vee\frac{l_\text{wr}^{2}}{\delta_\text{int}^{2}}\right)\label{eq:pointwise_estimates_herr_w2}
\end{align}
carry over from \prettyref{eq:pointwise_shear_estimates}, \prettyref{eq:in-plane-wr},
\prettyref{eq:out-of-plane-wr}, and the properties of $\chi_\text{int}$
listed above.

\subsubsection{Energy estimates for the herringbone\label{subsec:Energy-estimates-for-herringbone}}

We turn to estimate the energy of the herringbones defined
above. It will be convenient to decompose $Q$ into its ``wall''
and ``bulk'' regions given by
\begin{equation}
Q_\text{wall}=\left\{ x\in Q:d(x,J_{\hat{\eta}_{\text{herr}}})<\delta_\text{int}\right\} \quad\text{and}\quad Q_\text{bulk}=Q\backslash Q_\text{wall}.\label{eq:Qwall}
\end{equation}
Define
\begin{align}
a_{0}(l_\text{sh},l_\text{wr};\mu,Q) & =\text{tr}\,\mu\cdot\int_{Q}|W\left(\frac{x\cdot\hat{\eta}_\text{herr}(x)}{l_\text{wr}}\right)|^{2}\,dx,\label{eq:a_0}\\
a_{1}(l_\text{sh},l_\text{wr},\delta_\text{int};\mu,Q) & =\left(1\vee\frac{l_\text{wr}^{4}}{\delta_\text{int}^{4}}\right)|Q_\text{wall}|.\label{eq:a_1}
\end{align}

\begin{lemma}
\label{lem:herringbone_estimates} 
Let $Q$ be a square, let $\mu \in \emph{Sym}_2$ have $\mu > 0$, and let $l_\emph{wr}$, $l_\emph{sh}$, and $\delta_\emph{in}$ satisfy \prettyref{eq:parameters_herringbone_reqs}. The herringbones constructed in \prettyref{subsec:Constructing-the-herringbone} obey the following
 estimates:
\begin{itemize}
\item the stretching energy satisfies
\[
\int_{Q}|e(v_\emph{herr})+\frac{1}{2}\nabla w_\emph{herr}\otimes\nabla w_\emph{herr}-\frac{1}{2}\mu|^{2}\lesssim|\mu|^{2}a_{1}(\mu,Q);
\]
\item the bending energy satisfies 
\[
\int_{Q}|\nabla\nabla w_\emph{herr}|^{2}\leq\frac{a_{0}(\mu,Q)+C|\mu|a_{1}(\mu,Q)}{l_\emph{wr}^{2}}
\]
where the constant $C$ is independent of all parameters;
\item the substrate energy satisfies
\[
\int_{Q}|w_\emph{herr}|^{2}\leq a_{0}(\mu,Q)l_\emph{wr}^{2}.
\]
\end{itemize}
\end{lemma}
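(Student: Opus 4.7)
The plan is to split the domain according to the decomposition $Q = Q_{\text{bulk}} \sqcup Q_{\text{wall}}$ introduced in \prettyref{eq:Qwall} and to exploit, in each region, the explicit structure of the three pieces $v_\text{sh}$, $v_\text{wr}$, $w_\text{wr}$ composed with the cutoff $\chi_\text{int}$. On $Q_\text{bulk}$ we have $\chi_\text{int}\equiv 1$, so the constructions are unmodified and we can read off exact formulas; on $Q_\text{wall}$ we have no choice but to pay in pointwise bounds times area, and this is exactly what the factor $|Q_\text{wall}|$ in $a_1$ records.

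For the substrate estimate, observe that $w_\text{herr}=w_\text{wr}\cdot\chi_\text{int}$ with $0\leq\chi_\text{int}\leq1$, so $|w_\text{herr}|\leq|w_\text{wr}|$ pointwise. Plugging in the formula \prettyref{eq:1dwrinkles} gives $\int_Q |w_\text{herr}|^2 \leq \text{tr}\,\mu\cdot l_\text{wr}^2\int_Q |W(x\cdot\hat{\eta}_\text{herr}/l_\text{wr})|^2\,dx = l_\text{wr}^2 a_0(\mu,Q)$ as claimed. For the bending estimate, the key observation is that $W(t)=\sqrt{2}\cos t$ satisfies $W''=-W$, so on $Q_\text{bulk}$ (where $\hat{\eta}_\text{herr}$ is locally constant and $\chi_\text{int}=1$) the second derivative of $w_\text{herr}=w_\text{wr}$ satisfies $|\nabla\nabla w_\text{wr}|^2 = (\text{tr}\,\mu/l_\text{wr}^2)|W(x\cdot\hat{\eta}_\text{herr}/l_\text{wr})|^2$. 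Integrating yields the $a_0/l_\text{wr}^2$ contribution. On $Q_\text{wall}$ we apply the pointwise bound \prettyref{eq:pointwise_estimates_herr_w2} and multiply by $|Q_\text{wall}|$, which produces $|\mu|\cdot a_1(\mu,Q)/l_\text{wr}^2$ after using $\text{tr}\,\mu\lesssim|\mu|$.

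For the stretching estimate, the identity \prettyref{eq:vanishing-strain-bulk} already gives that the integrand vanishes identically on $Q_\text{bulk}$, reducing the problem to $Q_\text{wall}$. There we estimate the integrand in $L^\infty$ by the triangle inequality using the pointwise bounds \prettyref{eq:pointwise_estimates_herr_v} and \prettyref{eq:pointwise_estimates_herr_w1}, finding $|e(v_\text{herr}) + \tfrac12\nabla w_\text{herr}\otimes\nabla w_\text{herr} - \tfrac12\mu|\lesssim\text{tr}\,\mu\cdot(1\vee l_\text{wr}^2/\delta_\text{int}^2)$; squaring, integrating over $Q_\text{wall}$, and using $\text{tr}\,\mu\lesssim|\mu|$ gives $|\mu|^2 a_1(\mu,Q)$.

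None of the three estimates is genuinely difficult; the content of the lemma is really the recognition that the special identity $W''=-W$ allows the bending and substrate integrands to be controlled by the \emph{same} quantity $a_0$. This is what will later allow a cancellation inside $2\sqrt{bk}$ once one optimizes $l_\text{wr}=(b/k)^{1/4}$. The only care required is to avoid confusing $\nabla w_\text{herr}$ with $\nabla w_\text{wr}$ across the walls, which is handled cleanly by the decomposition $Q_\text{bulk}\sqcup Q_\text{wall}$ and the fact that $\chi_\text{int}=1$ off the walls.
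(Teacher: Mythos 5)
Your proposal is correct and follows essentially the same route as the paper: the decomposition $Q=Q_\text{bulk}\cup Q_\text{wall}$, the vanishing of the strain on the bulk via \prettyref{eq:vanishing-strain-bulk}, the identity $|W''|=|W|$ to express both the bulk bending and the substrate integrands through $a_0$, and pointwise bounds times $|Q_\text{wall}|$ for the wall contributions. No gaps.
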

\begin{proof}
We start by estimating the stretching energy, which requires estimating
the strain $\varepsilon$. Recall from \prettyref{eq:vanishing-strain-bulk}
that
\begin{equation}
\varepsilon=e(v_\text{herr})+\frac{1}{2}\nabla w_\text{herr}\otimes\nabla w_\text{herr}-\frac{1}{2}\mu=0\quad\text{on }Q_\text{bulk}.\label{eq:bulk_strain_est}
\end{equation}
To handle the wall region, we apply the
pointwise estimates from \prettyref{eq:pointwise_estimates_herr_v}
and \prettyref{eq:pointwise_estimates_herr_w1} to get that
\begin{equation}
|\varepsilon|\leq|\nabla v_\text{herr}|+\frac{1}{2}|\nabla w_\text{herr}|^{2}+\frac{1}{2}|\mu|\lesssim|\mu|\cdot(1\vee\frac{l_\text{wr}^{2}}{\delta_\text{int}^{2}})\quad\text{on }Q_\text{wall}.\label{eq:wall_strain_est}
\end{equation}
Combining \prettyref{eq:bulk_strain_est} and \prettyref{eq:wall_strain_est} yields that
\[
\int_{Q}|\varepsilon|^{2} =\left(\int_{Q_\text{wall}}+\int_{Q_\text{bulk}}\right)|\varepsilon|^{2} 
  \lesssim|\mu|^{2}\left(1\vee\frac{l_\text{wr}^{4}}{\delta_\text{int}^{4}}\right)|Q_\text{wall}|=|\mu|^{2}a_{1}
\]
according to \prettyref{eq:a_1}. This proves the desired estimate
on the stretching energy.

Next we estimate the bending energy, being careful to keep track of the important prefactors. Since $w_\text{herr}=w_\text{wr}$ in the bulk region
and $\hat{\eta}_\text{herr}$ is locally constant there, we see from
\prettyref{eq:1dwrinkles} that
\begin{equation}
\nabla\nabla w_\text{herr}=\frac{\sqrt{\text{tr}\,\mu}}{l_\text{wr}}W''\left(\frac{x\cdot\hat{\eta}_\text{herr}}{l_\text{wr}}\right)\hat{\eta}_\text{herr}\otimes\hat{\eta}_\text{herr}\quad\text{on }Q_\text{bulk}.\label{eq:bulk_bending_est}
\end{equation}
On the other hand, it follows from the last estimate in \prettyref{eq:pointwise_estimates_herr_w2}
that 
\begin{equation}
|\nabla\nabla w_\text{herr}|\lesssim\frac{\sqrt{|\mu|}}{l_\text{wr}}\left(1\vee\frac{l_\text{wr}^{2}}{\delta_\text{int}^{2}}\right)\quad\text{on }Q_\text{wall}.\label{eq:wall_bending_est}
\end{equation}
Using \prettyref{eq:bulk_bending_est} and \prettyref{eq:wall_bending_est}
and the fact that $|W|=|W''|$ we deduce that
\begin{align*}
\int_{Q}|\nabla\nabla w_\text{herr}|^{2} & =\left(\int_{Q_\text{bulk}}+\int_{Q_\text{wall}}\right)|\nabla\nabla w_\text{herr}|^{2}\\
 & \leq\frac{\text{tr}\,\mu}{l_\text{wr}^{2}}\int_{Q}\left|W\left(\frac{x\cdot\hat{\eta}_\text{herr}}{l_\text{wr}}\right)\right|^{2}+C\frac{|\mu|}{l_\text{wr}^{2}}\left(1\vee\frac{l_\text{wr}^{4}}{\delta_\text{int}^{4}}\right)|Q_\text{wall}|=\frac{a_{0}+C|\mu|a_{1}}{l_\text{wr}^{2}}
\end{align*}
by the definitions of $a_{0}$ and $a_{1}$ in \prettyref{eq:a_0}
and \prettyref{eq:a_1}. 

We finish with the substrate energy. Evidently, there holds
\[
|w_\text{herr}|\leq|w_\text{wr}|=\sqrt{\text{tr}\,\mu}\cdot l_\text{wr}\left|W\left(\frac{x\cdot\hat{\eta}_\text{herr}}{l_\text{wr}}\right)\right|\quad\text{on }Q
\]
given that $\chi_\text{int}\leq1$ and due to the out-of-plane part of
\prettyref{eq:1dwrinkles}. It follows that
\[
\int_{Q}|w_\text{herr}|^{2}\leq\int_{Q}|w_\text{wr}|^{2}=\text{tr}\,\mu\cdot l_\text{wr}^{2}\int_{Q}\left|W\left(\frac{x\cdot\hat{\eta}_\text{herr}}{l_\text{wr}}\right)\right|^{2}=l_\text{wr}^{2}a_{0}
\]
as desired. \qed\end{proof}
Next, we estimate the quantities $a_{0}$ and $a_{1}$ defined in
\prettyref{eq:a_0} and \prettyref{eq:a_1}.
\begin{lemma}
\label{lem:constant_estimates} We have the estimates
\begin{align*}
\left|a_{0}(\mu,Q)-\emph{tr}\,\mu\cdot|Q|\right| & \lesssim\emph{tr}\,\mu\cdot\frac{l_\emph{wr}}{l_\emph{sh}}\left(1\vee\frac{l_\emph{sh}}{\emph{diam}\,Q}\right)|Q|,\\
a_{1}(\mu,Q) & \lesssim\frac{\delta_\emph{int}}{l_\emph{sh}}\left(1\vee\frac{l_\emph{wr}^{4}}{\delta_\emph{int}^{4}}\right)\left(1\vee\frac{l_\emph{sh}}{\emph{diam}\,Q}\right)|Q|.
\end{align*}
\end{lemma}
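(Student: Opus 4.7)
\smallskip

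The plan is to estimate $a_0$ and $a_1$ by decomposing $Q$ according to the structure of the jump set $J_{\hat{\eta}_\emph{herr}}$ identified in \prettyref{eq:jumpset}. Recall that $J_{\hat\eta_\text{herr}}$ consists of lines perpendicular to $\hat\eta_2 - \hat\eta_1$ spaced alternately at distances $\theta l_\text{sh}$ and $(1-\theta)l_\text{sh}$, and that $\hat\eta_\text{herr}$ is constant on each resulting band, taking one of the values $\hat\eta_1$ or $\hat\eta_2$. Throughout we write $d=\text{diam}\,Q$ and use $|Q|\sim d^2$.

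First I would handle $a_1$. By its definition in \prettyref{eq:Qwall}, $Q_\text{wall}$ is the intersection with $Q$ of the open $\delta_\text{int}$-neighborhood of $J_{\hat\eta_\text{herr}}$, i.e.\ a union of parallel strips of width $2\delta_\text{int}$. Each such strip intersects $Q$ in a subset of area at most $2\delta_\text{int} \cdot \sqrt 2\,d$. The number of jump lines meeting $Q$ is bounded by a constant multiple of $1 \vee (d/l_\text{sh})$, since the minimum spacing between jump lines is $\theta l_\text{sh}$ and at most one set of parallel lines intersects $Q$ if $l_\text{sh} > d$. Combining these two bounds gives $|Q_\text{wall}|\lesssim \delta_\text{int}\,d\,(1\vee d/l_\text{sh})$, which after inserting $|Q|\sim d^2$ is exactly $(\delta_\text{int}/l_\text{sh})(1\vee l_\text{sh}/d)|Q|$. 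Multiplying by $(1\vee l_\text{wr}^4/\delta_\text{int}^4)$ as in \prettyref{eq:a_1} yields the stated bound on $a_1$.

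Next I would handle $a_0$. Fix a band $B$ on which $\hat\eta_\text{herr}\equiv \hat\eta_i$ is constant, so that the integrand reduces to $2\cos^2(x\cdot\hat\eta_i/l_\text{wr})$, a function of one variable with mean value $1$ and period $\pi l_\text{wr}$. Change coordinates to $(s,t)$ aligned with $\hat\eta_i$ and $\hat\eta_i^\perp$, and write $B\cap Q = \{(s,t): t\in T,\ s\in I_t\}$ where $|T|\lesssim d$ and each $I_t$ is an interval with $|I_t|\lesssim d$. The one-dimensional estimate $|\int_{I_t} 2\cos^2(s/l_\text{wr})\,ds - |I_t|| \lesssim l_\text{wr}$ (coming from the fact that the antiderivative of $2\cos^2-1$ is bounded by $l_\text{wr}$) integrates over $t\in T$ to give
\[
\Bigl|\int_{B\cap Q}|W(x\cdot\hat\eta_\text{herr}/l_\text{wr})|^2\,dx - |B\cap Q|\Bigr|\lesssim l_\text{wr}\, d.
\]
Summing over all bands intersecting $Q$, whose number is $\lesssim 1\vee d/l_\text{sh}$ by the same count as above, and using that these bands partition $Q$, yields
\[
\Bigl|\int_Q |W(x\cdot\hat\eta_\text{herr}/l_\text{wr})|^2\,dx - |Q|\Bigr| \lesssim l_\text{wr}\,d\,(1\vee d/l_\text{sh}) \sim \tfrac{l_\text{wr}}{l_\text{sh}}(1\vee l_\text{sh}/d)|Q|.
\]
Multiplying by $\text{tr}\,\mu$ recovers the first inequality of the lemma.

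The proof is essentially pure bookkeeping and there is no genuine obstacle; the only subtle point is that both bounds must reflect the dichotomy $l_\text{sh}\leq d$ versus $l_\text{sh}>d$, which is precisely what the factor $(1\vee l_\text{sh}/d)$ encodes. In the regime $l_\text{sh}\leq d$ many bands and jump lines fit inside $Q$ and the loss comes from each of them; in the regime $l_\text{sh}>d$ only $O(1)$ bands intersect $Q$ and the per-band error dominates. Splitting the argument into these two cases, if one prefers, leads to the same conclusion.
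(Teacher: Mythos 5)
Your proposal is correct. The $a_1$ estimate is argued exactly as in the paper: count the jump lines meeting $Q$, bound the area each contributes to $Q_\text{wall}$, and multiply by the factor from \prettyref{eq:a_1}. One phrasing to tighten: the count $\lesssim 1\vee \text{diam}\,Q/l_\text{sh}$ should be justified by the periodicity of $J_{\hat{\eta}_\text{herr}}$ (two lines per period $l_\text{sh}$), not by the minimum spacing $\theta l_\text{sh}$ --- taken literally, the latter would give $1+\text{diam}\,Q/(\theta l_\text{sh})$ and an unwanted dependence on $\lambda_1/\lambda_2$. Your stated conclusion is the right one.

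For $a_0$ you take a genuinely different route. The paper proves a general estimate $\bigl|\int_S W^2(x\cdot\hat\eta/l)\,dx-|S|\bigr|\lesssim\mathcal{H}^1(\partial S)\,l$ for arbitrary bounded measurable $S$, using the half-period identity $W^2(\cdot)+W^2(\cdot+\tfrac{\pi}{2})=2$ together with the bound $|S\triangle(S+l\hat\eta)|\leq\mathcal{H}^1(\partial S)\,l$ (quoted from the literature), and then applies it to the two sets $S_i=\{x\in Q:\hat\eta_\text{herr}=\hat\eta_i\}$, whose perimeters are controlled by the same line count. You instead work band by band: on each band $B\cap Q$ is convex, so Fubini in coordinates aligned with $\hat\eta_i$ reduces matters to the one-dimensional bound $\bigl|\int_{I_t}(2\cos^2(s/l_\text{wr})-1)\,ds\bigr|\leq l_\text{wr}$ coming from the explicit antiderivative, and summing over the $\lesssim 1\vee\text{diam}\,Q/l_\text{sh}$ bands gives the same total error. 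Your argument is more elementary and self-contained (no external symmetric-difference lemma), at the cost of using the specific form of $W$ and the convexity of the slices; the paper's version is more robust, applying verbatim to any profile with $\fint_0^{2\pi}W^2=1$ satisfying the shift identity and to sets with rougher slices. Both yield the stated bound.
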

\begin{proof}
We start with $a_{1}$. Recall the definitions of $J_{\hat{\eta}_{\text{herr}}}$ and $Q_\text{wall}$ from  \prettyref{eq:jumpset} and \prettyref{eq:Qwall}. The former consists of parallel lines
at distances $\theta l_\text{sh}$ and $(1-\theta)l_\text{sh}$ apart, the total
number of which intersecting $Q$ is $\lesssim\frac{\text{diam}\,Q}{l_\text{sh}}\vee1$.
Each such line contributes area $\lesssim\delta_\text{int}\cdot\text{diam}\,Q$
to $Q_\text{wall}$. Hence,
\begin{equation}
|Q_\text{wall}|\lesssim\left(\frac{\text{diam}\,Q}{l_\text{sh}}\vee1\right)\cdot\left(\delta_\text{int}\cdot\text{diam}\,Q\right)=\frac{\delta_\text{int}}{l_\text{sh}}\left(1\vee\frac{l_\text{sh}}{\text{diam}\,Q}\right)|Q|.\label{eq:Q_wall_estimate}
\end{equation}
It follows from \prettyref{eq:a_1}
that 
\[
a_{1}\lesssim\frac{\delta_\text{int}}{l_\text{sh}}\left(1\vee\frac{l_\text{wr}^{4}}{\delta_\text{int}^{4}}\right)\left(1\vee\frac{l_\text{sh}}{\text{diam}\,Q}\right)|Q|.
\]
as required.

We turn to estimate $a_{0}$. First, we claim that 
\begin{equation}
\left|\int_{S}W^{2}\left(\frac{x\cdot\hat{\eta}}{l}\right)\,dx-|S|\right|\lesssim\mathcal{H}^{1}(\partial S)l\quad\forall\,l\in(0,\infty)\label{eq:translation_bd}
\end{equation}
whenever $\hat{\eta}\in S^{1}$ and $S\subset\mathbb{R}^{2}$ is a 
bounded, measurable set. The constant implicit in  \prettyref{eq:translation_bd} is independent
of $l$, $\hat{\eta}$, and $S$. To prove it, begin by noting that $W^{2}(\cdot)+W^{2}(\cdot+\frac{\pi}{2})=2$ from which it follows that
\begin{equation}
\int_{S}W^{2}\left(\frac{x\cdot\hat{\eta}}{l}\right)\,dx+\int_{S}W^{2}\left(\frac{(x+\frac{\pi}{2}l\hat{\eta})\cdot\hat{\eta}}{l}\right)\,dx=2|S|.\label{eq:sum_law}
\end{equation}
On the other hand, a change of variables shows that
\begin{equation}
\left|\int_{S}W^{2}\left(\frac{(x+\frac{\pi}{2}l\hat{\eta})\cdot\hat{\eta}}{l}\right)-\int_{S}W^{2}\left(\frac{x\cdot\hat{\eta}}{l}\right)\right|=\left|\left(\int_{S+\frac{\pi}{2}l\hat{\eta}}-\int_{S}\right)W^{2}\left(\frac{x\cdot\hat{\eta}}{l}\right)\right|\lesssim\left|S\triangle\left(S+\frac{\pi}{2}l\hat{\eta}\right)\right|.\label{eq:diff_est}
\end{equation}
To control the righthand side we use that
\begin{equation}
\left|S\triangle(S+l\hat{\eta})\right|\leq\mathcal{H}^{1}(\partial S)l\quad\forall\,l\in(0,\infty), \label{eq:geometric_fact}
\end{equation}
which is a direct consequence of  \cite[Theorem 3]{schymura2014upper}. Applying \prettyref{eq:sum_law}-\prettyref{eq:geometric_fact} proves  \prettyref{eq:translation_bd}.

With the estimate \prettyref{eq:translation_bd} in hand we can easily
handle $a_{0}$. From its definition in \prettyref{eq:herringbone_unitvectorfield},
we see that $\hat{\eta}_\text{herr}$ takes on only the values $\hat{\eta}_{1}$
and $\hat{\eta}_{2}$. Decompose $Q$ according to
\[
Q=S_{1}\cup S_{2}\quad\text{where}\quad S_{i}=\left\{ x\in Q:\hat{\eta}_{\text{herr}}=\hat{\eta}_{i}\right\} ,\quad i=1,2.
\]
As in the proof of \prettyref{eq:Q_wall_estimate}, we note that
\begin{equation}
\mathcal{H}^{1}(\partial S_{i})\lesssim\left(\frac{\text{diam}\,Q}{l_\text{sh}}\vee1\right)\cdot\text{diam}\,Q=\frac{1}{l_\text{sh}}\left(1\vee\frac{l_\text{sh}}{\text{diam}\,Q}\right)|Q|\label{eq:perimeter_bd}
\end{equation}
for $i=1,2$. Hence,
\begin{align*}
\left|a_{0}-\text{tr}\,\mu\cdot|Q|\right| & =\text{tr}\,\mu\cdot\left|\int_{Q}W^{2}\left(\frac{x\cdot\hat{\eta}_\text{herr}}{l_\text{wr}}\right)-|Q|\right|\leq\text{tr}\,\mu\cdot\sum_{i=1}^{2}\left|\int_{S_{i}}W^{2}\left(\frac{x\cdot\hat{\eta}_{i}}{l_\text{wr}}\right)-|S_{i}|\right|\\
 & \lesssim\text{tr}\,\mu\cdot\sum_{i=1}^{2}\mathcal{H}^{1}(\partial S_{i})l_\text{wr}\lesssim\text{tr}\,\mu\cdot\frac{l_\text{wr}}{l_\text{sh}}\left(1\vee\frac{l_\text{sh}}{\text{diam}\,Q}\right)|Q|
\end{align*}
where in the second line we applied \prettyref{eq:translation_bd}
and \prettyref{eq:perimeter_bd}.
\qed\end{proof}

\subsection{Piecewise herringbone patterns adapted to variable defect\label{subsec:piecewise-herringbone-patterns}}

We return to $\Omega\subset\mathbb{R}^{2}$ which for our present purposes must only be a bounded and Lipschitz domain, and
consider a target defect $\mu:\Omega\to\text{Sym}_{2}$ that is positive definite and Lipschitz continuous.
Our task is to construct a family of displacements
\[
\{(v_{\text{p.h.}},w_{\text{p.h.}})\}\subset W^{1,\infty}(\Omega;\mathbb{R}^{2})\times W^{2,\infty}(\Omega)
\]
adapted to $\mu$ in that 
\[
v_{\text{p.h.}}\approx 0,\quad w_{\text{p.h.}}\approx 0,\quad\text{and}\quad e(v_\text{p.h.})+\frac{1}{2}\nabla w_\text{p.h.}\otimes\nabla w_\text{p.h.} \approx \frac{1}{2}\mu
\] 
and whose energy can be made optimal at leading order. After approximating $\mu$ by a piecewise constant target defect $\langle \mu \rangle$ defined on a lattice of squares, we piece together a well-chosen family of herringbones from \prettyref{subsec:herringbone-patterns} to form our ``piecewise herringbone'' pattern. The result is depicted in \prettyref{fig:disc_herringbone} (see also Panel (a) in \prettyref{fig:three_constructions}).

Our piecewise herringbones will be parameterized by
\begin{equation}
l_\text{wr}\in(0,\infty),\quad l_\text{sh}\in(0,\infty),\quad l_\text{avg}\in(0,\infty),\quad\delta_\text{int}\in(0,\frac{1}{4}\frac{\lambda}{\Lambda}l_\text{sh}),\quad\text{and}\quad\delta_\text{ext}\in(0,\frac{1}{2}l_\text{avg})\label{eq:parameters_ph_herringbone_reqs}
\end{equation}
where $\lambda,\Lambda\in(0,\infty)$ satisfy
\begin{equation}
\lambda Id\leq\mu(x)\leq\Lambda Id\quad\forall\,x\in\Omega.\label{eq:uniformly_elliptic}
\end{equation}
The parameters $l_\text{wr}$, $l_\text{sh}$, and $\delta_\text{int}$ should already
be familiar from \prettyref{subsec:herringbone-patterns}:
these set the lengthscales of the wrinkles, the in-plane
shear, and the internal walls of the herringbones. The first new parameter
$l_\text{avg}$  gives the ``averaging'' lengthscale across which we treat $\mu$ as if it were constant. 
It will be proportional to the diameter of the herringboned squares. 
The parameter $\delta_\text{ext}$ sets the thickness of the ``external walls'' between neighboring squares. 
We shall construct a piecewise herringbone for any choice of parameters satisfying \prettyref{eq:parameters_ph_herringbone_reqs}. However, in anticipation of the optimization that is to come, we note that ones for which
\[
\left(\frac{b}{k}\right)^{1/4}=l_\text{wr}\ll l_\text{sh}\ll l_\text{avg}\ll\text{diam}\,\Omega,\quad l_\text{wr}\lesssim\delta_\text{int}\lesssim_{\mu}l_\text{sh},\quad\text{and}\quad l_\text{sh}\lesssim\delta_\text{ext}\lesssim l_\text{avg}
\]
satisfy
\[
E_{b,k,\gamma}(v_{\text{p.h.}},w_{\text{p.h.}})=(2\sqrt{bk}+\gamma)\cdot \frac{1}{2}\int_{\Omega}\text{tr}\,\mu\,dx+O(l_\text{avg}^{2}\vee\frac{\delta_\text{int}}{l_\text{sh}}\vee\frac{\delta_\text{ext}}{l_\text{avg}}).
\]
The error term accounts for the cost of the approximation $\mu\approx \langle \mu \rangle$ as well as that of the walls. When it is negligible, our piecewise herringbones are optimal at leading order (again, the requisite lower bound is contained in the results of \prettyref{sec:Gamma_liminf}). Minimizing over the free parameters maximizes the range of this result --- see  \prettyref{cor:optimized_ph_pattern} for the details. We turn now to construct a general piecewise herringbone, and to estimate its energy.

\subsubsection{Constructing the piecewise herringbone\label{subsec:Constructing-the-piecewise-herringbone}}

\vspace{.5em}

\paragraph*{\uline{Step 1: assemble an \mbox{$l_\text{avg}$}-by-\mbox{$l_\text{avg}$}
lattice of herringbones}. \quad}

Define the squares
\[
Q_{\alpha}=\alpha+(0,l_\text{avg})^{2}\quad\forall\,\alpha\in\mathbb{Z}^{2}
\]
and let the index set $\mathcal{I}$ be the smallest subset of $\mathbb{Z}^{2}$
with the property that 
\[
\Omega\subset\cup_{\alpha\in\mathcal{I}}\overline{Q_{\alpha}}.
\]
Define the locally averaged target defect $\langle \mu \rangle$ whose value on the $\alpha$th square is given by
\begin{equation}
\mu_{\alpha}=\fint_{\Omega\cap Q_{\alpha}}\mu(x)\,dx,\quad\alpha\in\mathcal{I}.\label{eq:averaging}
\end{equation}
We produce a family of herringbone constructions $\{(v_{\text{herr}}^{\alpha},w_{\text{herr}}^{\alpha})\}_{\alpha\in\mathcal{I}}$
using the results of \prettyref{subsec:herringbone-patterns}:
given $\alpha\in\mathcal{I}$, we define $v_{\text{herr}}^{\alpha}:Q_{\alpha}\to\mathbb{R}^{2}$
and $w_{\text{herr}}^{\alpha}:Q_{\alpha}\to\mathbb{R}$ following
the procedure from \prettyref{subsec:Constructing-the-herringbone}
with $\mu_{\alpha}$ as the target defect and $l_\text{sh}$, $l_\text{wr}$,
and $\delta_\text{int}$ as above. (We take the parameters to be
independent of $\alpha$ for ease of exposition, and as it will not
affect the estimates at leading order.) Copying over the
pointwise bounds \prettyref{eq:pointwise_estimates_herr_v} and \prettyref{eq:pointwise_estimates_herr_w1}, we note that
\begin{align}
 & ||v_\text{herr}^{\alpha}||_{L^{\infty}}\lesssim\text{tr}\,\mu_{\alpha}\cdot l_\text{sh}\left(1\vee\frac{l_\text{wr}}{l_\text{sh}}\right),\quad||\nabla v_\text{herr}^{\alpha}||_{L^{\infty}}\lesssim\text{tr}\,\mu_{\alpha}\cdot(1\vee\frac{l_\text{wr}}{\delta_\text{int}})\label{eq:pointwise_estimates_herr_v-1}\\
 & ||w_\text{herr}^{\alpha}||_{L^{\infty}}\lesssim\sqrt{\text{tr}\,\mu_{\alpha}}\cdot l_\text{wr},\quad||\nabla w_\text{herr}^{\alpha}||_{L^{\infty}}\lesssim\sqrt{\text{tr}\,\mu_{\alpha}}\cdot\left(1\vee\frac{l_\text{wr}}{\delta_\text{int}}\right),\quad||\nabla\nabla w_\text{herr}^{\alpha}||_{L^{\infty}}\lesssim\frac{\sqrt{\text{tr}\,\mu_{\alpha}}}{l_\text{wr}}\cdot\left(1\vee\frac{l_\text{wr}^{2}}{\delta_\text{int}^{2}}\right)\label{eq:pointwise_estimates_herr_w-1}
\end{align}
with constants independent of $\alpha\in\mathcal{I}$.

Before proceeding to the next step of the construction, let us quickly verify
that the parameters $l_\text{sh}$, $l_\text{wr}$, and $\delta_\text{int}$ are
indeed admissible for use in \prettyref{subsec:herringbone-patterns}.
According to \prettyref{eq:parameters_herringbone_reqs}, we must
check that 
\begin{equation}
l_\text{sh}\in(0,\infty),\quad l_\text{wr}\in(0,\infty),\quad\text{and}\quad\delta_\text{int}\in(0,\frac{1}{2}\theta_{\alpha}l_\text{sh})\quad\forall\,\alpha\in\mathcal{I} \label{eq:parameters_herringbone_reqs-1}
\end{equation}
where 
\[
\theta_{\alpha}=\frac{\lambda_{1}^{\alpha}}{\lambda_{1}^{\alpha}+\lambda_{2}^{\alpha}}\quad\text{and}\quad 0<\lambda_1^\alpha \leq \lambda_2^\alpha\text{ are the eigenvalues of }\mu_\alpha.
\]
It follows from \prettyref{eq:uniformly_elliptic}
and \prettyref{eq:averaging} that $\theta_\alpha \in (\frac{\lambda}{2\Lambda},\frac{\Lambda}{2\lambda})$. 
As by hypothesis $\delta_\text{int}<\frac{1}{4}\frac{\lambda}{\Lambda}l_\text{sh},$
we conclude that \prettyref{eq:parameters_herringbone_reqs-1} holds.

\vspace{.5em}

\paragraph*{\uline{Step 2: join the herringbones across external walls at
scale \mbox{$\delta_\text{ext}$}}. \quad}

The next step is to join the herringbones obtained in the previous
step into a single, globally defined piecewise herringbone pattern.
We employ a family of smooth cutoff functions supported away from
$\cup_{\alpha\in\mathcal{I}}\partial Q_{\alpha}$ to define the external walls.
Since by hypothesis $\delta_\text{ext}<\frac{1}{2}l_\text{avg}$, there exists
a family of smooth cutoff functions $\{\chi_\text{ext}^{\alpha}\}_{\alpha\in\mathcal{I}}$
such that
\begin{itemize}
\item $\chi_\text{ext}^{\alpha}\in C_{c}^{\infty}(Q_{\alpha})$ and $0\leq\chi_{\alpha}\leq1$,
\item $\chi_\text{ext}^{\alpha}(x)=0$ if $d(x,\partial Q_{\alpha})\leq\frac{1}{2}\delta_\text{ext}$
and $\chi_\text{ext}^{\alpha}(x)=1$ if $d(x,\partial Q_{\alpha})\geq\delta_\text{ext}$,
\item $||\nabla\chi_\text{ext}^{\alpha}||_{L^{\infty}}\lesssim\frac{1}{\delta_\text{ext}}$,
$||\nabla\nabla\chi_\text{ext}^{\alpha}||_{L^{\infty}}\lesssim\frac{1}{\delta_\text{ext}}$.
\end{itemize}
The constants implicit in the above are independent of all parameters (including $\alpha$). Finally, we define $v_{\text{p.h.}}:\Omega\to\mathbb{R}^{2}$
and $w_{\text{p.h.}}:\Omega\to\mathbb{R}$ by
\[
v_{\text{p.h.}}=\sum_{\alpha\in\mathcal{I}}v_{\text{herr}}^{\alpha}\chi_\text{ext}^{\alpha}\quad\text{and}\quad w_{\text{p.h.}}=\sum_{\alpha\in\mathcal{I}}w_{\text{herr}}^{\alpha}\chi_\text{ext}^{\alpha}.
\]
This completes our construction of the piecewise herringbone.
Note the pointwise estimates
\begin{align}
 & ||v_{\text{p.h.}}||_{L^{\infty}}\lesssim||\mu||_{L^{\infty}}\cdot l_\text{sh}\left(1\vee\frac{l_\text{wr}}{l_\text{sh}}\right),\quad||\nabla v_{\text{p.h.}}||_{L^{\infty}}\lesssim||\mu||_{L^{\infty}}\cdot\left(1\vee\frac{l_\text{sh}}{\delta_\text{ext}}\vee\frac{l_\text{wr}}{\delta_\text{ext}}\vee\frac{l_\text{wr}}{\delta_\text{int}}\right),\label{eq:pointwise_ph_v}\\
 & ||w_{\text{p.h.}}||_{L^{\infty}}\lesssim\sqrt{||\mu||_{L^{\infty}}}\cdot l_\text{wr},\quad||\nabla w_{\text{p.h.}}||_{L^{\infty}}\lesssim\sqrt{||\mu||_{L^{\infty}}}\cdot\left(1\vee\frac{l_\text{wr}}{\delta_\text{ext}}\vee\frac{l_\text{wr}}{\delta_\text{int}}\right),\label{eq:pointwise_ph_w1}\\
 & ||\nabla\nabla w_{\text{p.h.}}||_{L^{\infty}}\lesssim\frac{\sqrt{||\mu||_{L^{\infty}}}}{l_\text{wr}}\cdot\left(1\vee\frac{l_\text{wr}^{2}}{\delta_\text{ext}^{2}}\vee\frac{l_\text{wr}^{2}}{\delta_\text{int}^{2}}\right).
 \label{eq:pointwise_ph_w2}
\end{align}
These follow from \prettyref{eq:pointwise_estimates_herr_v-1}, \prettyref{eq:pointwise_estimates_herr_w-1},
and the properties of $\{\chi_\text{ext}^{\alpha}\}$ listed above.

\subsubsection{Energy estimates for the piecewise herringbone}

Here we estimate the energy of the piecewise herringbones just defined.
Decompose $\Omega$ into its ``wall'' and ``bulk'' regions given by
\begin{equation}
\Omega_\text{wall}=\left\{ x\in\Omega:d(x,\cup_{\alpha}\partial Q_{\alpha})<\delta_\text{ext}\right\} \quad\text{and}\quad\Omega_\text{bulk}=\Omega\backslash\Omega_\text{wall}\label{eq:Omega_wall}
\end{equation}
and define the quantities
\begin{align}
A_{0}(l_\text{avg};\mu) & =\sum_{\alpha\in\mathcal{I}}\text{tr}\,\mu_{\alpha}\cdot|Q_{\alpha}|\label{eq:A_0}\\
A_{1}(l_\text{avg},l_\text{sh},l_\text{wr},\delta_\text{int},\delta_\text{ext};\mu) & =\left(1\vee\frac{l_\text{sh}^{2}}{\delta_\text{ext}^{2}}\vee\frac{l_\text{wr}^{4}}{\delta_\text{ext}^{4}}\vee\frac{l_\text{wr}^{4}}{\delta_\text{int}^{4}}\right)|\Omega_\text{wall}|+\frac{\delta_\text{int}}{l_\text{sh}}\left(1\vee\frac{l_\text{wr}^{4}}{\delta_\text{int}^{4}}\right)\left(1\vee\frac{l_\text{sh}}{l_\text{avg}}\right)|\cup_{\alpha\in\mathcal{I}}Q_{\alpha}|\label{eq:A_1}\\
A_{2}(l_\text{avg}) & =l_\text{avg}^{2}|\Omega|.\label{eq:A_2}
\end{align}

\begin{lemma}
\label{lem:PHestimates} Let $\Omega$ be a bounded and Lipschitz, and let $\mu:\Omega\to\emph{Sym}_{2}$ be positive definite and Lipschitz continuous. Let $l_\emph{wr}$, $l_\emph{sh}$, $l_\emph{avg}$, $\delta_\emph{int}$, and $\delta_\emph{ext}$ satisfy \prettyref{eq:parameters_ph_herringbone_reqs}. 
The piecewise herringbones constructed in
\prettyref{subsec:Constructing-the-piecewise-herringbone} satisfy
the following estimates:
\begin{itemize}
\item the stretching energy satisfies
\[
\int_{\Omega}|e(v_{\emph{p.h.}})+\frac{1}{2}\nabla w_{\emph{p.h.}}\otimes\nabla w_{\emph{p.h.}}-\frac{1}{2}\mu|^{2}\lesssim||\mu||_{L^{\infty}}^{2}A_{1}+||\nabla\mu||_{L^{\infty}}^{2}A_{2};
\]
\item the bending energy satisfies 
\[
\int_{\Omega}|\nabla\nabla w_{\emph{p.h.}}|^{2}\leq\frac{A_{0}+C||\mu||_{L^{\infty}}A_{1}}{l_\emph{wr}^{2}}
\]
where the constant $C$ is independent of all parameters;
\item the substrate energy satisfies
\[
\int_{\Omega}|w_{\emph{p.h.}}|^{2}\leq\left(A_{0}+C||\mu||_{L^{\infty}}A_{1}\right)l_\emph{wr}^{2}.
\]
\end{itemize}
\end{lemma}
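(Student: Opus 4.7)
The plan is to exploit the local structure of the construction by partitioning $\Omega$ into the wall and bulk regions $\Omega = \Omega_\text{wall} \cup \Omega_\text{bulk}$ from \prettyref{eq:Omega_wall}. On $\Omega_\text{bulk} \cap Q_\alpha$ only a single cutoff is active ($\chi^\alpha_\text{ext} \equiv 1$ while $\chi^\beta_\text{ext} \equiv 0$ for $\beta \neq \alpha$), so the piecewise herringbone coincides there with the single herringbone $(v^\alpha_\text{herr}, w^\alpha_\text{herr})$ and the estimates from \prettyref{lem:herringbone_estimates} apply square by square. On $\Omega_\text{wall}$ the cutoffs transition between $0$ and $1$, and there I would bound the integrands pointwise using the uniform estimates \prettyref{eq:pointwise_ph_v}--\prettyref{eq:pointwise_ph_w2} and integrate against $|\Omega_\text{wall}|$.

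For the stretching term, on each $\Omega_\text{bulk} \cap Q_\alpha$ I would split
\[
e(v_\text{p.h.}) + \tfrac{1}{2}\nabla w_\text{p.h.} \otimes \nabla w_\text{p.h.} - \tfrac{1}{2}\mu = \Big(e(v^\alpha_\text{herr}) + \tfrac{1}{2}\nabla w^\alpha_\text{herr} \otimes \nabla w^\alpha_\text{herr} - \tfrac{1}{2}\mu_\alpha\Big) + \tfrac{1}{2}(\mu_\alpha - \mu),
\]
square, and apply $(a+b)^2 \leq 2a^2 + 2b^2$. The first summand contributes $\lesssim |\mu_\alpha|^2 a_1(\mu_\alpha, Q_\alpha)$ by \prettyref{lem:herringbone_estimates}, while $|\mu_\alpha - \mu| \lesssim \|\nabla\mu\|_{L^\infty} l_\text{avg}$ on $Q_\alpha$ gives the second a bound of $\|\nabla\mu\|_{L^\infty}^2 l_\text{avg}^2 |Q_\alpha|$. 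Summing over $\alpha$ and invoking \prettyref{lem:constant_estimates} yields the internal-wall term of $A_1$ (times $\|\mu\|_{L^\infty}^2$) and the full $A_2$ (times $\|\nabla\mu\|_{L^\infty}^2$). On $\Omega_\text{wall}$ the pointwise bounds \prettyref{eq:pointwise_ph_v}--\prettyref{eq:pointwise_ph_w1} together with $|\mu| \leq \|\mu\|_{L^\infty}$ give integrand $\lesssim \|\mu\|_{L^\infty}^2 \big(1 \vee l_\text{sh}^2/\delta_\text{ext}^2 \vee l_\text{wr}^4/\delta_\text{ext}^4 \vee l_\text{wr}^4/\delta_\text{int}^4\big)$, and multiplying by $|\Omega_\text{wall}|$ produces the external-wall term of $A_1$.

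The bending and substrate estimates follow the same pattern. For bending, on $\Omega_\text{bulk}$ summing the per-square bound from \prettyref{lem:herringbone_estimates} gives $\sum_\alpha (a_0(\mu_\alpha, Q_\alpha) + C\|\mu\|_{L^\infty} a_1(\mu_\alpha, Q_\alpha))/l_\text{wr}^2$; by \prettyref{lem:constant_estimates} the first sum equals $A_0$ plus a correction absorbable into $\|\mu\|_{L^\infty} \cdot (\text{internal-wall part of } A_1)$. On $\Omega_\text{wall}$, squaring \prettyref{eq:pointwise_ph_w2} and multiplying by $|\Omega_\text{wall}|$ yields $\|\mu\|_{L^\infty}(1 \vee l_\text{wr}^4/\delta_\text{ext}^4 \vee l_\text{wr}^4/\delta_\text{int}^4)|\Omega_\text{wall}|/l_\text{wr}^2$, which is absorbed into the external-wall part of $A_1/l_\text{wr}^2$. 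For substrate, since the supports $\{\mathrm{supp}\,\chi^\alpha_\text{ext}\}$ are disjoint and each $\chi^\alpha_\text{ext} \leq 1$, one has $\int_\Omega |w_\text{p.h.}|^2 \leq \sum_\alpha \int_{Q_\alpha} |w^\alpha_\text{herr}|^2 \leq \sum_\alpha a_0(\mu_\alpha, Q_\alpha) l_\text{wr}^2$, from which the claim again follows by \prettyref{lem:constant_estimates}.

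The bulk of the work is pure bookkeeping: tracking how the ratios $l_\text{sh}/\delta_\text{ext}$, $l_\text{wr}/\delta_\text{ext}$, and $l_\text{wr}/\delta_\text{int}$ enter (squared from $|\nabla v|^2$ and $|\nabla\nabla w|^2$, fourth power from $|\nabla w|^4$), and subsuming them under the single envelope $1 \vee l_\text{sh}^2/\delta_\text{ext}^2 \vee l_\text{wr}^4/\delta_\text{ext}^4 \vee l_\text{wr}^4/\delta_\text{int}^4$ that appears in the first term of $A_1$. No genuinely new ideas are required beyond a careful combination of \prettyref{lem:herringbone_estimates} and \prettyref{lem:constant_estimates} with the Lipschitz continuity of $\mu$; the latter is what generates the new $A_2$ term, which is absent in the single-square analysis.
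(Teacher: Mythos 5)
Your proposal is correct and follows essentially the same route as the paper: decompose $\Omega$ into the wall and bulk regions of \prettyref{eq:Omega_wall}, on each $\Omega_\text{bulk}\cap Q_\alpha$ reduce to the single-square estimates of \prettyref{lem:herringbone_estimates} after splitting off the $\frac{1}{2}(\mu_\alpha-\mu)$ error controlled by the Lipschitz bound, use the pointwise estimates \prettyref{eq:pointwise_ph_v}--\prettyref{eq:pointwise_ph_w2} times $|\Omega_\text{wall}|$ on the wall region, and sum via \prettyref{lem:constant_estimates}. The bookkeeping you describe matches the paper's line by line, so there is nothing to add.
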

\begin{proof}
We begin with the stretching energy. Introduce the strains
\[
\varepsilon=e(v_\text{p.h.})+\frac{1}{2}\nabla w_\text{p.h.}\otimes\nabla w_\text{p.h.}-\frac{1}{2}\mu,\quad\text{and}\quad\varepsilon_{\alpha}=e(v_{\text{herr}}^{\alpha})+\frac{1}{2}\nabla w_{\text{herr}}^{\alpha}\otimes\nabla w_{\text{herr}}^{\alpha}-\frac{1}{2}\mu_{\alpha}\quad \text{for }\alpha\in\mathcal{I}.
\]
Using the definition of the cutoff function
$\chi_\text{ext}^{\alpha}$ we find that
\[
\varepsilon=\varepsilon_{\alpha}+\frac{1}{2}(\mu_{\alpha}-\mu)\quad\text{on }\Omega_\text{bulk}\cap Q_{\alpha},
\]
hence by the triangle inequality
\begin{equation}
|\varepsilon|\leq|\varepsilon_{\alpha}|+\frac{1}{2}|\mu_{\alpha}-\mu|\lesssim|\varepsilon_{\alpha}|+||\nabla\mu||_{L^{\infty}}l_\text{avg}\quad\text{on }\Omega_\text{bulk}\cap Q_{\alpha}.\label{eq:bulk_strain_est-1}
\end{equation}
On the other hand, the pointwise estimates \prettyref{eq:pointwise_ph_v}-\prettyref{eq:pointwise_ph_w1}
imply that
\begin{equation}
|\varepsilon|\leq|\nabla v_\text{p.h.}|+\frac{1}{2}|\nabla w_\text{p.h.}|^{2}+\frac{1}{2}|\mu|\lesssim||\mu||_{L^{\infty}}\left(1\vee\frac{l_\text{sh}}{\delta_\text{ext}}\vee\frac{l_\text{wr}^{2}}{\delta_\text{ext}^{2}}\vee\frac{l_\text{wr}^{2}}{\delta_\text{int}^{2}}\right)\quad\text{on }\Omega_\text{wall}.\label{eq:wall_strain_est-1}
\end{equation}
Applying \prettyref{eq:bulk_strain_est-1}, \prettyref{eq:wall_strain_est-1},
and the stretching part of \prettyref{lem:herringbone_estimates}
we deduce that 
\begin{align*}
\int_{\Omega}|\varepsilon|^{2} & =\left(\int_{\Omega_\text{wall}}+\int_{\Omega_\text{bulk}}\right)|\varepsilon|^{2}=\int_{\Omega_\text{wall}}|\varepsilon|^{2}+\sum_{\alpha\in\mathcal{I}}\int_{\Omega_\text{bulk}\cap Q_{\alpha}}|\varepsilon|^{2}\\
 & \lesssim||\mu||_{L^{\infty}}^{2}\left(1\vee\frac{l_\text{sh}^{2}}{\delta_\text{ext}^{2}}\vee\frac{l_\text{wr}^{4}}{\delta_\text{ext}^{4}}\vee\frac{l_\text{wr}^{4}}{\delta_\text{int}^{4}}\right)|\Omega_\text{wall}|+\sum_{\alpha\in\mathcal{I}}\left[||\mu||_{L^{\infty}}^{2}a_{1}(\mu_{\alpha},Q_{\alpha})+||\nabla\mu||_{L^{\infty}}^{2}l_\text{avg}^{2}|\Omega_\text{bulk}\cap Q_{\alpha}|\right]\\
 & \lesssim||\mu||_{L^{\infty}}^{2}\left[\left(1\vee\frac{l_\text{sh}^{2}}{\delta_\text{ext}^{2}}\vee\frac{l_\text{wr}^{4}}{\delta_\text{ext}^{4}}\vee\frac{l_\text{wr}^{4}}{\delta_\text{int}^{4}}\right)|\Omega_\text{wall}|+\frac{\delta_\text{int}}{l_\text{sh}}\left(1\vee\frac{l_\text{wr}^{4}}{\delta_\text{int}^{4}}\right)\left(1\vee\frac{l_\text{sh}}{l_\text{avg}}\right)|\cup_{\alpha\in\mathcal{I}}Q_{\alpha}|\right]+||\nabla\mu||_{L^{\infty}}^{2}l_\text{avg}^{2}|\Omega|\\
 & \lesssim||\mu||_{L^{\infty}}^{2}A_{1}+||\nabla\mu||_{L^{\infty}}^{2}A_{2}
\end{align*}
according to the definitions of $A_{1}$ and $A_{2}$ in \prettyref{eq:A_1}
and \prettyref{eq:A_2}. Note we used the estimate
\begin{equation}
\sum_{\alpha\in\mathcal{I}}a_{1}(\mu_{\alpha},Q_{\alpha})\lesssim\frac{\delta_\text{int}}{l_\text{sh}}\left(1\vee\frac{l_\text{wr}^{4}}{\delta_\text{int}^{4}}\right)\left(1\vee\frac{l_\text{sh}}{l_\text{avg}}\right)|\cup_{\alpha\in\mathcal{I}}Q_{\alpha}|\label{eq:sum_a_1}
\end{equation}
in the third line, which follows from \prettyref{lem:constant_estimates}
and the fact that $\text{diam}\,Q_{\alpha}\sim l_\text{avg}$ uniformly in $\alpha$. The desired estimate on the stretching energy is proved.

We turn to estimate the bending energy. Note that 
\begin{equation}
w_{\text{p.h.}}=w_{\text{herr}}^{\alpha}\quad\text{on }\Omega_\text{bulk}\cap Q_{\alpha}\label{eq:bulk_out-of-plane}
\end{equation}
by the definition of $\chi_\text{ext}^{\alpha}$,
while the pointwise estimate
\begin{equation}
|\nabla\nabla w_{\text{p.h.}}|\lesssim\frac{\sqrt{||\mu||_{L^{\infty}}}}{l_\text{wr}}\left(1\vee\frac{l_\text{wr}^{2}}{\delta_\text{ext}^{2}}\vee\frac{l_\text{wr}^{2}}{\delta_\text{int}^{2}}\right)\quad\text{on }\Omega_\text{wall}\label{eq:wall_out-of-plane}
\end{equation}
follows from \prettyref{eq:pointwise_ph_w2}. Note also that 
\begin{equation}
\sum_{\alpha\in\mathcal{I}}a_{0}(\mu_{\alpha},Q_{\alpha})\leq\sum_{\alpha}\text{tr}\,\mu_{\alpha}\cdot|Q_{\alpha}|+C||\mu||_{L^{\infty}}\frac{l_\text{wr}}{l_\text{sh}}\left(1\vee\frac{l_\text{sh}}{l_\text{avg}}\right)|\cup_{\alpha\in\mathcal{I}}Q_{\alpha}|\label{eq:sum_a_0}
\end{equation}
as a result of \prettyref{lem:constant_estimates}. Using \prettyref{eq:bulk_out-of-plane},
\prettyref{eq:wall_out-of-plane} and the bending part of \prettyref{lem:herringbone_estimates}
we deduce that
\begin{align*}
\int_{\Omega}|\nabla\nabla &w_\text{p.h.}|^{2}  =\left(\int_{\Omega_\text{wall}}+\int_{\Omega_\text{bulk}}\right)|\nabla\nabla w_\text{p.h.}|^{2}=\int_{\Omega_\text{wall}}|\nabla\nabla w_\text{p.h.}|^{2}+\sum_{\alpha\in\mathcal{I}}\int_{\Omega_\text{bulk}\cap Q_{\alpha}}|\nabla\nabla w_{\text{herr}}^{\alpha}|^{2}\\
 & \leq C\frac{||\mu||_{L^{\infty}}}{l_\text{wr}^{2}}\left(1\vee\frac{l_\text{wr}^{4}}{\delta_\text{ext}^{4}}\vee\frac{l_\text{wr}^{4}}{\delta_\text{int}^{4}}\right)|\Omega_\text{wall}|+\sum_{\alpha\in\mathcal{I}}\frac{a_{0}(\mu_{\alpha},Q_{\alpha})+C||\mu||_{L^{\infty}}a_{1}(\mu_{\alpha},Q_{\alpha})}{l_\text{wr}^{2}}\\
 & \leq\frac{1}{l_\text{wr}^{2}}\sum_{\alpha\in\mathcal{I}}\text{tr}\,\mu_{\alpha}\cdot|Q_{\alpha}|+C\frac{||\mu||_{L^{\infty}}}{l_\text{wr}^{2}}\left[\left(1\vee\frac{l_\text{wr}^{4}}{\delta_\text{ext}^{4}}\vee\frac{l_\text{wr}^{4}}{\delta_\text{int}^{4}}\right)|\Omega_\text{wall}|+\frac{\delta_\text{int}}{l_\text{sh}}\left(1\vee\frac{l_\text{wr}^{4}}{\delta_\text{int}^{4}}\right)\left(1\vee\frac{l_\text{sh}}{l_\text{avg}}\right)|\cup_{\alpha\in\mathcal{I}}Q_{\alpha}|\right]\\
 & \leq\frac{A_{0}+C||\mu||_{L^{\infty}}A_{1}}{l_\text{wr}^{2}}
\end{align*}
where we applied \prettyref{eq:sum_a_1} and \prettyref{eq:sum_a_0} to pass from the second line to the third. 

We finish with the substrate energy. Note that
\begin{equation}
|w_{\text{p.h.}}|\leq|w_{\text{herr}}^{\alpha}|\quad\text{on }\Omega\cap Q_{\alpha}\label{eq:out-of-plane-lessthan}
\end{equation}
as there always holds $\chi_\text{ext}^{\alpha}\leq1$. Using \prettyref{eq:out-of-plane-lessthan}
and the substrate part of \prettyref{lem:herringbone_estimates}, there follows
\begin{align*}
\int_{\Omega}|w_\text{p.h.}|^{2} & \leq\sum_{\alpha\in\mathcal{I}}\int_{\Omega\cap Q_{\alpha}}|w_{\text{herr}}^{\alpha}|^{2}\leq\sum_{\alpha\in\mathcal{I}}a_{0}(\mu_{\alpha},Q_{\alpha})l_\text{wr}^{2}\\
 & \leq l_\text{wr}^{2}\sum_{\alpha}\text{tr}\,\mu_{\alpha}\cdot|Q_{\alpha}|+Cl_\text{wr}^{2}||\mu||_{L^{\infty}}\frac{l_\text{wr}}{l_\text{sh}}\left(1\vee\frac{l_\text{sh}}{l_\text{avg}}\right)|\cup_{\alpha\in\mathcal{I}}Q_{\alpha}|\\
 & \leq\left(A_{0}+C||\mu||_{L^{\infty}}A_{1}\right)l_\text{wr}^{2}.
\end{align*}
We used \prettyref{eq:sum_a_0} to pass to
the second line, and  the definitions of $A_{0}$ and $A_{1}$ from
\prettyref{eq:A_0} and \prettyref{eq:A_1} at the end. 
\qed\end{proof}

Next, we identify an energetically optimal version of the piecewise
herringbone by minimizing over the free parameters $l_\text{avg}$,
$l_\text{sh}$, $l_\text{wr}$, $\delta_\text{int}$, and $\delta_\text{ext}$ from \prettyref{eq:parameters_ph_herringbone_reqs}.
To simplify the presentation, and as it turns out to be consistent
with optimality, we shall impose the additional constraints
\begin{equation}
l_\text{wr}\ll l_\text{sh}\ll l_\text{avg}\ll\text{diam}\,\Omega,\quad l_\text{wr}\lesssim\delta_\text{int},\quad\text{and}\quad l_\text{sh}\lesssim\delta_\text{ext}\label{eq:additional_constraints}
\end{equation}
in what follows. We require the asymptotic behavior of the quantities $A_0$, $A_1$, and $A_2$ from \prettyref{eq:A_0}-\prettyref{eq:A_2}. 
\begin{lemma}
\label{lem:Aparameter_estimates}We have that
\[
A_{0}(\mu)\to\int_{\Omega}\emph{tr}\,\mu\,dx,\quad A_{1}(\mu)\lesssim\frac{\delta_\emph{int}}{l_\emph{sh}}|\Omega|+\frac{\delta_\emph{ext}}{l_\emph{avg}}(\emph{diam}\,\Omega)^{2},\quad \emph{and}\quad A_{2}(\mu)=l_\emph{avg}^{2}|\Omega|
\]
in any limit satisfying \prettyref{eq:additional_constraints}.
\end{lemma}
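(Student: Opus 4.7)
The statement for $A_2$ is tautological, so the work is in $A_0$ and $A_1$. Throughout, I will use the hypothesis that $\Omega$ is bounded and Lipschitz together with the regime \prettyref{eq:additional_constraints} (namely $l_\text{wr}\ll l_\text{sh}\ll l_\text{avg}\ll\text{diam}\,\Omega$, $l_\text{wr}\lesssim\delta_\text{int}$, $l_\text{sh}\lesssim\delta_\text{ext}$), and that $\mu$ is uniformly bounded because it is Lipschitz on the bounded set $\Omega$.

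For $A_0$, I would split the index set as $\mathcal{I}=\mathcal{I}_\text{int}\cup\mathcal{I}_\text{bdry}$, where $\mathcal{I}_\text{int}$ denotes indices with $Q_\alpha\subset\Omega$ and $\mathcal{I}_\text{bdry}$ denotes those with $Q_\alpha\cap\partial\Omega\neq\emptyset$. For interior squares the averaging identity $\mu_\alpha|Q_\alpha|=\int_{Q_\alpha}\mu\,dx$ is exact, so summing gives $\sum_{\alpha\in\mathcal{I}_\text{int}}\text{tr}\,\mu_\alpha\cdot|Q_\alpha|=\int_{U}\text{tr}\,\mu\,dx$ with $U=\cup_{\mathcal{I}_\text{int}}Q_\alpha$. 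Because $\partial\Omega$ is Lipschitz and hence has finite $\mathcal{H}^1$ measure, the area of boundary squares is $|\cup_{\mathcal{I}_\text{bdry}}Q_\alpha|\lesssim_\Omega l_\text{avg}$. In particular $|\Omega\setminus U|\to 0$ as $l_\text{avg}\to 0$, so $\int_U\text{tr}\,\mu\to\int_\Omega\text{tr}\,\mu$. For the boundary squares themselves, the trivial bound $|\mu_\alpha|\leq \|\mu\|_{L^\infty(\Omega)}$ gives $\sum_{\mathcal{I}_\text{bdry}}\text{tr}\,\mu_\alpha\cdot|Q_\alpha|\lesssim\|\mu\|_{L^\infty}l_\text{avg}\to 0$. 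Combining these yields $A_0\to\int_\Omega\text{tr}\,\mu\,dx$.

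For $A_1$, the main observation is that the four maxima appearing in \prettyref{eq:A_1} collapse under the hypothesis \prettyref{eq:additional_constraints}. Indeed, $l_\text{sh}\lesssim\delta_\text{ext}$ gives $l_\text{sh}^2/\delta_\text{ext}^2\lesssim 1$; combining with $l_\text{wr}\ll l_\text{sh}$ gives $l_\text{wr}^4/\delta_\text{ext}^4\lesssim 1$; and $l_\text{wr}\lesssim\delta_\text{int}$ gives $l_\text{wr}^4/\delta_\text{int}^4\lesssim 1$. Since $l_\text{sh}\ll l_\text{avg}$, the factor $(1\vee l_\text{sh}/l_\text{avg})=1$. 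Thus
\[
A_1\lesssim |\Omega_\text{wall}|+\frac{\delta_\text{int}}{l_\text{sh}}\,|\cup_{\alpha\in\mathcal{I}}Q_\alpha|.
\]
The second term is controlled by $|\cup_\alpha Q_\alpha|\leq|\Omega|+|\cup_{\mathcal{I}_\text{bdry}}Q_\alpha|\lesssim|\Omega|$, again by the Lipschitz nature of $\partial\Omega$. For the first term, I count the total length of the internal edges: the number of squares meeting $\Omega$ is $|\mathcal{I}|\lesssim(\text{diam}\,\Omega)^2/l_\text{avg}^2$, each has perimeter $4l_\text{avg}$, so $\mathcal{H}^1(\cup_\alpha\partial Q_\alpha\cap\Omega)\lesssim(\text{diam}\,\Omega)^2/l_\text{avg}$. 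Since $\Omega_\text{wall}$ is the $\delta_\text{ext}$-tube around this set (intersected with $\Omega$), we obtain $|\Omega_\text{wall}|\lesssim (\delta_\text{ext}/l_\text{avg})(\text{diam}\,\Omega)^2$, which gives the claimed estimate.

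The argument is essentially bookkeeping; the only subtlety is handling the boundary layer of squares touching $\partial\Omega$ cleanly in the proof of $A_0\to\int_\Omega\text{tr}\,\mu\,dx$, which follows from the Lipschitz regularity of $\partial\Omega$ and the $L^\infty$ bound on $\mu$. No additional tools beyond elementary covering estimates are required.
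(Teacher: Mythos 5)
Your proof is correct and follows essentially the same route as the paper: the $A_1$ bound via collapsing the maxima under \prettyref{eq:additional_constraints} and counting the $\delta_{\text{ext}}$-tubes around the $\lesssim(\operatorname{diam}\Omega/l_{\text{avg}})^2$ square boundaries is exactly the paper's argument. For $A_0$ the paper simply asserts convergence from $\cup_{\alpha}Q_{\alpha}\to\Omega$, whereas you spell out the interior/boundary-square split and the $O(l_{\text{avg}})$ area of the boundary layer; this is a more complete rendering of the same idea and is fine.
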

\begin{proof}
The claim regarding $A_{0}$ follows from its definition, since 
\[
\cup_{\alpha\in\mathcal{I}}Q_{\alpha}\to\Omega\quad\text{as }l_\text{avg}\to0.
\]
The claim regarding $A_{2}$ is clear. Now we address $A_{1}$. First, note that
\[
|\cup_{\alpha\in\mathcal{I}}\overline{Q_{\alpha}}|\lesssim|\Omega|
\]
for all small enough $l_\text{avg}$. 
Now recall the definition of $\Omega_\text{wall}$ from \prettyref{eq:Omega_wall}.
Each square $Q_{\alpha}$ has perimeter $\lesssim l_\text{avg}$, and their
$\delta_\text{ext}$-thickenings have area $\lesssim\delta_\text{ext}\cdot l_\text{avg}$.
The total number of squares is eventually $\lesssim\left(\frac{\text{diam}\,\Omega}{l_\text{avg}}\right)^{2}$. Hence,
\[
|\Omega_\text{wall}|\lesssim\left(1\vee\frac{(\text{diam}\,\Omega)^{2}}{l_\text{avg}^{2}}\right)\cdot\left(\delta_\text{ext}\cdot l_\text{avg}\right)=\frac{\delta_\text{ext}}{l_\text{avg}}\left(1\vee\frac{l_\text{avg}^{2}}{(\text{diam}\,\Omega)^{2}}\right)\cdot\left(\text{diam}\,\Omega\right)^{2}.
\]
Setting these estimates into \prettyref{eq:A_1} and appealing to \prettyref{eq:additional_constraints}
we see that 
\[
A_{1}\lesssim|\Omega_\text{wall}|+\frac{\delta_\text{int}}{l_\text{sh}}|\cup_{\alpha\in\mathcal{I}}Q_{\alpha}|\lesssim\frac{\delta_\text{ext}}{l_\text{avg}}(\text{diam}\,\Omega)^{2}+\frac{\delta_\text{int}}{l_\text{sh}}|\Omega|.
\]
\qed\end{proof}
We are ready to optimize over the piecewise herringbone patterns adapted to $\mu$. Given \prettyref{eq:additional_constraints}, the estimates 
\begin{align}
\int_{\Omega}|e(v_{\text{p.h.}})+\frac{1}{2}\nabla w_{\text{p.h.}}\otimes\nabla w_{\text{p.h.}}-\frac{1}{2}\mu|^{2} & \lesssim_{\Omega,\mu}l_\text{avg}^{2}\vee\frac{\delta_\text{int}}{l_\text{sh}}\vee\frac{\delta_\text{ext}}{l_\text{avg}},\label{eq:stretching_estimate_beingoptimized}\\
\frac{b}{2}\int_{\Omega}|\nabla\nabla w_{\text{p.h.}}|^{2}+\frac{k}{2}\int_{\Omega}|w_{\text{p.h.}}|^{2} & \leq\left(\frac{b}{2}\frac{1}{l_\text{wr}^{2}}+\frac{k}{2}l_\text{wr}^{2}\right)\left(\int_{\Omega}\text{tr}\,\mu+o(1)+C(\Omega,\mu)\frac{\delta_\text{int}}{l_\text{sh}}\vee\frac{\delta_\text{ext}}{l_\text{avg}}\right)\label{eq:bending_estimate_beingoptimized}
\end{align}
follow from 
\prettyref{lem:PHestimates} and \prettyref{lem:Aparameter_estimates}. Balancing
the dominant terms in \prettyref{eq:stretching_estimate_beingoptimized}
and \prettyref{eq:bending_estimate_beingoptimized} yields that
\[
\frac{b}{l_\text{wr}^{2}}=kl_\text{wr}^{2}\quad\text{and}\quad l_\text{avg}^{2}\sim\frac{\delta_\text{int}}{l_\text{sh}}\sim\frac{\delta_\text{ext}}{l_\text{avg}},
\]
while saturating the last two constraints from \prettyref{eq:additional_constraints}
to minimize the energy that results yields that
\[
\frac{\delta_\text{int}}{l_\text{wr}}\sim\frac{\delta_\text{ext}}{l_\text{sh}}\sim1.
\]
These five relations underlie optimal choices for the five free parameters. 
Using them in \prettyref{eq:stretching_estimate_beingoptimized} and \prettyref{eq:bending_estimate_beingoptimized}
and recalling the pointwise estimates \prettyref{eq:pointwise_ph_v}-\prettyref{eq:pointwise_ph_w2}
we conclude the following result: 
\begin{corollary}
\label{cor:optimized_ph_pattern} Let $\Omega$ be bounded and Lipschitz, and let $\mu:\Omega \to \emph{Sym}_{2}$ be positive definite and Lipschitz continuous. 
Let $\{(v_{\emph{p.h.}},w_{\emph{p.h.}})\}$ be a sequence of piecewise herringbones as constructed in \prettyref{subsec:Constructing-the-piecewise-herringbone}, 
and suppose their parameters from \prettyref{eq:parameters_ph_herringbone_reqs} satisfy 
\[
l_\emph{wr}=\left(\frac{b}{k}\right)^{1/4}\ll\emph{diam}\,\Omega,\quad l_\emph{avg}\sim l_\emph{wr}^{1/5},\quad l_\emph{sh}\sim l_\emph{wr}^{1/2}l_\emph{avg}^{1/2},\quad\delta_\emph{int}\sim l_\emph{wr},\quad \emph{and}\quad\delta_\emph{ext}\sim l_\emph{sh}.
\]
Such a sequence satisfies the energy estimates
\begin{align*}
\int_{\Omega}|e(v_{\emph{p.h.}})+\frac{1}{2}\nabla w_{\emph{p.h.}}\otimes\nabla w_{\emph{p.h.}}-\frac{1}{2}\mu|^{2} & \lesssim_{\Omega,\mu}\left(\frac{b}{k}\right)^{1/10},\\
\frac{b}{2}\int_{\Omega}|\nabla\nabla w_{\emph{p.h.}}|^{2}+\frac{k}{2}\int_{\Omega}|w_{\emph{p.h.}}|^{2} & \leq\sqrt{bk}\cdot\int_{\Omega}\emph{tr}\,\mu\,dx+o(\sqrt{bk})
\end{align*}
as well as the pointwise estimates
\begin{align}
 & ||v_{\emph{p.h.}}||_{L^{\infty}}\lesssim_{\mu}\left(\frac{b}{k}\right)^{3/20},\quad||\nabla v_{\emph{p.h.}}||_{L^{\infty}}\lesssim_{\mu}1,\label{eq:pointwise_estimate_final}\\
 & ||w_{\emph{p.h.}}||_{L^{\infty}}\lesssim_{\mu}\left(\frac{b}{k}\right)^{1/4},\quad||\nabla w_{\emph{p.h.}}||_{L^{\infty}}\lesssim_{\mu}1,\quad||\nabla\nabla w_{\emph{p.h.}}||_{L^{\infty}}\lesssim_{\mu}\left(\frac{k}{b}\right)^{1/4}.\label{eq:pointwise_estimate_final2}
\end{align}
\end{corollary}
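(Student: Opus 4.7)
The plan is to substitute the proposed parameter choices directly into the piecewise herringbone estimates of the preceding two lemmas and verify the claimed bounds by arithmetic. First I would verify admissibility. Computing the scalings gives $l_{\text{wr}} = (b/k)^{1/4}$, $l_{\text{sh}} \sim (b/k)^{3/20}$, and $l_{\text{avg}} \sim (b/k)^{1/20}$; since $b/k\to 0$, the ordering $l_{\text{wr}}\ll l_{\text{sh}}\ll l_{\text{avg}}\to 0$ holds, and the constraints on $\delta_{\text{int}}$ and $\delta_{\text{ext}}$ required in \prettyref{eq:parameters_ph_herringbone_reqs} are satisfied for all small $b/k$ since $\delta_{\text{int}}\sim l_{\text{wr}}\ll l_{\text{sh}}$ and $\delta_{\text{ext}}\sim l_{\text{sh}}\ll l_{\text{avg}}$, together with $l_{\text{avg}}\ll\text{diam}\,\Omega$ eventually. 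The additional conditions \prettyref{eq:additional_constraints} hold as well.

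The key arithmetic observation is that the three dominant error ratios collapse to a single scale:
\[
l_{\text{avg}}^{2}\;\sim\;\frac{\delta_{\text{int}}}{l_{\text{sh}}}\;\sim\;\frac{\delta_{\text{ext}}}{l_{\text{avg}}}\;\sim\;(b/k)^{1/10}.
\]
Feeding this into \prettyref{lem:Aparameter_estimates} gives $A_{0}\to\int_{\Omega}\text{tr}\,\mu\,dx$, $A_{1}\lesssim(b/k)^{1/10}$, and $A_{2}=(b/k)^{1/10}|\Omega|$. The stretching bound from \prettyref{lem:PHestimates} then immediately yields $\int_{\Omega}|\varepsilon|^{2}\lesssim_{\Omega,\mu}(b/k)^{1/10}$. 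For the combined bending and substrate term, the choice $l_{\text{wr}}=(b/k)^{1/4}$ is precisely the one that makes $\frac{b}{2l_{\text{wr}}^{2}}+\frac{k\,l_{\text{wr}}^{2}}{2}=\sqrt{bk}$; multiplying by $A_{0}+C\|\mu\|_{L^{\infty}}A_{1}=\int_{\Omega}\text{tr}\,\mu\,dx+o(1)$ delivers $\sqrt{bk}\int_{\Omega}\text{tr}\,\mu\,dx+o(\sqrt{bk})$, as claimed.

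The pointwise estimates follow by direct substitution into \prettyref{eq:pointwise_ph_v}--\prettyref{eq:pointwise_ph_w2}. Since $l_{\text{wr}}/l_{\text{sh}}\to 0$, one has $\|v_{\text{p.h.}}\|_{L^\infty}\lesssim_\mu l_{\text{sh}}=(b/k)^{3/20}$ and $\|w_{\text{p.h.}}\|_{L^\infty}\lesssim_\mu l_{\text{wr}}=(b/k)^{1/4}$. The correction factors $l_{\text{sh}}/\delta_{\text{ext}}$, $l_{\text{wr}}/\delta_{\text{ext}}$, and $l_{\text{wr}}/\delta_{\text{int}}$ are all bounded under our choices, so the first-derivative bounds collapse to $\|\nabla v_{\text{p.h.}}\|_{L^\infty},\|\nabla w_{\text{p.h.}}\|_{L^\infty}\lesssim_\mu 1$; the analogous squared ratios remain bounded, yielding $\|\nabla\nabla w_{\text{p.h.}}\|_{L^\infty}\lesssim_\mu 1/l_{\text{wr}}=(k/b)^{1/4}$.

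There is no real obstacle: the construction and the underlying estimates have already been done, and the corollary is essentially a bookkeeping exercise. The substantive content lies in the identification of these specific exponents as optimal, which was carried out in the paragraph immediately preceding the corollary by balancing the dominant terms in \prettyref{eq:stretching_estimate_beingoptimized} and \prettyref{eq:bending_estimate_beingoptimized} while saturating the remaining constraints $l_{\text{wr}}\lesssim\delta_{\text{int}}$ and $l_{\text{sh}}\lesssim\delta_{\text{ext}}$ from \prettyref{eq:additional_constraints}.
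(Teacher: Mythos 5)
Your proposal is correct and follows exactly the paper's route: the corollary is obtained by substituting the stated parameter choices into \prettyref{lem:PHestimates}, \prettyref{lem:Aparameter_estimates}, and the pointwise bounds \prettyref{eq:pointwise_ph_v}--\prettyref{eq:pointwise_ph_w2}, after checking admissibility and noting that $l_{\text{avg}}^2\sim\delta_{\text{int}}/l_{\text{sh}}\sim\delta_{\text{ext}}/l_{\text{avg}}\sim(b/k)^{1/10}$ while $b/l_{\text{wr}}^2=kl_{\text{wr}}^2=\sqrt{bk}$. The arithmetic in your write-up checks out, so there is nothing to add.
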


\subsection{Recovery sequences\label{subsec:recovery-sequences}}

We are finally ready to prove \prettyref{prop:recovery}. We take
for granted the results of \prettyref{subsec:smooth-approx}-\prettyref{subsec:piecewise-herringbone-patterns}.

\vspace{1em}
\noindent\emph{Proof of \prettyref{prop:recovery}}
Let $u_{\text{eff}}\in BD(\Omega)$
be tension-free. By the result of \prettyref{prop:gam_liminf},
it suffices to construct 
\begin{equation}
(u_{b,k,\gamma},u_{b,k,\gamma})\overset{*}{\rightharpoonup}(u_{\text{eff}},0)\quad\text{weakly-}*\text{ in }BD(\Omega)\times W^{1,2}(\Omega)\label{eq:desiredresult_wkcvg}
\end{equation}
such that
\begin{equation}
\limsup\,\frac{E_{b,k,\gamma}(u_{b,k,\gamma},w_{b,k,\gamma})}{2\sqrt{bk}+\gamma}\leq\int_{\Omega}\frac{1}{2}|\nabla p|^{2}-\int_{\partial\Omega}u_{\text{eff}}\cdot\hat{\nu}\,ds.\label{eq:desiredresult_lowdef}
\end{equation}
We begin by applying the results of \prettyref{subsec:smooth-approx}
to reduce $u_{\text{eff}}$ that are smooth and uniformly tension-free.
Due to \prettyref{lem:smoothapproximation}, there exist 
uniformly tension-free $\{u_{n}\}_{n\in\mathbb{N}}\subset C^{\infty}(\overline{\Omega};\mathbb{R}^{2})$
converging to $u_{\text{eff}}$ in the intermediate sense. In particular,
\[
u_{n}\stackrel{*}{\rightharpoonup}u_{\text{eff}}\quad\text{weakly-\ensuremath{*} in }BD(\Omega)\quad\text{and}\quad\int_{\partial\Omega}u_{n}\cdot\hat{\nu}\,ds\to\int_{\partial\Omega}u_{\text{eff}}\cdot\hat{\nu}\,ds\quad\text{as }n\to\infty.
\]
Suppose now that for each fixed $n$ we can produce a recovery sequence $\{(u_{n,m},w_{n,m})\}_{m\in\mathbb{N}}$
for $(u_n,0)$, i.e., a sequence satisfying the analog of \prettyref{eq:desiredresult_wkcvg} and \prettyref{eq:desiredresult_lowdef} but with $u_\text{eff}$ replaced by $u_n$. 
Then, a straightforward diagonalization argument produces a recovery sequence for $(u_\text{eff},0)$.
Thus, it suffices to achieve \prettyref{eq:desiredresult_wkcvg} and \prettyref{eq:desiredresult_lowdef} for  $u_{\text{eff}}$ that are smooth and uniformly tension-free. We do so via the piecewise herringbone patterns from \prettyref{subsec:piecewise-herringbone-patterns}.

Fix some $u_{\text{eff}}\in C^\infty(\overline{\Omega};\mathbb{R}^2)$ that is uniformly tension-free. Introduce the (pointwise-defined) target defect
\begin{equation}
\mu(x)=\nabla p\otimes\nabla p(x)-2e(u_{\text{eff}})(x)\quad x\in\Omega\label{eq:pointwise_target_defect}
\end{equation}
and note it is Lipschitz as
\[
||\nabla\mu||_{L^{\infty}}\lesssim||\nabla\nabla u_{\text{eff}}||_{L^{\infty}}\vee||\nabla\nabla p||_{L^{\infty}}||\nabla p||_{L^{\infty}}<\infty.
\]
It is also uniformly positive definite. Therefore, we may apply
the results of \prettyref{subsec:piecewise-herringbone-patterns}
to obtain a family of piecewise herringbones $\{(v_{\text{p.h.}},w_{\text{p.h.}})\}$
indexed by  $l_\text{avg}$, $l_\text{sh}$, $l_\text{wr}$, $\delta_\text{int}$,
and $\delta_\text{ext}$ and that are adapted to $\mu$. Guided by \prettyref{cor:optimized_ph_pattern},
we choose these parameters to depend on $b$ and $k$ (and not on $\gamma$) as
follows: we take
\[
l_\text{wr}=\left(\frac{b}{k}\right)^{1/4},\quad l_\text{sh}=l_\text{wr}^{1/2}l_\text{avg}^{1/2},\quad l_\text{avg}=l_\text{wr}^{1/5},\quad\delta_\text{int}=l_\text{wr},\quad\text{and}\quad\delta_\text{ext}=l_\text{sh}
\]
noting that these define a valid piecewise herringbone pattern, according to \prettyref{eq:parameters_ph_herringbone_reqs},  so long as 
\[
\delta_\text{int} < \frac{1}{4} \frac{\lambda}{\Lambda}l_\text{sh}\quad\text{and}\quad  \delta_\text{ext} < \frac{1}{2}l_\text{avg}
\]
where $\lambda,\Lambda \in (0,\infty)$ are as in \prettyref{eq:uniformly_elliptic}. 
Equivalently, we must arrange that $l_{wr}^{2/5}<\frac{1}{4}\frac{\lambda}{\Lambda}$. Note the quantities $\lambda$ and $\Lambda$ are fixed by $\mu$, and hence by $p$ and $u_\text{eff}$. Of course,  $l_\text{wr} \ll 1$ within the  given parameter regime in \prettyref{eq:assumptions-Prop3.1}. Therefore, the required inequalities are eventually satisfied. 

All that remains is to assemble the estimates from \prettyref{cor:optimized_ph_pattern} to prove \prettyref{eq:desiredresult_wkcvg} and \prettyref{eq:desiredresult_lowdef}. Calling 
\[
u_{b,k,\gamma}=u_{\text{eff}}+v_{\text{p.h.}}\quad\text{and}\quad w_{b,k,\gamma}=w_{\text{p.h.}}
\]
we see from \prettyref{eq:pointwise_estimate_final} and \prettyref{eq:pointwise_estimate_final2}
that the desired convergence \prettyref{eq:desiredresult_wkcvg}
holds. Using the formula \prettyref{eq:non-dim_energy} for the energy, the definition of the target defect in \prettyref{eq:pointwise_target_defect}, and the rest of the estimates in the corollary we conclude that
\begin{align*}
 E_{b,k,\gamma}(u_{b,k,\gamma},w_{b,k,\gamma}) &\leq\frac{1}{2}\int_{\Omega}|e(v_{\text{p.h.}})+\frac{1}{2}\nabla w_{\text{p.h.}}\otimes\nabla w_{\text{p.h.}}-\frac{1}{2}\mu|^{2}+ \frac{b}{2}\int_{\Omega}|\nabla\nabla w_{\text{p.h.}}|^{2}+\frac{k}{2}\int_{\Omega}|w_{\text{p.h.}}|^{2}\\
 & \quad\qquad+\gamma\left(\int_{\Omega}\frac{1}{2}|\nabla p|^{2}-\int_{\partial\Omega}u_{\text{eff}}\cdot\hat{\nu}\right)+\gamma||v_{\text{p.h.}}||_{L^{1}(\partial\Omega)}+C(p)\frac{b}{2}\left(||\nabla\nabla w_{\text{p.h.}}||_{L^{2}}+1\right) \\
 & \leq(2\sqrt{bk}+\gamma)\cdot\int_{\Omega}\frac{1}{2}\text{tr}\,\mu+o(\sqrt{bk})+C(\Omega,p,u_{\text{eff}})\left((\frac{b}{k})^{1/10}+b^{3/4}k^{1/4}+\gamma(\frac{b}{k})^{3/20}\right)\\
 & =(2\sqrt{bk}+\gamma)\cdot\int_{\Omega}\frac{1}{2}\text{tr}\,\mu+o(2\sqrt{bk}+\gamma)
\end{align*}
by the definition of the parameter regime in \prettyref{eq:assumptions-Prop3.1}.
Note in the second line we applied the identity
\[
\int_{\Omega}\frac{1}{2}\text{tr}\,\mu(x)\,dx=\int_{\Omega}\frac{1}{2}|\nabla p|^{2}\,dx-\int_{\partial\Omega}u_{\text{eff}}\cdot\hat{\nu}\,ds
\]
which follows from \prettyref{eq:pointwise_target_defect}.
The desired inequality \prettyref{eq:desiredresult_lowdef} is proved.  \qed
\vspace{1em}

Together, \prettyref{prop:gam_liminf} and \prettyref{prop:recovery}
prove \prettyref{thm:gamma-lim}. The rest of the results in
\prettyref{subsec:The-limiting-variational} and \prettyref{subsec:Defect-measures} follow as explained there.

\section{Convex analysis of the limiting problems \label{sec:dualitytheory}}

\prettyref{sec:Gamma_liminf} and \prettyref{sec:Gamma_limsup} established
the role of the limiting minimization problems 
\begin{equation}
\min_{\substack{u_{\text{eff}}\in BD(\Omega)\\
e(u_{\text{eff}})\leq\frac{1}{2}\nabla p\otimes\nabla p\,dx
}
}\,\int_{\Omega}\frac{1}{2}|\nabla p|^{2}\,dx-\int_{\partial\Omega}u_{\text{eff}}\cdot\hat{\nu}\,ds\quad\text{and}\quad\min_{\substack{\mu\in\mathcal{M}_{+}(\Omega;\text{Sym}_{2})\\
-\frac{1}{2}\text{curl}\text{curl}\,\mu=\det\nabla\nabla p
}
}\,\frac{1}{2}\int_{\Omega}|\mu|_{1}\label{eq:primal-1}
\end{equation}
in the asymptotic analysis of the energy $E_{b,k,\gamma}$. In particular, we showed
under the assumptions \vpageref{par:Assumptions} that
\[
\min\,E_{b,k,\gamma}=C_{1}\cdot(2\sqrt{bk}+\gamma)+o(2\sqrt{bk}+\gamma)
\]
where $C_{1}$ is the common minimum value of the limiting problems in \prettyref{eq:primal-1}. We also established via $\Gamma$-convergence a correspondence between the almost minimizers of $E_{b,k,\gamma}$, and optimal $u_{\text{eff}}$ and $\mu$ solving these problems.  The fact that their optimal values are the same follows from the change of variables 
\[
e(u_\text{eff})+\frac{1}{2}\mu = \frac{1}{2}\nabla p\otimes\nabla p\,dx
\]
and the Saint-Venant compatibility conditions for simply connected domains. We refer the reader to
 \prettyref{subsec:The-limiting-variational}
and \prettyref{subsec:Defect-measures} for further discussion of these results.

The remainder of this paper is devoted to the analysis of the limiting problems, and in particular to proving the results from   \prettyref{subsec:The-geometry-of}
and \prettyref{subsec:method-of-stable-lines}. The present section contains, amongst other things, a proof of \prettyref{thm:duality}: we establish
the asserted duality between the ``primal'' problems in \prettyref{eq:primal-1}
and their ``dual'' problem
\begin{equation}
\max_{\substack{\varphi:\mathbb{R}^{2}\to\mathbb{R}\\
\varphi\text{ is convex}\\
\varphi=\frac{1}{2}|x|^{2}\text{ on }\mathbb{R}^{2}\backslash\Omega
}
}\,\int_{\Omega}(\varphi-\frac{1}{2}|x|^{2})\det\nabla\nabla p\,dx\label{eq:dual-1}
\end{equation}
posed over the given admissible Airy potentials
$\varphi$ (our choice of terminology will soon become clear). 
This duality holds under the basic assumptions from \prettyref{eq:A1a} and when $\Omega$ is simply connected. Actually, the methods developed here extend with little additional effort to general domains, even as the form of the dual problem changes. The choice of primal must be addressed. Since the (linearized) area problem appearing on the left of \prettyref{eq:primal-1} is the more general of the two, we take it to be our primal in what follows. We do so also because we expect that it should extend as the $\Gamma$-limit of $\frac{1}{2\sqrt{bk}+\gamma}E_{b,k,\gamma}$ for general domains. We proceed to state its dual.

First, we define a certain linear functional $L$. Consider the vector space of functions $a:\mathbb{R}^2\to\mathbb{R}$ that are locally affine exterior to $\Omega$, i.e., that satisfy
\[
\nabla\nabla a = 0\quad \text{on }\mathbb{R}^2\backslash\overline{\Omega}
\]
in the pointwise sense, and define 
\begin{equation}
L(a)= \int_{\mathbb{R}^2\backslash \Omega} a \det \nabla \nabla \overline{p} \,dx \quad\text{using some }\overline{p}\in W^{2,2}(\mathbb{R}^2)\cap C_c(\mathbb{R}^2) \text{ with } \overline{p}|_\Omega = p|_\Omega.\label{eq:defn-of-L}
\end{equation} 
That this is independent of the choice of extension $\overline{p}$ of $p$ follows from the very weak Hessian identity 
\begin{equation}
-\frac{1}{2}\text{curl}\text{curl}\,\nabla \overline{p} \otimes \nabla \overline{p} = \det \nabla \nabla \overline{p}, \label{eq:veryweak_hessian-duality}
\end{equation}
as will be explained in detail later on in \prettyref{lem:well-definedL}. Evidently, $L(a)$ depends only on the values taken on by $a$ exterior to $\Omega$. Given that $a$ is locally affine there, we can think of it as a sort of ``boundary integral'' term. Of course, if $p$ is regular enough, this can be justified using the divergence theorem along with \prettyref{eq:veryweak_hessian-duality}. 

We come now to our general duality result. Recall the formally adjoint operators $\text{curl}\text{curl}$ and $\nabla^{\perp}\nabla^{\perp}$ from  \prettyref{eq:curlcurl} and \prettyref{eq:Airyoperator}.
Recall also that a sequence of measures is said to converge \emph{narrowly}
in $\mathcal{M}(\Omega;\text{Sym}_{2})$ if their integrals against
arbitrary elements of $C_{b}(\Omega;\text{Sym}_{2})$ converge.

\begin{proposition} \label{prop:duality} Let $\Omega$ be bounded and Lipschitz and let $p\in W^{2,2}(\Omega)$. We have the duality
\begin{equation}
\min_{\substack{u_{\emph{eff}}\in BD(\Omega)\\
e(u_{\emph{eff}})\leq\frac{1}{2}\nabla p\otimes\nabla p\,dx
}
}\,\int_{\Omega}\frac{1}{2}|\nabla p|^{2}\,dx-\int_{\partial\Omega}u_{\emph{eff}}\cdot\hat{\nu}\,ds=\max_{\substack{\varphi:\mathbb{R}^{2}\to\mathbb{R}\\
\varphi\text{ is convex}\\
\nabla\nabla(\varphi-\frac{1}{2}|x|^{2})=0\text{ on }\mathbb{R}^{2}\backslash\overline{\Omega}
}
}\,\int_{\Omega}(\varphi-\frac{1}{2}|x|^{2})\det\nabla\nabla p\,dx + L(\varphi-\frac{1}{2}|x|^{2}).
\label{eq:general-duality-result}
\end{equation}
Regarding complementary slackness, the following are equivalent given $u_\emph{eff}$ and $\varphi$ admissible in the above:
\begin{enumerate}
\item $u_{\emph{eff}}$ and $\varphi$ are optimal;
\item there exists a non-negative sequence $\{\mu_{n}\}\subset C^{2}(\overline{\Omega};\emph{Sym}_{2})$
approximating $\mu=\nabla p\otimes\nabla p\,dx-2e(u_{\emph{eff}})$ in that
\begin{equation}\label{eq:needed-approxs}
\mu_{n}\,dx\to\mu\quad\text{narrowly in }\mathcal{M}(\Omega;\emph{Sym}_{2})\quad\text{and}\quad -\frac{1}{2}\emph{curl\,curl}\,\mu_{n}\,dx\stackrel{*}{\rightharpoonup}\det\nabla\nabla p\,dx\quad\text{weakly-\ensuremath{*} in }\mathcal{M}(\Omega)
\end{equation}
as $n\to\infty$, and for which
\begin{equation}
0=\lim_{n\to\infty}\,\int_{\Omega}|\left\langle \mu_{n},\nabla^{\perp}\nabla^{\perp}\varphi\right\rangle |=\lim_{n\to\infty}\,\int_{\partial\Omega}|\hat{\nu}\cdot [\nabla\varphi] \left\langle \hat{\tau}\otimes\hat{\tau},\mu_{n}\right\rangle |\,ds;\label{eq:limit-vanishes}
\end{equation}
\item the limits in \prettyref{eq:limit-vanishes} vanish for every such
sequence of approximations $\{\mu_{n}\}\subset C^{2}(\overline{\Omega};\emph{Sym}_{2})$ to the given $\mu$.
\end{enumerate}
Here, $[\nabla \varphi]$ denotes the jump in $\nabla \varphi$ across $\partial\Omega$ in the direction of $\hat{\nu}$. It is given by  $\nabla\varphi |_{\partial(\mathbb{R}^2\backslash\overline{\Omega})} - \nabla \varphi |_{\partial\Omega}$. 
\end{proposition}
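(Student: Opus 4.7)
The strategy is to treat the primal (linearized area) problem as a convex optimization with a one-sided matrix-valued inequality constraint, and to identify the Airy potential $\varphi$ as the natural Lagrange multiplier. The key algebraic observation is that $\sigma=\nabla^{\perp}\nabla^{\perp}\varphi=\mathrm{cof}\,\nabla\nabla\varphi$ is automatically divergence-free; conversely, every divergence-free symmetric $2\times 2$ tensor field on a simply connected set arises this way, and the condition $\sigma\geq 0$ (needed for it to be a multiplier for a $\leq$-constraint) is equivalent to convexity of $\varphi$. Working in the defect-measure formulation, I will first check that $L$ in \prettyref{eq:defn-of-L} is independent of the extension $\bar p$ (a direct consequence of the very weak Hessian identity \prettyref{eq:veryweak_hessian-duality} together with the density of $C^{\infty}_c$ in $W^{2,2}$), so that the right-hand side of \prettyref{eq:general-duality-result} is meaningful.

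\textbf{Weak duality via an integration-by-parts identity.} For $\mu\in C^{2}(\overline{\Omega};\mathrm{Sym}_{2})$ and $\varphi$ smooth on $\mathbb{R}^{2}$ satisfying the admissibility of the dual, two successive integrations by parts together with \prettyref{eq:veryweak_hessian-duality} applied to the extension $\bar p$ will give
\begin{equation*}
\tfrac{1}{2}\!\int_{\Omega}\langle \mathrm{Id},\mu\rangle\,dx-\int_{\Omega}(\varphi-\tfrac{1}{2}|x|^{2})\det\nabla\nabla p\,dx-L(\varphi-\tfrac{1}{2}|x|^{2})=\tfrac{1}{2}\!\int_{\Omega}\langle \nabla^{\perp}\nabla^{\perp}\varphi,\mu\rangle\,dx+\tfrac{1}{2}\!\int_{\partial\Omega}\hat{\nu}\cdot[\nabla\varphi]\,\langle\hat\tau\otimes\hat\tau,\mu\rangle\,ds,
\end{equation*}
the key point being that the boundary term uses the tangential part of $\mu\hat\nu$ and the jump in the normal derivative of $\varphi$ across $\partial\Omega$, while the interior term pairs $\mu$ with the cofactor Hessian of $\varphi$. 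Both terms on the right are $\geq 0$: the interior one because $\mu\geq 0$ and $\nabla^{\perp}\nabla^{\perp}\varphi=\mathrm{cof}\,\nabla\nabla\varphi\geq 0$ (from convexity of $\varphi$), the boundary one because $\hat\nu\cdot[\nabla\varphi]\geq 0$ (the convex extension admits a supporting affine function from outside, forcing the exterior normal derivative to dominate the interior one) and $\langle\hat\tau\otimes\hat\tau,\mu\rangle\geq 0$. Replacing $|\mu|_{1}$ by $\langle \mathrm{Id},\mu\rangle$ (valid since $\mu\geq 0$) yields weak duality.

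\textbf{Passage to general measures and the meaning of complementary slackness.} For a general admissible $\mu\in\mathcal{M}_{+}(\Omega;\mathrm{Sym}_{2})$, I will construct $\mu_{n}\in C^{2}(\overline{\Omega};\mathrm{Sym}_{2})$ satisfying \prettyref{eq:needed-approxs} by standard mollification, making sure to preserve non-negativity (by using a non-negative kernel) and to accommodate boundary behaviour by combining a bulk mollification with a small additive correction constructed from the extension $\bar p$ so that the curl-curl is matched. The weak duality identity then extends to the regularized form with $\mu_{n}$ in place of $\mu$, and letting $n\to\infty$ using narrow convergence of $\mu_{n}$ (for the left-hand side) and non-negativity of each term on the right shows the gap equals the sum of the two limits in \prettyref{eq:limit-vanishes}. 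This immediately gives $(1)\!\Leftrightarrow\!(2)\!\Leftrightarrow\!(3)$ once strong duality is in hand, and moreover shows that neither limit depends on the chosen approximation, so the definition of complementary slackness is robust.

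\textbf{No duality gap and existence of optima.} The main obstacle is to rule out a positive gap between the primal and dual values. My plan is a compactness-based minimax argument. The admissible set for the dual is, after subtracting $\frac{1}{2}|x|^{2}$, a set of non-negative convex functions vanishing outside $\overline{\Omega}$; they are uniformly bounded and Lipschitz on any ball containing $\Omega$ (with constants depending only on $\mathrm{diam}\,\Omega$), hence sequentially compact in $C(\overline{\Omega})$, and the dual objective is continuous on this set, giving existence of $\varphi_{\ast}$. Existence of an optimal $\mu_{\ast}$ in the primal follows from weak-$\ast$ compactness of energy-bounded sequences in $\mathcal{M}_{+}$ (controlled via $\int|\mu|_{1}$) together with lower semicontinuity of the objective and stability of the linear PDE constraint under weak-$\ast$ limits. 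To close the gap, I will invoke the Fenchel--Rockafellar theorem applied to the primal written in the form $\inf_{u\in BD}\bigl\{-\int_{\partial\Omega}u\cdot\hat\nu\,ds+\chi_{\{\,e(u)\leq\frac{1}{2}\nabla p\otimes\nabla p\,\}}(u)\bigr\}$: the constraint qualification is satisfied because for any $\lambda>0$ the shifted set $\{e(u)\leq\frac{1}{2}\nabla p\otimes\nabla p-\lambda Id\}$ is nonempty (e.g.\ $u\equiv 0$ works when $\lambda$ is small, up to adding a suitable quadratic), supplying a Slater-type interior point. The existence of a Lagrange multiplier for the constraint then produces an optimal $\varphi_{\ast}$ with matching value, yielding zero gap. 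The most delicate technical point is verifying the constraint qualification in $BD$ and matching the multiplier space with the space of admissible Airy potentials; this is where I expect the bulk of the work to lie.
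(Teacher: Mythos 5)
Your outline of weak duality, the integration-by-parts identity, the regularization of $\mu$ by non-negative mollification, and the deduction of the three equivalent forms of complementary slackness all match the paper's argument (its lemmas on the well-definedness of $L$, on mollification of measures, and on the duality-gap identity); the sign facts $\nabla\nabla\varphi\geq 0$ on $\Omega$ and $\hat{\nu}\cdot[\nabla\varphi]\geq 0$ at $\partial\Omega$ are exactly how the paper makes the right-hand side of the gap identity non-negative. One small simplification: plain mollification with a non-negative kernel already satisfies \prettyref{eq:needed-approxs} -- the curl-curl of the mollification converges weakly-$*$ to $\text{curl}\,\text{curl}\,\mu$ automatically -- so no additive correction built from $\overline{p}$ is needed.

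The genuine gap is in your strong-duality step. You propose to apply Fenchel--Rockafellar with the $BD$ problem as the F--R primal and a Slater point for the constraint $e(u)\leq\frac{1}{2}\nabla p\otimes\nabla p\,dx$. Two problems arise. First, $u\equiv 0$ is not strictly feasible: $\nabla p\otimes\nabla p$ is rank one, so it never dominates $\lambda Id$ for any $\lambda>0$; you would need something like $u=-\lambda x$. Much more seriously, the constraint takes values in $\mathcal{M}(\Omega;\text{Sym}_{2})$, which is itself a dual space, so the Lagrange multiplier produced by F--R lives in the dual of $\mathcal{M}$ (the bidual of $C_{c}$), and there is no reason it should be representable by a continuous -- or even measure-valued -- Airy stress $\nabla^{\perp}\nabla^{\perp}\varphi$. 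The step you flag as ``where the bulk of the work will lie'' is precisely this obstruction, and a Slater condition does not resolve it. The paper sidesteps it by running Fenchel--Rockafellar in the opposite direction: it takes $E=C_{c}(\Omega;\text{Sym}_{2})$ as the base space of test stresses $\zeta$, so that the attained maximum guaranteed by the theorem lands in $E^{*}=\mathcal{M}(\Omega;\text{Sym}_{2})$ and is identified with symmetrized gradients $e(u)$, $u\in BD(\Omega)$, via the characterization in \cite{temam1980functions} ($\varepsilon=e(u)$ if and only if $\varepsilon$ annihilates every smooth compactly supported divergence-free field); the constraint qualification then reduces to continuity of the indicator-plus-linear functional at $\zeta=0$, which is trivial since $0$ is uniformly separated from $Id$. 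The resulting stress supremum, which need not be attained among continuous $\sigma$, is then upgraded to an attained maximum over convex Airy potentials by exactly the Arzel\`a--Ascoli compactness argument you describe. If you wish to keep your direction of attack you need a different mechanism for placing the multiplier in the right space, not a Slater point.
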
 

Some remarks are in order. 
First, we observe that various other statements of the dual problem from \prettyref{eq:general-duality-result}  can be produced given additional regularity for $p$. Perhaps the most illuminating
one is as follows: if $p\in W^{2,2}(\Omega)\cap C^1(\overline{\Omega})$, the dual problem can be rewritten as 
\begin{equation}
\max_{\substack{\sigma\in\mathcal{M}_{+}\left(\mathbb{R}^{2};\text{Sym}_{2}\right)\\
\text{div}\,\sigma=0\\
\sigma=Id\,dx\ \text{on }\mathbb{R}^{2}\backslash\overline{\Omega}
}
}\,\int_{\overline{\Omega}}\left\langle \frac{1}{2}\nabla p\otimes\nabla p,Id\,dx-\sigma\right\rangle .\label{eq:relaxed-stress-formulation}
\end{equation}
As \prettyref{eq:relaxed-stress-formulation} is never used in this paper, we leave the details of its proof to the interested reader, and simply remark that it proceeds via the usual change of variables between an Airy potential $\varphi$ and its induced ``Airy stress'' 
\[
\sigma=\nabla^{\perp}\nabla^{\perp}\varphi
\]
with the only slight complication being that, here, $\sigma$ is considered to be a measure. 
In fact, the first step in our proof of \prettyref{eq:general-duality-result} will be to obtain an ill-posed version of \prettyref{eq:relaxed-stress-formulation} in which $\sigma$ is taken to be continuous, and for which a maximizer is not guaranteed. The second step is to relax this \emph{ad hoc} regularity constraint.

Our next two remarks concern the proof of \prettyref{thm:duality}: we claim that \prettyref{prop:duality} reduces to the stated duality between \prettyref{eq:primal-1} and \prettyref{eq:dual-1} in the case that $\Omega$ is simply connected. 
To see this, first note that the functional on the righthand side of \prettyref{eq:general-duality-result} is invariant under the addition of any affine function to $\varphi$. Indeed, it follows from its definition and the very weak Hessian identity \prettyref{eq:veryweak_hessian-duality} that
\begin{equation}\label{eq:L-affine}
L(a) = - \int_\Omega a \det \nabla \nabla p \,dx\quad\text{whenever } a\text{ is affine}.
\end{equation}
Now if $\Omega$ is simply connected it has only one exterior component, and hence any locally affine function on $\mathbb{R}^2\backslash\overline{\Omega}$ extends automatically as an affine function on $\mathbb{R}^2$. It follows that we may take $\varphi = \frac{1}{2}|x|^2$ on $\mathbb{R}^2\backslash{\Omega}$ without changing the result of \prettyref{eq:general-duality-result}, in which case $L=0$ and the original dual problem \prettyref{eq:dual-1} results.

We finish by showing how the complementary slackness conditions from \prettyref{thm:duality} follow from the general ones established here. The fact is that the mollifications
$\{\mu_{\delta}\}_{\delta>0}$ from \prettyref{eq:mollified_measure_defn-intro}
 approximate the given $\mu$  in the sense of \prettyref{eq:needed-approxs}. This is a direct consequence of \prettyref{lem:mollification_of_measures} and the identity 
\[
-\frac{1}{2}\text{curl}\text{curl}\,\mu = \det \nabla \nabla p.
\]
\prettyref{prop:duality} therefore implies that $u_{\text{eff}}$
and $\varphi$ are optimal if and only if 
\[
\left\langle \nabla^{\perp}\nabla^{\perp}\varphi,\mu\right\rangle =0\quad\text{in }\Omega\quad\text{and}\quad\hat{\nu}\cdot[\nabla\varphi]\left\langle \hat{\tau}\otimes\hat{\tau},\mu\right\rangle =0\quad\text{at }\partial\Omega
\]
in the regularized sense (meaning that \prettyref{eq:compl_slackness}
holds). Since the primal problems in \prettyref{eq:primal-1} are equivalent when $\Omega$ is simply connected,  \prettyref{thm:duality} follows from these remarks. 

The remainder of this section proves \prettyref{prop:duality}. \prettyref{subsec:duality-and-relaxation}
establishes \prettyref{eq:general-duality-result}. It is there that we explain how to anticipate the form of the general dual problem via a minimax
procedure. \prettyref{subsec:complementary-slackness}  proves the
complementary slackness conditions by establishing the integration
by parts identity
\begin{equation}
\frac{1}{2}\int_{\Omega}|\mu|_{1}-\int_{\Omega}(\varphi-\frac{1}{2}|x|^{2})\det\nabla\nabla p -L(\varphi-\frac{1}{2}|x|^{2}) = \lim_{n\to\infty}\,\frac{1}{2}\int_{\Omega}\left\langle \mu_{n},\nabla^{\perp}\nabla^{\perp}\varphi\right\rangle +\frac{1}{2}\int_{\partial\Omega}\hat{\nu}\cdot[\nabla\varphi]\left\langle \hat{\tau}\otimes\hat{\tau},\mu_{n}\right\rangle \label{eq:duality-gap}
\end{equation}
whenever $u_{\text{eff}}$ and $\varphi$ are admissible and $\{\mu_{n}\}_{n\in\mathbb{N}}$
approximates $\mu=\nabla p\otimes\nabla p\,dx-2e(u_{\text{eff}})$ in the sense of \prettyref{eq:needed-approxs}. Together, these complete the proof of the general duality result. They also
lay the groundwork for \prettyref{sec:method_of_characteristics}
where we make precise our method of stable lines.

\subsection{The dual problem\label{subsec:duality-and-relaxation}}

We begin by proving \prettyref{eq:general-duality-result}. The first step is to introduce a 
Lagrange multiplier $\sigma$ for the tension-free constraint, and apply a minimax
procedure to identify the dual problem it should solve. 
Given $u\in BD(\Omega)$, observe that 
\[
e(u)\leq\frac{1}{2}\nabla p\otimes\nabla p\,dx\quad\iff\quad\int_{\Omega}\left\langle \sigma,\frac{1}{2}\nabla p\otimes\nabla p\,dx-e(u)\right\rangle \geq0\quad\forall\,\sigma\in C(\overline{\Omega};\text{Sym}_{2})\text{ with }\sigma\geq0.
\]
The primal problem on the lefthand side of \prettyref{eq:general-duality-result} can therefore be rewritten as 
\[
\min_{\substack{u\in BD(\Omega)\\
e(u)\leq\frac{1}{2}\nabla p\otimes\nabla p\,dx
}
}\,\int_{\Omega}\frac{1}{2}|\nabla p|^{2}\,dx-\int_{\partial\Omega}u\cdot\hat{\nu}\,ds=\inf_{u\in BD(\Omega)}\sup_{\substack{\sigma\in C(\overline{\Omega};\text{Sym}_{2})\\
\sigma\geq0
}
}\,\int_{\Omega}\left\langle Id-\sigma,\frac{1}{2}\nabla p\otimes\nabla p\,dx-e(u)\right\rangle .
\]
Now to identify its dual, we reverse the order of operations
between $\inf$ and $\sup$. We do so informally at first,
and then again with a rigorous proof in \prettyref{lem:strong_duality}.
Let $\sigma\in C(\overline{\Omega};\text{Sym}_{2})$. By the
divergence theorem, 
\[
\int_{\Omega}\left\langle Id-\sigma,e(u)\right\rangle =0\quad\forall\,u\in BD(\Omega)\quad\iff\quad\text{div}\,\sigma=0\quad\text{on }\Omega\quad\text{and}\quad\sigma\hat{\nu}=\hat{\nu}\quad\text{at }\partial\Omega.
\]
The first condition on the righthand side is that $\sigma$ is weakly divergence-free. The second condition holds where the outwards-pointing unit normal $\hat{\nu}$ is well-defined. It follows that 
\[
\sup_{\substack{\sigma\in C(\overline{\Omega};\text{Sym}_{2})\\
\sigma\geq0
}
}\inf_{u\in BD(\Omega)}\,\int_{\Omega}\left\langle Id-\sigma,\frac{1}{2}\nabla p\otimes\nabla p\,dx-e(u)\right\rangle =\sup_{\substack{\sigma\in C(\overline{\Omega};\text{Sym}_{2})\\
\sigma\geq0\text{ and }\text{div}\,\sigma=0\\
\sigma\hat{\nu}=\hat{\nu}\ \text{at }\partial\Omega
}
}\,\int_{\Omega}\left\langle Id-\sigma,\frac{1}{2}\nabla p\otimes\nabla p\right\rangle \,dx
\]
and on the right appears our candidate dual.

The following result justifies the manipulations above.

\begin{lemma} \label{lem:strong_duality} There holds
\begin{equation}
\min_{\substack{u\in BD(\Omega)\\
e(u)\leq\frac{1}{2}\nabla p\otimes\nabla p\,dx
}
}\,\int_{\Omega}\frac{1}{2}|\nabla p|^{2}\,dx-\int_{\partial\Omega}u\cdot\hat{\nu}\,ds=\sup_{\substack{\sigma\in C(\overline{\Omega};\emph{Sym}_{2})\\
\sigma\geq0\text{ and }\emph{div}\,\sigma=0\\
\sigma\hat{\nu}=\hat{\nu}\ \text{at }\partial\Omega
}
}\,\int_{\Omega}\left\langle Id-\sigma,\frac{1}{2}\nabla p\otimes\nabla p\right\rangle \,dx.
\label{eq:first_duality}
\end{equation}
Equality continues to hold when the boundary condition is replaced by the more restrictive one 
\[
\sigma = Id \quad\text{at } \partial\Omega.
\]
\end{lemma}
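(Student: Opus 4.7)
The plan is to establish the duality in three steps. First, I would prove the easy direction $\min \geq \sup$ by a direct integration by parts. For any admissible $u \in BD(\Omega)$ and continuous $\sigma$ as in the claimed dual, the identity $\int_\Omega u\cdot\hat{\nu}\,ds = \int_\Omega \langle Id, e(u)\rangle$ (from the divergence theorem, extended to $BD$ by intermediate density) rewrites the primal objective as $\int_\Omega \langle Id, \tfrac{1}{2}\nabla p\otimes\nabla p\,dx - e(u)\rangle$. The tension-free constraint gives $\tfrac{1}{2}\nabla p\otimes\nabla p\,dx - e(u) \geq 0$, so $\int_\Omega\langle \sigma, \tfrac{1}{2}\nabla p\otimes \nabla p\,dx - e(u)\rangle \geq 0$; subtracting and using the integration-by-parts identity \prettyref{eq:IBP1} together with $\mathrm{div}(Id-\sigma) = 0$ in $\Omega$ and $(Id-\sigma)\hat{\nu} = 0$ on $\partial\Omega$ makes the $e(u)$-term on $\sigma$ disappear. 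This yields the primal value $\geq \int_\Omega \langle Id-\sigma, \tfrac{1}{2}\nabla p\otimes\nabla p\rangle\,dx$, and sup'ing over $\sigma$ gives weak duality.

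For strong duality, I would apply Fenchel--Rockafellar on the pair $X = BD(\Omega)$, $Y = C(\overline{\Omega};\mathrm{Sym}_2)$ with $\Lambda u = e(u) \in Y^* = \mathcal{M}(\overline{\Omega};\mathrm{Sym}_2)$. Writing the primal as $\inf_{u\in BD} F(u) + G(\Lambda u)$ with $F(u) = -\int_{\partial\Omega} u\cdot\hat{\nu}\,ds$ linear continuous on the intermediate topology and $G$ the indicator of $\{m \leq \tfrac{1}{2}\nabla p\otimes\nabla p\,dx\}$, the qualification condition is met because $u\equiv 0$ yields $e(u) = 0$ in the relative interior of the constraint. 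The resulting dual over $Y = C(\overline{\Omega};\mathrm{Sym}_2)$ enforces, through $F^*$, the equality $\mathrm{div}(Id - \sigma) = 0$ with $(Id - \sigma)\hat{\nu} = 0$, and through $G^*(-\sigma)$ the sign $\sigma\geq 0$, recovering precisely the right-hand side of \prettyref{eq:first_duality}. Because $F$ is not weak-$*$ continuous on $BD$, extra care is needed; an alternative is the Hahn--Banach separation of the convex cone $\{(e(u),t) : \text{primal}(u)\leq t\}$ from the half-space $t < c$ in $\mathcal{M}\times\mathbb{R}$, producing a (possibly measure-valued) dual optimizer $\sigma^*$ which one then approximates in the weak-$*$ sense by continuous $\sigma_n$ via mollification of its extension by $Id$ across $\partial\Omega$. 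Either route reduces strong duality to a compactness-plus-approximation argument, and this is where the main technical obstacle lies: ensuring that the extended-and-mollified approximants remain non-negative, divergence-free, and satisfy $\sigma_n\hat{\nu} = \hat{\nu}$, while their integrals against $\tfrac{1}{2}\nabla p\otimes\nabla p$ converge to the dual optimum (using $\nabla p\otimes\nabla p \in L^1$ and narrow convergence).

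Finally, to upgrade the boundary condition from $\sigma\hat{\nu} = \hat{\nu}$ to $\sigma = Id$ at $\partial\Omega$, I would use an extension-truncation-mollification procedure. Given any admissible continuous $\sigma$ for the weaker boundary condition, extend $\sigma$ by $Id$ to $\mathbb{R}^2\backslash\overline{\Omega}$; the extension is non-negative, and the boundary matching $(\sigma - Id)\hat{\nu} = 0$ (from both sides) ensures $\sigma$ is divergence-free on $\mathbb{R}^2$ as a distribution. Mollifying at scale $\varepsilon$ yields smooth $\sigma_\varepsilon \geq 0$ with $\mathrm{div}\,\sigma_\varepsilon = 0$. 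A local diffeomorphism flattening $\partial\Omega$ (using the Lipschitz hypothesis) combined with an inward translation of step $\sim\varepsilon$ before mollifying ensures $\sigma_\varepsilon = Id$ in a neighborhood of $\partial\Omega$, hence $\sigma_\varepsilon = Id$ on $\partial\Omega$. Since $\tfrac{1}{2}\nabla p\otimes\nabla p \in L^1$, the dual integral passes to the limit as $\varepsilon\to 0$, proving that the more restrictive sup equals the less restrictive one. The hardest part of the whole argument is thus the strong duality step --- specifically, constructing continuous-in-$\overline{\Omega}$ approximants $\sigma_n$ from a general measure-valued dual optimizer without losing the pointwise non-negativity or the distributional divergence-free plus boundary conditions.
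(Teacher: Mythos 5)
Your weak-duality step is fine and matches the paper's remark that $\min \geq \sup$ follows from the minimax inequality. The gap is in the strong-duality step. As you set up Fenchel--Rockafellar, the constraint functional $G$ is the indicator of $\{m \in \mathcal{M}(\Omega;\mathrm{Sym}_2) : m \leq \tfrac{1}{2}\nabla p\otimes\nabla p\,dx\}$, and this set has empty interior in $\mathcal{M}$: an arbitrarily norm-small positive singular perturbation such as $\epsilon\delta_{x_0}$ leaves it, since the absolutely continuous majorant cannot dominate a Dirac mass. So the standard qualification condition (``$G$ continuous at a point where the objective is finite'') fails, and $u\equiv 0$ does not rescue it. Your fallback --- Hahn--Banach separation yielding a measure-valued dual optimizer $\sigma^*$, followed by mollification to produce continuous admissible $\sigma_n$ --- is exactly the step you flag as the ``main technical obstacle,'' and it is left unresolved: simultaneously preserving $\sigma_n \geq 0$, $\mathrm{div}\,\sigma_n = 0$, and $\sigma_n\hat{\nu} = \hat{\nu}$ on a merely Lipschitz boundary under extension-translation-mollification is genuinely delicate, and nothing in the lemma actually requires constructing such a maximizer.

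The paper sidesteps all of this by dualizing in the opposite direction. After subtracting the common term $\int_\Omega\tfrac{1}{2}|\nabla p|^2$ and substituting $\sigma = Id - \zeta$ with $\zeta\in C_c(\Omega;\mathrm{Sym}_2)$, it applies Fenchel--Rockafellar on $E = C_c(\Omega;\mathrm{Sym}_2)$ with dual $E^* = \mathcal{M}(\Omega;\mathrm{Sym}_2)$: the functional carrying the one-sided constraint $\zeta \leq Id$ is now continuous at $\zeta = 0$ in the uniform norm (zero is uniformly bounded away from $Id$), so the qualification holds, and the theorem guarantees attainment on the $E^*$ side --- which is precisely the $BD$ side, where the lemma claims a $\min$. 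The $\sigma$ side is only ever a $\sup$, so no continuous maximizer need be produced. The one nontrivial input you omit is the computation of the transform of the divergence-free indicator: a measure that annihilates every compactly supported smooth divergence-free field must equal $e(u)$ for some $u\in BD(\Omega)$ (Temam). Finally, because the $\zeta$'s are compactly supported, the corresponding $\sigma$'s equal $Id$ near $\partial\Omega$, so both boundary conditions in the statement follow from the same computation by a sandwich of admissible sets --- no separate extension/truncation argument is needed for the last claim.
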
 
\begin{proof}  
Although the asserted equalities are not yet clear, the inequality
\[
\min_{\substack{u\in BD(\Omega)\\
e(u)\leq\frac{1}{2}\nabla p\otimes\nabla p\,dx
}
}\,\int_{\Omega}\frac{1}{2}|\nabla p|^{2}\,dx-\int_{\partial\Omega}u\cdot\hat{\nu}\,ds\geq\sup_{\substack{\sigma\in C(\overline{\Omega};\text{Sym}_{2})\\
\sigma\geq0\text{ and }\text{div}\,\sigma=0\\
\sigma\hat{\nu} = \hat{\nu}\ \text{at }\partial\Omega
}
}\,\int_{\Omega}\left\langle Id-\sigma,\frac{1}{2}\nabla p\otimes\nabla p\right\rangle \,dx 
\]
does follow directly from the minimax argument (recall the $\inf \sup$ of a functional is never less than its $\sup \inf$). 
Eliminating the common term $\int_{\Omega}\frac{1}{2}|\nabla p|^{2}$,
making the change of variables $\sigma=Id-\zeta$, and applying a
straightforward inclusion argument, we see it suffices to check that
\begin{equation}
\max_{\substack{u\in BD(\Omega)\\
e(u)\leq\frac{1}{2}\nabla p\otimes\nabla p\,dx
}
}\,\int_{\partial\Omega}u\cdot\hat{\nu}\,ds=\inf_{\substack{\zeta\in C_{c}(\Omega;\text{Sym}_{2})\\
Id\geq\zeta\text{ and }\text{div}\,\zeta=0
}
}\,\int_{\Omega}\left\langle Id-\zeta,\frac{1}{2}\nabla p\otimes\nabla p\right\rangle \,dx. \label{eq:claim_duality}
\end{equation}  
This can be done via the Fenchel--Rockafeller minimax theorem (see, e.g., \cite[Theorem 1.12]{brezis2011functional}), as we explain.

Introduce the vector spaces 
\[
E=C_{c}(\Omega;\text{Sym}_{2})\quad\text{and}\quad E^{*}=\mathcal{M}(\Omega;\text{Sym}_{2})
\]
and equip them with the uniform and dual norms, respectively. By
the Riez--Markov theorem, $E^{*}$ is the topological dual of $E$.
Define the functionals $\Phi,\Psi:E\to(-\infty,\infty]$ by 
\[
\Phi(\zeta)=\begin{cases}
\int_{\Omega}\left\langle Id-\zeta,\frac{1}{2}\nabla p\otimes\nabla p\right\rangle \,dx & Id\geq\zeta\\
\infty & \text{otherwise}
\end{cases}\quad\text{and}\quad\Psi(\zeta)=\begin{cases}
0 & \text{div}\,\zeta=0\\
\infty & \text{otherwise}
\end{cases}.
\]
Since $\frac{1}{2}\nabla p\otimes\nabla p\,dx\in E^{*}$, and as zero
is bounded uniformly away from $Id$, the functional $\Phi$ is finite
and continuous at $\zeta=0$. Evidently, $\Psi(0)<\infty$. Thus,
by the Fenchel--Rockafeller minimax theorem, 
\begin{equation}
\max_{\varepsilon\in E^{*}}\,-\Phi^{*}(-\varepsilon)-\Psi^{*}(\varepsilon)=\inf_{\zeta\in E}\,\Phi(\zeta)+\Psi(\zeta)\label{eq:Fenchel_Rockafellar}
\end{equation}
where the Legendre transforms $\Phi^{*},\Psi^{*}:E^{*}\to(-\infty,\infty]$
appearing on the lefthand side are given for $\varepsilon \in E^*$ by 
\[
\Phi^{*}(\varepsilon)=\sup_{\zeta\in E}\,\int_{\Omega}\langle\zeta,\varepsilon\rangle-\Phi(\zeta)\quad\text{and}\quad\Psi^{*}(\varepsilon)=\sup_{\zeta\in E}\,\int_{\Omega}\langle\zeta,\varepsilon\rangle-\Psi(\zeta).
\]
To finish, we must deduce from \prettyref{eq:Fenchel_Rockafellar}
the desired equality \prettyref{eq:claim_duality}. 

It is clear from the definitions that
\[
\inf_{\zeta\in E}\,\Phi(\zeta)+\Psi(\zeta)=\inf_{\substack{\zeta\in C_{c}(\Omega;\text{Sym}_{2})\\
Id\geq\zeta\text{ and }\text{div}\,\zeta=0
}
}\,\int_{\Omega}\left\langle Id-\zeta,\frac{1}{2}\nabla p\otimes\nabla p\right\rangle \,dx.
\]
Thus, the righthand sides of \prettyref{eq:claim_duality} and \prettyref{eq:Fenchel_Rockafellar} agree. To check their lefthand sides,
we must compute the Legendre transforms of $\Phi$ and $\Psi$. Given any $\varepsilon\in E^{*}$, we claim
that 
\begin{equation}
\Psi^{*}(\varepsilon)=\sup_{\substack{\zeta\in E\\
\text{div}\,\zeta=0
}
}\,\int_{\Omega}\left\langle \zeta,\varepsilon\right\rangle =\begin{cases}
0 & \exists\,u\in BD(\Omega)\ \text{s.t.}\ \varepsilon=e(u)\\
\infty & \text{otherwise}
\end{cases}. \label{eq:Legendretransform-Psi}
\end{equation}
The first equality is clear. To see the second, note by the divergence theorem (the first identity in \prettyref{eq:IBP1})
that 
\[
\int_{\Omega}\left\langle \zeta,e(u)\right\rangle =0
\]
whenever $u\in BD(\Omega)$ and $\zeta\in E\cap C_{c}^{1}(\Omega;\text{Sym}_{2})$ is divergence-free. By density, it holds for $\zeta\in E$
that are weakly divergence-free. On the other hand, suppose $\varepsilon\in E^{*}$
but that there does not exist $u\in BD(\Omega)$ such that $\varepsilon=e(u)$.
According to \cite[Proposition 2.1 and Theorem 2.1]{temam1980functions},
there exists a divergence-free $\zeta\in C_{c}^{\infty}(\Omega;\text{Sym}_{2})$
for which 
\[
\int_{\Omega}\left\langle \zeta,\varepsilon\right\rangle \neq0.
\]
Making the replacement $\zeta\to\lambda\zeta$ and sending $\lambda\to\infty$
or $-\infty$ as necessary, we deduce \prettyref{eq:Legendretransform-Psi}.

Finally, we compute the Legendre transform of $\Phi$. Given $u\in BD(\Omega)$,
we see using its definition that 
\begin{align*}
\Phi^{*}(-e(u)) & =\sup_{\substack{\zeta\in E\\
Id\geq\zeta
}
}\,\int_{\Omega}\left\langle \zeta,-e(u)\right\rangle -\int_{\Omega}\left\langle Id-\zeta,\frac{1}{2}\nabla p\otimes\nabla p\right\rangle \,dx\\
 & =\sup_{\substack{\zeta\in E\\
Id\geq\zeta
}
}\,\int_{\Omega}\left\langle \zeta,\frac{1}{2}\nabla p\otimes\nabla p\,dx-e(u)\right\rangle -\int_{\Omega}\left\langle Id,\frac{1}{2}\nabla p\otimes\nabla p\right\rangle \,dx\\
 & =\begin{cases}
\int_{\Omega}\left\langle Id,\frac{1}{2}\nabla p\otimes\nabla p\,dx-e(u)\right\rangle -\int_{\Omega}\left\langle Id,\frac{1}{2}\nabla p\otimes\nabla p\right\rangle \,dx & e(u)\leq\frac{1}{2}\nabla p\otimes\nabla p\,dx\\
\infty & \text{otherwise}
\end{cases}\\
 & =\begin{cases}
-\int_{\partial\Omega}u\cdot\hat{\nu}\,ds & e(u)\leq\frac{1}{2}\nabla p\otimes\nabla p\,dx\\
\infty & \text{otherwise}
\end{cases}.
\end{align*}
Combining this with \prettyref{eq:Legendretransform-Psi} proves that
\[
\max_{\varepsilon\in E^{*}}\,-\Phi^{*}(-\varepsilon)-\Psi^{*}(\varepsilon)=\max_{u\in BD(\Omega)}\,-\Phi^{*}(-e(u))=\max_{\substack{u\in BD(\Omega)\\
e(u)\leq\frac{1}{2}\nabla p\otimes\nabla p\,dx
}
}\,\int_{\partial\Omega}u\cdot\hat{\nu}\,ds.
\]
We conclude that the lefthand sides of 
\prettyref{eq:claim_duality} and \prettyref{eq:Fenchel_Rockafellar} are the same, and with this the proof is complete. \qed\end{proof} 

\prettyref{lem:strong_duality} is a good start, but we much prefer to identify a version of the dual problem for which maximizers
are guaranteed. The basic issue is that, while the admissible
$\sigma$ in \prettyref{eq:first_duality} satisfy 
\[
\int_{\Omega}|\sigma|_{1}=\int_{\Omega}\left\langle \nabla x,\sigma\right\rangle = 
\int_{\partial\Omega}x\cdot\hat{\nu}=\int_\Omega \langle \nabla x , Id \rangle = 2|\Omega|
\]
so that they are bounded \emph{a priori }in $L^{1}$, no similar
control on $\nabla\sigma$ is available (even as it is trace-free). So, while the admissible set in \prettyref{eq:first_duality}
is pre-compact in the weak-$*$ topology induced by the injection
\[
C(\overline{\Omega};\text{Sym}_{2})\to\mathcal{M}(\overline{\Omega};\text{Sym}_{2}),\ \sigma\mapsto\sigma\,dx,
\]
it is not closed. Evidently, the boundary conditions $\sigma \hat{\nu} = \hat{\nu}$ and \emph{ad hoc} regularity hypothesis that $\sigma \in C$ must be relaxed. Taking into account the low regularity of $\nabla p$, which is not necessarily continuous at the present level of generality, we find it convenient to do so by changing variables to the anticipated Airy
potentials $\varphi$.

It is well-known that if $\sigma$ is $\text{Sym}_{2}$-valued and
divergence-free, there exists a scalar-valued function $\varphi$ such
that 
\[
\sigma = \nabla^\perp\nabla^\perp \varphi
\]
on any simply connected domain. (See \prettyref{eq:Airyoperator}
for the notation $\nabla^{\perp}\nabla^{\perp}$.) Such functions $\varphi$ are usually referred to in the literature as ``Airy potentials''
or ``Airy stress functions'', and the divergence-free
fields $\nabla^{\perp}\nabla^{\perp}\varphi$ they generate are known as ``Airy stresses''. Note we need not rule out the possibility that $\Omega$ is multiply connected at this stage. This is because the required change of variables from $\sigma$ to $\varphi$ can be carried out on $\mathbb{R}^2$. Indeed, the boundary conditions in \prettyref{eq:first_duality} ensure that $\sigma$ can be extended to $\mathbb{R}^2$ in a divergence-free way. 
To prepare, we record some useful properties of the functional $L$ from \prettyref{eq:defn-of-L}.
\begin{lemma}\label{lem:well-definedL} 
The functional $L$ is well-defined on the vector space of functions $a:\mathbb{R}^2\to\mathbb{R}$ that are locally affine exterior to $\Omega$. 
It is linear and continuous in any norm for which the restriction map $a\mapsto a |_{\mathbb{R}^2\backslash\overline{\Omega}}$ is continuous.
\end{lemma}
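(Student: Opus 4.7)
The plan is to reduce both claims to a single integration by parts based on the very weak Hessian identity \prettyref{eq:veryweak_hessian-duality}. Note first that the defining integral for $L(a)$ uses only $a|_{\mathbb{R}^2\setminus\Omega}$, and because $|\partial\Omega|=0$, in fact only $a|_{\mathbb{R}^2\setminus\overline{\Omega}}$ matters. The restriction of $a$ to this open set is smooth with uniformly bounded derivatives on each of its (finitely many) connected components — since $\Omega$ is bounded and Lipschitz — so a Stein-type extension yields $\tilde a \in W^{2,\infty}(\mathbb{R}^2)$ with $\tilde a = a$ on $\mathbb{R}^2\setminus\overline{\Omega}$ and $\nabla\nabla\tilde a = 0$ there. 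Replacing $a$ by $\tilde a$ in the defining integral does not change $L(a)$.

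For well-definedness, take any two admissible extensions $\overline{p}_1,\overline{p}_2\in W^{2,2}(\mathbb{R}^2)\cap C_c(\mathbb{R}^2)$ of $p$ and set $\omega = \nabla\overline{p}_1\otimes\nabla\overline{p}_1 - \nabla\overline{p}_2\otimes\nabla\overline{p}_2$. Since $\overline{p}_1 = \overline{p}_2$ on the open set $\Omega$ and both belong to $W^{2,2}(\mathbb{R}^2)$, all their weak derivatives agree a.e.\ on $\Omega$; in particular, $\omega = 0$ a.e.\ on $\Omega$ and $\det\nabla\nabla\overline{p}_1 = \det\nabla\nabla\overline{p}_2$ a.e.\ there. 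Consequently,
\[
\int_{\mathbb{R}^2\setminus\Omega}\tilde a\,(\det\nabla\nabla\overline{p}_1 - \det\nabla\nabla\overline{p}_2)\,dx = \int_{\mathbb{R}^2}\tilde a\,(\det\nabla\nabla\overline{p}_1 - \det\nabla\nabla\overline{p}_2)\,dx.
\]
Applying \prettyref{eq:veryweak_hessian-duality} to $\overline{p}_1$ and $\overline{p}_2$ and subtracting, we have $\det\nabla\nabla\overline{p}_1 - \det\nabla\nabla\overline{p}_2 = -\tfrac{1}{2}\,\text{curl}\text{curl}\,\omega$ as distributions on $\mathbb{R}^2$. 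A standard mollification argument, using that $\tilde a\in W^{2,\infty}(\mathbb{R}^2)$ and that $\omega\in L^1(\mathbb{R}^2;\text{Sym}_2)$ has compact support, then produces the integration-by-parts identity
\[
\int_{\mathbb{R}^2}\tilde a\,(\det\nabla\nabla\overline{p}_1 - \det\nabla\nabla\overline{p}_2)\,dx = -\tfrac{1}{2}\int_{\mathbb{R}^2}\langle \nabla^\perp\nabla^\perp\tilde a,\omega\rangle\,dx.
\]
The integrand on the right vanishes almost everywhere: $\omega = 0$ a.e.\ on $\Omega$ while $\nabla^\perp\nabla^\perp\tilde a = 0$ on $\mathbb{R}^2\setminus\overline{\Omega}$ by construction. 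Hence $L(a)$ does not depend on the choice of extension.

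Linearity in $a$ is immediate from the defining integral being linear in $a$ for any fixed $\overline{p}$. For continuity, observe that $L$ factors as $L = L_0\circ R$, where $R$ denotes the restriction $a\mapsto a|_{\mathbb{R}^2\setminus\overline{\Omega}}$ and $L_0$ is a linear functional on the space of locally affine functions on $\mathbb{R}^2\setminus\overline{\Omega}$. Since $\mathbb{R}^2\setminus\overline{\Omega}$ has only finitely many connected components and $a$ is specified on each by an affine triple $(c_i, b_i)$, this target space is finite-dimensional; $L_0$ is therefore continuous in any norm placed on it. Consequently, any topology on the ambient space of $a$'s that renders $R$ continuous renders $L$ continuous as well.

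The main subtlety lies in the integration-by-parts identity above: it is performed over all of $\mathbb{R}^2$ and would normally require a test function with global regularity and compact support, yet $a$ itself may be discontinuous across $\partial\Omega$ and need not decay at infinity. The Stein-type extension $\tilde a \in W^{2,\infty}(\mathbb{R}^2)$ (which is valid precisely because $\mathbb{R}^2\setminus\overline{\Omega}$ is Lipschitz) supplies the needed regularity without changing $L(a)$; combined with the double vanishing — of $\omega$ inside $\Omega$ and of $\nabla^\perp\nabla^\perp\tilde a$ outside $\overline{\Omega}$ — this closes the argument.
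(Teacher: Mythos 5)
Your proof is correct and follows essentially the same route as the paper's: both reduce well-definedness to the integration-by-parts identity coming from the very weak Hessian identity, observe that the integrand $\langle\nabla^\perp\nabla^\perp\tilde a,\omega\rangle$ vanishes a.e.\ because the two factors are supported on complementary sets, and obtain continuity by factoring $L$ through the finite-dimensional space of locally affine functions on $\mathbb{R}^2\backslash\overline{\Omega}$. The only (cosmetic) differences are that you regularize via a Stein extension and mollification of $\omega$ where the paper simply notes that locally affine functions admit smooth global extensions and reduces to smooth $\overline{p}_i$ by density.
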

\begin{proof} 
To check that $L$ is well-defined, we must show that the integral in \prettyref{eq:defn-of-L} does not depend on the choice of extension $\overline{p}$. 
That is, we must prove that
\[
\int_{\mathbb{R}^{2}\backslash\Omega}a\det\nabla\nabla \overline{p}_1=\int_{\mathbb{R}^{2}\backslash\Omega}a\det\nabla\nabla \overline{p}_2
\]
whenever $\overline{p}_1,\overline{p}_2\in W^{2,2}(\mathbb{R}^{2})\cap C_c(\mathbb{R}^2)$ satisfy $\overline{p}_1=\overline{p}_2$ on $\Omega$ and  $\nabla \nabla a =0$ on $\mathbb{R}^2\backslash\overline{\Omega}$.
By density, it suffices to take $\overline{p}_1$ and $\overline{p}_2$ to be smooth. Note we can also take $a$ to be smooth as every locally affine function on $\mathbb{R}^2\backslash\overline{\Omega}$ admits a smooth extension to $\mathbb{R}^{2}$. Testing the  identity \prettyref{eq:veryweak_hessian-duality} against $a$, integrating by parts twice, and subtracting yields that
\[
\int_{\mathbb{R}^{2}}a\left(\det\nabla\nabla \overline{p}_1-\det\nabla\nabla \overline{p}_2\right)=-\frac{1}{2}\int_{\mathbb{R}^{2}}\left\langle \nabla^{\perp}\nabla^{\perp}a,\nabla \overline{p}_1\otimes\nabla \overline{p}_1-\nabla \overline{p}_2\otimes\nabla \overline{p}_2\right\rangle.
\]
The integrand on the right vanishes a.e.\ by our hypotheses. For the same reason, the integral on the left may be taken over $\mathbb{R}^2\backslash\Omega$. The desired equality is proved.

Looking back to \prettyref{eq:defn-of-L}, we see that $L$ is a linear functional of $a |_{\mathbb{R}^2\backslash\overline{\Omega}}$. Since by hypothesis $\Omega$ is a bounded, Lipschitz domain, it has finitely many exterior components, i.e., 
\[
\mathbb{R}^2\backslash\overline{\Omega} = \cup_{i=1}^N E_i\quad\text{where the sets }E_i\text{ are open and disjoint}.
\]
That $a$ is locally affine exterior to $\Omega$ is equivalent to the existence of $\{m_{i}\}_{i=1}^{N}\subset\mathbb{R}^{2}$
and $\{b_{i}\}_{i=1}^{N}\subset\mathbb{R}$ such that
\[
a=m_{i}\cdot x+b_{i}\quad\text{on }E_i,\quad\text{for }i=1,\dots, N.
\]
Quotienting out by the equivalence relation that $a_1\sim a_2$ if $a_1|_{\mathbb{R}^2\backslash\overline{\Omega}} = a_2|_{\mathbb{R}^2\backslash\overline{\Omega}}$, there results a finite dimensional vector space on which $L$ is well-defined. The stated continuity now follows from the elementary facts that every linear function of finitely many variables is continuous, and that this holds regardless of the choice of norm.  
\qed
\end{proof}
We are ready to change variables from $\sigma$ to $\varphi$. To help simplify the presentation, and as it does not affect the end result, we use the ``restricted'' set of admissible $\sigma$ from \prettyref{lem:strong_duality}.
\begin{lemma} \label{lem:correspondence} The restricted sets of admissible
stresses and Airy potentials 
\[
\left\{ \sigma\in C(\overline{\Omega};\emph{Sym}_{2}):\sigma\geq0,\ \emph{div}\,\sigma=0,\ \sigma=Id \emph{ at }\partial\Omega \right\}\quad\text{and}\quad
\left\{ \varphi\in C^{2}(\mathbb{R}^{2}):\nabla\nabla\varphi\geq0,\ \nabla\nabla\varphi=Id\ \emph{on }\mathbb{R}^{2}\backslash\Omega \right\} 
\]
are put into a many-to-one correspondence via the relation
\[
\nabla^{\perp}\nabla^{\perp}\varphi=\begin{cases}
\sigma & \text{on }\Omega\\
Id & \text{on }\mathbb{R}^{2}\backslash\Omega
\end{cases}.
\]
Under this correspondence,
\[
\int_{\Omega}\left\langle Id-\sigma,\frac{1}{2}\nabla p\otimes\nabla p\right\rangle \,dx=\int_{\Omega}(\varphi-\frac{1}{2}|x|^{2})\det\nabla\nabla p\,dx + L(\varphi-\frac{1}{2}|x|^{2}). 
\]
\end{lemma}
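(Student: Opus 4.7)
\noindent\emph{Proof proposal.} The plan is to prove the correspondence in both directions and then establish the integral identity via integration by parts against the very weak Hessian of $\overline{p}$. First, given an admissible stress $\sigma$, I extend it to $\widetilde{\sigma}:\mathbb{R}^2 \to \mathrm{Sym}_2$ by declaring $\widetilde{\sigma} = Id$ on $\mathbb{R}^2\setminus\Omega$; the matching boundary trace $\sigma = Id$ at $\partial\Omega$ makes $\widetilde{\sigma}$ continuous, and divergence-freeness passes through the interface since the normal tractions agree. On the simply connected $\mathbb{R}^2$, the Poincar{\'e} lemma (applied twice) yields $\varphi\in C^2(\mathbb{R}^2)$, unique up to an affine function, with $\nabla^{\perp}\nabla^{\perp}\varphi = \widetilde{\sigma}$. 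Because cof is an involution on $2{\times}2$ symmetric matrices that preserves positive semi-definiteness, $\nabla\nabla\varphi = \mathrm{cof}(\widetilde{\sigma})\geq 0$, and $\nabla\nabla\varphi = \mathrm{cof}(Id) = Id$ outside $\Omega$. Conversely, given $\varphi$ admissible on the right, setting $\sigma := \nabla^{\perp}\nabla^{\perp}\varphi|_{\Omega}$ produces a continuous, non-negative, divergence-free tensor, and the continuity of $\nabla\nabla\varphi$ across $\partial\Omega$ forces $\sigma = Id$ there.

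For the integral identity, set $\psi := \tfrac{1}{2}|x|^2 - \varphi$, so that $\psi \in C^2(\mathbb{R}^2)$, $\nabla\nabla\psi$ vanishes on $\mathbb{R}^2\setminus\overline{\Omega}$, and $Id - \sigma = \nabla^{\perp}\nabla^{\perp}\psi$ on $\Omega$. Choosing any extension $\overline{p}\in W^{2,2}(\mathbb{R}^2)\cap C_c(\mathbb{R}^2)$ of $p$, the support of $\nabla\overline{p}\otimes\nabla\overline{p}$ is compact, and since $\nabla^{\perp}\nabla^{\perp}\psi$ vanishes a.e.\ outside $\Omega$ we may write
\begin{equation*}
\int_{\Omega}\bigl\langle Id - \sigma,\tfrac{1}{2}\nabla p\otimes\nabla p\bigr\rangle\,dx \;=\; \int_{\mathbb{R}^2}\bigl\langle\nabla^{\perp}\nabla^{\perp}\psi,\tfrac{1}{2}\nabla\overline{p}\otimes\nabla\overline{p}\bigr\rangle\,dx.
\end{equation*}
The formal adjointness of $\nabla^{\perp}\nabla^{\perp}$ and $\mathrm{curl}\,\mathrm{curl}$, combined with the very weak Hessian identity \prettyref{eq:veryweak_hessian-duality}, then produces
\begin{equation*}
\int_{\mathbb{R}^2}\bigl\langle \nabla^{\perp}\nabla^{\perp}\psi,\tfrac{1}{2}\nabla\overline{p}\otimes\nabla\overline{p}\bigr\rangle\,dx \;=\; -\int_{\mathbb{R}^2}\psi\,\det\nabla\nabla\overline{p}\,dx \;=\; \int_{\mathbb{R}^2}\bigl(\varphi - \tfrac{1}{2}|x|^2\bigr)\det\nabla\nabla\overline{p}\,dx.
\end{equation*}
Splitting this last integral over $\Omega$ and $\mathbb{R}^2\setminus\Omega$, and recognizing the exterior piece as $L(\varphi - \tfrac{1}{2}|x|^2)$ via the definition in \prettyref{eq:defn-of-L} (noting $\varphi - \tfrac{1}{2}|x|^2$ is locally affine exterior to $\Omega$, placing it in the domain of $L$), yields the desired identity.

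The main obstacle is justifying the integration by parts at the low regularity $\overline{p}\in W^{2,2}$, where $\det\nabla\nabla\overline{p}$ must be interpreted distributionally via \prettyref{eq:veryweak_hessian-duality}. I expect to handle this by mollifying $\overline{p}$ to obtain $\overline{p}_n\in C_c^{\infty}(\mathbb{R}^2)$, for which the identity is a genuine application of the divergence theorem (all boundary terms vanishing by compact support), and then passing to the limit: the right-hand side converges because $\psi\in C^2$ is bounded on the (fixed) support of the mollifications and $\det\nabla\nabla\overline{p}_n \to \det\nabla\nabla\overline{p}$ in $L^1$ (as a product of $L^2$-convergent factors), while the left-hand side converges because $\nabla^{\perp}\nabla^{\perp}\psi\in C^0$ and $\nabla\overline{p}_n\otimes\nabla\overline{p}_n \to \nabla\overline{p}\otimes\nabla\overline{p}$ strongly in $L^1$. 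The well-definedness of $L$ established in \prettyref{lem:well-definedL} ensures the exterior contribution does not depend on the choice of extension $\overline{p}$.
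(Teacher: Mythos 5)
Your proposal is correct and follows essentially the same route as the paper: extend $\sigma$ by $Id$, use the simple connectivity of $\mathbb{R}^2$ to produce the Airy potential, and derive the integral identity by testing the very weak Hessian identity \prettyref{eq:veryweak_hessian-duality} against $\varphi-\tfrac{1}{2}|x|^2$ with a compactly supported $W^{2,2}$ extension $\overline{p}$, then splitting the resulting integral over $\Omega$ and its exterior to recognize $L$. Your mollification argument merely fills in a density detail that the paper leaves implicit in its appeal to the very weak Hessian identity, so no substantive difference.
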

\begin{proof} 
The stated correspondence follows from our previous remarks on the introduction of Airy potentials and the fact that $\mathbb{R}^2$ is simply connected. 
In particular, when we extend a given $\sigma$ by setting it equal to $Id$ off of $\Omega$, the resulting $\text{Sym}_2$-valued function is continuous, non-negative, and weakly divergence-free on all of $\mathbb{R}^2$. Thus, there exists a corresponding $\varphi$, which is of course non-unique. The reverse direction is clear. 

We turn to prove the stated equality. By \prettyref{lem:well-definedL}, we may fix some compactly supported, $W^{2,2}$ extension $\overline{p}$ of $p$  in the definition of $L$. It follows from the given correspondence and the very weak Hessian identity \prettyref{eq:veryweak_hessian-duality}  that
\begin{align*}
\int_{\Omega}\left\langle Id-\sigma,\frac{1}{2}\nabla p\otimes\nabla p\right\rangle &=\int_{\Omega}\left\langle \nabla^{\perp}\nabla^{\perp}(\frac{1}{2}|x|^{2}-\varphi),\frac{1}{2}\nabla p\otimes\nabla p\right\rangle =\int_{\mathbb{R}^{2}}\left\langle \nabla^{\perp}\nabla^{\perp}(\varphi-\frac{1}{2}|x|^{2}),-\frac{1}{2}\nabla\overline{p}\otimes\nabla\overline{p}\right\rangle \\&=\int_{\mathbb{R}^{2}}(\varphi-\frac{1}{2}|x|^{2})\det\nabla\nabla \overline{p}=\int_{\Omega}(\varphi-\frac{1}{2}|x|^{2})\det\nabla\nabla p+L(\varphi-\frac{1}{2}|x|^{2})
\end{align*}
as claimed. Note we used \prettyref{eq:defn-of-L} at the end. \qed\end{proof} 
Combining \prettyref{lem:strong_duality} and \prettyref{lem:correspondence} we get that
\begin{equation}
\min_{\substack{u\in BD(\Omega)\\
e(u)\leq\frac{1}{2}\nabla p\otimes\nabla p\,dx
}
}\,\int_{\Omega}\frac{1}{2}|\nabla p|^{2}-\int_{\partial\Omega}u\cdot\hat{\nu}=\sup_{\substack{\varphi\in C^{2}(\mathbb{R}^{2})\\
\varphi\text{ is convex}\\
\nabla\nabla(\varphi-\frac{1}{2}|x|^2) = 0\text{ on }\mathbb{R}^{2}\backslash\Omega
}
}\,\int_{\Omega}(\varphi-\frac{1}{2}|x|^{2})\det\nabla\nabla p+L(\varphi-\frac{1}{2}|x|^{2}).\label{eq:intermediate_step_equality}
\end{equation}
The last step is
to do away with the \emph{ad hoc} regularity hypothesis that $\varphi \in C^2$, thereby allowing the corresponding $\sigma$ to be measure-valued. We must check that the supremum remains the same, and that it is achieved. 

\begin{lemma} \label{lem:closures} The supremum on the righthand side of \prettyref{eq:intermediate_step_equality} is equal to the maximum on the righthand side of \prettyref{eq:general-duality-result}. 
There exists an optimal $\varphi$ achieving the maximum there.  
\end{lemma}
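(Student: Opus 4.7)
The plan is to prove (a) that the supremum on the righthand side of \prettyref{eq:general-duality-result} is attained within the stated admissible class of convex functions, and (b) that this maximum coincides with the supremum over $C^{2}$ admissible Airy potentials from \prettyref{eq:intermediate_step_equality}, which by \prettyref{lem:correspondence} equals the primal value.

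For part (a), I would apply the direct method using Blaschke's selection theorem. Set $F(\varphi) := \int_{\Omega}(\varphi - \tfrac{1}{2}|x|^{2})\det\nabla\nabla p\,dx + L(\varphi - \tfrac{1}{2}|x|^{2})$. A key preliminary observation is that $F$ is invariant under $\varphi \mapsto \varphi + a$ for any globally affine $a$ on $\mathbb{R}^{2}$, since $L(a) = -\int_{\Omega} a\,\det\nabla\nabla p\,dx$ by \prettyref{lem:well-definedL} applied to a smooth extension of $p$ together with \prettyref{eq:veryweak_hessian-duality}. Using this freedom, a maximizing sequence $\{\varphi_{n}\}$ is normalized so that $\varphi_{n} = \tfrac{1}{2}|x|^{2}$ on the unbounded exterior component of $\mathbb{R}^{2} \backslash \overline{\Omega}$. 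Global convexity of $\varphi_{n}$ on $\mathbb{R}^{2}$ together with this exterior prescription gives uniform pointwise bounds on compacta: upper bounds via convex combinations from exterior points, lower bounds via subgradients based outside $\overline{\Omega}$ where the gradient is fixed to $x$. Any remaining affine parameters associated with bounded exterior components of $\mathbb{R}^{2} \backslash \overline{\Omega}$ are controlled by the maximization, since $F$ depends on them linearly through $L$. Blaschke's selection theorem then yields a subsequence converging locally uniformly to a convex limit $\varphi^{*}$, and the exterior condition $\nabla\nabla(\varphi^{*} - \tfrac{1}{2}|x|^2) = 0$ passes to the limit on each open exterior component. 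Locally uniform convergence of convex functions implies weak-$*$ convergence of their Hessian measures and convergence of the affine exterior data, so $F(\varphi_{n}) \to F(\varphi^{*})$ by the continuity of both terms in $F$ (the integral term using $\det\nabla\nabla p \in L^{1}(\Omega)$, the $L$ term via \prettyref{lem:well-definedL}); hence $\varphi^{*}$ attains the supremum.

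For part (b), the inclusion $\{C^{2} \text{ admissible}\} \subset \{\text{general admissible}\}$ immediately gives $\sup_{C^{2}} F \leq \sup_{\mathrm{gen}} F$. For the reverse, I would establish weak duality: $F(\varphi) \leq \mathrm{primal}$ for every general admissible $\varphi$. Given such a $\varphi$ and any tension-free $u$ with defect $\mu = \nabla p \otimes \nabla p\,dx - 2 e(u) \geq 0$, an integration-by-parts calculation based on $\det \nabla \nabla p = -\tfrac{1}{2} \mathrm{curl\,curl}(\nabla p \otimes \nabla p)$ and \prettyref{eq:veryweak_hessian-duality} applied to a compactly supported $W^{2,2}$ extension of $p$ (so that $L$ absorbs the exterior contributions), after regularizing $\mu$ by mollifications $\mu_{\delta}$, yields in the limit $\delta \to 0$ the identity
\[
\tfrac{1}{2}\int_{\Omega}|\mu|_{1} - F(\varphi) = \lim_{\delta \to 0}\Bigl(\tfrac{1}{2}\int_{\Omega}\langle \nabla^{\perp}\nabla^{\perp}\varphi, \mu_{\delta}\rangle + \tfrac{1}{2}\int_{\partial\Omega} \hat{\nu} \cdot [\nabla\varphi]\,\langle \hat{\tau} \otimes \hat{\tau}, \mu_{\delta}\rangle\,ds\Bigr) \geq 0.
\]
The interior term is non-negative because $\varphi$ convex gives $\nabla^{\perp}\nabla^{\perp}\varphi \geq 0$ as a matrix-valued Radon measure, paired with the non-negative smooth field $\mu_{\delta}$; the boundary term is non-negative because $\hat{\nu} \cdot [\nabla\varphi] \geq 0$, which is equivalent to the trace inequality $\hat{\nu} \cdot \nabla\varphi \leq \hat{\nu} \cdot x$ from \prettyref{rem:bcs} and is verified directly from convexity of $\varphi$ with exterior data $\tfrac{1}{2}|x|^{2} + $ affine, and because $\langle \hat{\tau} \otimes \hat{\tau}, \mu_{\delta}\rangle \geq 0$. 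Minimizing over $u$ establishes $F(\varphi) \leq \mathrm{primal}$, closing the chain of inequalities.

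The main obstacle is the integration-by-parts identity used in the weak duality step, which requires making sense of the pairing between the matrix-valued measures $\nabla^{\perp}\nabla^{\perp}\varphi$ and $\mu$ through mollification, and carefully tracking the exterior contributions via $L$. This calculation essentially parallels the duality-gap identity \prettyref{eq:duality-gap} to be rigorously derived in \prettyref{subsec:complementary-slackness}; here only the inequality and the non-negativity of the limiting integrands are needed, and these follow directly from the admissibility constraints. A secondary technical point is handling multiply connected $\Omega$ in the compactness step, where the affine-normalization freedom must be expended on a single exterior component and the remaining affine parameters on bounded exterior components are bounded through their linear appearance in $L$.
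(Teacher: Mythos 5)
Your proposal is correct and follows essentially the same route as the paper: the equality of the two suprema is obtained by weak duality through the regularized integration-by-parts identity \prettyref{eq:duality-gap} (with non-negativity of the limiting integrands coming from convexity of $\varphi$), and the existence of a maximizer follows from the direct method after normalizing by an affine function so that $\varphi=\frac{1}{2}|x|^{2}$ on the unbounded exterior component. The only cosmetic difference is that you invoke Blaschke selection where the paper uses Arzel\`a--Ascoli with the explicit bounds $||\nabla\varphi||_{L^{\infty}(B)}\leq 1$; for convex functions these give the same locally uniform compactness.
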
 

\begin{proof} The first part of the result follows immediately once we establish that 
\begin{equation}
\int_{\Omega}\frac{1}{2}|\nabla p|^{2}-\int_{\partial\Omega}u\cdot\hat{\nu}\geq\int_{\Omega}(\varphi-\frac{1}{2}|x|^{2})\det\nabla\nabla p+L(\varphi-\frac{1}{2}|x|^{2})\label{eq:weak_duality}
\end{equation}
whenever $u$ and $\varphi$ are admissible in \prettyref{eq:general-duality-result}. Indeed, the admissible set of $\varphi$ in \prettyref{eq:intermediate_step_equality} is a subset of that in \prettyref{eq:general-duality-result}. Enlarging an admissible set can never make the resulting supremum smaller. 
Now to prove \prettyref{eq:weak_duality}, we shall make use of the integration by parts identity \prettyref{eq:duality-gap}
introduced above and proved in \prettyref{lem:IBP-identity} below,
as well as the properties of the mollifications $\{\mu_{\delta}\}_{\delta>0}$ from \prettyref{eq:mollified_measure_defn-intro} to be proved
in \prettyref{lem:mollification_of_measures}. The reader may check that these results stand independently
of the desired inequality.

So let $u$ and $\varphi$ be admissible in \prettyref{eq:general-duality-result} and call
$\mu=\nabla p\otimes\nabla p\,dx-2e(u)$. \prettyref{lem:mollification_of_measures} and  \prettyref{lem:IBP-identity} show that 
\[
\frac{1}{2}\int_{\Omega}|\mu|_{1}-\int_{\Omega}(\varphi-\frac{1}{2}|x|^{2})\det\nabla\nabla p-L(\varphi-\frac{1}{2}|x|^{2})=\lim_{\delta\to\infty}\,\frac{1}{2}\int_{\Omega}\left\langle \mu_{\delta},\nabla^{\perp}\nabla^{\perp}\varphi\right\rangle +\frac{1}{2}\int_{\partial\Omega}\hat{\nu}\cdot[\nabla\varphi]\left\langle \hat{\tau}\otimes\hat{\tau},\mu_{\delta}\right\rangle. 
\]
The integrands on the right are non-negative by admissibility: that  $\mu_\delta \geq 0$ follows from our choice to take the kernel $\rho \geq 0$ in their definition; that $\nabla \nabla \varphi \geq 0$ in $\Omega$ and that $\hat{\nu}\cdot[\nabla\varphi] \geq 0$ at $\partial\Omega$ are  easy consequences of the convexity of $\varphi$ (we include the details of this in \prettyref{lem:restriction} below).
Applying
the divergence theorem from \prettyref{eq:IBP1} and
the fact that $\mu\geq0$, we find that 
\[
\frac{1}{2}\int_{\Omega}|\mu|_{1}=\int_{\Omega}\left\langle Id,\frac{1}{2}\mu\right\rangle =\int_{\Omega}\left\langle Id,\frac{1}{2}\nabla p\otimes\nabla p\,dx-e(u)\right\rangle =\int_{\Omega}\frac{1}{2}|\nabla p|^{2}-\int_{\partial\Omega}u\cdot\hat{\nu}.
\]
The inequality \prettyref{eq:weak_duality} is proved. Thus,  the optimal values of the maximization problems in \prettyref{eq:general-duality-result} and  \prettyref{eq:intermediate_step_equality} must be the same, regardless of whether or not they admit any solutions.

To finish, we must verify the existence of a maximizer for the dual problem in \prettyref{eq:general-duality-result}. We apply the direct method. Let $B$ be a ball of finite radius that contains $\Omega$, and note it suffices
to search for a maximizer in the subset
\begin{equation}\label{eq:max-subset}
\left\{ \varphi\in C(\mathbb{R}^{2}):\varphi\text{ is convex},\ \nabla\nabla(\varphi-\frac{1}{2}|x|^2) = 0\text{ on }\mathbb{R}^{2}\backslash\overline{\Omega}\right\} \cap \left\{ \varphi=\frac{1}{2}|x|^{2}\text{ on }\mathbb{R}^{2}\backslash B\right\}.
\end{equation}
Indeed, the functional being maximized in \prettyref{eq:general-duality-result} is unchanged under the addition of any affine function to $\varphi$, as was noted in the paragraph surrounding \prettyref{eq:L-affine}. Thus, we can take $\varphi = \frac{1}{2}|x|^2$ on the unbounded component of $\mathbb{R}^2\backslash\overline{\Omega}$. Now observe that \prettyref{eq:max-subset} is compact
in the uniform norm topology: it is closed, and using the
\emph{a priori} bounds 
\[
||\nabla\varphi||_{L^{\infty}(B)}\leq1\quad\text{and}\quad||\varphi||_{L^{\infty}(B)}\leq\max_{x\in\overline{B}}\,\frac{1}{2}|x|^{2}+\text{diam}\,B
\]
we deduce from Arzel\`a--Ascoli that it is pre-compact.
That the functional in \prettyref{eq:general-duality-result} is uniformly continuous follows from our standing assumptions that $\Omega$ is bounded and Lipschitz and that $p\in W^{2,2}(\Omega)$. In particular, the uniform continuity of $L(\varphi-\frac{1}{2}|x|^2)$ follows from \prettyref{lem:well-definedL}. The existence of a maximizing $\varphi$ is proved.
\qed\end{proof}
We are ready to prove \prettyref{eq:general-duality-result}.

\vspace{1em}
\noindent\emph{Proof of the equality part of \prettyref{prop:duality}} 
Combining \prettyref{lem:strong_duality}-\prettyref{lem:closures} yields the
string of equalities 
\begin{multline*}
\min_{\substack{u\in BD(\Omega)\\
e(u)\leq\frac{1}{2}\nabla p\otimes\nabla p\,dx
}
}\,\int_{\Omega}\frac{1}{2}|\nabla p|^{2}-\int_{\partial\Omega}u\cdot\hat{\nu} 
= \sup_{\substack{\sigma\in C(\overline{\Omega};\text{Sym}_{2})\\
\sigma\geq0\text{ and }\text{div}\,\sigma=0\\
\sigma\hat{\nu}=\hat{\nu}\ \text{at }\partial\Omega
}
}\,\int_{\Omega}\left\langle Id-\sigma,\frac{1}{2}\nabla p\otimes\nabla p\right\rangle 
 \\ =\sup_{\substack{\varphi\in C^{2}(\mathbb{R}^{2})\\
\varphi\text{ is convex}\\
\nabla\nabla(\varphi-\frac{1}{2}|x|^2) = 0\text{ on }\mathbb{R}^{2}\backslash\Omega
}
}\,\int_{\Omega}(\varphi-\frac{1}{2}|x|^{2})\det\nabla\nabla p+L=\max_{\substack{\varphi:\mathbb{R}^{2}\to\mathbb{R}\\
\varphi\text{ is convex}\\
\nabla\nabla(\varphi-\frac{1}{2}|x|^2) = 0\text{ on }\mathbb{R}^{2}\backslash\overline{\Omega}
}
}\,\int_{\Omega}(\varphi-\frac{1}{2}|x|^{2})\det\nabla\nabla p+L
\end{multline*}
where we have abbreviated the argument $\varphi-\frac{1}{2}|x|^2$ of $L$. The proof of \prettyref{eq:general-duality-result} is complete. \qed
\vspace{1em}

Before moving on to the complementary slackness part of \prettyref{prop:duality},
we pause to point out that the admissible Airy potentials from \prettyref{eq:general-duality-result} can
be described using boundary conditions. This was stated 
in \prettyref{rem:bcs} in the context of a simply connected domain (where we took $a=0$). Note we also make use of it later on below. 

\begin{lemma} \label{lem:restriction} Upon restriction to $\Omega$,
the admissible set of Airy potentials in \prettyref{eq:general-duality-result}
can be equivalently described as those $\varphi\in HB(\Omega)$ for
which 
\begin{equation}
\nabla\nabla\varphi\geq0\quad\text{on }\Omega\label{eq:non-negative-Hessian}
\end{equation}
and such that
\begin{equation}
\varphi=\frac{1}{2}|x|^{2}+a\quad\text{and}\quad\hat{\nu}\cdot\nabla\varphi\leq\hat{\nu}\cdot(x+\nabla a)\quad\text{at }\partial\Omega\label{eq:boundary-conditions-monotone}
\end{equation}
for some $a:\mathbb{R}^{2}\to\mathbb{R}$ that is locally affine exterior to $\Omega$.
These boundary conditions are understood in the sense of trace, i.e., the values of $\varphi$ and $\nabla\varphi$ at $\partial\Omega$ are taken from $\Omega$,
while those of $a$ and $\nabla a$ at $\partial\Omega$  are taken from $\mathbb{R}^{2}\backslash\overline{\Omega}$. 
\end{lemma}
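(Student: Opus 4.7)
The plan is to prove each direction of the equivalence separately, by exploiting the basic fact that a continuous function on $\mathbb{R}^{2}$ is convex if and only if its distributional Hessian is a non-negative, $\text{Sym}_{2}$-valued Radon measure.

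For the forward direction, starting from $\varphi$ admissible in \prettyref{eq:general-duality-result}, I would observe that the convexity of $\varphi$ on $\mathbb{R}^{2}$ together with the bounded, Lipschitz character of $\Omega$ implies $\varphi|_{\Omega}\in HB(\Omega)$ with $\nabla\nabla\varphi\geq 0$, yielding \prettyref{eq:non-negative-Hessian}. Since $\nabla\nabla(\varphi-\frac{1}{2}|x|^{2})=0$ on $\mathbb{R}^{2}\backslash\overline{\Omega}$, the difference $\varphi-\frac{1}{2}|x|^{2}$ restricts to an affine function on each connected component of $\mathbb{R}^{2}\backslash\overline{\Omega}$; extending these affinely into $\Omega$ produces an $a:\mathbb{R}^{2}\to\mathbb{R}$ that is locally affine exterior to $\Omega$. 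The first part of \prettyref{eq:boundary-conditions-monotone} is then forced by the continuity of $\varphi$. For the normal-derivative inequality, I would invoke the classical monotonicity of the directional derivative of a convex function along any line: at $\mathcal{H}^{1}$-a.e.\ $x_{0}\in\partial\Omega$ the map $t\mapsto \hat{\nu}\cdot\nabla\varphi(x_{0}+t\hat{\nu})$ is non-decreasing wherever defined, so the inner trace lies below the outer one.

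For the reverse direction, I would assemble the piecewise extension
\[
\tilde{\varphi}(x)=\begin{cases}\varphi(x), & x\in\overline{\Omega},\\ \tfrac{1}{2}|x|^{2}+a(x), & x\in\mathbb{R}^{2}\backslash\overline{\Omega},\end{cases}
\]
which is continuous on $\mathbb{R}^{2}$ by the first boundary condition, and then compute its distributional Hessian. Formally, the result decomposes as
\[
\nabla\nabla\tilde{\varphi}=\nabla\nabla\varphi\lfloor\Omega+Id\,dx\lfloor(\mathbb{R}^{2}\backslash\overline{\Omega})+\bigl(\hat{\nu}\cdot[\nabla\tilde{\varphi}]\bigr)\hat{\nu}\otimes\hat{\nu}\,\mathcal{H}^{1}\lfloor\partial\Omega,
\]
where the tangential part of the jump $[\nabla\tilde{\varphi}]$ vanishes because the tangential derivative of the trace along $\partial\Omega$ agrees on both sides (by differentiating $\varphi=\frac{1}{2}|x|^{2}+a$ along $\partial\Omega$). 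The two bulk pieces are non-negative by hypothesis, and the coefficient $\hat{\nu}\cdot[\nabla\tilde{\varphi}]=\hat{\nu}\cdot(x+\nabla a)-\hat{\nu}\cdot\nabla\varphi\geq 0$ by the second part of \prettyref{eq:boundary-conditions-monotone}. Hence $\nabla\nabla\tilde{\varphi}\geq 0$ as a Radon measure on $\mathbb{R}^{2}$, and $\tilde{\varphi}$ is therefore convex. By construction $\nabla\nabla(\tilde{\varphi}-\frac{1}{2}|x|^{2})=\nabla\nabla a=0$ on $\mathbb{R}^{2}\backslash\overline{\Omega}$, so $\tilde{\varphi}$ is admissible in \prettyref{eq:general-duality-result}.

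The main obstacle I expect is justifying the distributional Hessian decomposition at the regularity $\varphi\in HB(\Omega)$, where $\nabla\varphi$ admits only an $L^{1}(\partial\Omega,\mathcal{H}^{1})$-trace. I plan to handle this by approximating $\varphi$ in the intermediate topology on $HB(\Omega)$ by smooth functions (dense per \prettyref{subsec:Bounded-deformation-maps}), for which the jump formula reduces to an elementary integration by parts, and then passing to the limit using intermediate continuity of the trace map.
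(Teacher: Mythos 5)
Your proposal is correct and follows essentially the same route as the paper: both directions hinge on the distributional Hessian decomposition of the extended function into its interior part, the exterior identity part, and a purely normal jump term $\hat{\nu}\cdot[\nabla\varphi]\,\hat{\nu}\otimes\hat{\nu}\,\mathcal{H}^{1}\lfloor\partial\Omega$, with convexity equivalent to non-negativity of all three pieces. The only cosmetic differences are that the paper reads the normal-trace inequality in the forward direction off the same jump identity rather than invoking monotonicity of directional derivatives, and that you explicitly flag the approximation argument needed to justify the jump formula at $HB$ regularity, which the paper states without proof.
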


\begin{proof} The result follows from the identity
\begin{equation}
\nabla\nabla\varphi=\nabla\nabla\varphi\lfloor\Omega+Id\,dx\lfloor\mathbb{R}^{2}\backslash\overline{\Omega}+\hat{\nu}\cdot[\nabla\varphi]\hat{\nu}\otimes\hat{\nu}\,\mathcal{H}^{1}\lfloor\partial\Omega\quad\text{on }\mathbb{R}^{2}\label{eq:distributional_Hessian_identity}
\end{equation}
which holds for all $\varphi\in HB_{\text{loc}}(\mathbb{R}^{2})$
such that $\varphi-\frac{1}{2}|x|^{2}$ is locally affine on $\mathbb{R}^{2}\backslash\overline{\Omega}$.
Indeed, if $\varphi\in HB(\Omega)$ satisfies \prettyref{eq:non-negative-Hessian}
and \prettyref{eq:boundary-conditions-monotone} for some $a$ as in the statement,
 its extension by $\frac{1}{2}|x|^{2}+a$ belongs to $HB_{\text{loc}}(\mathbb{R}^{2})$ and obeys \prettyref{eq:distributional_Hessian_identity}.
Its Hessian is non-negative, and so it is convex. Therefore it is admissible in  \prettyref{eq:general-duality-result}. 

On the other hand, if $\varphi$ is admissible then it is a convex extension of $\frac{1}{2}|x|^{2}+a$ from $\mathbb{R}^{2}\backslash\overline{\Omega}$
to $\mathbb{R}^2$ for some $a$ that is locally affine on $\mathbb{R}^2\backslash\overline{\Omega}$. It belongs to $HB_{\text{loc}}(\mathbb{R}^{2})$ and satisfies $\nabla\nabla \varphi \geq 0$ on $\mathbb{R}^2$. It restricts to an element of $HB(\Omega)$ with boundary trace equal
to $\frac{1}{2}|x|^{2}+a$, so the first part of \prettyref{eq:boundary-conditions-monotone} holds. Testing \prettyref{eq:distributional_Hessian_identity} at $\partial\Omega$ and using that 
\[
[\nabla\varphi]=x+\nabla a|_{\partial(\mathbb{R}^2\backslash\overline{\Omega})} -\nabla\varphi|_{\partial\Omega}\quad\text{at }\partial\Omega
\]
yields the rest of \prettyref{eq:boundary-conditions-monotone}. Testing it at $\Omega$ proves  \prettyref{eq:non-negative-Hessian}. 
\qed\end{proof}

\subsection{Complementary slackness conditions\label{subsec:complementary-slackness} }

It remains to prove the complementary slackness part of \prettyref{prop:duality}.
First, we verify that the mollification procedure from \prettyref{eq:mollified_measure_defn-intro}
can be used to generate the approximations referred to there.

\begin{lemma} \label{lem:mollification_of_measures} Let $\mu\in\mathcal{M}(\Omega;\emph{Sym}_{2})$
be such that $\emph{curl\,curl}\,\mu\in\mathcal{M}(\Omega)$. Its mollifications $\{\mu_{\delta}\}_{\delta>0}$ from \prettyref{eq:mollified_measure_defn-intro} belong to $C^{\infty}(\overline{\Omega};\emph{Sym}_{2})$
and converge  to $\mu$ in the following sense:
\[
\mu_{\delta}\,dx\to\mu\quad\text{narrowly in }\mathcal{M}(\Omega;\emph{Sym}_{2})\quad\text{and}\quad\emph{curl\,curl}\,\mu_{\delta}\,dx\stackrel{*}{\rightharpoonup}\emph{curl\,curl}\,\mu\quad\text{weakly-\ensuremath{*} in }\mathcal{M}(\Omega)
\]
as $\delta \to 0$. If in addition $\mu\geq0$, then $\mu_{\delta}(x)\geq0$ for all $x\in\overline{\Omega}$
and $\delta>0$. \end{lemma}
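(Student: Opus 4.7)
The plan is to handle the four claims—smoothness, pointwise non-negativity, narrow convergence, and the curl-curl limit—by reducing each to Fubini and dominated convergence against the finite measure $|\mu|$. Writing $\eta_\delta(z) := \delta^{-2}\rho(z/\delta)$ so that $\mu_\delta(x) = \int_\Omega \eta_\delta(x-y)\,d\mu(y)$, I would first verify smoothness by differentiating under the integral sign, which is legal since $\eta_\delta \in C_c^\infty(\mathbb{R}^2)$ and $|\mu|(\Omega)<\infty$; the pointwise non-negativity under $\mu\geq 0$ is immediate from $\rho \geq 0$.

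For the narrow convergence, I would fix $f\in C_b(\Omega;\text{Sym}_2)$ and apply Fubini to write
\[
\int_\Omega \langle f(x),\mu_\delta(x)\rangle\,dx = \int_\Omega \langle f_\delta(y),d\mu(y)\rangle, \qquad f_\delta(y) := \int_\Omega \eta_\delta(x-y) f(x)\,dx.
\]
Since $|\mu|$ is supported in the open set $\Omega$, for every $y$ in its support one has $B(y,\delta)\subset\Omega$ once $\delta$ is small enough, and then the change of variables $x = y+\delta z$ yields $f_\delta(y) = \int_{B_1}\rho(z)f(y+\delta z)\,dz \to f(y)$ by continuity of $f$ at $y$. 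Since $||f_\delta||_{L^\infty}\leq ||f||_{L^\infty}$ uniformly in $\delta$, dominated convergence against the finite measure $|\mu|$ produces the required narrow limit.

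For the curl-curl statement, I would first establish the interior identity
\[
(\text{curl}\text{curl}\,\mu_\delta)(x) = \int_\Omega \eta_\delta(x-y)\,d(\text{curl}\text{curl}\,\mu)(y) \qquad \text{for every } x\in\Omega_\delta := \{y\in\Omega : d(y,\partial\Omega) > \delta\},
\]
by computing $\text{curl}\text{curl}$ under the integral and using that $\partial_{x_ix_j}\eta_\delta(x-y) = \partial_{y_iy_j}\eta_\delta(x-y)$, so that $\text{curl}\text{curl}_x\,\eta_\delta(x-y)$ coincides with $(\nabla^\perp\nabla^\perp)_y\,\eta_\delta(x-y)$ and matches the test-function pairing defining $\text{curl}\text{curl}\,\mu$ (valid since $\eta_\delta(x-\cdot)\in C_c^\infty(\Omega)$ when $x\in\Omega_\delta$). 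Given this identity, for $\psi\in C_c(\Omega)$ and $\delta$ small enough that $\text{supp}\,\psi\subset\Omega_\delta$, Fubini converts $\int_\Omega \psi\,(\text{curl}\text{curl}\,\mu_\delta)\,dx$ into $\int_\Omega \psi_\delta\,d(\text{curl}\text{curl}\,\mu)$ with $\psi_\delta\to\psi$ uniformly on compact sets, yielding the desired weak-$*$ convergence. The main obstacle will be pinning down this commutation identity cleanly—the computation is transparent at the test-function level but requires care since $\mu$ is only a measure; the hypothesis that $\text{curl}\text{curl}\,\mu$ is itself a measure is precisely what makes the right-hand side meaningful as a standard mollification of a Radon measure, after which the narrow-convergence machinery of the previous paragraph applies verbatim to $\text{curl}\text{curl}\,\mu$.
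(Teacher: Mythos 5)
Your proposal is correct and follows essentially the same route as the paper: non-negativity from $\rho\geq 0$, narrow convergence via Fubini plus dominated convergence against the finite measure $|\mu|$ using the pointwise limit $f_\delta(y)\to f(y)$ for $y$ in the open set $\Omega$, and the curl-curl limit via the formal adjointness of $\text{curl}\,\text{curl}$ and $\nabla^{\perp}\nabla^{\perp}$ together with Fubini. The only cosmetic difference is that you package the adjointness as a pointwise interior commutation identity $(\text{curl}\,\text{curl}\,\mu_\delta)(x)=\int_\Omega\eta_\delta(x-y)\,d(\text{curl}\,\text{curl}\,\mu)(y)$ on $\Omega_\delta$, whereas the paper transfers the mollification onto the test function $\chi$; both are the same computation and both rest on $\eta_\delta(x-\cdot)$ (resp.\ the mollified $\chi$) being an admissible $C_c^\infty(\Omega)$ test function once $\delta$ is small.
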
 

\begin{proof} The last conclusion regarding non-negativity follows
from our assumption that the mollifying kernel $\rho\geq0$. We establish
the claimed convergences now. Let $\sigma\in C_{b}(\Omega;\text{Sym}_{2})$.
Fixing $y\in\Omega$, we see that 
\[
\int_{\Omega}\rho\left(\frac{1}{\delta}(x-y)\right)\sigma(x)\,\frac{dx}{\delta^{2}}\to\sigma(y)\quad\text{as }\delta\to0
\]
and also that 
\[
\left|\int_{\Omega}\rho\left(\frac{1}{\delta}(x-y)\right)\sigma(x)\,\frac{dx}{\delta^{2}}\right| \leq\int\indicator{\Omega}(y+\delta x)\rho(x)|\sigma(y+\delta x)|\,dx \leq||\sigma||_{L^{\infty}(\Omega)}
\]
for all $\delta>0$. Therefore, 
\begin{align*}
\int_{\Omega}\left\langle \sigma,\mu_{\delta}\right\rangle \,dx & =\int_{\Omega}\left\langle \sigma(x),\int_{\Omega}\frac{1}{\delta^{2}}\rho\left(\frac{1}{\delta}(x-y)\right)d\mu(y)\right\rangle \,dx\\
 & =\int_{\Omega}\left\langle \int_{\Omega}\rho\left(\frac{1}{\delta}(x-y)\right)\sigma(x)\,\frac{dx}{\delta^{2}},d\mu(y)\right\rangle \to\int_{\Omega}\left\langle \sigma,\mu\right\rangle 
\end{align*}
by the bounded convergence theorem. As $\sigma$ was arbitrary, we
conclude the narrow convergence of $\mu_{\delta}\,dx$ to $\mu$.

Now we show the weak-$*$ convergence of $\text{curl}\text{curl}\,\mu_{\delta}\,dx$ to $\text{curl}\text{curl}\,\mu$. Let $\chi\in C_{c}^{\infty}(\Omega)$.
For small enough $\delta>0$, we can apply the fact that $\text{curl}\text{curl}$ and $\nabla^\perp\nabla^\perp$ are formally adjoint along with Fubini's theorem to write that 
\begin{align*}
 \int_{\Omega}\chi\text{curl}\text{curl}\,\mu_{\delta}\,dx 
 & =\int_{\Omega}\left\langle \int_{\Omega}\frac{1}{\delta^{2}}\rho\left(\frac{1}{\delta}(x-y)\right)\nabla^{\perp}\nabla^{\perp}\chi(x)\,dx,\mu(y)\right\rangle \\
 & =\int_{\Omega}\left\langle \int_{B_{1}}\rho\left(x\right)\nabla^{\perp}\nabla^{\perp}\chi(\cdot+\delta x)\,dx,\mu\right\rangle  =\int_{\Omega}\left\langle \nabla^{\perp}\nabla^{\perp}\int_{B_{1}}\rho(x)\chi(\cdot+\delta x)\,dx,\mu\right\rangle .
\end{align*}
Using that $\text{curl\,curl}\,\mu\in\mathcal{M}(\Omega)$, there
follows 
\[
\int_{\Omega}\left\langle \nabla^{\perp}\nabla^{\perp}\int_{B_{1}}\rho(x)\chi(\cdot+\delta x)\,dx,\mu\right\rangle =\int_{\Omega}\left[\int_{B_{1}}\rho(x)\chi(y+\delta x)\,dx\right]d\text{curl\,curl}\,\mu(y)\to\int_{\Omega}\chi d\text{curl\,curl}\,\mu
\]
as $\delta\to0$. The proof is complete. \qed\end{proof} 
Next, we establish the integration by parts identity   \prettyref{eq:duality-gap}. 

\begin{lemma} \label{lem:IBP-identity} Let $u$ and
$\varphi$ be admissible in \prettyref{eq:general-duality-result}
and suppose the sequence 
$\{\mu_{n}\}_{n\in\mathbb{N}}\subset C^{2}(\overline{\Omega};\emph{\ensuremath{\text{Sym}_{2}}})$ converges to $\mu=\nabla p\otimes\nabla p\,dx-2e(u)$ in the sense of \prettyref{eq:needed-approxs}.
Then, 
\[
\frac{1}{2}\int_{\Omega}|\mu|_{1}-\int_{\Omega}(\varphi-\frac{1}{2}|x|^{2})\det\nabla\nabla p\,dx -L(\varphi-\frac{1}{2}|x|^{2})=\lim_{n\to\infty}\,\frac{1}{2}\int_{\Omega}\left\langle \mu_{n},\nabla^{\perp}\nabla^{\perp}\varphi\right\rangle +\frac{1}{2}\int_{\partial\Omega}\hat{\nu}\cdot[\nabla\varphi]\left\langle \hat{\tau}\otimes\hat{\tau},\mu_{n}\right\rangle ds.
\] 
\end{lemma}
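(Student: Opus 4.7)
Write $\psi = \varphi - \tfrac{1}{2}|x|^{2}$, which lies in $HB(\Omega)\cap C(\mathbb{R}^{2})$ and is locally affine on $\mathbb{R}^{2}\backslash\overline{\Omega}$. Since $\mu \geq 0$ one has $|\mu|_{1} = \text{tr}\,\mu$ and, because $\nabla^{\perp}\nabla^{\perp}(\tfrac{1}{2}|x|^{2}) = Id$, the narrow convergence $\mu_{n}\,dx \to \mu$ tested against the continuous bounded function $Id$ yields the splitting
\[
\tfrac{1}{2}\int_{\Omega}|\mu|_{1} = \lim_{n\to\infty}\tfrac{1}{2}\int_{\Omega}\langle \nabla^{\perp}\nabla^{\perp}\varphi, \mu_{n}\rangle\,dx \;-\; \lim_{n\to\infty}\tfrac{1}{2}\int_{\Omega}\langle \nabla^{\perp}\nabla^{\perp}\psi, \mu_{n}\rangle\,dx.
\]
The first limit coincides with the bulk term on the right of the claim, so \prettyref{lem:IBP-identity} reduces to showing
\[
\lim_{n\to\infty}\Bigl[\tfrac{1}{2}\int_{\Omega}\langle \nabla^{\perp}\nabla^{\perp}\psi, \mu_{n}\rangle\,dx + \tfrac{1}{2}\int_{\partial\Omega}\hat\nu\cdot[\nabla\varphi]\langle\hat\tau\otimes\hat\tau,\mu_{n}\rangle\,ds\Bigr] = -\int_{\Omega}\psi\det\nabla\nabla p\,dx - L(\psi). \quad (\star)
\]

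My plan is to recast both sides of $(\star)$ as a single distributional pairing on $\mathbb{R}^{2}$. Because $\psi$ is locally affine exterior to $\Omega$, the decomposition \prettyref{eq:distributional_Hessian_identity}, taken with cofactors, gives
\[
\nabla^{\perp}\nabla^{\perp}\psi = \nabla^{\perp}\nabla^{\perp}\psi\lfloor\Omega + \hat\nu\cdot[\nabla\varphi]\,\hat\tau\otimes\hat\tau\,\mathcal{H}^{1}\lfloor\partial\Omega \quad \text{on } \mathbb{R}^{2}.
\]
For any $C^{2}$-smooth, compactly supported extension $\tilde\mu_{n}$ of $\mu_{n}$ to $\mathbb{R}^{2}$, pairing with this measure and integrating by parts via the formal adjointness of $\nabla^{\perp}\nabla^{\perp}$ and $\text{curl}\,\text{curl}$ shows that the left side of $(\star)$ equals $\tfrac{1}{2}\int_{\mathbb{R}^{2}}\psi\,\text{curl}\,\text{curl}\,\tilde\mu_{n}\,dx$; moreover this expression does not depend on the particular extension chosen, since two extensions differ on $\mathbb{R}^{2}\backslash\overline{\Omega}$ where $\nabla^{\perp}\nabla^{\perp}\psi$ vanishes. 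Fixing also an extension $\bar p\in W^{2,2}(\mathbb{R}^{2})\cap C_{c}(\mathbb{R}^{2})$ of $p$, the very weak Hessian identity \prettyref{eq:veryweak_hessian-duality} and the definition \prettyref{eq:defn-of-L} of $L$ rewrite the right side of $(\star)$ as $\tfrac{1}{2}\int_{\mathbb{R}^{2}}\psi\,\text{curl}\,\text{curl}(\nabla\bar p\otimes\nabla\bar p)\,dx$. Thus the identity reduces to
\[
\lim_{n\to\infty}\,\int_{\mathbb{R}^{2}}\psi\,\text{curl}\,\text{curl}\bigl(\tilde\mu_{n} - \nabla\bar p\otimes\nabla\bar p\bigr)\,dx = 0. \quad (\diamond)
\]

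For $(\diamond)$ I plan one further integration by parts to transfer derivatives back onto $\psi$, which turns the integral into a bulk pairing $\int_{\Omega}\langle \nabla^{\perp}\nabla^{\perp}\psi, \mu_{n} - \nabla p\otimes\nabla p\rangle\,dx$ plus a boundary correction at $\partial\Omega$ comparing the inside trace of $\mu_{n}$ with that of $\nabla\bar p\otimes\nabla\bar p$. The bulk part should tend to zero in view of the narrow convergence $\mu_{n}\,dx \to \mu = \nabla p\otimes\nabla p\,dx - 2e(u)$ and a symmetry of the Airy bilinear form: $\nabla^{\perp}\nabla^{\perp}\psi$ is divergence-free distributionally, so its pairing with the symmetric gradient $e(u)$ reduces to a boundary integral that is absorbed by the admissibility condition $\sigma\hat\nu = \hat\nu$ on the associated Airy stress from \prettyref{lem:correspondence}. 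The main obstacle is the low regularity of all objects involved: $\nabla^{\perp}\nabla^{\perp}\psi$ and $e(u)$ are only measures, the inside trace of $\mu_{n}$ has no \emph{a priori} limit under narrow convergence, and the adjointness of the fourth-order operators must be interpreted carefully at $\partial\Omega$. I plan to address these points by working within the regularized complementary-slackness framework of \prettyref{thm:duality}, approximating $\psi$ and $\varphi$ by smooth test functions in the intermediate $HB$ topology so that their traces converge, and then exploiting the freedom in the extension $\tilde\mu_{n}$---selecting one whose exterior boundary trace matches that of $\nabla\bar p\otimes\nabla\bar p$ up to $n$-vanishing errors---to render the boundary correction in $(\diamond)$ asymptotically degenerate.
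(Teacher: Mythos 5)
Your reduction of the lemma to the identity $(\star)$ is sound, and your reformulation of the left side of $(\star)$ as a single global pairing $\tfrac{1}{2}\int_{\mathbb{R}^{2}}\psi\,\text{curl}\,\text{curl}\,\tilde\mu_{n}\,dx$ via the distributional Hessian decomposition is a legitimate, extension-independent repackaging of the integration by parts that the paper performs with \prettyref{eq:IBP1}. The problem is the limit passage $(\diamond)$, which is where the convergence hypotheses \prettyref{eq:needed-approxs} must actually be used, and which your plan does not close. Transferring the derivatives back onto $\psi$ is circular: it returns you to the left side of $(\star)$, and the resulting ``bulk pairing'' $\int_{\Omega}\langle\nabla^{\perp}\nabla^{\perp}\psi,\mu_{n}-\nabla p\otimes\nabla p\rangle$ cannot be passed to the limit by narrow convergence, because $\nabla^{\perp}\nabla^{\perp}\psi$ is only a measure (narrow convergence tests against $C_{b}$, and the limiting object $\langle\nabla^{\perp}\nabla^{\perp}\psi,\mu\rangle$ is exactly the ill-defined measure-times-measure product that the paper's regularization framework exists to avoid). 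Likewise, ``its pairing with the symmetric gradient $e(u)$ reduces to a boundary integral'' pairs two measures, and choosing a clever extension $\tilde\mu_{n}$ buys nothing once you have shown the expression is extension-independent.

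The missing device is to subtract a smooth global extension $a\in C^{\infty}(\mathbb{R}^{2})$ of the locally affine exterior data, i.e.\ $a=\varphi-\tfrac{1}{2}|x|^{2}$ on $\mathbb{R}^{2}\backslash\overline{\Omega}$, and to split $\psi=(\psi-a)+a$. This matters because the hypothesis on $\text{curl}\,\text{curl}\,\mu_{n}$ is only weak-$*$ convergence in $\mathcal{M}(\Omega)$, so it may be tested only against functions vanishing at $\partial\Omega$; your $\psi$ has boundary trace $a|_{\partial\Omega}\neq 0$ in general, so $\int_{\Omega}\psi\,\text{curl}\,\text{curl}\,\mu_{n}$ need not converge to $-2\int_{\Omega}\psi\det\nabla\nabla p$ (mass can escape to the boundary). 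After the split, $\psi-a\in C_{0}(\Omega)$ and the weak-$*$ hypothesis applies to $\int_{\Omega}(\psi-a)\,\text{curl}\,\text{curl}\,\mu_{n}$, while for the $a$-term one integrates by parts globally to get $\int_{\Omega}\langle\nabla^{\perp}\nabla^{\perp}a,\mu_{n}\rangle$ with $\nabla^{\perp}\nabla^{\perp}a\in C_{b}$, to which narrow convergence does apply. The limit is then matched to $-\int_{\Omega}\psi\det\nabla\nabla p-L(\psi)$ using two further identities you would still need to prove: $\int_{\Omega}\langle\nabla^{\perp}\nabla^{\perp}a,e(u)\rangle=0$ (by extending $u$ in $BD$ and using that $\nabla^{\perp}\nabla^{\perp}a$ is divergence-free and supported in $\overline{\Omega}$) and the formula relating $L(a)$ to $\int_{\Omega}a\det\nabla\nabla p$ and $\int_{\Omega}\langle\nabla^{\perp}\nabla^{\perp}a,\tfrac{1}{2}\nabla p\otimes\nabla p\rangle$ via the very weak Hessian identity for $\overline{p}$. (In the special case $a\equiv 0$, e.g.\ the simply connected setting of \prettyref{thm:duality} with $\varphi=\tfrac{1}{2}|x|^{2}$ exterior, your argument does go through directly; it is the general admissible class of \prettyref{eq:general-duality-result} for which the subtraction of $a$ is essential.)
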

\begin{proof} Since $\varphi - \frac{1}{2} |x|^2$ is locally affine on $\mathbb{R}^2\backslash\overline{\Omega}$, we can find $a\in C^\infty(\mathbb{R}^{2})$ such that $\varphi-\frac{1}{2}|x|^{2}=a$ there. 
Integrating by parts twice via
\prettyref{eq:IBP1} and recalling that the unit tangent and outwards-pointing unit normal vectors to $\partial\Omega$ were taken to satisfy $\hat{\tau}=\hat{\nu}^{\perp}$, we deduce that 
\begin{align*}
 & \int_{\Omega}\left\langle \mu_{n},\nabla^{\perp}\nabla^{\perp}(\varphi-\frac{1}{2}|x|^{2}-a)\right\rangle \\
 & \qquad=\int_{\Omega}(\varphi-\frac{1}{2}|x|^{2}-a)\text{curl}\text{curl}\,\mu_{n}-\int_{\partial\Omega}\text{curl}\,\mu_{n}\cdot\hat{\tau}(\varphi-\frac{1}{2}|x|^{2}-a)+\int_{\partial\Omega}\left\langle \mu_{n},\nabla^{\perp}(\varphi-\frac{1}{2}|x|^{2}-a)\otimes\hat{\tau}\right\rangle \\
 & \qquad=\int_{\Omega}(\varphi-\frac{1}{2}|x|^{2}-a)\text{curl}\text{curl}\,\mu_{n}-\int_{\partial\Omega}\hat{\nu}\cdot \left[\nabla\varphi\right] \left\langle \mu_{n},\hat{\tau}\otimes\hat{\tau}\right\rangle 
\end{align*}
where in the last line we used that $[\nabla \varphi] = x + \nabla a - \nabla \varphi |_{\partial\Omega}$ points normally to $\partial\Omega$ a.e. Thus,
\[
\int_{\Omega}\left\langle \mu_{n},\nabla^{\perp}\nabla^{\perp}\varphi\right\rangle +\int_{\partial\Omega}\hat{\nu}\cdot\left[\nabla\varphi\right]\left\langle \mu_{n},\hat{\tau}\otimes\hat{\tau}\right\rangle =\int_{\Omega}\left\langle Id,\mu_{n}\right\rangle +\int_{\Omega}(\varphi-\frac{1}{2}|x|^{2}-a)\text{curl}\text{curl}\,\mu_{n}+\int_{\Omega}\left\langle \mu_{n},\nabla^{\perp}\nabla^{\perp}a\right\rangle 
\]
for each $n$. Taking $n\to\infty$ and using the given approximation properties we deduce that
\begin{multline}
\lim_{n\to\infty}\,\frac{1}{2}\int_{\Omega}\left\langle \mu_{n},\nabla^{\perp}\nabla^{\perp}\varphi\right\rangle +\frac{1}{2}\int_{\partial\Omega}\hat{\nu}\cdot[\nabla\varphi]\left\langle \hat{\tau}\otimes\hat{\tau},\mu_{n}\right\rangle \\
=\frac{1}{2}\int_{\Omega}|\mu|_{1}-\int_{\Omega}(\varphi-\frac{1}{2}|x|^{2}-a)\det\nabla\nabla p+\frac{1}{2}\int_{\Omega}\left\langle \nabla^{\perp}\nabla^{\perp}a,\mu\right\rangle. \label{eq:limiting-statement-step1}
\end{multline}
To finish, we must rewrite the righthand side above using the definitions of $\mu$ and $L$.

Recall we took $a\in C^\infty (\mathbb{R}^2)$ and to equal to $\varphi - \frac{1}{2}|x|^2$ on $\mathbb{R}^2\backslash\overline{\Omega}$. Observe that
\begin{equation}
\int_{\Omega}\left\langle \nabla^{\perp}\nabla^{\perp}a,e(u)\right\rangle =0.\label{eq:limiting-statement-step2}
\end{equation}
Indeed, we can find a compactly supported, $BD$ extension of $u$ to $\mathbb{R}^2$, and then as $\nabla^{\perp}\nabla^{\perp}a$ is divergence-free and vanishes outside of $\Omega$, the desired identity follows by the divergence theorem.
Next, we claim that
\begin{equation}
L(a) = - \int_\Omega a \det \nabla \nabla p + \left\langle \nabla^\perp \nabla^\perp a , \frac{1}{2}\nabla p \otimes \nabla p\right\rangle.\label{eq:limiting-statement-step3}
\end{equation}
To prove it, introduce a compactly supported, $W^{2,2}$ extension $\overline{p}$ of $p$ from $\Omega$ to $\mathbb{R}^2$, and test the very weak Hessian identity \prettyref{eq:veryweak_hessian-duality} against $a$. The result is that
\[
\int_{\mathbb{R}^2} a \det \nabla \nabla \overline{p} = - \int_{\mathbb{R}^2} \left\langle \nabla^\perp \nabla^\perp a , \frac{1}{2}\nabla \overline{p}\otimes \overline{p}\right\rangle.
\] 
Breaking up the integral on the left to be over $\Omega$ and $\mathbb{R}^2\backslash\Omega$, and using the definition \prettyref{eq:defn-of-L} of $L$, there follows \prettyref{eq:limiting-statement-step3}. 
Combining \prettyref{eq:limiting-statement-step1}-\prettyref{eq:limiting-statement-step3} with our choice to call $\mu=\nabla p\otimes\nabla p\,dx-2e(u)
$ finishes the proof. 
\qed\end{proof} 
We are finally ready to complete the proof of \prettyref{prop:duality}.

\vspace{1em}
\noindent\emph{Proof of the complementary slackness part of \prettyref{prop:duality}}
Let $u_{\text{eff}}$ and $\varphi$ be admissible in \prettyref{eq:general-duality-result}, and let
$\{\mu_{n}\}_{n\in\mathbb{N}}$ be non-negative and approximate $\mu=\nabla p\otimes\nabla p\,dx-2e(u_{\text{eff}})$ in the sense of \prettyref{eq:needed-approxs}. Recall from \prettyref{lem:IBP-identity} that 
\begin{equation}
\frac{1}{2}\int_{\Omega}|\mu|_{1}-\int_{\Omega}(\varphi-\frac{1}{2}|x|^{2})\det\nabla\nabla p-L(\varphi-\frac{1}{2}|x|^{2})=\lim_{n\to\infty}\,\frac{1}{2}\int_{\Omega}\left\langle \mu_{n},\nabla^{\perp}\nabla^{\perp}\varphi\right\rangle +\frac{1}{2}\int_{\partial\Omega}\hat{\nu}\cdot[\nabla\varphi]\left\langle \hat{\tau}\otimes\hat{\tau},\mu_{n}\right\rangle .\label{eq:IBP_applied}
\end{equation}
Since $\varphi$ is convex we find, just as in the proof of \prettyref{lem:restriction}, that
\[
\nabla\nabla\varphi\geq0 \quad\text{on } \Omega \quad\text{and}\quad \hat{\nu}\cdot[\nabla\varphi] \geq 0 \quad\text{at } \partial\Omega.
\]
So, the integrals on the righthand side of  \prettyref{eq:IBP_applied} are non-negative and limit to zero if and only if its lefthand side vanishes. At the same time, due to \prettyref{eq:general-duality-result}, the lefthand side vanishes if and only if $u_{\text{eff}}$ and $\varphi$ are optimal. It remains to produce an example
of an approximating sequence $\{\mu_{n}\}$. Using \prettyref{lem:mollification_of_measures} and the fact that 
\[
-\frac{1}{2}\text{curl}\text{curl}\,\mu = \det \nabla \nabla p,
\] 
we see the mollification procedure from \prettyref{eq:mollified_measure_defn-intro} approximates $\mu$ in the desired sense.
\qed
\vspace{1em}

It is natural to wonder if there is some more intrinsic way of stating
the complementary slackness conditions, i.e., one that does not make use of
 \emph{ad hoc} regularizations. The crux of the issue is that one must make
sense of the ``Frobenius inner product'' between two $\text{Sym}_{2}$-valued
Radon measures, one of which is divergence-free and the other of which
has its curlcurl controlled. Consider, for instance, how to define $\left\langle \nabla^{\perp}\nabla^{\perp}\varphi,e(u)\right\rangle $ when $\varphi\in HB$ and $u\in BD$. If for some reason we knew that
$\varphi\in C^{1}$, we could fall back on the identity 
\[
\left\langle \nabla^{\perp}\nabla^{\perp}\varphi,e(u)\right\rangle =2\text{curl}\left(e(u)\nabla^{\perp}\varphi\right)-\text{curl}\text{curl}\left(e(u)\varphi\right)
\]
to define the product on the lefthand side as the distribution 
on the right. This sort of approach  goes
back at least to \cite{kohn1983dual}. Unfortunately, it is not the
case that every optimal Airy potential is $C^{1}$. 
Lacking a successful distributional approach, we have simply opted to use
regularizations instead. For a related discussion see \cite{arroyo-rabasa2017relaxation}
(however, the functionals there do not appear to allow for one-sided
constraints).

\section{Wrinkle patterns by the method of stable lines\label{sec:method_of_characteristics}}

We continue our study of the limiting minimization problems in \prettyref{eq:primal-1}.
\prettyref{sec:dualitytheory} identified various versions of the dual problem, along with complementary slackness conditions satisfied by optimal primal-dual pairs. There we established a general duality result, applicable even in situations where we do not yet know the $\Gamma$-limit of the rescaled energies $\frac{1}{2\sqrt{bk}+\gamma}E_{b,k,\gamma}$ (such as when $\Omega$ is not strictly star-shaped). Our results thus far can be summarized as follows: under the full set of assumptions \vpageref{par:Assumptions} of the introduction, 
$\mu\in\mathcal{M}_+(\Omega;\text{Sym}_2)$ arises as the defect measure of an almost minimizing sequence for $E_{b,k,\gamma}$ if and only if it satisfies
\begin{equation}
\begin{cases}
-\frac{1}{2}\text{curl}\text{curl}\,\mu=\det\nabla\nabla p & \text{on }\Omega\\
\left\langle \nabla^{\perp}\nabla^{\perp}\varphi,\mu\right\rangle =0 & \text{on }\Omega\\
\hat{\nu}\cdot[\nabla\varphi]\left\langle \hat{\tau}\otimes\hat{\tau},\mu\right\rangle =0 & \text{at }\partial\Omega
\end{cases}\label{eq:wrinkling-PDEs}
\end{equation}
where $\varphi$ solves the dual problem \prettyref{eq:dual-1}. The first equation
holds in the sense of distributions, while the second and third ones
hold in the regularized sense, i.e.,
\begin{equation}
\lim_{\delta\to0}\,\int_{\Omega}|\left\langle \mu_{\delta},\nabla^{\perp}\nabla^{\perp}\varphi\right\rangle |=0\quad\text{and}\quad\lim_{\delta\to0}\,\int_{\partial\Omega}|\hat{\nu}\cdot[\nabla\varphi]\left\langle \hat{\tau}\otimes\hat{\tau},\mu_{\delta}\right\rangle |\,ds=0\label{eq:complementary_slackness_weaksense}
\end{equation}
where $\{\mu_{\delta}\}_{\delta>0}$ are the mollifications in \prettyref{eq:mollified_measure_defn-intro}.
Moreover, the same system \prettyref{eq:wrinkling-PDEs} applies even when only the basic assumptions from \prettyref{eq:A1a} hold, so long as we take $\mu = \nabla p\otimes\nabla p\,dx-2e(u_\text{eff})$ and let $u_\text{eff}$ and $\varphi$ be optimal in \prettyref{eq:general-duality-result}. Recall $[\nabla\varphi]$ denotes the jump in $\nabla\varphi$ across $\partial\Omega$ in the direction of $\hat{\nu}$.

The purpose of this section is to study \prettyref{eq:wrinkling-PDEs} as a boundary value problem for $\mu$ and, in particular, to establish the results from \prettyref{subsec:method-of-stable-lines} regarding the general formulation of our method of stable lines. Let us briefly outline what we achieve. We begin in \prettyref{subsec:Lines-of-wrinkling-classification} by defining a partition of the shell according to the structure of $\varphi$. 
Included in this partition are the ``stable lines'' and the ``ordered'' set $O$ they fill out. We show that
\[
\mu = \lambda \hat{\eta} \otimes \hat{\eta}\quad \text{on }O,\quad\text{where }\lambda\geq 0 \text{ and }\hat{\eta} \in R\left((\nabla\nabla\varphi)_\text{a.c.}\right). 
\] 
The unit vector field $\hat{\eta}$ arises as a suitable choice of normal direction to the stable lines. 
In \prettyref{subsec:wrinkles-as-chars}, we justify our assertion that the stable lines are characteristic curves for the PDE
\[
-\frac{1}{2}\text{curl}\text{curl}(\hat{\eta}\otimes\hat{\eta}\lambda) = \det \nabla \nabla p\quad\text{on }O
\] 
implied by \prettyref{eq:wrinkling-PDEs}. We do so by producing ODEs for the absolutely continuous and singular parts of $\lambda$, which hold on (a.e.) stable line. 
 Finally, in \prettyref{subsec:three-solution-formulas} we show how to derive appropriate boundary data depending on the layout of the stable lines, and how to apply them to conclude (partial) uniqueness,  regularity, and explicit solution formulas for $\lambda$ and $\mu$. The reader wishing to see concrete examples should go forward to \prettyref{sec:examples}, keeping in mind that we make repeated use of \prettyref{cor:non-intersecting-chars}-\prettyref{cor:chars-meet-at-point} there. 
 
A word on assumptions is required:  
throughout this section, we require that $\Omega$ is bounded and Lipschitz and that $p\in W^{2,2}(\Omega)$. We take $\mu\in \mathcal{M}_+(\Omega;\text{Sym}_2)$, let $\varphi:\mathbb{R}^2\to\mathbb{R}$ be convex, and assume that they satisfy \prettyref{eq:wrinkling-PDEs} (it is not necessary for our present purposes to assume that they are optimal). Importantly, we must also assume that
\begin{equation}\label{eq:special-regularity}
\text{there exists a non-empty open subset of } \Omega \text{ on which }\varphi \in W^{2,2}.
\end{equation}
As was addressed briefly in \prettyref{subsec:openquestions}, this last assumption will allow us to apply the theory of $W^{2,2}$ developable surfaces from \cite{hornung2011approximation,hornung2011fine,pakzad2004sobolev}. 
Various further assumptions on $\varphi$ will be introduced as needed below.

\subsection{Stable lines\label{subsec:Lines-of-wrinkling-classification}}

Our first task is to explain how the structure of $\varphi$ constrains
that of $\mu$ solving the system \prettyref{eq:wrinkling-PDEs}.  
Guided by the second equation there, we define a partition of $\Omega$ by writing
\begin{equation}
\Omega= \Sigma \cup F\cup O\cup U \label{eq:partition}
\end{equation}
where the sets $\Sigma$, $F$, $O$, and $U$ are as follows: 
\begin{itemize}
\item the \emph{singular} set $\Sigma$ is the smallest closed subset of $\Omega$ such that $\varphi \in W^{2,2}_\text{loc}(\Omega\backslash\Sigma)$; 
\item the \emph{flattened} set $F$ is the largest open subset of $\Omega\backslash\Sigma$
on which both of the eigenvalues of $(\nabla\nabla\varphi)_\text{a.c.}$ are locally uniformly positive a.e.; 
\item the \emph{ordered} set $O$ is the largest open subset of $\Omega\backslash\Sigma$
on which one of the eigenvalues of $(\nabla\nabla\varphi)_\text{a.c.}$ is zero a.e.\ and the other eigenvalue is locally uniformly positive a.e.; 
\item the \emph{unconstrained} set $U$ is the complement of $\Sigma\cup F\cup O$
with respect to $\Omega$. 
\end{itemize}
To be clear, we say that a function
$\zeta$ is \emph{locally uniformly positive a.e.} on a (measurable) set $A$
if for all $x\in A$ there exists $c>0$ and a relatively open neighborhood $V\subset A$ of $x$ on which $\zeta \geq c$ a.e. Note $\Sigma \neq \Omega$ due to \prettyref{eq:special-regularity}.

Next, we explain what we mean by ``stable lines''. Recall from \prettyref{subsec:method-of-stable-lines} that a curve parallel to $N(\nabla\nabla \varphi)$ throughout $O$ was (preliminarily) called a stable line.
This definition is no longer suitable in the present, measure-theoretic context. It generalizes naturally as follows:  
henceforth, we refer to a curve belonging to $O$ as a \emph{stable line} of $\varphi$ if it is a maximally contained open line segment on which $\varphi$
is affine. 
\begin{lemma} \label{lem:stablelines} Every $x\in O$ belongs to a unique stable line $\ell_{x}$ which satisfies $\partial\ell_x \subset \partial O$. 
The map $x\mapsto\ell_{x}$
is locally Lipschitz from $O$ to the projective space $\mathbb{P}^{1}$.
In particular, there exists $\hat{\eta}\in\emph{Lip}_{\emph{loc}}(O;S^{1})$
that is constant along and perpendicular to the stable lines, i.e.,
\begin{equation}
\ell_{x}=\ell_{y}\implies\hat{\eta}(x)=\hat{\eta}(y)\quad\forall\,x,y\in O\quad\text{and}\quad\hat{\eta}(x)\perp\ell_{x}\quad\forall\,x\in O.\label{eq:properties-stable-lines}
\end{equation}
\end{lemma}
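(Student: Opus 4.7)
The plan is to import structural results for $W^{2,2}$ convex functions with degenerate Hessian (equivalently, $W^{2,2}$ developable graphs) from \cite{hornung2011approximation,hornung2011fine,pakzad2004sobolev} and then read off the claimed properties of the stable lines. The key observation is that on $O$ we have $\varphi\in W^{2,2}_{\text{loc}}$ with $\nabla\nabla\varphi\geq0$, one eigenvalue of $\nabla\nabla\varphi$ equal to zero a.e., and the other locally uniformly bounded below by a positive constant. In particular, $\det\nabla\nabla\varphi=0$ a.e.\ on $O$, so the graph of $\varphi$ over $O$ is a $W^{2,2}$ developable surface to which the cited theory applies.

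The first step is to extract the stable line through each $x\in O$. Fix a point $x_0\in O$ and a connected, relatively compact open neighborhood $V\subset O$. On $V$, the kernel $N(\nabla\nabla\varphi)$ is a well-defined one-dimensional subspace a.e., and the Hornung--Pakzad theory provides a Lipschitz unit vector field $\hat{\tau}_V:V\to S^1$, unique up to sign, spanning $N(\nabla\nabla\varphi)$ a.e.; moreover, the integral curves of $\hat{\tau}_V$ are genuine straight line segments along which $\nabla\varphi$ is constant (hence $\varphi$ is affine). Choosing a continuous branch of $\pm\hat{\tau}_V$, and setting $\hat{\eta}(y)=\hat{\tau}_V(y)^{\perp}$ on $V$, produces $\hat{\eta}\in\text{Lip}(V;S^1)$ as required. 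Because $O$ is open, any line segment in $O$ emanating from $x$ in direction $\hat{\tau}_V(x)$ can be extended until it first meets $\partial O$; define $\ell_x$ to be this maximal open segment.

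The second step is uniqueness of $\ell_x$ and the boundary condition $\partial\ell_x\subset\partial O$. If $\ell\subset O$ were another maximal open segment through $x$ on which $\varphi$ is affine, then $\nabla\varphi$ would be constant along $\ell$, so its direction would lie in $N(\nabla\nabla\varphi)(x)$. Since that kernel is one-dimensional, $\ell$ and $\ell_x$ would be parallel; maximality then forces $\ell=\ell_x$. The boundary statement follows because $\ell_x$ is open and maximal in $O$: any endpoint lying in $O$ could be pushed further, contradicting maximality. Gluing the local choices $\hat{\eta}$ along overlapping neighborhoods of $O$ using the already-established uniqueness of directions yields a globally defined $\hat{\eta}\in\text{Lip}_{\text{loc}}(O;S^1)$ obeying \prettyref{eq:properties-stable-lines}.

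Finally, the local Lipschitz dependence of $\ell_x$ in $\mathbb{P}^1$ is essentially a restatement of the Lipschitz regularity of $\hat{\eta}$: the map $x\mapsto\ell_x$ factors through $x\mapsto[\hat{\eta}(x)^{\perp}]\in\mathbb{P}^1$, and the quotient $S^1\to\mathbb{P}^1$ is Lipschitz. The main obstacle in the argument is verifying that our $\varphi$ on $O$ actually meets the hypotheses of the developable surface theory of \cite{hornung2011approximation,hornung2011fine,pakzad2004sobolev} — those papers are phrased in terms of isometric immersions or $W^{2,2}$ maps with vanishing determinant Hessian, so one must check that convexity together with the uniform positivity of the nonzero eigenvalue indeed places $\varphi$ in their framework (modulo a local coordinate change bringing the graph into the form of a developable $W^{2,2}$ map with controlled second fundamental form). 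Once this identification is made, the ruling structure, together with the Lipschitz regularity of the kernel direction, is exactly what those papers supply, and the lemma follows.
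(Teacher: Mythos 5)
Your overall strategy is the same as the paper's: observe that $\varphi\in W^{2,2}_{\mathrm{loc}}(O)$ with $\operatorname{rank}\nabla\nabla\varphi=1$ a.e., so the graph of $\varphi$ over $O$ is a $W^{2,2}_{\mathrm{loc}}$ developable surface, import the ruling-line decomposition and the local Lipschitz dependence of the ruling direction from \cite{hornung2011approximation,hornung2011fine,pakzad2004sobolev} (and \cite{kirchheim2001geometry}), and define the stable lines as the planar projections of the ruling lines. Your worry about matching hypotheses is not an issue: the scalar, vanishing-Hessian-determinant setting is exactly the one treated in \cite{pakzad2004sobolev}, and the paper invokes it with no coordinate change.

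The one place your argument is genuinely thinner than the paper's is the final gluing step producing a \emph{global} $\hat{\eta}\in\mathrm{Lip}_{\mathrm{loc}}(O;S^1)$. What the theory hands you directly is the locally Lipschitz line field $x\mapsto\ell_x\in\mathbb{P}^1$; passing from a $\mathbb{P}^1$-valued map to an $S^1$-valued lift on a connected open set that need not be simply connected is obstructed in general (a line field can have half-integer index around a hole), so "gluing the local choices using uniqueness of directions" does not by itself close the argument. The paper resolves this with a geometric input you omit: since distinct stable lines are disjoint open segments ending on $\partial O$, two points $x,y$ with $\ell_x\perp\ell_y$ must satisfy $|x-y|>d_{\partial O}(x)\vee d_{\partial O}(y)$; hence imposing $\hat{\eta}(x)\cdot\hat{\eta}(y)>0$ whenever $|x-y|\leq d_{\partial O}(x)\vee d_{\partial O}(y)$ is consistent and pins down $\hat{\eta}$ globally up to one overall sign per component, with the quantitative bound $|\hat{\eta}(x)-\hat{\eta}(y)|\lesssim |x-y|/\bigl(d_{\partial O}(x)\vee d_{\partial O}(y)\bigr)$. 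You should supply this (or an equivalent monodromy argument) to justify the existence of the global lift.
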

\begin{remark}
In general, $\hat{\eta}$ can fail to be Lipschitz on $O$. This can happen for various reasons, such as when distinct stable lines share a common boundary point. Examples of this appear throughout Panel (b) of \prettyref{fig:library_of_patterns}. 
\end{remark}
\begin{proof} We require some facts about developable surfaces. In 
the smooth setting, a \emph{developable surface} is one whose
Gaussian curvature vanishes identically. Any such surface can be decomposed into
two disjoint parts: a \emph{ruled} part consisting of disjoint open line segments that pass between boundary points ---  the surface's \emph{ruling lines} --- and a locally planar part. 
In \cite{pakzad2004sobolev}, this decomposition is shown to hold for $W^{2,2}$ developable surfaces, i.e., ones whose Gaussian curvature vanishes a.e. By a covering argument, it holds for $W_{\text{loc}}^{2,2}$ developable surfaces as well.

These facts allow to characterize the graph of $\varphi$ over  $O$. By definition, this  is the image of the mapping $O\to\mathbb{R}^{3}$, $(x_{1},x_{2})\mapsto(x_{1},x_{2},\varphi(x))$.
Looking back to the definition of $O$ immediately after \prettyref{eq:partition}, we see that
\[
\varphi\in W_{\text{loc}}^{2,2}(O)\quad\text{and}\quad\text{rank}\,\nabla\nabla\varphi=1\quad\text{a.e. on }O.
\]
Therefore, $\varphi$ describes a $W^{2,2}_\text{loc}$ developable
surface over $O$, which consists entirely of ruling lines. Stable
lines are now easily produced: the stable line $\ell_{x}$ through $x$
arises from the  projection $(x_{1},x_{2},x_{3})\mapsto(x_{1},x_{2})$
of the unique ruling line through $(x_{1},x_{2},\varphi(x))$ to the
plane. Indeed, $\varphi$ is affine along $\ell_{x}$, and it is maximally extended in $O$. 

It remains to choose the normal direction $\hat{\eta}$. Of
course, we can take it to satisfy \prettyref{eq:properties-stable-lines}.  
That it can be chosen to be locally Lipschitz follows from the known fact \cite{hornung2011fine,kirchheim2001geometry}  that the map $x\mapsto\ell_{x}$ is locally Lipschitz from $O$ to the projective space $\mathbb{P}^1\simeq S^{1}/\left\{ \hat{n}\sim-\hat{n}\right\}$. Let us explain. Without
loss of generality, we can take $O$ to be connected after passing to
its components. In order that $\ell_{x}\perp\ell_{y}$ it must be
that $|x-y|>d_{\partial O}(x)\vee d_{\partial O}(y)$ as stable
lines never intersect. So, once we decide that 
\[
\hat{\eta}(x)\cdot\hat{\eta}(y)>0\quad\text{when}\quad |x-y|\leq d_{\partial O}(x)\vee d_{\partial O}(y)
\]
there will remain exactly two choices for $\hat{\eta}:O\to S^{1}$ (in general, the number of choices depends on the number of connected components). Fixing
$\hat{\eta}(x)$ at some $x\in O$ determines it throughout. It now follows that 
\[
\left|\hat{\eta}(x)-\hat{\eta}(y)\right|\lesssim\frac{\left|x-y\right|}{d_{\partial O}(x)\vee d_{\partial O}(y)}\quad\forall\,x,y\in O
\]
from a worst case analysis of how stable lines may meet at $\partial O$.
\qed\end{proof}
Having defined the stable lines of $\varphi$, we can now use them to characterize the structure of $\mu$.
\begin{lemma} \label{lem:structure} Let $\Omega$ be partitioned as in \prettyref{eq:partition}. 
Any solution $\mu\in\mathcal{M}_{+}(\Omega;\emph{Sym}_{2})$ of \prettyref{eq:wrinkling-PDEs}
must satisfy
\begin{equation}
\mu=0\quad\text{on }F\quad\text{and}\quad\mu=\lambda\hat{\eta}\otimes\hat{\eta}\quad\text{on }O\label{eq:rank-1-defect}
\end{equation}
for some $\lambda\in\mathcal{M}_{+}(O)$ and  $\hat{\eta}\in\emph{Lip}_{\emph{loc}}(O;S^{1})$ satisfying \prettyref{eq:properties-stable-lines}.
\end{lemma}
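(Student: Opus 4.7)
\emph{The plan} is to read the structure of $\mu$ directly from the regularized complementary slackness \prettyref{eq:complementary_slackness_weaksense}, using the pointwise structure of $\nabla^\perp\nabla^\perp\varphi$ on the open sets $F$ and $O$. The key simplification is that, by definition of $\Sigma$, $\varphi \in W^{2,2}_{\text{loc}}(\Omega\setminus\Sigma)$, so on $F\cup O$ both $\nabla\nabla\varphi$ and its cofactor $\nabla^\perp\nabla^\perp\varphi$ are $L^2_{\text{loc}}$ functions rather than measures. Convexity forces $\nabla\nabla\varphi \geq 0$ a.e., and in two dimensions $\text{cof}$ merely swaps eigenvalues, so $\nabla^\perp\nabla^\perp\varphi \geq 0$ a.e.\ too. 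Consequently, for any $\chi \in C_c(F\cup O)$ with $\chi\geq 0$ the inequality
\[ 0 \leq \int_\Omega \chi\,\langle\mu_\delta,\nabla^\perp\nabla^\perp\varphi\rangle\,dx \leq \|\chi\|_\infty \int_\Omega |\langle\mu_\delta,\nabla^\perp\nabla^\perp\varphi\rangle| \to 0 \]
is immediate. This is the workhorse of both steps.

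\emph{For the flattened set}, I would fix $x_0 \in F$ and pick a neighborhood $V\subset F$ on which $\nabla\nabla\varphi \geq cId$ a.e.\ for some $c > 0$; then also $\nabla^\perp\nabla^\perp\varphi\geq cId$ a.e.\ on $V$. For non-negative $\chi\in C_c(V)$, the non-negativity of $\mu_\delta$ gives
\[ c\int_V \chi\,\text{tr}\,\mu_\delta\,dx \leq \int_V \chi\,\langle\mu_\delta,\nabla^\perp\nabla^\perp\varphi\rangle\,dx \to 0, \]
and the narrow convergence $\mu_\delta\,dx\to\mu$ from \prettyref{lem:mollification_of_measures} yields $\int_V \chi\,d\,\text{tr}\,\mu=0$. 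Hence $\text{tr}\,\mu(V) = 0$ and $\mu|_V = 0$; a covering argument gives $\mu = 0$ on $F$.

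\emph{On the ordered set}, first I identify the structure of $\nabla\nabla\varphi$. Since $\varphi$ is affine along each stable line $\ell_x$ (by the $W^{2,2}$ developable surface theory used in \prettyref{lem:stablelines}), its kernel at a.e.\ $x \in O$ contains the direction of $\ell_x$; since the rank is $1$ a.e.\ on $O$ (one eigenvalue is zero a.e.\ and the other locally uniformly positive), the kernel is spanned exactly by $\hat{\eta}^\perp$. Therefore
\[ \nabla\nabla\varphi = \zeta\,\hat{\eta}\otimes\hat{\eta} \quad\text{and}\quad \nabla^\perp\nabla^\perp\varphi = \zeta\,\hat{\eta}^\perp\otimes\hat{\eta}^\perp \quad\text{a.e.\ on }O, \]
with $\zeta := \text{tr}\,\nabla\nabla\varphi$ locally uniformly positive a.e. Now I would fix $x_0\in O$ and a neighborhood $V\subset O$ on which $\hat{\eta}$ is Lipschitz and $\zeta\geq c>0$ a.e. For non-negative $\chi\in C_c(V)$,
\[ c\int_V \chi\,\langle\hat{\eta}^\perp\otimes\hat{\eta}^\perp,\mu_\delta\rangle\,dx \leq \int_V \chi\,\langle\nabla^\perp\nabla^\perp\varphi,\mu_\delta\rangle\,dx \to 0. \]
The matrix field $\chi\,\hat{\eta}^\perp\otimes\hat{\eta}^\perp$, extended by zero outside $V$, is continuous with compact support, so narrow convergence gives $\int_V \chi\,\langle\hat{\eta}^\perp\otimes\hat{\eta}^\perp,d\mu\rangle=0$. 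The non-negative scalar measure $\langle\hat{\eta}^\perp\otimes\hat{\eta}^\perp,\mu\rangle$ therefore vanishes on $V$.

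\emph{The final step is a pointwise consequence.} Writing the polar decomposition $\mu = M\,d|\mu|$, with $M$ a Borel symmetric non-negative matrix of unit Frobenius norm $|\mu|$-a.e., the vanishing of $\langle\hat{\eta}^\perp\otimes\hat{\eta}^\perp,\mu\rangle$ forces $\hat{\eta}^\perp\cdot M\hat{\eta}^\perp=0$, hence $M\hat{\eta}^\perp=0$ (using $M\geq 0$), hence $M=\hat{\eta}\otimes\hat{\eta}$ $|\mu|$-a.e.\ on $V$. Setting $\lambda := |\mu|\in\mathcal{M}_+(V)$, one obtains $\mu = \lambda\,\hat{\eta}\otimes\hat{\eta}$ on $V$, and these local representations glue to give $\lambda\in\mathcal{M}_+(O)$ by uniqueness of the polar decomposition on overlaps. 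The main obstacle will be in the ordered case, on two points: justifying the rank-one identity $\nabla\nabla\varphi=\zeta\,\hat{\eta}\otimes\hat{\eta}$ on $O$ via the $W^{2,2}$ developable surface theory (knowing that the kernel of the Hessian aligns a.e.\ with the ruling/stable-line direction), and extracting the pointwise $|\mu|$-a.e.\ identity $M=\hat{\eta}\otimes\hat{\eta}$ from the vanishing of the scalar measure $\langle\hat{\eta}^\perp\otimes\hat{\eta}^\perp,\mu\rangle$.
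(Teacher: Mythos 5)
Your proposal is correct and follows essentially the same route as the paper: restrict the regularized complementary slackness to $F$ and $O$, use the local uniform positivity of the relevant eigenvalue(s) of $\nabla\nabla\varphi$ together with narrow convergence of $\mu_\delta\,dx$ to kill $\mathrm{tr}\,\mu$ on $F$ and $\langle\hat\eta^\perp\otimes\hat\eta^\perp,\mu\rangle$ on $O$, and then conclude by a covering argument. The only difference is that you spell out the final polar-decomposition step (deducing $\mu=\lambda\hat\eta\otimes\hat\eta$ from $\mu\geq 0$ and $\langle\hat\eta^\perp\otimes\hat\eta^\perp,\mu\rangle=0$), which the paper leaves implicit; your treatment of it is correct.
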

\begin{proof} We combine the previous result with the complementary slackness part of \prettyref{eq:wrinkling-PDEs}. In particular, we make use of the first part of \prettyref{eq:complementary_slackness_weaksense}, which implies here that 
\begin{equation}
0=\lim_{\delta\to0}\,\int_{F}\left|\left\langle \mu_{\delta},(\nabla^{\perp}\nabla^{\perp}\varphi)_\text{a.c.}\right\rangle \right|\,dx=\lim_{\delta\to0}\,\int_{O}\left|\left\langle \mu_{\delta},(\nabla^{\perp}\nabla^{\perp}\varphi)_\text{a.c.}\right\rangle \right|\,dx\label{eq:useful-cvg-to-zero}.
\end{equation}
Note it follows from their definition (as in \prettyref{lem:mollification_of_measures}) that  $\mu_{\delta}\,dx\overset{*}{\rightharpoonup}\mu$
weakly-$*$ on $F$ and $O$.

Consider the flattened set $F$. We claim that every $x\in F$ is contained in a neighborhood where $\mu=0$. Indeed, by its definition we can always find a relatively open set $V\subset F$ such that $x\in V$ and
\[
(\nabla\nabla\varphi)_\text{a.c.}\gtrsim Id \quad\text{a.e. on }V.
\]
Upon passing to the limit in the first part of \prettyref{eq:useful-cvg-to-zero},
we deduce that
\[
0=\lim_{\delta\to0}\,\int_{V}\left|\left\langle Id,\mu_{\delta}\right\rangle \right|\,dx=|\mu|(V).
\]
Every compact subset of $F$ is covered by finitely many
such neighborhoods. Hence, $\mu=0$ on $F$.

The ordered set $O$ requires a bit more care. Recall $\varphi$ is affine along its stable lines, which run perpendicularly to the vector field $\hat{\eta}\in\text{Lip}_\text{loc}(O;S^1)$ from \prettyref{lem:stablelines}. 
So, there exists $\zeta\in L^1(O;(0,\infty))\cap L_{\text{loc}}^{2}(O)$ such that 
\[
(\nabla\nabla\varphi)_\text{a.c.}=\zeta\hat{\eta}\otimes\hat{\eta}\quad\text{a.e.\ on }O.
\]
Actually, the definition of $O$ gives a bit more: for each $x\in O$ there exists a relatively open neighborhood $V\subset O$ of $x$ on which $\zeta\gtrsim1$ a.e. 
Using this, we can pass to the limit in the second part of \prettyref{eq:useful-cvg-to-zero} to find that
\[
0=\lim_{\delta\to0}\,\int_{V}\left|\left\langle \hat{\eta}^{\perp}\otimes\hat{\eta}^{\perp},\mu_{\delta}\right\rangle \right|\,dx=\left|\left\langle \hat{\eta}^{\perp}\otimes\hat{\eta}^{\perp},\mu\right\rangle \right|(V).
\]
Again as the choice of $x\in O$ was arbitrary, it follows that $\langle \hat{\eta}^{\perp}\otimes\hat{\eta}^{\perp},\mu\rangle =0$ on $O$.
\qed\end{proof} 
The possibilities for $\mu$ at $\partial O$ and $\partial F$ are less
clear, as the relevant eigenvalue(s) of $\nabla\nabla\varphi$ may
degenerate there. We leave the detailed study of this to future work, and turn to describe the role that stable lines play for solving \prettyref{eq:wrinkling-PDEs}.

\subsection{The method of characteristics\label{subsec:wrinkles-as-chars}}

The previous section described the relation between the structure of $\varphi$ and that of $\mu$ solving \prettyref{eq:wrinkling-PDEs}. 
Following  \prettyref{lem:structure}, we continue to let $\lambda\in\mathcal{M}_+(O)$ and $\hat{\eta}\in\text{Lip}_\text{loc}(O;S^1)$ satisfy 
\[
\mu = \lambda \hat{\eta}\otimes\hat{\eta}\quad\text{on }O
\]
where $\hat{\eta}$ is constant along and perpendicular to the stable lines $\{\ell_x\}$ of $\varphi$. By the first
equation in \prettyref{eq:wrinkling-PDEs},
\begin{equation}
-\frac{1}{2}\text{curl}\text{curl}\left(\hat{\eta}\otimes\hat{\eta}\,\lambda\right)=\det\nabla\nabla p\quad\text{on }O\label{eq:PDE_lambda}
\end{equation}
in the sense of distributions. We now claim that \prettyref{eq:PDE_lambda}
can be solved using the method of characteristics with stable
lines as characteristic curves. It is not difficult to understand why this ought to be the case. Denote the first and second
directional derivatives along the stable lines by 
\begin{equation}
\partial_{\hat{\eta}^{\perp}}=\hat{\eta}^{\perp}\cdot\nabla\quad\text{and}\quad\partial_{\hat{\eta}^{\perp}}^{2}=\left\langle \hat{\eta}^{\perp}\otimes\hat{\eta}^{\perp},\nabla\nabla\right\rangle .\label{eq:full_directional}
\end{equation}
Pretending for the moment that $\lambda$ and $\hat{\eta}$ are smooth (rather than belonging to $\mathcal{M}_+$ and $\text{Lip}_\text{loc}$), we apply the product rule along with the statement that $\partial_{\hat{\eta}^\perp}\hat{\eta}=0$ to write that 
\begin{equation}
\text{curl}\text{curl}\left(\hat{\eta}\otimes\hat{\eta}\lambda\right) =\partial_{\hat{\eta}^{\perp}}^{2}\lambda+\frac{2}{\varrho}\partial_{\hat{\eta}^{\perp}}\varrho\partial_{\hat{\eta}^{\perp}}\lambda+\frac{1}{\varrho}\partial_{\hat{\eta}^{\perp}}^{2}\varrho\lambda=\frac{1}{\varrho}\partial_{\hat{\eta}^{\perp}}^{2}\left(\varrho\lambda\right)\quad\text{where}\quad \partial_{\hat{\eta}^{\perp}}\varrho=\text{div}\,\hat{\eta}^{\perp}\varrho.\label{eq:rho_eqn}
\end{equation} 
Thus, the PDE \prettyref{eq:PDE_lambda} can be rewritten (at least informally, at first) as the family of ODEs 
\[
-\frac{1}{2\varrho}\partial_{\hat{\eta}^{\perp}}^{2}\left(\varrho\lambda\right)=\det\nabla\nabla p\quad\text{along the stable lines}.
\]
\prettyref{lem:disintegration} provides a rigorous version of this in the original, measure-theoretic setting of \prettyref{eq:wrinkling-PDEs}. In brief: whereas this ODE turns out to govern the absolutely continuous part of $\lambda$ along $\mathcal{H}^1$-a.e.\ stable line, its singular part is instead affine along a complementary set of stable lines.

The next few paragraphs fix the notation used in the remainder. \label{par:notation} 
First, due to the topological difficulties inherent in parameterizing the stable lines $\{\ell_x\}$ --- see \cite{hornung2011fine} for a detailed account of the related problem of parameterizing ruling lines --- we find it convenient to reduce to certain well-prepared regions of the form 
\begin{equation}\label{eq:setup-for-stable-lines}
\begin{gathered}
V=\cup_{s\in\Gamma}\ell_{s}\quad \text{where }\Gamma\subset V\text{ is a smooth curve such that}\\
\ell_s = \ell_{s'} \implies s = s' \quad\forall\, s,s'\in \Gamma\quad\text{and}\quad T_s\Gamma \not\parallel \ell_s\quad\forall\,s\in \Gamma.
\end{gathered}
\end{equation}
Recall by a \emph{smooth curve} we mean a diffeomorphic copy of an open interval $I\subset \mathbb{R}$, i.e., its image under a smooth and one-to-one map. 
(Later on, we allow $\Gamma$ to denote other, more general index sets. We note when this occurs.)  The conditions on the second line  require that the curve $\Gamma$ meets each stable line it indexes transversely and exactly once.   
Note it follows from \prettyref{lem:stablelines} that every $x\in O$ admits a neighborhood $V$ of this form. Indeed, we may simply choose $\Gamma$ to pass through $x$ and to remain approximately parallel to $\hat{\eta}$ along its extent.

Next, we introduce the notation involved with the technique of disintegration of measure. The basic facts are as follows (see, e.g.,  \cite{graf1989classification}). Given one of the regions $V=\cup_{s\in\Gamma}\ell_{s}$ from \prettyref{eq:setup-for-stable-lines}, we say that 
\begin{equation}\label{eq:pi-map}
\pi:V\to\Gamma\text{ sends }x\in V\text{ to the unique }s\in \ell_x\cap\Gamma.
\end{equation} 
Note for each $s\in \Gamma$, the fiber $\pi^{-1}(\{s\}) = \ell_s$. Given $\lambda\in\mathcal{M}(V)$ and $\vartheta \in \mathcal{M}(\Gamma)$ with $ \pi_{\#}\lambda \ll \vartheta$, there exists
a $\vartheta$-a.e.\ uniquely determined (Borel) family $\{\lambda_{s}\}_{s\in\Gamma}\subset\mathcal{M}(V)$ such that
\[
\text{supp}\,\lambda_{s}\subset\ell_{s}\quad\forall\,s\in\Gamma \quad\text{and}\quad
\int_{V}\psi\,d\lambda=\int_{\Gamma}\left[\int_{\ell_{s}}\psi\,d\lambda_{s}\right]\,d\vartheta(s)\quad\forall\,\psi\in L^{1}(V,\lambda).
\]
Here, $\pi_{\#}$ is the pushforward map through $\pi$. Thus, $\lambda$ \emph{disintegrates} into its parts $\{\lambda_{s}\}_{s\in\Gamma}$ with respect to $\pi$ and $\vartheta$, a situation we indicate by writing
\[
\lambda=\int_{\Gamma}\lambda_{s}\,d\vartheta(s).
\]
A useful example to keep in mind is the formula for the two-dimensional Lebesgue measure 
\begin{equation}
\mathcal{L}^{2}=\int_{\Gamma}\varrho\mathcal{H}^{1}\lfloor\ell_{s}\,d\mathcal{H}^{1}(s)\quad\text{on }V.\label{eq:Leb_disintegration}
\end{equation}
Note this defines the change of measure factor $\varrho:V\to(0,\infty)$ anticipated in \prettyref{eq:rho_eqn}.

Finally, we define the Sobolev spaces
$W^{k,r}(\ell_{s})$ for $k\in\mathbb{N}$ and $r\in[1,\infty]$. 
For each $\ell_s$, we say that $f\in L^{r}(\ell_{s},\mathcal{H}^{1})$ belongs to $W^{1,r}(\ell_{s})$ if there exists $g\in L^{r}(\ell_{s},\mathcal{H}^{1})$
such that 
\[
\int_{\ell_{s}}f\partial_{\hat{\eta}^{\perp}}\chi\,d\mathcal{H}^{1}=-\int_{\ell_{s}}g\chi\,d\mathcal{H}^{1}\quad\forall\,\chi\in C_{c}^{\infty}(\ell_{s}).
\]
In such a case, we write that 
\[
\partial_{\hat{\eta}^{\perp}(s)}f=g\quad\text{on }\ell_{s}
\]
and call $g$ the \emph{weak directional derivative} of $f$ in the direction of $\hat{\eta}^\perp(s)$. Thus, $\partial_{\hat{\eta}^{\perp}(s)}:W^{1,r}(\ell_{s})\to L^{r}(\ell_{s},\mathcal{H}^{1})$. 
Similarly, $W^{k,r}(\ell_{s})$
consists of all $f\in L^{r}(\ell_{s},\mathcal{H}^{1})$ whose
weak directional derivatives $\partial_{\hat{\eta}^{\perp}(s)}^{j}f$
of orders $j=1,\dots,k$ belong to $L^{r}(\ell_{s},\mathcal{H}^{1})$.
Of course, if $f$ is smooth nearby $\ell_{s}$, these derivatives
can be computed using \prettyref{eq:full_directional} along with other, analogous formulas at higher order.
Given $f\in W^{k,r}(\ell_{s})$ we define its \emph{trace} $f|_{\partial\ell_{s}}$ as usual, by continuous extension of the restriction map. As each $\ell_{s}$ is one-dimensional, $\cdot|_{\partial\ell_{s}}:W^{k,r}(\ell_{s})\to L^{\infty}(\partial\ell_{s},\mathcal{H}^{0})$.

We are ready to make precise our claim that stable lines are characteristic curves for the PDE \prettyref{eq:PDE_lambda}.

\begin{lemma} \label{lem:disintegration} 
Let $\lambda\in \mathcal{M}_+(O)$ solve \prettyref{eq:PDE_lambda}, and let $V=\cup_{s\in\Gamma}\ell_s$ and $\pi:V\to\Gamma$ be as in \prettyref{eq:setup-for-stable-lines} and   \prettyref{eq:pi-map}. Then there exist $\lambda_{\emph{a.c.}},\lambda_{\emph{sing}}:V\to[0,\infty)$ such that
\[
\lambda=\lambda_{\emph{a.c.}}\,dx+\int_{\Gamma}\lambda_{\emph{sing}}\mathcal{H}^{1}\lfloor\ell_{s}\,d\vartheta(s)\quad\text{on }V
\]
where $\vartheta$ is the singular part of $\pi_{\#}\lambda$ with
respect to $\mathcal{H}^{1}$. 
The function $\varrho\lambda_{\emph{a.c.}}$ belongs to $W^{2,1}(\ell_{s})$ and satisfies 
\begin{equation}
-\frac{1}{2\varrho}\partial_{\hat{\eta}^{\perp}(s)}^{2}(\varrho\lambda_{\emph{a.c.}})=\det\nabla\nabla p\quad\emph{on }\ell_{s}\label{eq:ODE_ac}
\end{equation}
upon restriction to $\mathcal{H}^{1}$-a.e.\ $\ell_{s}$. Likewise, the function $\lambda_{\emph{sing}}$ belongs to $W^{2,\infty}(\ell_{s})$ and
satisfies 
\begin{equation}
\partial_{\hat{\eta}^{\perp}(s)}^{2}\lambda_{\emph{sing}}=0\quad\emph{on }\ell_{s}\label{eq:ODE_sing}
\end{equation}
upon restriction to $\vartheta$-a.e.\ $\ell_{s}$.
\end{lemma}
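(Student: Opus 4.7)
The plan is to work in coordinates adapted to the stable lines, reduce the distributional PDE \prettyref{eq:PDE_lambda} to a one-dimensional $\partial_t^2$-identity, and then disintegrate to split it into independent statements on the absolutely continuous and singular pieces of $\lambda$. Since $\lambda \geq 0$, non-negativity of the eventual densities $\lambda_{\text{a.c.}}$ and $\lambda_{\text{sing}}$ will be automatic.

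First I would parameterize $V$ by the bi-Lipschitz map $\Psi:U\to V$, $(s,t)\mapsto\Phi_t(s)$, where $\Phi_t$ is the flow of the Lipschitz vector field $\hat{\eta}^\perp$ starting from $\Gamma$. By \prettyref{lem:stablelines} and $\partial_{\hat{\eta}^\perp}\hat{\eta}^\perp=0$, the fibers $\{s\}\times I_s$ of $\Psi$ coincide with arc-length parameterizations of the $\ell_s$, and the Jacobian of $\Psi$ is exactly the disintegration density $\varrho$ of \prettyref{eq:Leb_disintegration}. A pointwise algebraic computation with $\hat{\eta}=(a,b)$ and $\hat{\eta}^\perp=(-b,a)$ shows
\[
\hat{\eta}(x)\cdot\nabla^\perp\nabla^\perp\chi(x)\cdot\hat{\eta}(x)=\partial_{\hat{\eta}^\perp(x)}^2\chi(x),\quad\chi\in C_c^\infty(V);
\]
pulling back by $\tilde{\chi}=\chi\circ\Psi$ and using that $\hat{\eta}^\perp$ is $t$-independent along fibers yields $\partial_t^2\tilde{\chi}(s,t)=(\hat{\eta}\cdot\nabla^\perp\nabla^\perp\chi\cdot\hat{\eta})(\Psi(s,t))$. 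Substituting this into the distributional form of \prettyref{eq:PDE_lambda} and changing variables in the measure through $\tilde{\lambda}=\Psi^{-1}_\#\lambda$ produces the one-dimensional distributional identity
\[
\partial_t^2\tilde{\lambda}=-2\tilde{f}\varrho\quad\text{on }U,\qquad \tilde{f}=(\det\nabla\nabla p)\circ\Psi\in L^1(U).
\]

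Next I would disintegrate $\tilde{\lambda}$ along the projection $\pi_s:U\to\Gamma$, $(s,t)\mapsto s$. Writing $\tilde{\nu}=(\pi_s)_\#\tilde{\lambda}=\pi_\#\lambda$ and Lebesgue-decomposing $\tilde{\nu}=\tilde{\rho}\,d\mathcal{H}^1\lfloor\Gamma+\vartheta$, the disintegration theorem yields fiber measures $\{\tilde{\lambda}_s\}_{s\in\Gamma}$ with $\tilde{\lambda}=\int_\Gamma\tilde{\lambda}_s\,d\tilde{\nu}(s)$. Testing the reduced PDE against separated test functions $\chi(s,t)=\psi(s)\phi(t)$ gives the equality of measures on $\Gamma$,
\[
\Bigl(\textstyle\int\phi''\,d\tilde{\lambda}_s\Bigr)\,d\tilde{\nu}(s)=-2\Bigl(\textstyle\int\phi(t)\tilde{f}(s,t)\varrho(s,t)\,dt\Bigr)\,ds.
\]
Matching singular and absolutely continuous parts of this identity splits it in two: for $\vartheta$-a.e.\ $s$, $\partial_t^2\tilde{\lambda}_s=0$ in $\mathcal{D}'(I_s)$, forcing $\tilde{\lambda}_s=(a(s)+b(s)t)\,dt$ (affine, hence $W^{2,\infty}$ since $I_s$ is bounded); for $\mathcal{H}^1$-a.e.\ $s$ with $\tilde{\rho}(s)>0$, $\partial_t^2[\tilde{\rho}(s)\tilde{\lambda}_s]=-2\tilde{f}(s,\cdot)\varrho(s,\cdot)\in L^1(I_s)$, forcing $\tilde{\rho}(s)\tilde{\lambda}_s$ to be a $W^{2,1}$ density on the fiber. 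Setting $\lambda_{\text{sing}}(\Psi(s,t))=a(s)+b(s)t$ on the singular fibers, letting $\lambda_{\text{a.c.}}$ be the Lebesgue density of the a.c.\ part of $\lambda$, and observing that $\varrho\lambda_{\text{a.c.}}\circ\Psi=\tilde{\rho}\tilde{\lambda}_s$ together with $\partial_t\leftrightarrow\partial_{\hat{\eta}^\perp(s)}$ along fibers, both ODEs \prettyref{eq:ODE_ac}--\prettyref{eq:ODE_sing} and the Sobolev regularities follow.

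The main obstacle I foresee is the passage from \prettyref{eq:PDE_lambda} to the reduced identity $\partial_t^2\tilde{\lambda}=-2\tilde{f}\varrho$. The pointwise algebraic identity $\hat{\eta}\cdot\nabla^\perp\nabla^\perp\chi\cdot\hat{\eta}=\partial_{\hat{\eta}^\perp}^2\chi$ is unaffected by the low regularity of $\hat{\eta}$, but the change of variables for the measure $\lambda$ must be carried out through the merely bi-Lipschitz $\Psi$, which requires shrinking $V$ so that $\Gamma$ is transverse to $\hat{\eta}^\perp$ uniformly and so that $\Psi$ is a genuine bi-Lipschitz homeomorphism with a positive Jacobian identifiable with $\varrho$. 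Once these technicalities are dispatched, the disintegration step and its fiber-by-fiber Sobolev analysis are elementary.
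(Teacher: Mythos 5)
Your overall strategy --- pass to fiber-adapted coordinates, disintegrate, test against separated functions $\psi(s)\phi(t)$, and match the absolutely continuous and singular parts of the resulting identity of measures on $\Gamma$ --- is the same as the paper's, and the second half of your argument (the fiber-by-fiber ODEs and the $W^{2,1}$/$W^{2,\infty}$ regularity) is essentially correct. The gap is in the step you flag as ``the main obstacle'' but then dismiss as a routine bi-Lipschitz technicality: the passage from \prettyref{eq:PDE_lambda} to the reduced identity $\partial_t^2\tilde{\lambda}=-2\tilde{f}\varrho$ on $U$. The PDE \prettyref{eq:PDE_lambda} is only known against test functions in $C_c^\infty(V)$, and a bi-Lipschitz change of variables does not transport a second-order distributional identity: the test functions you actually need --- $\tilde{\chi}\circ\Psi^{-1}$ for $\tilde{\chi}\in C_c^\infty(U)$, i.e.\ functions of the form $\psi(\pi(x))\phi(t(x))$ --- are only Lipschitz on $V$, because $\hat{\eta}$, hence $\pi$ and $\Psi^{-1}$, is only Lipschitz. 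Their genuine second derivatives on $V$ involve $\nabla\nabla\pi$, which is merely a measure, so they cannot be inserted into the weak formulation; and a generic mollification $(\psi\circ\pi)_\epsilon$ does \emph{not} satisfy $\partial_{\hat{\eta}^{\perp}}(\psi\circ\pi)_\epsilon\to 0$ and $\partial_{\hat{\eta}^{\perp}}^{2}(\psi\circ\pi)_\epsilon\to 0$, which is exactly what is needed for the cross terms in the product rule to disappear. Transversality of $\Gamma$ and positivity of the Jacobian, which is all your proposal supplies, are already built into \prettyref{eq:setup-for-stable-lines} and \prettyref{lem:stablelines} and do not address this.

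This is where the bulk of the paper's proof lives. It constructs special smooth approximations $\pi_k\to\pi$ with $\partial_{\hat{\eta}^{\perp}}\pi_k\to 0$ and $\partial_{\hat{\eta}^{\perp}}^{2}\pi_k\to 0$ uniformly on compactly contained subsets (\prettyref{lem:invertibility-lemma} and \prettyref{lem:stablelines-approximation}), obtained not by mollifying $\pi$ directly but by mollifying the ruling direction $\hat{\eta}\circ\gamma$ along the indexing curve, verifying via a quantitative curvature condition that the induced parameterization $\Phi_{\hat{\eta}_k}$ stays invertible away from $\partial V$, and setting $\pi_k=\gamma\circ(\Phi_{\hat{\eta}_k}^{-1})_1$; this follows the $W^{2,2}$ developable-surface approximation machinery of Hornung and Pakzad. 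Without this (or an equivalent) construction your reduced one-dimensional identity, and hence everything downstream of it, remains unproven.
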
 

\begin{remark} Of course, if $p\in W^{2,2r}$ so that $\det\nabla\nabla p\in L^{r}$,
the conclusion is that $\varrho\lambda_{\text{a.c.}}\in W^{2,r}$
on $\mathcal{H}^{1}$-a.e.\ $\ell_{s}$. \end{remark} 
\begin{remark}\label{rem:unique-continuation-stable-lines} It is straightforward to check that the following unique continuation-type result holds: if $V_1 = \cup_{s\in \Gamma_1} \ell_s$ and $V_2 = \cup_{s\in \Gamma_2} \ell_s$ satisfy $\Gamma_1 \cap V_2 = \Gamma_2 \cap V_1$, the functions provided above must have $\lambda^{1}_{\text{a.c.}} = \lambda^{2}_{\text{a.c.}}$ Lebesgue a.e.\ on $V_1\cap V_2$, and $\lambda^{1}_\text{sing} = \lambda^{2}_\text{sing}$ upon restriction to $\vartheta$-a.e.\ $\ell_s$ in $V_1\cap V_2$. We use this later on in  the proof of \prettyref{cor:chars-meet-at-point}.
\end{remark}
\begin{proof} We start by asserting the existence of  $\{\lambda_{s}^{\text{a.c.}}\}_{s\in\Gamma},\{\lambda_{s}^{\text{sing}}\}_{s\in\Gamma}\subset\mathcal{M}_{+}(V)$
such that
\begin{equation}
\text{supp}\,\lambda_{s}^{\text{a.c.}},\text{supp}\,\lambda_{s}^{\text{sing}}\subset\ell_{s}\quad\forall\,s\in\Gamma\quad\text{and}\quad\lambda=\int_{\Gamma}\lambda_{s}^{\text{a.c.}}\,d\mathcal{H}^{1}(s)+\int_{\Gamma}\lambda_{s}^{\text{sing}}\,d\vartheta(s) \quad\text{on }V.\label{eq:disintegrated_twoparts}
\end{equation}
Indeed, by disintegration of measure, we can find a family 
$\{\lambda_{s}\}_{s\in\Gamma}\subset\mathcal{M}_{+}(V)$ such that
\[
\text{supp}\,\lambda_{s}\subset\ell_{s}\quad\forall\,s\in\Gamma\quad\text{and}\quad\lambda=\int_{\Gamma}\lambda_{s}\,d\pi_{\#}\lambda\quad\text{on }V.
\]
Note the Lebesgue decomposition 
\[
\pi_{\#}\lambda=\frac{d\pi_{\#}\lambda}{d\mathcal{H}^{1}}\,\mathcal{H}^{1}+\vartheta\quad\text{with}\quad \mathcal{H}^{1}\perp \vartheta.
\]
Taking
\[
\lambda_{s}^{\text{a.c.}}=\frac{d\pi_{\#}\lambda}{d\mathcal{H}^{1}}(s)\lambda_{s}\quad\text{for }\mathcal{H}^{1}\text{-a.e. }s\quad\text{and}\quad\lambda_{s}^{\text{sing}}=\lambda_{s}\quad\text{for }\vartheta\text{-a.e. }s
\]
we arrive at  \prettyref{eq:disintegrated_twoparts}.

Having disintegrated $\lambda$ into its parts $\{\lambda_{s}^{\text{a.c.}}\}_{s\in\Gamma}$, $\{\lambda_{s}^{\text{sing}}\}_{s\in\Gamma}$ we proceed to establish
the desired ODEs. We will make use
of the PDE \prettyref{eq:PDE_lambda} or, more precisely, its distributional
version 
\begin{equation}
\int_{V}-\frac{1}{2}\partial_{\hat{\eta}^{\perp}}^{2}\psi\,d\lambda=\int_{V}\psi\det\nabla\nabla p\,dx\quad\forall\,\psi\in C_{c}^{\infty}(V).\label{eq:weakcompatibility}
\end{equation}
The argument splits into two steps. The first step is to prove that  \prettyref{eq:weakcompatibility}
holds not only for the test functions above, but also for ones of the form 
\begin{equation}
\psi=\chi\Psi\circ\pi\quad\text{where}\quad\chi\in C_{c}^{\infty}(V)\text{ and }\Psi\in C_{c}^{\infty}(\Gamma)\label{eq:test_seperated}
\end{equation}
and where in place of $\partial^2_{\hat{\eta}^\perp}\psi$ we write $\partial^2_{\hat{\eta}^\perp}\chi \Psi \circ \pi$. 
To see this, fix $\chi \in C_c^\infty(V)$ and let $W\subset V$ be an open and compactly contained neighborhood of its support. 
We claim there exists a sequence $\{\pi_{k}\}_{k\in\mathbb{N}}\subset C^{\infty}(W;\Gamma)$ of smooth approximations to $\pi$ such that
\begin{equation}
\pi_{k}\to\pi,\quad\partial_{\hat{\eta}^{\perp}}\pi_{k}\to0,\quad\text{and}\quad\partial_{\hat{\eta}^{\perp}}^{2}\pi_{k}\to0\quad\text{uniformly on }W\label{eq:convergences_pi}
\end{equation}
as $k\to\infty$. Postponing their construction to \prettyref{lem:invertibility-lemma} and \prettyref{lem:stablelines-approximation}
below, we define $\{\psi_{k}\}_{k\in\mathbb{N}}\subset C_{c}^{\infty}(V)$
by 
\begin{equation}
\psi_{k}=\chi\Psi\circ\pi_{k}\label{eq:test_seperated_approx}
\end{equation}
and note using the product rule that 
\[
\partial_{\hat{\eta}^{\perp}}^{2}\psi_{k} =\partial_{\hat{\eta}^{\perp}}^{2}\chi\Psi\circ\pi_{k}+2\partial_{\hat{\eta}^{\perp}}\chi\partial_{\hat{\eta}^{\perp}}(\Psi\circ\pi_{k})+\chi\partial_{\hat{\eta}^{\perp}}^{2}(\Psi\circ\pi_{k}).
\]
Due to \prettyref{eq:convergences_pi}, only the first term on the
righthand side survives in the limit. That is,
\[
\partial_{\hat{\eta}^{\perp}}^{2}\psi_{k}\to\partial_{\hat{\eta}^{\perp}}^{2}\chi\Psi\circ\pi\quad\text{uniformly on }W.
\]
Setting \prettyref{eq:test_seperated_approx} into \prettyref{eq:weakcompatibility}
and passing to the limit finishes the first step. See \prettyref{lem:invertibility-lemma} and \prettyref{lem:stablelines-approximation} for the construction of the required approximations $\pi_k$.

We just showed that \prettyref{eq:weakcompatibility} holds for all
test functions of the form \prettyref{eq:test_seperated}. Equivalently, we have that
\begin{multline}
\int_{\Gamma}\left[\int_{\ell_{s}}\frac{1}{2}\partial_{\hat{\eta}^{\perp}}^{2}\chi\,d\lambda_{s}^{\text{a.c.}}+\int_{\ell_{s}}\chi\varrho\det\nabla\nabla p\,d\mathcal{H}^{1}\right]\Psi(s)\,d\mathcal{H}^{1}(s)+\int_{\Gamma}\left[\int_{\ell_{s}}\frac{1}{2}\partial_{\hat{\eta}^{\perp}}^{2}\chi\,d\lambda_{s}^{\text{sing}}\right]\Psi(s)\,d\vartheta(s)=0\\
\forall\,\chi\in C_{c}^{\infty}(V),\Psi\in C_{c}^{\infty}(\Gamma)\label{eq:Fubini_disintegration}
\end{multline}
by the disintegration formulas \prettyref{eq:Leb_disintegration} and  \prettyref{eq:disintegrated_twoparts}. 
The next step is to show that the bracketed terms vanish, i.e., 
\begin{equation}
\int_{\ell_{s}}-\frac{1}{2}\partial_{\hat{\eta}^{\perp}(s)}^{2}\chi\,d\lambda_{s}^{\text{a.c.}}=\int_{\ell_{s}}\chi\varrho\det\nabla\nabla p\,d\mathcal{H}^{1}\quad\text{and}\quad\int_{\ell_{s}}\partial_{\hat{\eta}^{\perp}(s)}^{2}\chi\,d\lambda_{s}^{\text{sing}}=0\quad\forall\,\chi\in C_{c}^{\infty}(\ell_{s})\label{eq:weak1}
\end{equation}
up to $\mathcal{H}^{1}$- and $\vartheta$-negligible sets. By
an extension argument it suffices to take $\chi\in C_{c}^{\infty}(V)$.
Let $\{\chi_{k}\}_{k\in\mathbb{N}}\subset C_{c}^{\infty}(V)$
be $C^{2}$-dense. Setting $\chi_{k}$ into \prettyref{eq:Fubini_disintegration}
and recalling that $\mathcal{H}^1 \perp \vartheta$, we see that
\[
\int_{\ell_{s}}-\frac{1}{2}\partial_{\hat{\eta}^{\perp}}^{2}\chi_{k}\,d\lambda_{s}^{\text{a.c.}}=\int_{\ell_{s}}\chi_{k}\varrho\det\nabla\nabla p\,d\mathcal{H}^{1}\quad\text{for }\mathcal{H}^{1}\text{-a.e. }s\quad\text{and}\quad\int_{\ell_{s}}\partial_{\hat{\eta}^{\perp}}^{2}\chi_{k}\,d\lambda_{s}^{\text{sing}}=0\quad\text{for }\vartheta\text{-a.e. }s
\]
where the exceptional sets depend on  $k$. Intersecting
over $k$ removes this dependence and yields \prettyref{eq:weak1}. In other words, we have established the ODEs 
\begin{equation}
-\frac{1}{2}\partial_{\hat{\eta}^{\perp}(s)}^{2}\lambda^{\text{a.c.}}_s=\varrho\det\nabla\nabla p\quad\text{on }\mathcal{H}^1\text{-a.e. } \ell_{s}\quad\text{and}\quad \partial_{\hat{\eta}^{\perp}(s)}^{2}\lambda^{\text{sing}}_s=0\quad\text{on }\vartheta\text{-a.e. }\ell_{s}.
\label{eq:desiredODEs-1}
\end{equation}
These hold in the sense of distributions on the specified stable lines.

The rest of the proof is more or less straightforward.
From \prettyref{eq:desiredODEs-1} we see that
$\lambda_{s}^{\text{a.c.}},\lambda_{s}^{\text{sing}}\ll\mathcal{H}^{1}\lfloor\ell_{s}$
and that their densities satisfy the same ODEs. By hypothesis, $\det\nabla\nabla p\in L^{1}(V)$. It follows
from \prettyref{eq:Leb_disintegration} and Fubini's theorem that
$\varrho\det\nabla\nabla p \in L^{1}(\ell_{s},\mathcal{H}^{1})$ and so $\frac{d\lambda_{s}^{\text{a.c.}}}{d\mathcal{H}^{1}\lfloor\ell_{s}}\in W^{2,1}(\ell_{s})$  on $\mathcal{H}^{1}$-a.e.\ $\ell_{s}$.
Evidently, $\frac{d\lambda_{s}^{\text{sing}}}{d\mathcal{H}^{1}\lfloor\ell_{s}}\in W^{2,\infty}(\ell_{s})$ on $\vartheta$-a.e.\ $\ell_{s}$ as it is affine upon restriction to those stable lines. Setting
\[
\lambda_{\text{a.c.}}=\frac{1}{\varrho}\frac{d\lambda_{s}^{\text{a.c.}}}{d\mathcal{H}^{1}\lfloor\ell_{s}}\quad\text{on }\mathcal{H}^1\text{-a.e. } \ell_s\quad\text{and}\quad\lambda_{\text{sing}}=\frac{d\lambda_{s}^{\text{sing}}}{d\mathcal{H}^{1}\lfloor\ell_{s}}\quad\text{on }\vartheta\text{-a.e. } \ell_s
\]
and using \prettyref{eq:Leb_disintegration} and \prettyref{eq:disintegrated_twoparts} once more, we conclude that 
\[
\lambda=\int_{\Gamma}\varrho\lambda_{\text{a.c.}}\mathcal{H}^{1}\lfloor\ell_{s}\,d\mathcal{H}^{1}(s)+\int_{\Gamma}\lambda_{\text{sing}}\mathcal{H}^{1}\lfloor\ell_{s}\,d\vartheta(s)=\lambda_{\text{a.c.}}\,dx+\int_{\Gamma}\lambda_{\text{sing}}\mathcal{H}^{1}\lfloor\ell_{s}\,d\vartheta(s).
\]
The desired ODEs \prettyref{eq:ODE_ac} and \prettyref{eq:ODE_sing} follow from \prettyref{eq:desiredODEs-1}.
\qed\end{proof}
Left over from the proof above is a result ensuring
that the map $\pi:V\to\Gamma$ from \prettyref{eq:pi-map}, whose
fibers are the stable lines $\{\ell_{s}\}_{s\in\Gamma}$, can be approximated
by smooth maps $\{\pi_{k}\}_{k\in\mathbb{N}}$ with fibers converging
to the stable lines (see \prettyref{eq:convergences_pi}). Similar
results appear in the proof that smooth developable surfaces are $W^{2,2}$-dense
\cite{hornung2011approximation,hornung2011fine,pakzad2004sobolev}.
There, the authors replace the surface's ruling lines with smoothly
varying ones; here, per \prettyref{lem:stablelines}, the stable lines
of $\varphi$ are the planar projection of the ruling lines of
its graph (over $O$). The main difference is the choice of topology
--- we need that $\pi_{k}$ and certain of its derivatives converge
uniformly, rather than only a.e. Nevertheless, the argument from the
references can easily be adapted to produce the desired result. We follow
\cite{hornung2011approximation,hornung2011fine}.

Let $V=\cup_{s\in\Gamma}\ell_{s}$ be as in \prettyref{eq:setup-for-stable-lines}.
The first step is to define coordinates adapted to the stable
lines. Recall we took $\Gamma$ to be a diffeomorphic copy of an open interval $I\subset \mathbb{R}$. Let $\gamma:I\to\Gamma$ be a smooth map such that 
\begin{equation}
V=\cup_{s\in I}\ell_{\gamma(s)},\quad s\mapsto\ell_{\gamma(s)}\text{ is one-to-one},\quad\gamma'\cdot\hat{\eta}\circ\gamma>0,\quad\text{and}\quad|\gamma'|=1.\label{eq:uniform-transverslity-param}
\end{equation}
Note the slight redundancy in the usage of $s$. 
In a minor modification of \cite{hornung2011approximation,hornung2011fine},
we define $\Phi_{\hat{n}}:I\times\mathbb{R}\to\mathbb{R}^{2}$ by 
\begin{equation}
\Phi_{\hat{n}}(s,t)=\gamma(s)+t\hat{n}^{\perp}(s)\label{eq:parameterization-definition}
\end{equation}
for $\hat{n}:I\to S^{1}$. Unlike the references, we do not require
that $\gamma'\parallel\hat{n}$, although we will eventually prevent
them from being perpendicular. If $\hat{n}$ is differentiable then
so is $\Phi_{\hat{n}}$, in which case 
\begin{equation}
\det\nabla\Phi_{\hat{n}}=\gamma'\cdot\hat{n}-t\kappa_{\hat{n}}\quad\text{where}\quad\kappa_{\hat{n}}=\hat{n}'\cdot\hat{n}^{\perp}.\label{eq:determinant-formula}
\end{equation}
Taking $\hat{n}=\hat{\eta}\circ\gamma$ leads to the desired coordinates. We refer to $\Phi_{\hat{\eta}}$ and $\kappa_{\hat{\eta}}$
in place of $\Phi_{\hat{\eta}\circ\gamma}$ and $\kappa_{\hat{\eta}\circ\gamma}$.
Since $\hat{\eta}$ is locally Lipschitz, $\Phi_{\hat{\eta}}$ is
as well. We claim that it admits a locally Lipschitz inverse on $V$. That it is invertible there is a clear consequence of the disjointness of the stable lines.
Note that 
\begin{equation}
\det\nabla\Phi_{\hat{\eta}}=\varrho\circ\Phi_{\hat{\eta}}>0\quad\text{a.e.\ on }\Phi_{\hat{\eta}}^{-1}(V)\label{eq:det-positive-rho}
\end{equation}
where $\varrho:V\to(0,\infty)$ is as in \prettyref{eq:Leb_disintegration}.
That $|\det\nabla\Phi_{\hat{\eta}}|=\varrho\circ\Phi_{\hat{\eta}}$
follows from the area formula for Lipschitz maps (see, e.g., \cite{maggi2012sets}). Its positivity is due to the given orientation in \prettyref{eq:uniform-transverslity-param}. Continuing, we define $t_{V}^{\pm}:I\to\mathbb{R}$
such that 
\[
t_{V}^{-}<0<t_{V}^{+}\quad\text{and}\quad\ell_{\gamma(s)}=\Phi_{\hat{\eta}}\left(\{s\}\times(t_{V}^{-}(s),t_{V}^{+}(s))\right)\quad\forall\,s\in I.
\]
Combining \prettyref{eq:determinant-formula} and \prettyref{eq:det-positive-rho}
shows that 
\begin{equation}
\frac{1}{t_{V}^{-}(s)}\leq\frac{\kappa_{\hat{\eta}}(s)}{\gamma'\cdot\hat{\eta}\circ\gamma(s)}\leq\frac{1}{t_{V}^{+}(s)}\quad\text{for a.e. }s\in I.\label{eq:good-inequalities-baseline}
\end{equation}
Hence, $\Phi_{\hat{\eta}}^{-1}\in\text{Lip}_{\text{loc}}(V;I\times\mathbb{R})$
by the Lipschitz inverse function theorem \cite{clarke1976inverse}.
For future reference, note that the functions $\pm t_{V}^{\pm}$ are
lower semi-continuous as $V$ is open; they are also bounded by its
diameter.

All this being said, we now rewrite the map $\pi:V\to\Gamma$ from
\prettyref{eq:pi-map} as 
\begin{equation}
\pi=\gamma\circ(\Phi_{\hat{\eta}}^{-1})_{1}\quad\text{where}\quad(s,t)_{1}=s.\label{eq:inverse-formula-for-pi}
\end{equation}
The plan is clear: look for a way of smoothing $\hat{\eta}$ such
that the associated maps remain invertible, at least on a given portion
of $V$. Note we avoid $\partial V$ as we do not make any assumptions
on its regularity, or on the behavior of the stable lines there (see
\cite{hornung2011approximation,hornung2011fine} for more on this
point).
\begin{lemma}
\label{lem:invertibility-lemma} Let $t^{\pm}\in C_{c}(I)$ and let
$J\subset I$ be an open interval such that 
\begin{equation}
t_{V}^{-}<t^{-}\leq0\leq t^{+}<t_{V}^{+}\quad\text{and}\quad\sup_{J}\,t^{-}<0<\inf_{J}\,t^{+}.\label{eq:invertibility-setup-times}
\end{equation}
Define the open sets 
\[
M_{t^{\pm},J}=\cup_{s\in J}\{s\}\times(t^{-}(s),t^{+}(s))\quad\text{and}\quad V_{t^{\pm},J}=\cup_{s\in J}\Phi_{\hat{\eta}}\left(\{s\}\times(t^{-}(s),t^{+}(s))\right)
\]
and let $W\subset V_{t^{\pm},J}$ be open and compactly contained.
For all $\epsilon>0$ there exists a $\delta>0$ such that if $\hat{n}:I\to S^{1}$
is Lipschitz on $J$ and satisfies 
\begin{equation}
||\hat{n}-\hat{\eta}\circ\gamma||_{L^{\infty}\left(J\right)}<\delta\quad\text{and}\quad\frac{1}{t^{-}(s)}+\epsilon\leq\frac{\kappa_{\hat{n}}(s)}{\gamma'\cdot\hat{n}(s)}\leq\frac{1}{t^{+}(s)}-\epsilon\quad\text{for a.e. }s\in J,\label{eq:invertibility-primary-assumptions}
\end{equation}
then $\Phi_{\hat{n}}$ admits an inverse on $W$ satisfying $\Phi_{\hat{n}}^{-1}\in\emph{Lip}(W;M_{t^{\pm},J})$
as well as the estimates
\[
||\Phi_{\hat{n}}^{-1}-\Phi_{\hat{\eta}}^{-1}||_{L^{\infty}(W)}\lesssim_{\Gamma,\hat{\eta},t^{\pm},J}||\hat{n}-\hat{\eta}\circ\gamma||_{L^{\infty}\left(J\right)}\quad\text{and}\quad||\nabla\Phi_{\hat{n}}^{-1}||_{L^{\infty}(W)}\lesssim_{\Gamma,\hat{\eta},t^{\pm},J}\frac{1}{\epsilon}.
\]
\end{lemma}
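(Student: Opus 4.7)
The plan is to construct $\Phi_{\hat{n}}^{-1}$ by viewing the equation $\gamma(s)+t\hat{n}^{\perp}(s)=x$ as an implicit definition of $(s,t)\in M_{t^{\pm},J}$ in terms of $x\in W$. The argument proceeds in three stages: first, derive a uniform lower bound $\det\nabla\Phi_{\hat{n}}\gtrsim\epsilon$ on $M_{t^{\pm},J}$; second, apply Clarke's Lipschitz inverse function theorem \cite{clarke1976inverse} to produce local inverses with the claimed derivative bound; and third, upgrade to a globally defined inverse on $W$ by establishing both injectivity and surjectivity.

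For the lower bound I will factor
\[
\det \nabla \Phi_{\hat{n}}(s,t) = (\gamma'\cdot \hat{n})\Bigl(1 - t\tfrac{\kappa_{\hat{n}}}{\gamma'\cdot \hat{n}}\Bigr).
\]
Transversality from \eqref{eq:uniform-transverslity-param} gives $\gamma'\cdot \hat{\eta}\circ \gamma \geq \alpha_{0}>0$ on the relevant compact subset of $J$ (bounded since $t^{\pm}\in C_{c}(I)$); taking $\delta<\alpha_{0}/2$ transfers this to $\gamma'\cdot \hat{n}$. The bracketed factor is affine in $t$, so its minimum over $[t^{-}(s),t^{+}(s)]$ is attained at an endpoint, where the bounds in \eqref{eq:invertibility-primary-assumptions} combined with $\inf_{J}t^{+}\wedge(-\sup_{J}t^{-})>0$ force it to be $\gtrsim \epsilon$. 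For the derivative bound I will avoid inverting $\nabla\Phi_{\hat{n}}$ entrywise, which would introduce a spurious dependence on $\|\hat{n}'\|_{L^{\infty}}$, and instead differentiate the scalar implicit equation $(x-\gamma(s))\cdot\hat{n}(s)=0$ to obtain $s'(x) = \hat{n}(s(x))/\det\nabla\Phi_{\hat{n}}(s(x),t(x))$, whence $|s'|\lesssim 1/\epsilon$. The cancellation $(x-\gamma(s))\cdot(\hat{n}^{\perp})'(s) = t\hat{n}^{\perp}\cdot(-\kappa_{\hat{n}}\hat{n}) = 0$ in the chain rule for $t(x)=(x-\gamma(s(x)))\cdot\hat{n}^{\perp}(s(x))$ gives the matching bound on $|\nabla_{x}t|$. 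The $L^{\infty}$ estimate then follows from the local Lipschitz bound on $\Phi_{\hat{\eta}}^{-1}$ available from \eqref{eq:good-inequalities-baseline}, together with $\|\Phi_{\hat{n}}-\Phi_{\hat{\eta}}\|_{L^{\infty}(M_{t^{\pm},J})}\lesssim \|\hat{n}-\hat{\eta}\circ\gamma\|_{L^{\infty}(J)}$ and the identity $\Phi_{\hat{\eta}}^{-1}(x)-\Phi_{\hat{n}}^{-1}(x) = \Phi_{\hat{\eta}}^{-1}(x) - \Phi_{\hat{\eta}}^{-1}(\Phi_{\hat{\eta}}(\Phi_{\hat{n}}^{-1}(x)))$.

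The main obstacle is the global injectivity in the third stage: the determinant bound gives only \emph{local} invertibility, and in principle $\Phi_{\hat{n}}$ could fold $M_{t^{\pm},J}$ back on itself. I plan a compactness-contradiction argument. If no $\delta$ worked, one could extract $\hat{n}_{k}\to \hat{\eta}\circ\gamma$ uniformly on $J$, each satisfying \eqref{eq:invertibility-primary-assumptions}, and distinct pairs $(s_{1}^{k},t_{1}^{k}),(s_{2}^{k},t_{2}^{k})\in M_{t^{\pm},J}$ with $\Phi_{\hat{n}_{k}}(s_{i}^{k},t_{i}^{k}) = x_{k}\in W$. Since $t^{\pm}\in C_{c}(I)$ and $W$ is compactly contained in $V_{t^{\pm},J}$, both sequences live in compact sets and admit convergent subsequences with limits $(s_{i}^{\infty},t_{i}^{\infty})\in\overline{M_{t^{\pm},J}}$ and $x_{\infty}\in\overline{W}$. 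Uniform convergence of $\hat{n}_{k}$ passes to the limit in $\Phi_{\hat{n}_{k}}$ and yields $\Phi_{\hat{\eta}}(s_{i}^{\infty},t_{i}^{\infty}) = x_{\infty}$; since $\Phi_{\hat{\eta}}\colon M_{t^{\pm},J}\to V_{t^{\pm},J}$ is a homeomorphism and $\overline{W}$ avoids $\partial V_{t^{\pm},J}$, the two limits coincide at a single point $(s^{*},t^{*})$ in the interior of $M_{t^{\pm},J}$. Applying Clarke's theorem to $\Phi_{\hat{\eta}}$ at $(s^{*},t^{*})$, together with the uniform closeness $\|\Phi_{\hat{n}_{k}}-\Phi_{\hat{\eta}}\|_{L^{\infty}}\to 0$ and the uniform positivity of $\det\nabla\Phi_{\hat{n}_{k}}$, then rules out two preimages of $x_{k}$ in a fixed neighborhood of $(s^{*},t^{*})$ for large $k$, the desired contradiction. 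Surjectivity of $\Phi_{\hat{n}}$ onto $W$ follows by a parallel argument: $\Phi_{\hat{n}}(M_{t^{\pm},J})$ is open by the uniform positivity of the Jacobian, and a compactness argument in the spirit of the injectivity one shows it contains $W$ once $\delta$ is sufficiently small.
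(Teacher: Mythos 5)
Your first stage (the factorization of $\det\nabla\Phi_{\hat{n}}$ and the endpoint argument giving a lower bound $\gtrsim\epsilon$) is sound and matches the paper, and your route to the two estimates is a workable variant of the paper's (the dependence on $\|\hat{n}'\|_{L^{\infty}}$ you are trying to avoid is in any case harmless, since $|\hat{n}'|=|\kappa_{\hat{n}}|\lesssim_{t^{\pm},J}1$ by \eqref{eq:invertibility-setup-times} and \eqref{eq:invertibility-primary-assumptions}). The gap is in your third stage, at precisely the step you compress into one sentence: ``applying Clarke's theorem to $\Phi_{\hat{\eta}}$ at $(s^{*},t^{*})$ \dots rules out two preimages of $x_{k}$ in a fixed neighborhood for large $k$.'' What you need there is a local injectivity radius for $\Phi_{\hat{n}_{k}}$ that is \emph{uniform in $k$}. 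Clarke's theorem supplies a radius for each map separately, but that radius depends on the generalized Jacobian $\partial\Phi_{\hat{n}_{k}}$ near the point, and these do \emph{not} converge to $\partial\Phi_{\hat{\eta}}$: the first column of $\nabla\Phi_{\hat{n}_{k}}(s,t)$ is $\gamma'(s)-t\kappa_{\hat{n}_{k}}(s)\hat{n}_{k}(s)$, and the curvatures $\kappa_{\hat{n}_{k}}$ are only constrained to lie a.e.\ in the band \eqref{eq:invertibility-primary-assumptions}; they may oscillate arbitrarily within it, so on any fixed neighborhood of $(s^{*},t^{*})$ the Jacobians of $\Phi_{\hat{n}_{k}}$ are not close to a single invertible matrix, and $C^{0}$-closeness to the homeomorphism $\Phi_{\hat{\eta}}$ does not by itself forbid small folds. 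A symptom of the gap: your argument never uses the $\epsilon$-margin for injectivity (only for the determinant), yet injectivity genuinely fails without that margin, since with $\kappa_{\hat{n}}/(\gamma'\cdot\hat{n})$ allowed to reach $1/t^{+}(s)$ two ruling lines can intersect just inside the strip.

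The missing quantitative input is the paper's analysis of the intersection-time function $\tau_{\hat{n}}(s,s')$ defined by $\gamma(s)+\tau_{\hat{n}}(s,s')\hat{n}^{\perp}(s)=\gamma(s')+\tau_{\hat{n}}(s',s)\hat{n}^{\perp}(s')$, via the identity
\[
\frac{1}{\tau_{\hat{n}}(s,s')}=\frac{\left(\hat{n}(s')-\hat{n}(s)\right)\cdot\hat{n}^{\perp}(s)}{\left(\gamma(s')-\gamma(s)\right)\cdot\hat{n}(s')},
\]
which shows that $1/\tau_{\hat{n}}(s,s')$ equals $\fint_{(s,s')}\kappa_{\hat{n}}/(\gamma'\cdot\hat{n})$ up to an error $O(|s-s'|)$. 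The \emph{averaged} form of \eqref{eq:invertibility-primary-assumptions} survives the oscillation of $\kappa_{\hat{n}}$ and places the intersection of any two nearby ruling lines outside $(t^{-}(s),t^{+}(s))$; pairs with $|s-s'|$ bounded below are handled separately using the Lipschitz bound on $\Phi_{\hat{\eta}}^{-1}$ and the $C^{0}$-closeness. This yields global injectivity on $M_{t^{\pm},J}$ directly and makes the compactness reduction unnecessary. Your surjectivity sketch has the same uniformity issue (local openness from Clarke again needs $k$-independent constants); the paper's homotopy-invariance-of-degree argument along $\theta\mapsto\Phi_{(1-\theta)\hat{n}+\theta\hat{\eta}}$ is the clean way to obtain $W\subset\Phi_{\hat{n}}(M_{t^{\pm},J})$, and you should adopt it or supply the quantitative injectivity estimate above before invoking compactness.
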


\begin{proof} That $\Phi_{\hat{n}}$ is onto $W$ can be checked
using the homotopy invariance of degree (see, e.g., \cite{fonseca1995degree}).
By the definitions, $\Phi_{\hat{\eta}}$ is a homeomorphism
between $M_{t^{\pm},J}$ and $V_{t^{\pm},J}$. Define the continuous
homotopy $[0,1]\to C(\overline{M_{t^{\pm},J}})$, $\theta\mapsto\Phi_{\theta}=\Phi_{(1-\theta)\hat{n}+\theta\hat{\eta}}$
from $\Phi_{0}=\Phi_{\hat{n}}$ to $\Phi_{1}=\Phi_{\hat{\eta}}$.
Note that 
\[
|\Phi_{\theta}(s,t)-\Phi_{1}(s,t)|\leq|\hat{n}(s)-\hat{\eta}\circ\gamma(s)||t|.
\]
Since by hypothesis $\overline{W}\subset V_{t^{\pm},J}$, there exists
$\delta_{0}>0$ such that $W\cap\Phi_{\theta}(\partial M_{t^{\pm},J})=\emptyset$
whenever $||\hat{n}-\hat{\eta}\circ\gamma||_{L^{\infty}(J)}<\delta_{0}$
and for all $\theta$. Applying \cite[Theorem 2.3]{fonseca1995degree}
proves that $d(\Phi_{\hat{n}},M_{t^{\pm},J},p)=d(\Phi_{\hat{\eta}},M_{t^{\pm},J},p)=1$
for all $p\in W$. Hence, 
\[
||\hat{n}-\hat{\eta}\circ\gamma||_{L^{\infty}(J)}<\delta_{0}\quad\implies\quad W\subset\Phi_{\hat{n}}(M_{t^{\pm},J})
\]
as in the first part of the claim.

Next, we show that $\Phi_{\hat{n}}$ can be made one-to-one on $M_{t^{\pm},J}$.
This part of the proof is modeled more or less directly after \cite[Section 5]{hornung2011fine}.
Introduce $\tau_{\hat{n}}:J\times J\backslash\left\{ (s,s'):s=s'\right\} \to\mathbb{R}\cup\{\infty\}$
such that 
\begin{equation}
\gamma(s)+\tau_{\hat{n}}(s,s')\hat{n}^{\perp}(s)=\gamma(s')+\tau_{\hat{n}}(s',s)\hat{n}^{\perp}(s').\label{eq:lines-intersecting-criterion}
\end{equation}
If $\hat{n}(s)\nparallel\hat{n}(s')$, $\tau_{\hat{n}}(s,s')$ gives
the travel time from $\gamma(s)$ in the direction of $\hat{n}^{\perp}(s)$
to the line containing $\gamma(s')$ and parallel to $\hat{n}^{\perp}(s')$.
If $\hat{n}(s)\parallel\hat{n}(s')$ we simply set $\tau_{\hat{n}}(s,s')=\tau_{\hat{n}}(s',s)=\infty$.
Looking back to \prettyref{eq:parameterization-definition}, we see
that $\Phi_{\hat{n}}(s,t)\neq\Phi_{\hat{n}}(s',t')$ for $(s,t),(s',t')\in M_{t^{\pm},J}$
if and only if 
\begin{equation}
\frac{1}{t^{-}(s)}\leq\frac{1}{\tau_{\hat{n}}(s,s')}\leq\frac{1}{t^{+}(s)}\quad\text{or}\quad\frac{1}{t^{-}(s')}\leq\frac{1}{\tau_{\hat{n}}(s',s)}\leq\frac{1}{t^{+}(s')}\label{eq:inequality-to-be-shown}
\end{equation}
and $s\neq s'$. We check that this holds when $||\hat{\eta}-\hat{\eta}\circ\gamma||_{L^{\infty}(J)}$
is sufficiently small.

Let $s,s'\in J$ be such that $\hat{n}(s)\nparallel\hat{n}(s')$,
and let $(s,s')$ denote the open interval with boundary points $s$
and $s'$. Of course, $(s,s')\subset J$. Dotting $\hat{n}(s')$ into
\prettyref{eq:lines-intersecting-criterion} and rearranging yields
the formula 
\begin{equation}
\frac{1}{\tau_{\hat{n}}(s,s')}=\frac{\left(\hat{n}(s')-\hat{n}(s)\right)\cdot\hat{n}^{\perp}(s)}{\left(\gamma(s')-\gamma(s)\right)\cdot\hat{n}(s')}.\label{eq:identity-on-times}
\end{equation}
The righthand side approximates $\kappa_{\hat{n}}/\gamma'\cdot\hat{n}$.
In particular, there exists $\delta_{1}>0$ and $c_{1}>0$ such that
\begin{equation}
\left|\frac{\left(\hat{n}(s')-\hat{n}(s)\right)\cdot\hat{n}^{\perp}(s)}{\left(\gamma(s')-\gamma(s)\right)\cdot\hat{n}(s')}-\fint_{(s,s')}\frac{\kappa_{\hat{n}}}{\gamma'\cdot\hat{n}}\right|\lesssim_{\Gamma,\hat{\eta},t^{\pm},J}|s-s'|\label{eq:estimate-from-above-on-curvature}
\end{equation}
if $||\hat{n}-\hat{\eta}\circ\gamma||_{L^{\infty}(J)}<\delta_{1}$
and $|s-s'|<c_{1}$. It suffices to choose $\delta_{1}$ and $c_{1}$
such that 
\begin{equation}
\gamma'(a)\cdot\hat{n}(a')\gtrsim_{\Gamma,\hat{\eta},J}1\quad\forall\,a,a'\in J\text{ with }|a-a'|<c_{1}.\label{eq:overkill-choice}
\end{equation}
This is possible by \prettyref{eq:uniform-transverslity-param} since
$J$ is compactly contained. Note we also used the bound $||\kappa_{\hat{n}}||_{L^{\infty}(J)}\lesssim_{t^{\pm},J}1$,
which follows from \prettyref{eq:invertibility-setup-times} and \prettyref{eq:invertibility-primary-assumptions}.
Since $t^{\pm}$ are continuous, these same assumptions yield $c_{2}>0$
such that 
\begin{equation}
\frac{1}{t^{-}(s)}-\frac{\epsilon}{2}\leq\fint_{(s,s')}\frac{1}{t^{-}}+\epsilon\leq\fint_{(s,s')}\frac{\kappa_{\hat{n}}}{\gamma'\cdot\hat{n}}\leq\fint_{(s,s')}\frac{1}{t^{+}}-\epsilon\leq\frac{1}{t^{+}(s)}-\frac{\epsilon}{2}\label{eq:continuity-of-times}
\end{equation}
if $|s-s'|<c_{2}$. Combining \prettyref{eq:identity-on-times}-\prettyref{eq:continuity-of-times}
with \prettyref{eq:inequality-to-be-shown}, we deduce that $\Phi_{\hat{n}}(s,t)\neq\Phi_{\hat{n}}(s',t')$
if $||\hat{n}-\hat{\eta}\circ\gamma||_{L^{\infty}(J)}<\delta_{1}$
and if $(s,t),(s',t')\in M_{t^{\pm},J}$ satisfy $0<|s-s'|<c_{1}\wedge c_{2}\wedge\frac{\epsilon}{2C}$.
The constant $C=C(\Gamma,\hat{\eta},t^{\pm},J)$ is the one implicit
in the estimate \prettyref{eq:estimate-from-above-on-curvature}.
On the other hand, since $\Phi_{\hat{\eta}}(M_{t^{\pm},J})=V_{t^{\pm},J}$,
we can write that 
\[
\frac{|s-s'|}{||\nabla\Phi_{\hat{\eta}}^{-1}||_{L^{\infty}(V_{t^{\pm},J})}}\leq|\Phi_{\hat{\eta}}(s,t)-\Phi_{\hat{\eta}}(s',t')|\leq|\Phi_{\hat{n}}(s,t)-\Phi_{\hat{n}}(s',t')|+|\hat{n}(s)-\hat{\eta}\circ\gamma(s)||t|+|\hat{n}(s')-\hat{\eta}\circ\gamma(s')||t'|
\]
on $M_{t^{\pm},J}$. Thus, there exists $\delta_{2}>0$ such that
$\Phi_{\hat{n}}(s,t)\neq\Phi_{\hat{n}}(s',t')$ if $||\hat{n}-\hat{\eta}||_{L^{\infty}(J)}<\delta_{2}$
and if $(s,t),(s',t')\in M_{t^{\pm},J}$ satisfy $|s-s'|\geq c_{1}\wedge c_{2}\wedge\frac{\epsilon}{2C}$.
The conclusion is that 
\[
||\hat{n}-\hat{\eta}\circ\gamma||_{L^{\infty}(J)}<\delta_{1}\wedge\delta_{2}\quad\implies\quad\Phi_{\hat{n}}\text{ is one-to-one on }M_{t^{\pm},J}.
\]
The first part of the claim on the invertibility of $\Phi_{\hat{n}}$
is proved.

We end with the estimates on $\Phi_{\hat{n}}^{-1}$. Recall we arranged,
by our choice of $\delta_{0}$, for the inclusion $W\subset\Phi_{\hat{n}}(M_{t^{\pm},J})$. Let $x\in W$ and produce $(s,t)\in M_{t^{\pm},J}$ with
$\Phi_{\hat{n}}(s,t)=x$. It follows that 
\[
|\Phi_{\hat{\eta}}^{-1}(x)-\Phi_{\hat{n}}^{-1}(x)|=|\Phi_{\hat{\eta}}^{-1}\circ\Phi_{\hat{n}}(s,t)-\Phi_{\hat{\eta}}^{-1}\circ\Phi_{\hat{\eta}}(s,t)|\leq||\nabla\Phi_{\hat{\eta}}^{-1}||_{L^{\infty}(V_{t^{\pm},J})}|\hat{n}(s)-\hat{\eta}\circ\gamma(s)||t|
\]
thus yielding the first estimate in the claim. Continuing, we note
that $\delta_{1}$ was chosen so that \prettyref{eq:overkill-choice}
would hold. In particular, $\gamma'\cdot\hat{n}\gtrsim_{\Gamma,\hat{\eta},J}1$
on $J$. It follows from \prettyref{eq:determinant-formula} and \prettyref{eq:invertibility-primary-assumptions}
that 
\[
\det\nabla\Phi_{\hat{n}}\geq\epsilon(t^{+}\wedge|t^{-}|)\gamma'\cdot\hat{n}\quad\text{a.e. on }M_{t^{\pm},J}.
\]
The bound on $\nabla\Phi_{\hat{n}}^{-1}$ now follows from \prettyref{eq:parameterization-definition},
\prettyref{eq:invertibility-setup-times}, and the Lipschitz inverse
function theorem.\qed\end{proof} 
Having produced a sufficient condition for the invertibility of $\Phi_{\hat{n}}$
away from $\partial V$, we can now construct the desired approximations
$\pi_{k}$ of $\pi$. 
\begin{lemma}
\label{lem:stablelines-approximation}Let $V=\cup_{s\in\Gamma}\ell_{s}$
and $\pi:V\to\Gamma$ be as in \prettyref{eq:setup-for-stable-lines}
and \prettyref{eq:pi-map}, and let $W\subset V$ be open and compactly
contained. There exists $\left\{ \pi_{k}\right\} _{k\in\mathbb{N}}\subset C^{\infty}(W;\Gamma)$
such that $\pi_{k}\to\pi$, $\partial_{\hat{\eta}^{\perp}}\pi_{k}\to0$,
and $\partial_{\hat{\eta}^{\perp}}^{2}\pi_{k}\to0$ uniformly as $k\to\infty$. 
\end{lemma}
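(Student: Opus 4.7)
The plan is to smooth the direction field $\hat{\eta}\circ\gamma$ by mollification, apply \prettyref{lem:invertibility-lemma} to ensure the resulting maps $\Phi_{\hat{n}_k}$ remain invertible on $W$, and take $\pi_k = \gamma\circ (\Phi_{\hat{n}_k}^{-1})_1$.

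First, I will choose parameters. Since $W\Subset V$ and $\pm t_V^\pm$ are positive and bounded below on compact subsets of $I$, a compactness argument will yield a single open interval $J\Subset I$ and $t^\pm \in C_c(I)$ satisfying \prettyref{eq:invertibility-setup-times} such that $W\subset V_{t^\pm,J}$ and the inequalities in \prettyref{eq:good-inequalities-baseline} hold on $\overline{J}$ with a uniform slack of $2\epsilon_0>0$. Next, I will lift $\hat{\eta}\circ\gamma|_J$ to a Lipschitz angle $\theta:J\to\mathbb{R}$, extend it to $I$ preserving the Lipschitz constant, and set $\hat{n}_k = (\cos\theta_k, \sin\theta_k)$ with $\theta_k = \theta * \rho_{h_k}$ and $h_k = 1/k$. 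Then $\hat{n}_k \in C^\infty(I;S^1)$, $|\hat{n}_k - \hat{\eta}\circ\gamma| \lesssim h_k$ uniformly on $J$, and $\kappa_{\hat{n}_k} = \theta_k' = \theta'*\rho_{h_k}$ will inherit $L^\infty(J)$ bounds from $\theta'$. Since the a.e.\ inequalities $\theta' \leq (\gamma'\cdot\hat{\eta}\circ\gamma)/t^+ - 2\epsilon_0(\gamma'\cdot\hat{\eta}\circ\gamma)$ and its counterpart for the lower bound are preserved under convolution with a non-negative kernel, and since their right-hand sides are continuous on $J$, a standard comparison will show that $\hat{n}_k$ satisfies the curvature condition \prettyref{eq:invertibility-primary-assumptions} with $\epsilon = \epsilon_0/2$ for $k$ large. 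Applying \prettyref{lem:invertibility-lemma} will then produce smooth inverses $\Phi_{\hat{n}_k}^{-1}$ on $W$ with $\|\nabla\Phi_{\hat{n}_k}^{-1}\|_\infty$ uniformly bounded and $\|\Phi_{\hat{n}_k}^{-1} - \Phi_{\hat{\eta}}^{-1}\|_{L^\infty(W)} = O(h_k)$, so that $\pi_k = \gamma\circ s_k$ with $s_k = (\Phi_{\hat{n}_k}^{-1})_1$ belongs to $C^\infty(W;\Gamma)$ and converges uniformly to $\pi$ by \prettyref{eq:inverse-formula-for-pi}.

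For the first directional derivative, the identity $s_k \circ \Phi_{\hat{n}_k}(s,t)=s$ will imply that $s_k$ is constant along the lines parallel to $\hat{n}_k^\perp(s)$, so $\tilde{\hat{n}}_k^\perp \cdot \nabla s_k = 0$ on $W$, where $\tilde{\hat{n}}_k(x) = \hat{n}_k(s_k(x))$. Since $|\tilde{\hat{n}}_k - \hat{\eta}| = O(h_k)$ uniformly on $W$ --- a bound that follows from composing $\hat{n}_k \to \hat{\eta}\circ\gamma$ with the uniform convergence of $s_k$ to its analog $s_\infty$ for $\hat{\eta}$, together with the local Lipschitzness of $\hat{\eta}\circ\gamma$ --- writing
\[
\partial_{\hat{\eta}^\perp}\pi_k = \gamma'(s_k)\,(\hat{\eta}^\perp - \tilde{\hat{n}}_k^\perp)\cdot \nabla s_k
\]
and using the uniform bound on $\nabla s_k$ will immediately give $\partial_{\hat{\eta}^\perp}\pi_k = O(h_k)\to 0$.

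The main obstacle will be establishing the uniform convergence $\partial_{\hat{\eta}^\perp}^2 \pi_k \to 0$, which is delicate because $\nabla^2 s_k = O(1/h_k)$, inheriting this blow-up from $\theta_k''$. The saving grace is that the identity $\tilde{\hat{n}}_k^{\perp T}\nabla^2 s_k\,\tilde{\hat{n}}_k^\perp = 0$, obtained from two $t$-differentiations of $s_k\circ\Phi_{\hat{n}_k}$, forces the decomposition
\[
\nabla^2 s_k = a_k \tilde{\hat{n}}_k\otimes\tilde{\hat{n}}_k + b_k(\tilde{\hat{n}}_k\otimes\tilde{\hat{n}}_k^\perp + \tilde{\hat{n}}_k^\perp\otimes\tilde{\hat{n}}_k).
\]
I will compute that mixed differentiation of $s_k\circ\Phi_{\hat{n}_k}$ in $s$ and $t$ yields $b_k = \kappa_{\hat{n}_k}/(\det\nabla\Phi_{\hat{n}_k})^2$, uniformly bounded by the positive lower bound on $\det\nabla\Phi_{\hat{n}_k}$, while two $s$-differentiations will express $a_k$ in terms of $\gamma''$, $b_k$, and $\kappa_{\hat{n}_k}' = \theta_k''$, giving $|a_k| = O(1/h_k)$. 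Since $|\hat{\eta}^\perp\cdot \tilde{\hat{n}}_k| = O(h_k)$ and $|\hat{\eta}^\perp\cdot\tilde{\hat{n}}_k^\perp| = O(1)$, the expansion
\[
\hat{\eta}^{\perp T}\nabla^2 s_k\,\hat{\eta}^\perp = a_k(\hat{\eta}^\perp\cdot\tilde{\hat{n}}_k)^2 + 2b_k(\hat{\eta}^\perp\cdot\tilde{\hat{n}}_k)(\hat{\eta}^\perp\cdot\tilde{\hat{n}}_k^\perp)
\]
will yield $O(1/h_k)\cdot O(h_k^2) + O(1)\cdot O(h_k) = O(h_k)\to 0$; the analogous bound for $\partial_{\hat{\eta}^\perp}^2\pi_k$ will follow after accounting for the smooth composition with $\gamma$. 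The crux of the argument is this cancellation, in which the blow-up of $a_k$ is exactly compensated by the vanishing to second order of $\hat{\eta}^\perp\cdot\tilde{\hat{n}}_k$, a gain that is perfectly matched by the choice of mollification scale.
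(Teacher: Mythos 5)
Your proposal is correct and follows essentially the same route as the paper: mollify the direction field at scale $h_k=1/k$, invoke \prettyref{lem:invertibility-lemma} to get uniformly Lipschitz inverses $\Phi_{\hat n_k}^{-1}$ on $W$, define $\pi_k=\gamma\circ(\Phi_{\hat n_k}^{-1})_1$, and exploit the constancy of $\pi_k$ along the smoothed lines together with the cancellation in which the $O(1/h_k)$ blow-up of the Hessian occurs only in a direction whose inner product with $\hat\eta^\perp$ is $O(h_k)$. The only differences are cosmetic --- you mollify the angle lift of $\hat\eta\circ\gamma$ where the paper mollifies the vector field and renormalizes, and you organize the second-derivative estimate via a frame decomposition of $\nabla^2 s_k$ where the paper expands $\hat\eta^\perp\otimes\hat\eta^\perp-\hat\eta_k^\perp\otimes\hat\eta_k^\perp$ directly; the underlying estimates and the decisive $O(1/h_k)\cdot O(h_k^2)+O(1)\cdot O(h_k)$ bookkeeping are identical.
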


\begin{proof} We apply \prettyref{lem:invertibility-lemma}. Carrying
over the notation from that result, we let $t^{\pm}\in C_{c}(I)$
and $J\subset I$ be an open interval for which \prettyref{eq:invertibility-setup-times} holds,
such that $\overline{W}\subset V_{t^{\pm},J}$ and
$\overline{V_{t^{\pm},J}}\subset V$. Since $t_{V}^{-}<t^{-}<0<t^{+}<t_{V}^{+}$
nearby $J$, there exists $\epsilon>0$ and a slightly larger open
interval $J_{0}\subset I$ with $\overline{J}\subset J_{0}$,
$\sup_{J_{0}}\,t^{-}<0<\inf_{J_{0}}\,t^{+}$, and such that
\begin{equation}
\frac{1}{t^{-}(s)}+2\epsilon\leq\frac{1}{t_{V}^{-}(s)}\quad\text{and}\quad\frac{1}{t_{V}^{+}(s)}\leq\frac{1}{t^{+}(s)}-2\epsilon\quad\forall\,s\in J_{0}.\label{eq:defn-of-epsilon-approximation}
\end{equation}
To be clear, we consider $t^{\pm}$, $J$, $\epsilon$ and $J_{0}$
to be fixed depending on $\Gamma$, $\hat{\eta}$, $W$, and $V$ at this stage.

Next, we let $\delta>0$ be as in \prettyref{lem:invertibility-lemma}
and produce a sequence $\{\hat{\eta}_{k}\}_{k\in\mathbb{N}}\subset C^{\infty}(I;S^{1})$
approximating $\hat{\eta}\circ \gamma$. We will verify that our sequence satisfies \prettyref{eq:invertibility-primary-assumptions} for large enough $k$, i.e., there eventually holds
\begin{equation}
||\hat{\eta}_{k}-\hat{\eta}\circ\gamma||_{L^{\infty}\left(J\right)}<\delta\quad\text{and}\quad\frac{1}{t^{-}(s)}+\epsilon\leq\frac{\kappa_{\hat{\eta}_{k}}(s)}{\gamma'\cdot\hat{\eta}_{k}(s)}\leq\frac{1}{t^{+}(s)}-\epsilon\quad\forall\,s\in J. \label{eq:needed-inequalities-to-invert}
\end{equation}
Fix $\rho\in C_{c}^{\infty}((-1,1))$ with $\rho\geq0$ and $\int_{-1}^{1}\rho\,ds=1$.
For all large enough $k\in\mathbb{N}$, define $\hat{\eta}_{k}:I\to S^1$ by taking
\begin{equation}
\hat{\eta}_{k}(s)=\frac{(\hat{\eta}\circ \gamma)_{1/k}(s)}{\left|(\hat{\eta}\circ \gamma)_{1/k}(s)\right|}\quad\text{where}\quad(\hat{\eta}\circ \gamma)_{1/k}(s)=\int_{J_{0}}k\rho\left(k(s-s')\right)\hat{\eta}\circ\gamma(s')\,ds',\quad s\in J\label{eq:direct-mollification-of-curvature}
\end{equation}
and by smoothly extending it to the rest of $I$. Its values on $I\backslash\overline{J}$ are
immaterial; we simply define it there so as to match  the lemma. Note we needed that $\left|(\hat{\eta}\circ \gamma)_{1/k}\right|>0$ on $J$ for  \prettyref{eq:direct-mollification-of-curvature} to make sense. 
This is not a problem, since for all large enough $k$ there holds
\begin{equation}
1\geq\left|(\hat{\eta}\circ \gamma)_{1/k}\right|\geq1-||\kappa_{\hat{\eta}}||_{L^{\infty}(J_{0})}\frac{2}{k}\quad\text{on }J.\label{eq:lower-bound-norm-estimate}
\end{equation}
Observe that 
\begin{equation}
\kappa_{\hat{\eta}_{k}}=\frac{(\hat{\eta}\circ \gamma)_{1/k}'\cdot(\hat{\eta}\circ \gamma)_{1/k}^{\perp}}{\left|(\hat{\eta}\circ \gamma)_{1/k}\right|^{2}}\quad\text{on }J, \quad\text{while}\quad \kappa_{\hat{\eta}}=(\hat{\eta}\circ\gamma)'\cdot(\hat{\eta}\circ\gamma)^{\perp}\quad\text{on }I.\label{eq:formula-for-kappa}
\end{equation}
When combined with the usual properties of mollification, these facts imply the estimates
\begin{equation}
||\hat{\eta}_{k}-\hat{\eta}\circ\gamma||_{L^{\infty}(J)}\lesssim||\kappa_{\hat{\eta}}||_{L^{\infty}(J_{0})}\frac{1}{k},\quad||\hat{\eta}_{k}'||_{L^{\infty}(J)}\lesssim||\kappa_{\hat{\eta}}||_{L^{\infty}(J_{0})},\quad||\hat{\eta}_{k}''||_{L^{\infty}(J)}\lesssim k||\kappa_{\hat{\eta}}||_{L^{\infty}(J_{0})}+||\kappa_{\hat{\eta}}||_{L^{\infty}(J_{0})}^{2}\label{eq:mollification-estimates-kappa}
\end{equation}
for large enough $k$. We proceed to verify \prettyref{eq:needed-inequalities-to-invert}.
That the first condition is eventually satisfied  is clear. For the
second condition, let $s\in J$ and note that
\begin{equation}
\left|\frac{\kappa_{\hat{\eta}_{k}}(s)}{\gamma'\cdot\hat{\eta}_{k}(s)}-\int_{J_{0}}k\rho\left(k(s-s')\right)\frac{\kappa_{\hat{\eta}}(s')}{\gamma'\cdot\hat{\eta}\circ\gamma(s')}\,ds'\right|\lesssim_{\Gamma,\hat{\eta},J,J_{0}}\frac{1}{k}\label{eq:building-approx-step1}
\end{equation}
for large enough $k$. Besides \prettyref{eq:lower-bound-norm-estimate}-\prettyref{eq:mollification-estimates-kappa},
the proof makes use of the lower bound $\gamma'\cdot\hat{\eta}_{k}\gtrsim_{\Gamma,\hat{\eta},J}1$
on $J$, which is eventually implied by \prettyref{eq:uniform-transverslity-param}
and the fact that $J$ is compactly contained. Note that
\begin{equation}
\frac{1}{t^{-}}+2\epsilon\leq\frac{\kappa_{\hat{\eta}}}{\gamma'\cdot\hat{\eta}\circ\gamma}\leq\frac{1}{t^{+}}-2\epsilon\quad\text{a.e. on }J_{0}\label{eq:building-approx-step2}
\end{equation}
by \prettyref{eq:good-inequalities-baseline} and \prettyref{eq:defn-of-epsilon-approximation}.
Combining \prettyref{eq:building-approx-step1}, \prettyref{eq:building-approx-step2},
our choice to take $\rho\geq0$, and the fact that $1/t^{\pm}$ are
uniformly continuous on $J_{0}$, we deduce the second condition in
\prettyref{eq:needed-inequalities-to-invert} for large enough $k$. 

The previous paragraphs checked that the hypotheses of \prettyref{lem:invertibility-lemma} hold
for the given region $W$ and for our choices of $t^{\pm}$, $J$, $\epsilon$,
and $\hat{n}=\hat{\eta}_{k}$. Note we had to take $k$ large enough so that the conditions in \prettyref{eq:needed-inequalities-to-invert} would hold. The conclusion is that
the maps $\Phi_{\hat{\eta}_{k}}:I\times\mathbb{R}\to\mathbb{R}^{2}$
from \prettyref{eq:parameterization-definition} admit inverses on
$W$ satisfying $\Phi_{\hat{\eta}_{k}}^{-1}\in\text{Lip}(W;M_{t^{\pm},J})$, again
for large enough $k$. Recall  $M_{t^{\pm},J}=\Phi_{\hat{\eta}}^{-1}(V_{t^{\pm},J})$.
The estimates 
\begin{equation}
||\Phi_{\hat{\eta}_{k}}^{-1}-\Phi_{\hat{\eta}}^{-1}||_{L^{\infty}(W)}\lesssim_{\Gamma,\hat{\eta},W,V}||\hat{\eta}_{k}-\hat{\eta}\circ\gamma||_{L^{\infty}\left(J\right)}\quad\text{and}\quad||\nabla\Phi_{\hat{\eta}_{k}}^{-1}||_{L^{\infty}(W)}\lesssim_{\Gamma,\hat{\eta},W,V}1\label{eq:deduced-estimates-from-lemma}
\end{equation}
follow directly from the ones in the lemma. We remind the reader that
$t^{\pm}$, $J$, and $\epsilon$ (and $J_{0}$) were taken to depend
 on $\Gamma$, $\hat{\eta}$, $W$, and $V$. Since $\Phi_{\hat{\eta}_{k}}$
is smooth and $|\det\nabla\Phi_{\hat{\eta}_{k}}|\gtrsim_{\Gamma,\hat{\eta},W,V}1$
on $\Phi_{\hat{\eta}_{k}}^{-1}(W)$, the inverse function theorem
gives that $\Phi_{\hat{\eta}_{k}}^{-1}\in C^{\infty}(W;M_{t^{\pm},J})$.
We are finally ready to define the desired approximations $\pi_{k}$.
In a direct analogy with the formula for $\pi$ in \prettyref{eq:inverse-formula-for-pi},
we define $\pi_{k}\in C^{\infty}(W;\Gamma)$ by 
\begin{equation}
\pi_{k}=\gamma\circ(\Phi_{\hat{\eta}_{k}}^{-1})_{1}\quad\text{where}\quad(s,t)_{1}=s\label{eq:defn-of-approx-pi_k}
\end{equation}
for all large enough $k$. The rest of the proof establishes the convergences
in the claim.

That $\pi_{k}\to\pi$ uniformly on $W$ is clear. In fact, we read
off from \prettyref{eq:inverse-formula-for-pi}, \prettyref{eq:mollification-estimates-kappa}, \prettyref{eq:deduced-estimates-from-lemma},
and \prettyref{eq:defn-of-approx-pi_k} that 
\begin{equation}
||\pi_{k}-\pi||_{L^{\infty}(W)}\lesssim_{\Gamma,\hat{\eta},W,V}\frac{1}{k},\quad||\nabla\pi_{k}||_{L^{\infty}(W)}\lesssim_{\Gamma,\hat{\eta},W,V}1,\quad||\nabla\nabla\pi_{k}||_{L^{\infty}(W)}\lesssim_{\Gamma,\hat{\eta},W,V}k.\label{eq:estimates-on-pi_k}
\end{equation}
The third inequality follows from the elementary estimates
\[
|\nabla\Phi_{\hat{\eta}_{k}}| \lesssim 1 + |t||\hat{\eta}_{k}'| \quad\text{and}\quad |\nabla\nabla\Phi_{\hat{\eta}_{k}}|\lesssim|\gamma''|+|\hat{\eta}_{k}'|+|t||\hat{\eta}_{k}''|
\]
along with the inverse function theorem. It is now convenient to consider $\hat{\eta}_{k}$ as being defined on $W$ instead of $I$. 
Abusing notation slightly, we let 
\[
\hat{\eta}_{k}(x)=\hat{\eta}_{k}\circ(\Phi_{\hat{\eta}_{k}}^{-1})_{1}(x),\quad x\in W.
\]
On the righthand side we use \prettyref{eq:direct-mollification-of-curvature},
noting that $(\Phi_{\hat{\eta}_{k}}^{-1})_{1}(W)\subset J$. It is
natural to compare against $\hat{\eta}=\hat{\eta}\circ\gamma\circ(\Phi_{\hat{\eta}}^{-1})_{1}$. 
The estimates
\begin{equation}
||\hat{\eta}_{k}-\hat{\eta}||_{L^{\infty}(W)}\lesssim_{\Gamma,\hat{\eta},W,V}\frac{1}{k}\quad\text{and}\quad||\nabla\hat{\eta}_{k}||_{L^{\infty}(W)}\lesssim_{\Gamma,\hat{\eta},W,V}1\label{eq:estimates-on-eta_k}
\end{equation}
result from \prettyref{eq:mollification-estimates-kappa} and \prettyref{eq:deduced-estimates-from-lemma}.
We define $\partial_{\eta_{k}^{\perp}}$ and $\partial_{\eta_{k}^{\perp}}^{2}$
analogously to $\partial_{\hat{\eta}^{\perp}}$ and $\partial_{\hat{\eta}^{\perp}}^{2}$
from \prettyref{eq:full_directional}. Then,
\begin{equation}
\partial_{\eta_{k}^{\perp}}\pi_{k}=0\quad\text{and}\quad\partial_{\eta_{k}^{\perp}}^{2}\pi_{k}=0.\label{eq:useful-identities-directionalderivs}
\end{equation}
These identities are consequences of the fact that 
$\partial_t (\pi_{k}\circ \Phi_{\hat{\eta}_{k}})=0$.
Indeed, $\partial_{\eta_{k}^{\perp}}$ is a directional derivative along the lines traced out by $t\mapsto\Phi_{\hat{\eta}_{k}}(s,t)$, as follows from \prettyref{eq:parameterization-definition}.

We now have all of the ingredients to prove that $\partial_{\hat{\eta}^{\perp}}\pi_{k} \to 0$
and $\partial_{\hat{\eta}^{\perp}}^{2}\pi_{k} \to 0$ uniformly
on $W$. Applying the first identity in \prettyref{eq:useful-identities-directionalderivs}
yields that
\[
\partial_{\hat{\eta}^{\perp}}\pi_{k}=\partial_{\hat{\eta}^{\perp}}\pi_{k}-\partial_{\eta_{k}^{\perp}}\pi_{k}=\partial_{(\hat{\eta}-\hat{\eta}_{k})^{\perp}}\pi_{k}.
\]
Using \prettyref{eq:estimates-on-pi_k} and \prettyref{eq:estimates-on-eta_k}
we deduce that
\[
||\partial_{\hat{\eta}^{\perp}}\pi_{k}||_{L^{\infty}(W)}\leq||\hat{\eta}-\hat{\eta}_{k}||_{L^{\infty}(W)}||\nabla\pi_{k}||_{L^{\infty}(W)}\lesssim_{\Gamma,\hat{\eta},W,V}\frac{1}{k}.
\]
Next, we differentiate the first identity in \prettyref{eq:useful-identities-directionalderivs} to see that
\[
0=\partial_{(\hat{\eta}-\hat{\eta}_{k})^{\perp}}\left(\partial_{\hat{\eta}_{k}^{\perp}}\pi_{k}\right)=\partial_{(\hat{\eta}-\hat{\eta}_{k})^{\perp}}\hat{\eta}_{k}^{\perp}\cdot\nabla\pi_{k}+\left\langle (\hat{\eta}-\hat{\eta}_{k})^{\perp}\otimes\hat{\eta}_{k}^{\perp},\nabla\nabla\right\rangle \pi_{k}.
\]
By the second identity there,
\begin{align*}
\partial_{\hat{\eta}^{\perp}}^{2}\pi_{k} & =\left\langle \hat{\eta}^{\perp}\otimes\hat{\eta}^{\perp}-\eta_{k}^{\perp}\otimes\eta_{k}^{\perp},\nabla\nabla\right\rangle \pi_{k}=\left\langle (\hat{\eta}-\hat{\eta}_{k})^{\perp}\otimes(\hat{\eta}-\hat{\eta}_{k}+2\hat{\eta}_{k})^{\perp},\nabla\nabla\right\rangle \pi_{k}\\
 & =\partial_{(\hat{\eta}-\hat{\eta}_{k})^{\perp}}^{2}\pi_{k}-2\partial_{(\hat{\eta}-\hat{\eta}_{k})^{\perp}}\hat{\eta}_{k}^{\perp}\cdot\nabla\pi_{k}.
\end{align*}
Using \prettyref{eq:estimates-on-pi_k} and \prettyref{eq:estimates-on-eta_k}
again, we find that 
\[
||\partial_{\hat{\eta}^{\perp}}^{2}\pi_{k}||_{L^{\infty}(W)}\leq||\hat{\eta}-\hat{\eta}_{k}||_{L^{\infty}(W)}^{2}||\nabla\nabla\pi_{k}||_{L^{\infty}(W)}+2||\hat{\eta}-\hat{\eta}_{k}||_{L^{\infty}(W)}||\nabla\hat{\eta}_{k}||_{L^{\infty}(W)}||\nabla\pi_{k}||_{L^{\infty}(W)}\lesssim_{\Gamma,\hat{\eta},W,V}\frac{1}{k}.
\]
The proof is complete. \qed\end{proof}

\subsection{Three solution formulas\label{subsec:three-solution-formulas}}

Ultimately, we are interested in applying the method of characteristics
to deduce (partial) uniqueness and regularity theorems, and even explicit
solution formulas for $\mu$. Doing so requires supplementing the
ODEs from \prettyref{subsec:wrinkles-as-chars} with boundary data
implied by the original system  \prettyref{eq:wrinkling-PDEs}. Different
data arise depending on the  stable lines.  
Guided by the upcoming examples in \prettyref{sec:examples},
we treat the three configurations shown in \prettyref{fig:characteristics}. 
See \prettyref{cor:non-intersecting-chars} for Panel (a), \prettyref{cor:chars-meet-along-curve} for Panel (b), and
\prettyref{cor:chars-meet-at-point} for Panel (c). We continue to use the notation from the paragraphs leading up to \prettyref{lem:disintegration}.

\begin{figure}[tb]
\centering
\subfloat[]{\includegraphics[width=0.22\paperwidth]{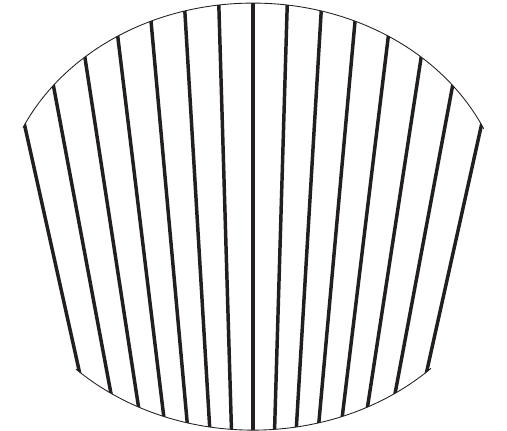}}\hspace{.06\textwidth}\subfloat[]{\includegraphics[width=0.22\paperwidth]{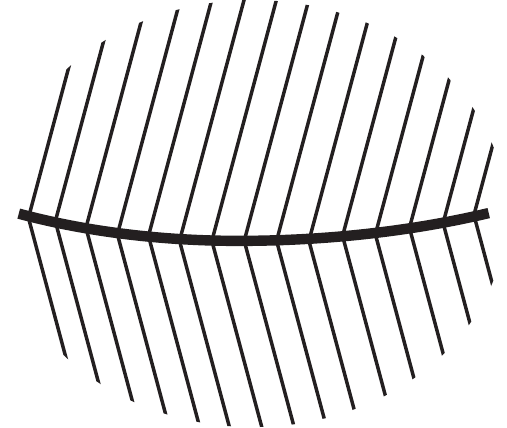}}\hspace{.06\textwidth}\subfloat[]{\includegraphics[width=0.22\paperwidth]{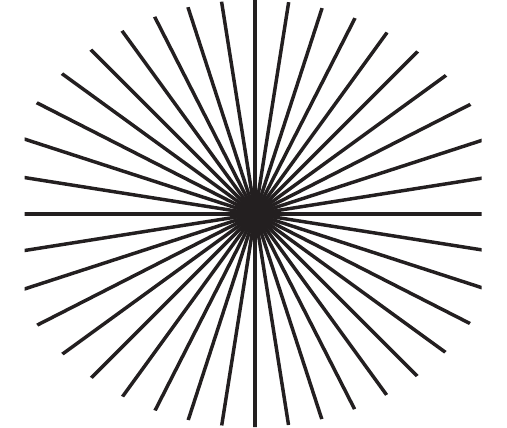}}\caption{Three configurations of stable lines. Panel (a) depicts stable
lines extending between boundary points. Panel
(b) depicts stable lines meeting along an interior curve. Panel (c) shows stable lines meeting
at a point. Given suitable non-degeneracy
conditions on $\varphi$, we prove that 
$\mu$ is uniquely determined on such lines. \label{fig:characteristics}}
\end{figure}

\paragraph*{\uline{Stable lines extending between boundary points}. \quad{}}

First, we consider Panel (a) of \prettyref{fig:characteristics}.
Recall $O$ denotes the ordered set of $\varphi$ from  \prettyref{eq:partition}, while $\{\ell_x\}$ denote its stable lines. Assume there exists an open set $V\subset O$ of the form
\begin{equation}
V=\cup_{s\in\Gamma}\ell_{s}\quad\text{where}\quad\partial\ell_{s}\subset\partial\Omega\quad\forall\,s\in\Gamma.\label{eq:extend-to-bdry-hyp1}
\end{equation}
As in \prettyref{eq:setup-for-stable-lines}, we understand $\Gamma\subset V$ to be a smooth curve (i.e., one that is diffeomorphic to an open interval) that meets each stable line it indexes transversely and exactly once. For simplicity, we suppose that
\begin{equation}\label{eq:extend-to-bdry-hyp1b}
\cup_{s\in\Gamma}\partial\ell_{s} \text{ consists of two Lipschitz curves}.
\end{equation}
By \prettyref{lem:stablelines} and the definition of $O$, there exist $\zeta\in L^1(V;(0,\infty))\cap L_{\text{loc}}^{2}(V)$ and $\hat{\eta}\in\text{Lip}_{\text{loc}}(V;S^{1})$ such that
\[
\nabla\nabla\varphi=\zeta\hat{\eta}\otimes\hat{\eta}\,dx\quad\text{on }V
\]
where $\hat{\eta}$ is constant along and perpendicular to the  stable lines. Our next assumption is that
\begin{equation}\label{eq:extend-to-bdry-Liphyp}
\hat{\eta}\in C(\overline{V}).
\end{equation}
In particular, the given stable lines are not allowed to meet at $\partial\Omega$.
Finally, we assume there exists $c>0$ such that 
\begin{equation}
\hat{\nu}\cdot[\nabla\varphi]\geq c\quad\text{and}\quad\left|\hat{\tau}\cdot\hat{\eta}|_{\partial\Omega}\right|\geq c\quad\mathcal{H}^{1}\text{-a.e. on }\cup_{s\in\Gamma}\partial\ell_{s}\label{eq:extend-to-bdry-hyp3}
\end{equation}
where $\hat{\nu}$ and $\hat{\tau}$ are the 
outwards-pointing unit normal and unit tangent vectors at $\partial\Omega$.
Note the second part of this last assumption requires that the given stable lines remain (a.e.) uniformly transverse to $\partial\Omega$.
Recall the change of measure factor $\varrho:V\to(0,\infty)$ from \prettyref{eq:Leb_disintegration}.

\begin{corollary} \label{cor:non-intersecting-chars} Suppose $\varphi$ admits some $V$ satisfying
 \prettyref{eq:extend-to-bdry-hyp1}-\prettyref{eq:extend-to-bdry-hyp3}, and let $\mu\in\mathcal{M}_{+}(\Omega;\emph{Sym}_{2})$ solve \prettyref{eq:wrinkling-PDEs}.
Then
\[
\mu=\lambda\hat{\eta}\otimes\hat{\eta}\,dx\quad\text{on }V
\]
where $\lambda:V\to[0,\infty)$ is determined as the unique weak
solution of the two-point boundary value problem 
\[
\begin{cases}
-\frac{1}{2\varrho}\partial_{\hat{\eta}^{\perp}(s)}^{2}(\varrho\lambda)=\det\nabla\nabla p & \text{on }\ell_{s}\\
\varrho\lambda=0 & \text{at }\partial\ell_{s}
\end{cases}
\]
upon restriction to $\mathcal{H}^{1}$-a.e.\ $\ell_{s}$. 
\end{corollary}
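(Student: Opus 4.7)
The plan is to combine the general structure results already in place with the additional boundary data afforded by the hypotheses on $\varphi$. Applying \prettyref{lem:structure} on $O\supset V$, we may write $\mu = \lambda\hat{\eta}\otimes\hat{\eta}$ for some $\lambda \in \mathcal{M}_+(V)$, with $\hat{\eta}$ as in \prettyref{lem:stablelines}. Next, \prettyref{lem:disintegration} disintegrates $\lambda$ into an absolutely continuous piece $\lambda_\text{a.c.}\,dx$ and a singular piece $\int_\Gamma \lambda_\text{sing}\,\mathcal{H}^1\lfloor\ell_s\,d\vartheta(s)$. The density $\varrho\lambda_\text{a.c.}$ belongs to $W^{2,1}(\ell_s)$ and solves the claimed second-order ODE on $\mathcal{H}^1$-a.e.\ stable line, while $\lambda_\text{sing}$ is affine on $\vartheta$-a.e.\ stable line. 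The ODE for $\varrho\lambda_\text{a.c.}$ is linear second-order with $L^1$ right-hand side, so its solution is unique once we pin down endpoint values; and any affine function on a bounded interval vanishing at both endpoints is identically zero. Hence the whole result will follow once we show that $\varrho\lambda_\text{a.c.}|_{\partial\ell_s} = 0$ for $\mathcal{H}^1$-a.e.\ $s$ and $\lambda_\text{sing}|_{\partial\ell_s} = 0$ for $\vartheta$-a.e.\ $s$, the latter forcing $\vartheta \equiv 0$ since the two boundary points of $\ell_s$ are distinct by \prettyref{eq:extend-to-bdry-hyp1b}.

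The boundary data comes from the third equation in \prettyref{eq:wrinkling-PDEs}, read in the regularized sense as
\[
\lim_{\delta\to 0}\int_{\partial\Omega}\bigl|\hat{\nu}\cdot[\nabla\varphi]\,\langle\hat{\tau}\otimes\hat{\tau},\mu_\delta\rangle\bigr|\,ds = 0.
\]
First, I would localize to a relatively open neighborhood $B$ of $\cup_{s\in\Gamma}\partial\ell_s$ in $\partial\Omega$; since stable lines do not meet $\partial\Omega$ elsewhere in $V$, a cutoff argument confines the mollified defect $\mu_\delta$ there to contributions traceable back to $V$ for all small $\delta$. Second, using the continuity of $\hat{\eta}$ up to $\partial V$ from \prettyref{eq:extend-to-bdry-Liphyp}, together with the uniform lower bounds $\hat{\nu}\cdot[\nabla\varphi]\geq c$ and $|\hat{\tau}\cdot\hat{\eta}|\geq c$ from \prettyref{eq:extend-to-bdry-hyp3}, one can estimate
\[
\langle\hat{\tau}\otimes\hat{\tau},\mu_\delta\rangle \gtrsim c^2\,\lambda_\delta\quad\text{and}\quad \hat{\nu}\cdot[\nabla\varphi] \gtrsim c\quad\text{near }B,
\]
which removes the weights and yields $\lim_{\delta\to 0}\int_B \lambda_\delta\,ds = 0$.

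Finally, I would translate this $L^1$-type vanishing into the claimed pointwise trace statements by changing variables to the adapted coordinates $\Phi_{\hat{\eta}}(s,t) = \gamma(s) + t\hat{\eta}^\perp(s)$ from \prettyref{eq:parameterization-definition}. In these coordinates, $B$ corresponds to the graphs $\{t = t_V^\pm(s)\}$, and Fubini together with the uniform transversality turns the boundary $L^1$ bound into the statement that the one-dimensional traces of the disintegrated density converge to zero at $t^\pm_V(s)$ for $\mathcal{H}^1$-a.e.\ $s$ (respectively $\vartheta$-a.e.\ $s$). This yields $\varrho\lambda_\text{a.c.}|_{\partial\ell_s} = 0$ and $\lambda_\text{sing}|_{\partial\ell_s} = 0$; the latter forces $\lambda_\text{sing}\equiv 0$ on $\vartheta$-a.e.\ $\ell_s$, hence $\vartheta \equiv 0$. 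The two-point boundary value problem for $\varrho\lambda_\text{a.c.}$ then has a unique weak solution, completing the proof.

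The main obstacle is the regularization step: the complementary slackness condition is only available after mollification by the kernel $\rho_\delta$, which averages across neighboring stable lines, while we need to capture the behavior of the disintegrated density on a single line. Carrying this out rigorously requires carefully tracking how $\mu_\delta$ relates to the mollifications of $\lambda_\text{a.c.}$ and $\lambda_\text{sing}$ in adapted coordinates as $\delta\to 0$, and exploiting the continuity of $\hat{\eta}$ and uniform transversality to ensure that the fibered structure survives the mollification limit.
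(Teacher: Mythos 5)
Your proposal follows essentially the same route as the paper's proof: combine \prettyref{lem:structure} and \prettyref{lem:disintegration} to reduce to the two ODEs along stable lines, then extract the vanishing traces $\varrho\lambda_{\text{a.c.}}|_{\partial\ell_s}=0$ and $\lambda_{\text{sing}}|_{\partial\ell_s}=0$ from the regularized boundary complementary slackness condition via non-negativity, the uniform lower bounds in \prettyref{eq:extend-to-bdry-hyp3}, and a Fubini/Fatou passage to the limit in adapted coordinates. The one detail worth making explicit in the mollification step you flag as the main obstacle is the freedom (noted in \prettyref{rem:complementary-slackness-freedom}) to choose the kernel $\rho>0$ on a neighborhood of zero, which is what guarantees the limiting double integral dominates a positive multiple of the one-dimensional traces.
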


\begin{proof} Combining the second part of \prettyref{eq:rank-1-defect}
and \prettyref{lem:disintegration} yields the disintegration formula
\begin{equation}
\mu=\lambda_{\text{a.c.}}\hat{\eta}\otimes\hat{\eta}\,dx+\int_{\Gamma}\lambda_{\text{sing}}\hat{\eta}\otimes\hat{\eta}\mathcal{H}^{1}\lfloor\ell_{s}\,d\vartheta(s)\quad\text{on }V\label{eq:disintegration_1}
\end{equation}
where $\lambda_{\text{a.c.}}$ and $\lambda_{\text{sing}}$ solve
the ODEs \prettyref{eq:ODE_ac} and \prettyref{eq:ODE_sing} for $\mathcal{H}^{1}$-
and $\vartheta$-a.e.\ $s\in\Gamma$. 
Our plan is to use the complementary
slackness conditions in the original system \prettyref{eq:wrinkling-PDEs}
to deduce the boundary conditions 
\begin{equation}
\varrho\lambda_{\text{a.c.}}|_{\partial\ell_{s}}=0\quad\text{for }\mathcal{H}^{1}\text{-a.e. }s\quad\text{and}\quad\lambda_{\text{sing}}|_{\partial\ell_{s}}=0\quad\text{for }\vartheta\text{-a.e. }s.\label{eq:stable-lines-extending-bcs}
\end{equation}
It follows immediately from \prettyref{eq:ODE_sing}
and the second of these  that $\lambda_{\text{sing}}=0$.
Then, \prettyref{eq:ODE_ac} and the first boundary condition 
yield the desired characterization of $\lambda=\lambda_{\text{a.c.}}$.
The regularized formulation of the complementary slackness conditions must be applied. 
Here, we use the second part of \prettyref{eq:complementary_slackness_weaksense}, which implies that 
\begin{equation}
0=\lim_{\delta\to0}\,\int_{\cup_{s\in\Gamma}\partial\ell_{s}}|\hat{\nu}\cdot[\nabla\varphi]\left\langle \hat{\tau}\otimes\hat{\tau},\mu_{\delta}\right\rangle |\,d\mathcal{H}^{1}\label{eq:vanishinglimit}
\end{equation}
due to our assumptions \prettyref{eq:extend-to-bdry-hyp1} and \prettyref{eq:extend-to-bdry-hyp1b} (that the domain of integration is measurable follows from the second of these). 
Recall $\{\mu_{\delta}\}_{\delta>0}$
are the mollifications of $\mu$ defined in \prettyref{eq:mollified_measure_defn-intro}
using the kernel $\rho\in C_{c}^{\infty}(B_{1})$. As noted in \prettyref{rem:complementary-slackness-freedom}
and proved in \prettyref{sec:dualitytheory}, the complementary slackness
conditions hold so long as $\rho\geq 0$ and $\int_{B_1}\rho = 1$. We choose it a bit more carefully now: for the purposes of this proof, we  take $\rho >0$ on a neighborhood of zero. In fact, this same choice will also appear in the proofs of \prettyref{cor:chars-meet-along-curve} and \prettyref{cor:chars-meet-at-point}.

All this being said, we claim that the desired boundary conditions \prettyref{eq:stable-lines-extending-bcs}
hold. Using that  $\mu$ and $\rho$ are non-negative and applying  the disintegration formulas
\prettyref{eq:Leb_disintegration} and \prettyref{eq:disintegration_1}, we have by Fubini's theorem that
\begin{align*}
\mu_{\delta}(x)\geq\int_{V}\frac{1}{\delta^{2}}\rho\left(\frac{x-y}{\delta}\right)\,d\mu(y)=\int_{\Gamma}\left[\int_{\ell_{s}}\frac{1}{\delta^{2}}\rho\left(\frac{x-y}{\delta}\right)\varrho\lambda_{\text{a.c.}}\hat{\eta}\otimes\hat{\eta}(y)\,d\mathcal{H}^{1}(y)\right]\,d\mathcal{H}^{1}(s)\\
+\int_{\Gamma}\left[\int_{\ell_{s}}\frac{1}{\delta^{2}}\rho\left(\frac{x-y}{\delta}\right)\lambda_{\text{sing}}\hat{\eta}\otimes\hat{\eta}(y)\,d\mathcal{H}^{1}(y)\right]\,d\vartheta(s)
\end{align*}
for all $x\in\overline{\Omega}$. Applying this where the given stable lines meet $\partial\Omega$, we deduce the lower bound
\begin{align*}
 & \int_{\cup_{s\in\Gamma}\partial\ell_{s}}\hat{\nu}\cdot[\nabla\varphi]\left\langle \hat{\tau}\otimes\hat{\tau},\mu_{\delta}\right\rangle \,d\mathcal{H}^{1}\\
 & \qquad\geq\int_{\Gamma}\left[\int_{y\in\ell_{s}}\int_{x\in\cup_{s\in\Gamma}\partial\ell_{s}}\hat{\nu}\cdot[\nabla\varphi](x)|\hat{\tau}(x)\cdot\hat{\eta}(y)|^{2}\varrho\lambda_{\text{a.c.}}(y)\rho\left(\frac{x-y}{\delta}\right)\,\frac{d\mathcal{H}^{1}(x)d\mathcal{H}^{1}(y)}{\delta^{2}}\right]\,d\mathcal{H}^{1}(s)\\
 & \qquad\qquad+\int_{\Gamma}\left[\int_{y\in\ell_{s}}\int_{x\in\cup_{s\in\Gamma}\partial\ell_{s}}\hat{\nu}\cdot[\nabla\varphi](x)|\hat{\tau}(x)\cdot\hat{\eta}(y)|^{2}\lambda_{\text{sing}}(y)\rho\left(\frac{x-y}{\delta}\right)\,\frac{d\mathcal{H}^{1}(x)d\mathcal{H}^{1}(y)}{\delta^{2}}\right]\,d\vartheta(s).
\end{align*}
The integral on the lefthand
side  tends to zero as $\delta\to0$ by \prettyref{eq:vanishinglimit}. Using Fatou's lemma, we can pass to the limit on the
right. Recall from \prettyref{lem:disintegration} that $\varrho\lambda_{\text{a.c.}}$
and $\lambda_{\text{sing}}$ belong to $W^{2,1}(\ell_{s})$ and $W^{2,\infty}(\ell_{s})$
respectively for $\mathcal{H}^{1}$- and $\vartheta$-a.e.\ $s$.
In particular, $\varrho\lambda_{\text{a.c.}}(y)$ and $\lambda_{\text{sing}}(y)$
converge to their traces as $y\to\partial\ell_{s}$ along a.e.\ $\ell_{s}$.
Applying the hypotheses  \prettyref{eq:extend-to-bdry-Liphyp} and \prettyref{eq:extend-to-bdry-hyp3}, and making use of our choice to take $\rho >0$ nearby zero, we conclude that
\[
0=\int_{\Gamma}||\varrho\lambda_{\text{a.c.}}||_{L^{\infty}(\partial\ell_{s},\mathcal{H}^{0})}\,d\mathcal{H}^{1}(s)=\int_{\Gamma}||\lambda_{\text{sing}}||_{L^{\infty}(\partial\ell_{s},\mathcal{H}^{0})}\,d\vartheta(s)
\]
in the limit $\delta\to0$. The boundary conditions
\prettyref{eq:stable-lines-extending-bcs} are proved.
\qed\end{proof}

\paragraph*{\uline{Stable lines meeting along an interior curve}. \quad{}}

We turn to Panel (b) of \prettyref{fig:characteristics}. 
Again, recall from  \prettyref{eq:partition} that $O$ denotes the ordered set of $\varphi$ to which its stable lines $\{\ell_x\}$ belong, while $\Sigma$ is its singular set.
We now let
$V\subset O\cup \Sigma$ be an open set such that
\begin{equation}
V\backslash\Sigma=V_{-}\cup V_{+}\quad\text{where}\quad V_{\pm}=\cup_{s\in\Gamma_{\pm}}\ell_{s}\quad\text{ are disjoint}. 
\label{eq:along-sing-curve-hyp1}
\end{equation}
Here, $\Gamma_{\pm}$ are smooth curves belonging to $V_{\pm}$ that meet their indexed stable lines transversely and exactly once. 
Our second assumption is twofold: we require that
\begin{equation}\label{eq:along-sing-curve-hyp1b}
\begin{gathered}
\text{each indexed stable line }\ell_s\text{ passes between }\Sigma\text{ and }\partial V,\text{ and }
\\
\text{for all }z\in V\cap\Sigma\text{ there exist } s_\pm \in \Gamma_\pm \text{ such that } \{z\} = \partial \ell_{s_+} \cap \partial\ell_{s_-}.
\end{gathered}
\end{equation}
Looking back to \prettyref{fig:characteristics} should help make the meaning of this clear. Continuing, we assume for simplicity that \begin{equation}
V\cap\Sigma\quad\text{is a Lipschitz curve}.\label{eq:along-sing-hyp2}
\end{equation}
\prettyref{lem:stablelines} now guarantees the existence of $\zeta\in L^1(V;(0,\infty))\cap L_{\text{loc}}^{2}(V\backslash\Sigma)$
and $\hat{\eta}\in\text{Lip}_{\text{loc}}(V\backslash\Sigma;S^{1})$ such that
\[
\nabla\nabla\varphi=\zeta\hat{\eta}\otimes\hat{\eta}\,dx+\hat{\nu}_{\Sigma}\cdot[\nabla\varphi]\hat{\nu}_{\Sigma}\otimes\hat{\nu}_{\Sigma}\,\mathcal{H}^{1}\lfloor\Sigma\quad\text{on }V.
\]
Here, $\hat{\nu}_{\Sigma}$ denotes a choice of unit normal vector to $\Sigma$, which is defined a.e.\ along the portion of it belonging to $V$. We take it to point from $V_{-}$ to $V_{+}$, and we write $[\cdot]=\cdot|_{\Sigma_{+}}-\cdot|_{\Sigma_{-}}$
for the corresponding jump in a quantity where $\cdot|_{\Sigma_{\pm}}$
are the traces at $\Sigma$ from $V_{\pm}$. Our fourth assumption is that 
\begin{equation}\label{eq:along-sing-Liphyp}
\hat{\eta} \text{ restricts to each of }V_\pm\text{ as an element of }C(\overline{V_\pm}).
\end{equation}
In particular, this implies that the pair $s_\pm$  in \prettyref{eq:along-sing-curve-hyp1b} is unique, as no two stable lines on the same side of $V\cap \Sigma$ can meet there. 
Finally, we suppose there exists $c>0$ such that 
\begin{equation}
\begin{gathered}\zeta\geq c\quad\mathcal{L}^{2}\text{-a.e. on }V,\\
\hat{\nu}_{\Sigma}\cdot\left[\nabla\varphi\right]\geq c\quad\text{and}\quad\left|\hat{\tau}_{\Sigma}\cdot\hat{\eta}|_{\Sigma_{\pm}}\right|\geq c\quad\mathcal{H}^{1}\text{-a.e. on }V\cap\Sigma
\end{gathered}
\label{eq:along-sing-curve-hyp4}
\end{equation}
where $\hat{\tau}_{\Sigma}=\hat{\nu}_{\Sigma}^{\perp}$. Define the change of measure factor $\varrho:V\to(0,\infty)$ following \prettyref{eq:Leb_disintegration}.

\begin{corollary} \label{cor:chars-meet-along-curve} Suppose $\varphi$ admits some $V$ satisfying
\prettyref{eq:along-sing-curve-hyp1}-\prettyref{eq:along-sing-curve-hyp4}, 
and let $\mu\in\mathcal{M}_{+}(\Omega;\emph{Sym}_{2})$ solve \prettyref{eq:wrinkling-PDEs}.
Then 
\[
\mu=\lambda\hat{\eta}\otimes\hat{\eta}\,dx\quad\text{on }V
\]
where $\lambda:V\to[0,\infty)$ is determined as the unique weak
solution of the Cauchy problem 
\[
\begin{cases}
-\frac{1}{2\varrho}\partial_{\hat{\eta}^{\perp}(s)}^{2}(\varrho\lambda)=\det\nabla\nabla p & \text{on }\ell_{s}\\
\varrho\lambda=\partial_{\hat{\eta}^{\perp}(s)}(\varrho\lambda)=0 & \text{at }\partial\ell_{s}\cap \Sigma
\end{cases}
\]
upon restriction to $\mathcal{H}^{1}$-a.e.\ $\ell_{s}$. 
\end{corollary}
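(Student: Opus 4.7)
The plan is to parallel the argument of \prettyref{cor:non-intersecting-chars} on $V_+$ and $V_-$ separately: extract the Dirichlet trace $\varrho\lambda = 0$ at $\Sigma$ from complementary slackness, and then use the PDE in \prettyref{eq:wrinkling-PDEs} together with the non-negativity of $\mu$ to upgrade to the full Cauchy data and to rule out any singular part. After covering each $V_\pm$ by neighborhoods of the form \prettyref{eq:setup-for-stable-lines}, \prettyref{lem:structure} combined with \prettyref{lem:disintegration} will yield the decomposition
\[
\mu = \lambda_{\text{a.c.}}^\pm\,\hat{\eta}\otimes\hat{\eta}\,dx + \int_{\Gamma_\pm} \lambda_{\text{sing}}^\pm\,\hat{\eta}\otimes\hat{\eta}\,\mathcal{H}^1\lfloor\ell_s\,d\vartheta_\pm(s) \quad \text{on } V_\pm,
\]
in which $\varrho\lambda_{\text{a.c.}}^\pm$ solves \prettyref{eq:ODE_ac} on $\mathcal{H}^1$-a.e.\ $\ell_s$ and $\lambda_{\text{sing}}^\pm$ is affine on $\vartheta_\pm$-a.e.\ $\ell_s$.

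For the Dirichlet trace at $\Sigma$ I would invoke the \emph{first} complementary slackness condition in \prettyref{eq:complementary_slackness_weaksense}, localized to a neighborhood of $V\cap\Sigma$. The key observation is that $\nabla^\perp\nabla^\perp\varphi$ carries a singular component $\hat\nu_\Sigma\cdot[\nabla\varphi]\,\hat\tau_\Sigma\otimes\hat\tau_\Sigma\,\mathcal{H}^1\lfloor\Sigma$ there, playing exactly the role that $\hat\nu\cdot[\nabla\varphi]\,\hat\tau\otimes\hat\tau$ plays at $\partial\Omega$ in \prettyref{cor:non-intersecting-chars}. Testing against the mollifications $\mu_\delta$ with the kernel $\rho$ chosen strictly positive near zero, and running the lower-bound plus Fatou argument from the previous proof verbatim---using the non-negativity of $\mu_\delta$ and the non-degeneracies $\hat\nu_\Sigma\cdot[\nabla\varphi]\ge c$ and $|\hat\tau_\Sigma\cdot\hat\eta|_{\Sigma_\pm}|\ge c$ from \prettyref{eq:along-sing-Liphyp} and \prettyref{eq:along-sing-curve-hyp4}---will yield $\varrho\lambda_{\text{a.c.}}^\pm|_\Sigma = 0$ and $\lambda_{\text{sing}}^\pm|_\Sigma = 0$.

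The harder step is extracting the Neumann data and showing $\lambda_{\text{sing}}^\pm\equiv 0$. I would use the first equation in \prettyref{eq:wrinkling-PDEs} as a distributional identity on the full set $V$: since $\det\nabla\nabla p\in L^1$, the distribution $\text{curl curl}\,\mu$ admits no singular mass on $\Sigma$. A jump computation across $\Sigma$, carried out in coordinates adapted to the two sides, shows that the singular contribution of $\text{curl curl}\,\mu$ on $\Sigma$ is the sum of two mutually singular measures: an $\mathcal{H}^1\lfloor\Sigma$-absolutely continuous piece from $\lambda_{\text{a.c.}}^\pm$ equal to $[(\hat{\eta}\cdot\hat\tau_\Sigma)^2\partial_{\hat\nu_\Sigma}\lambda_{\text{a.c.}}]|_\Sigma\,\mathcal{H}^1\lfloor\Sigma$, and a $\vartheta_\pm$-singular piece from $\lambda_{\text{sing}}^\pm$ obtained by pushing $\partial_{\hat\eta^\perp}\lambda_{\text{sing}}^\pm|_\Sigma\,\vartheta_\pm$ forward to $\Sigma$ along the transverse map $s\mapsto\partial\ell_s\cap\Sigma$. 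The $\delta'$-on-$\Sigma$ terms in this computation vanish automatically from the Dirichlet data just established. By mutual singularity---bi-Lipschitz transversality ensures $\pi_{\Sigma\#}\vartheta_\pm\perp\mathcal{H}^1\lfloor\Sigma$---each remaining piece must vanish separately.

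The singular piece vanishing forces $\partial_{\hat\eta^\perp}\lambda_{\text{sing}}^\pm|_\Sigma = 0$ on $\vartheta_\pm$-a.e.\ $\ell_s$, which, combined with the affine ODE and the Dirichlet data, forces $\lambda_{\text{sing}}^\pm\equiv 0$. The absolutely continuous piece vanishing is a single jump identity relating $(\hat\eta\cdot\hat\tau_\Sigma)^2\partial_{\hat\nu_\Sigma}\lambda_{\text{a.c.}}^\pm$ from the two sides. The key new input is non-negativity: since $\lambda_{\text{a.c.}}^\pm\ge 0$ with vanishing trace at $\Sigma$, the inward one-sided normal derivatives from $V_\pm$ have \emph{opposite} signs, so the jump identity together with the strict positivity $(\hat\eta\cdot\hat\tau_\Sigma)^2 > 0$ from \prettyref{eq:along-sing-curve-hyp4} forces each to vanish separately. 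Using $\lambda|_\Sigma = 0$ once more to pass from $\partial\lambda$ to $\partial(\varrho\lambda)$ gives $\partial_{\hat\eta^\perp}(\varrho\lambda)|_\Sigma = 0$; uniqueness for the resulting Cauchy problem is then standard ODE theory. I expect the main obstacle to be making the distributional $\text{curl curl}$ bookkeeping rigorous across the Lipschitz curve $\Sigma$ when $\hat\eta$ is only continuous up to $\Sigma$ from each side, which I plan to handle by a mollify-then-restrict procedure in the spirit of \prettyref{lem:disintegration}, preserving non-negativity throughout.
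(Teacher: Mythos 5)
Your strategy tracks the paper's proof closely: the Dirichlet trace at $\Sigma$ is extracted exactly as in \prettyref{cor:non-intersecting-chars} from the singular component $\hat{\nu}_{\Sigma}\cdot[\nabla\varphi]\,\hat{\tau}_{\Sigma}\otimes\hat{\tau}_{\Sigma}\,\mathcal{H}^{1}\lfloor\Sigma$ of $\nabla\nabla\varphi$ via the lower-bound-plus-Fatou argument, and the Neumann data come from combining a matching condition across $\Sigma$ (derived from the distributional equation $-\tfrac12\mathrm{curl\,curl}\,\mu=\det\nabla\nabla p$, which has no singular mass on $\Sigma$) with the sign information $\partial_{\hat{\eta}^{\perp}}(\varrho\lambda)|_{\Sigma}\ge 0$ forced by $\mu\ge 0$ and the vanishing trace. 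The separation of the $\lambda_{\text{a.c.}}$ and $\lambda_{\text{sing}}$ contributions by mutual singularity of $\mathcal{H}^{1}$ and $\vartheta_{\pm}$, and the conclusion $\lambda_{\text{sing}}\equiv 0$, are also as in the paper.

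There is, however, one genuine omission. The conclusion asserts $\mu=\lambda\hat{\eta}\otimes\hat{\eta}\,dx$ on all of $V$, and $V\cap\Sigma$ is a Lipschitz curve on which $\mu$ could \emph{a priori} carry singular mass (an $\mathcal{H}^{1}\lfloor\Sigma$ part or atoms); the disintegration from \prettyref{lem:structure} and \prettyref{lem:disintegration} lives only on $V\setminus\Sigma$, so it says nothing about $\mu\lfloor(V\cap\Sigma)$. You must prove $\mu(V\cap\Sigma)=0$ separately, and this cannot be skipped: if $\mu$ charged $\Sigma$, your jump computation for $\mathrm{curl\,curl}\,\mu$ across $\Sigma$ would acquire extra distributional terms (tangential second derivatives of the surface density and a genuine $\delta'$-type layer) that are not killed by the Dirichlet data alone, so the matching identity you rely on would be wrong as written. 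The paper handles this as its first step: the bulk part of the complementary slackness condition, tested near $\Sigma$, forces $\langle\hat{\eta}^{\perp}\otimes\hat{\eta}^{\perp}|_{\Sigma_{\pm}},\mu\rangle=0$ on $V\cap\Sigma$ so that $\mu\lfloor\Sigma$ is rank one parallel to $\hat{\eta}\otimes\hat{\eta}|_{\Sigma_{\pm}}$ from either side, and then the singular (jump) part of the complementary slackness, together with $\hat{\nu}_{\Sigma}\cdot[\nabla\varphi]\ge c$ and $|\hat{\tau}_{\Sigma}\cdot\hat{\eta}|_{\Sigma_{\pm}}|\ge c$, kills that remaining component. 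Inserting this argument before your jump computation closes the gap; the rest of your outline then goes through.
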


\begin{remark} We note the following curious fact: under the above hypotheses, $\det\nabla\nabla p\leq0$ a.e.\ on $V$.
Indeed, it follows from the Cauchy problem that $\varrho\lambda$
and $\det\nabla\nabla p$ take on opposite signs. Of course, $\varrho>0$
and $\lambda\geq0$.  \end{remark}

\begin{remark} In each of the examples in \prettyref{sec:examples}
it will turn out that if $\Sigma$ is not empty nor a single (smooth)
curve, it is nevertheless a tree. At its internal vertices, three or more stable lines will meet, and to achieve
the analogous result we will need to show that $\mu$ vanishes on these lines. 
This can be done using the ideas in the proof below. See \prettyref{ex:negatively-curved-convex} for more
details. \end{remark}

\begin{proof} The proof is similar in spirit to that of \prettyref{cor:non-intersecting-chars}, albeit more involved.
We start by showing that 
\begin{equation}
\mu=0\quad\text{on }V\cap\Sigma.\label{eq:invisible-jumpset}
\end{equation}
At the same time, a straightforward application of \prettyref{eq:rank-1-defect} and
\prettyref{lem:disintegration} yields the disintegration formula
\begin{equation}
\mu=\lambda_{\text{a.c.}}\hat{\eta}\otimes\hat{\eta}\,dx+\int_{\Gamma}\lambda_{\text{sing}}\hat{\eta}\otimes\hat{\eta}\mathcal{H}^{1}\lfloor\ell_{s}\,d\vartheta(s)\quad\text{on }V\backslash\Sigma\label{eq:disintegration_2}
\end{equation}
where $\lambda_{\text{a.c.}}$ and $\lambda_{\text{sing}}$ solve
the ODEs \prettyref{eq:ODE_ac} and \prettyref{eq:ODE_sing} for $\mathcal{H}^{1}$-
and $\vartheta$-a.e.\ $s$ belonging to the index set $\Gamma = \Gamma_{+}\cup\Gamma_{-}$.  Our second step will be to extract
the initial conditions 
\begin{align}
\varrho\lambda_{\text{a.c.}}|_{\partial\ell_{s}\cap \Sigma} & =\partial_{\hat{\eta}^{\perp}(s)}(\varrho\lambda_{\text{a.c.}})|_{\partial\ell_{s}\cap \Sigma}=0\quad\text{for }\mathcal{H}^{1}\text{-a.e. }s,\label{eq:zeroth_order_trace_ac-1}\\
\lambda_{\text{sing}}|_{\partial\ell_{s}\cap \Sigma} & =\partial_{\hat{\eta}^{\perp}(s)}\lambda_{\text{sing}}|_{\partial\ell_{s}\cap \Sigma}=0\quad\text{for }\vartheta\text{-a.e. }s\label{eq:zeroth_order_trace_sing-1}
\end{align}
from the first two equations in \prettyref{eq:wrinkling-PDEs}. Combining these with \prettyref{eq:ODE_ac} and \prettyref{eq:ODE_sing} proves that $\lambda_{\text{sing}}=0$, and the desired characterization of $\lambda=\lambda_{\text{a.c.}}$ follows. To accomplish these steps, we shall make use of the following consequences of the (regularized) complementary slackness conditions \prettyref{eq:complementary_slackness_weaksense}, which hold in light of the formula for $\nabla\nabla \varphi$ above: 
\begin{equation}
0=\lim_{\delta\to0}\,\int_{V\backslash\Sigma}\left|\left\langle \zeta\hat{\eta}^{\perp}\otimes\hat{\eta}^{\perp},\mu_{\delta}\right\rangle \right|\,dx=\lim_{\delta\to0}\,\int_{V\cap\Sigma}|\hat{\nu}_{\Sigma}\cdot[\nabla\varphi]\left\langle \hat{\tau}_{\Sigma}\otimes\hat{\tau}_{\Sigma},\mu_{\delta}\right\rangle |\,d\mathcal{H}^{1}\label{eq:compl_slackness_both}
\end{equation}
where $\{\mu_{\delta}\}_{\delta>0}$ are the mollified versions of $\mu$ from \prettyref{eq:mollified_measure_defn-intro}. Again, we take the kernel $\rho\in C_c^\infty(B_1)$ to satisfy $\rho > 0$ on a neighborhood of  zero (see \prettyref{rem:complementary-slackness-freedom}).

We start by proving \prettyref{eq:invisible-jumpset}. First, we note for every $x\in V$ that
\begin{equation}\label{eq:mu-lb-non-negative}
\mu_{\delta}(x)\geq\int_{V\cap\Sigma}\frac{1}{\delta^2}\rho\left(\frac{x-y}{\delta}\right)\,d\mu(y)\quad\text{and}\quad
\mu_{\delta}(x)\geq\int_{V\backslash \Sigma}\frac{1}{\delta^{2}}\rho\left(\frac{x-y}{\delta}\right)\,d\mu(y).
\end{equation}
These follow from the non-negativity of $\rho$ and $\mu$. Integrating the first of these and applying Fubini's theorem yields 
\begin{align*}
\int_{V\backslash\Sigma}\left\langle \zeta\hat{\eta}^{\perp}\otimes\hat{\eta}^{\perp},\mu_{\delta}\right\rangle \,dx & \geq\int_{V\cap\Sigma}\left\langle \int_{V\backslash\Sigma}\zeta\hat{\eta}^{\perp}\otimes\hat{\eta}^{\perp}(x)\rho\left(\frac{x-y}{\delta}\right)\,\frac{dx}{\delta^{2}},\mu(y)\right\rangle ,\\
\int_{V\cap\Sigma}\left\langle \hat{\nu}_{\Sigma}\cdot[\nabla\varphi]\hat{\tau}_{\Sigma}\otimes\hat{\tau}_{\Sigma},\mu_{\delta}\right\rangle \,d\mathcal{H}^{1} & \geq\frac{1}{\delta}\int_{V\cap\Sigma}\left\langle \int_{V\cap\Sigma}\hat{\nu}_{\Sigma}\cdot[\nabla\varphi]\hat{\tau}_{\Sigma}\otimes\hat{\tau}_{\Sigma}(x)\rho\left(\frac{x-y}{\delta}\right)\,\frac{d\mathcal{H}^{1}(x)}{\delta},\mu(y)\right\rangle.
\end{align*}
According to  \prettyref{eq:compl_slackness_both}, each of the integrals on the lefthand side tends
to zero as $\delta \to 0$. Applying  \prettyref{eq:along-sing-Liphyp} along with the first part of  \prettyref{eq:along-sing-curve-hyp4}, we can pass to the limit in the first inequality above to deduce that
\[
0=\left\langle \hat{\eta}^{\perp}\otimes\hat{\eta}^{\perp}|_{\Sigma_{\pm}},\mu\right\rangle \quad\text{on }V\cap\Sigma.
\]
Then, using that $\mu$ is  non-negative and $\text{Sym}_2$-valued, we get that
\[
\mu=\left\langle \hat{\eta}\otimes\hat{\eta}|_{\Sigma_{\pm}},\mu\right\rangle \hat{\eta}\otimes\hat{\eta}|_{\Sigma_{\pm}}\quad\text{on } V\cap\Sigma.
\] 
Now plug this into the second inequality and send $\delta \to 0$ again (this is to deal with the case where $\hat{\eta}|_{\Sigma_\pm}$ are parallel on some non-null set). Applying  \prettyref{eq:along-sing-Liphyp} along with the second and third parts of \prettyref{eq:along-sing-curve-hyp4}, and recalling that  $\rho >0$ nearby zero, there follows
\[
0=\left\langle \hat{\eta}\otimes\hat{\eta}|_{\Sigma_{\pm}},\mu\right\rangle\quad\text{on }V\cap\Sigma.
\]
Therefore $\mu = 0$ on $V\cap \Sigma$ and \prettyref{eq:invisible-jumpset} is proved.

We proceed to control $\mu$ on $V\backslash \Sigma$. As explained
above, we must establish the initial conditions \prettyref{eq:zeroth_order_trace_ac-1}
and \prettyref{eq:zeroth_order_trace_sing-1} for $\varrho\lambda_{\text{a.c.}}$
and $\lambda_{\text{sing}}$. We handle their traces first. This part of the proof can be copied almost verbatim
from that of \prettyref{cor:non-intersecting-chars}. Recall
the index set $\Gamma=\Gamma_{+}\cup\Gamma_{-}$. Using the second part of  \prettyref{eq:mu-lb-non-negative} along with  the disintegration formulas \prettyref{eq:Leb_disintegration} and \prettyref{eq:disintegration_2}
produces the lower bound
\begin{align*}
 & \int_{V\cap\Sigma}\left\langle \hat{\nu}_{\Sigma}\cdot[\nabla\varphi]\hat{\tau}_{\Sigma}\otimes\hat{\tau}_{\Sigma},\mu_{\delta}\right\rangle \,d\mathcal{H}^{1}\\
 & \geq\int_{\Gamma}\left[\int_{y\in\ell_{s}}\int_{x\in V\cap\Sigma}\hat{\nu}_{\Sigma}\cdot[\nabla\varphi](x)|\hat{\tau}_{\Sigma}(x)\cdot\hat{\eta}(y)|^{2}\varrho\lambda_{\text{a.c.}}(y)\rho\left(\frac{x-y}{\delta}\right)\,\frac{d\mathcal{H}^{1}(x)d\mathcal{H}^{1}(y)}{\delta^{2}}\right]\,d\mathcal{H}^{1}(s)\\
 & \qquad+\int_{\Gamma}\left[\int_{y\in\ell_{s}}\int_{x\in V\cap\Sigma}\hat{\nu}_{\Sigma}\cdot[\nabla\varphi](x)|\hat{\tau}_{\Sigma}(x)\cdot\hat{\eta}(y)|^{2}\lambda_{\text{sing}}(y)\rho\left(\frac{x-y}{\delta}\right)\,\frac{d\mathcal{H}^{1}(x)d\mathcal{H}^{1}(y)}{\delta^{2}}\right]\,d\vartheta(s).
\end{align*}
According to \prettyref{eq:compl_slackness_both}, the integral on the lefthand side tends to zero as $\delta \to 0$.
Applying the assumption \prettyref{eq:along-sing-Liphyp}, the second and third parts of \prettyref{eq:along-sing-curve-hyp4}, and  our choice to take $\rho >0$ nearby zero, we pass to the limit to deduce that
\[
0=\int_{\Gamma}\varrho\lambda_{\text{a.c.}}|_{\partial\ell_{s}\cap \Sigma}\,d\mathcal{H}^{1}(s)=\int_{\Gamma}\lambda_{\text{sing}}|_{\partial\ell_{s}\cap \Sigma}\,d\vartheta(s).
\]
Since the integrands are non-negative, they must vanish a.e. The first parts of \prettyref{eq:zeroth_order_trace_ac-1}
and \prettyref{eq:zeroth_order_trace_sing-1} are proved.

The next part of the proof has no analog in that of \prettyref{cor:non-intersecting-chars}: we  must 
show that $\partial_{\hat{\eta}^{\perp}}(\varrho\lambda_{\text{a.c.}})$
and $\partial_{\hat{\eta}^{\perp}}\lambda_{\text{sing}}$ vanish
at $V\cap\Sigma$.  The argument goes in two steps. The first step is to show that 
\begin{equation}
0\leq\partial_{\hat{\eta}^{\perp}(s)}(\varrho\lambda_{\text{a.c.}})|_{\partial\ell_{s}\cap \Sigma}\quad\text{for }\mathcal{H}^{1}\text{-a.e. }s\in \Gamma \quad\text{and}\quad0\leq\partial_{\hat{\eta}^{\perp}(s)}\lambda_{\text{sing}}|_{\partial\ell_{s}\cap \Sigma}\quad\text{for }\vartheta\text{-a.e. }s\in \Gamma\label{eq:non-negative_firstordertraces}
\end{equation}
using the non-negativity of $\mu$. Here, we understand that each $\ell_s$ is oriented so that it starts at $\Sigma$ and ends at $\partial V$, i.e., we take $\hat{\eta}^{\perp}$ to point away from $\Sigma$. (This is possible due to  \prettyref{eq:along-sing-curve-hyp1b}.) 
The second step is to deduce from the first equation in \prettyref{eq:wrinkling-PDEs} the matching conditions
\begin{equation}
\begin{gathered}
0=\partial_{\hat{\eta}^{\perp}(s_{+})}(\varrho\lambda_{\text{a.c.}})|_{\partial\ell_{s_{+}}\cap \Sigma}+\partial_{\hat{\eta}^{\perp}(s_{-})}(\varrho\lambda_{\text{a.c.}})|_{\partial\ell_{s_{-}}\cap \Sigma}\quad\text{and}\quad0=\partial_{\hat{\eta}^{\perp}(s_{+})}\lambda_{\text{sing}}|_{\partial\ell_{s_{+}}\cap \Sigma}+\partial_{\hat{\eta}^{\perp}(s_{-})}\lambda_{\text{sing}}|_{\partial\ell_{s_{-}}\cap \Sigma} \\ 
\text{respectively for }\mathcal{H}^{1}\text{-a.e.\ and } \vartheta\text{-a.e.\ } s_{\pm}\in\Gamma_{\pm}
\text{ such that } \partial\ell_{s_{+}}\cap\partial\ell_{s_{-}}\cap V\cap\Sigma \neq\emptyset.
\end{gathered}\label{eq:matching}
\end{equation}
Note there is a one-to-one correspondence between points $z\in V\cap\Sigma$ and pairs $s_{\pm}\in\Gamma_\pm$ satisfying $\{z\} = \partial\ell_{s_+}\cap\partial\ell_{s_-}$, due to our hypotheses  \prettyref{eq:along-sing-curve-hyp1b} and \prettyref{eq:along-sing-Liphyp}. So, the last two quantifications in \prettyref{eq:matching} make sense. Combining \prettyref{eq:non-negative_firstordertraces} and \prettyref{eq:matching} yields the remaining parts of  \prettyref{eq:zeroth_order_trace_ac-1}
and \prettyref{eq:zeroth_order_trace_sing-1}.

First, we handle \prettyref{eq:non-negative_firstordertraces}. Since $\mu\geq0$, the densities $\varrho\lambda_{\text{a.c.}}$ and
$\lambda_{\text{sing}}$ 
are non-negative. That is,
\[
0\leq\int_{\ell_{s}}\psi\varrho\lambda_{\text{a.c.}}\,d\mathcal{H}^{1}\quad\text{for }\mathcal{H}^{1}\text{-a.e. }s\quad\text{and}\quad0\leq\int_{\ell_{s}}\psi\lambda_{\text{sing}}\,d\mathcal{H}^{1}\quad\text{for }\vartheta\text{-a.e. }s
\]
whenever $\psi \geq 0$. Let $\chi\in C_{c}((1,2))$ be non-negative and integrate to one, and define $\psi_{\delta}\in C(\ell_s)$ for $\delta >0$ by
\[
\psi_{\delta}(x)=\frac{1}{|x-z|}\cdot\frac{1}{\delta}\chi\left(\frac{|x-z|}{\delta}\right)\quad\text{where}\quad \{z\} = \partial\ell_s \cap V\cap \Sigma.
\]
Recall from \prettyref{lem:disintegration} that $\varrho\lambda_{\text{a.c.}}\in W^{2,1}(\ell_s)$ and $\lambda_{\text{sing}}\in W^{2,\infty}(\ell_s)$ for $\mathcal{H}^1$- and $\vartheta$-a.e.\ $s$, respectively. Above, we proved that they vanish at a.e.\ $\partial\ell_{s}\cap\Sigma$. Therefore, 
\[
\int_{\ell_{s}}\psi_{\delta}\varrho\lambda_{\text{a.c.}}\,d\mathcal{H}^{1}\to\partial_{\hat{\eta}^{\perp}(s)}(\varrho\lambda_{\text{a.c.}})|_{\partial\ell_{s}\cap{\Sigma}}\quad\text{and}\quad\int_{\ell_{s}}\psi_{\delta}\lambda_{\text{sing}}\,d\mathcal{H}^{1}\to\partial_{\hat{\eta}^{\perp}(s)}\lambda_{\text{sing}}|_{\partial\ell_{s}\cap{\Sigma}} \quad \text{for a.e.\ } s
\]
and with this the desired inequalities \prettyref{eq:non-negative_firstordertraces} follow.

Finally, we prove the matching conditions \prettyref{eq:matching}. Testing the first part of \prettyref{eq:wrinkling-PDEs} against $\psi\in C_{c}^{\infty}(V)$ and applying the disintegration formulas  \prettyref{eq:Leb_disintegration} and \prettyref{eq:disintegration_2} yields the identity
\begin{align*}
\int_{V}\psi\det\nabla\nabla p\,dx & =\int_{V}\left\langle -\frac{1}{2}\nabla^{\perp}\nabla^{\perp}\psi,\mu\right\rangle \\
 & =\int_{\Gamma}\left[\int_{\ell_{s}}-\frac{1}{2}\partial_{\hat{\eta}^{\perp}(s)}^{2}\psi\varrho\lambda_{\text{a.c.}}\,d\mathcal{H}^{1}\right]\,d\mathcal{H}^{1}(s)+\int_{\Gamma}\left[\int_{\ell_{s}}-\frac{1}{2}\partial_{\hat{\eta}^{\perp}(s)}^{2}\psi\lambda_{\text{sing}}\,d\mathcal{H}^{1}\right]\,d\vartheta(s).
\end{align*}
Integrating by parts twice with the ODEs \prettyref{eq:ODE_ac} and \prettyref{eq:ODE_sing}, there follows
\begin{align*}
\int_{\Gamma}\left[\int_{\ell_{s}}\partial_{\hat{\eta}^{\perp}(s)}^{2}\psi\varrho\lambda_{\text{a.c.}}\,d\mathcal{H}^{1}\right]\,d\mathcal{H}^{1}(s)  &=\int_{\Gamma}\left(\psi\partial_{\hat{\eta}^{\perp}(s)}(\varrho\lambda_{\text{a.c.}})\right)|_{\partial\ell_{s}\cap{\Sigma}}\,d\mathcal{H}^{1}(s)-2\int_{V}\psi\det\nabla\nabla p\,dx,\\
\int_{\Gamma}\left[\int_{\ell_{s}}\partial_{\hat{\eta}^{\perp}(s)}^{2}\psi\lambda_{\text{sing}}\,d\mathcal{H}^{1}\right]\,d\vartheta(s)&=\int_{\Gamma}\left(\psi\partial_{\hat{\eta}^{\perp}(s)}\lambda_{\text{sing}}\right)|_{\partial\ell_{s}\cap{\Sigma}}\,d\vartheta(s).
\end{align*}
Setting these formulas into the identity above and cancelling like terms proves that
\[
0=\int_{\Gamma}\left(\psi\partial_{\hat{\eta}^{\perp}(s)}(\varrho\lambda_{\text{a.c.}})\right)|_{\partial\ell_{s}\cap{\Sigma}}\,d\mathcal{H}^{1}(s)+\int_{\Gamma}\left(\psi\partial_{\hat{\eta}^{\perp}(s)}\lambda_{\text{sing}}\right)|_{\partial\ell_{s}\cap{\Sigma}}\,d\vartheta(s)
\]
for all $\psi\in C_{c}^{\infty}(V)$.  The conditions in \prettyref{eq:matching} now follow from the correspondence between points $z\in V\cap\Sigma$ and pairs $s_\pm \in \Gamma_\pm$, and the fact that $\mathcal{H}^{1}\perp\vartheta$ on $\Gamma = \Gamma_+ \cup \Gamma_-$. 
\qed\end{proof}

\paragraph*{\uline{Stable lines meeting at an interior point}. \quad{}}

We end with the possibility in Panel (c) of \prettyref{fig:characteristics}. 
Let $O$ be the ordered set of $\varphi$ from \prettyref{eq:partition}, and suppose there exists a point $x_0\in \Omega$ and an open set $V\subset O\cup \{x_0\}$ such that
\begin{equation}
x_0 \in V\quad\text{and}\quad V\backslash\left\{ x_{0}\right\} =\cup_{s\in\Gamma}\ell_{s}\quad\text{where}\quad\ell_{s}||\hat{e}_{r}(s)\quad\forall\,s\in\Gamma.\label{eq:V-meet-at-point}
\end{equation}
We use $(r,\theta)$ to denote polar coordinates about $x_{0}$,
and $\{\hat{e}_{r},\hat{e}_{\theta}\}$ for the associated orthonormal frame. 
The set $\Gamma$ is understood to meet each stable line it indexes transversely and exactly once; we take it to be diffeomorphic to the unit circle $S^1$. It follows that
\[
\nabla\nabla\varphi=\frac{\zeta(\theta)}{r}\hat{e}_{\theta}\otimes\hat{e}_{\theta}\,dx\quad\text{on }V
\]
where $\zeta\in L^{2}((0,2\pi))$ is locally uniformly positive a.e. 
Evidently, $x_0$ belongs to the singular set $\Sigma$ of $\varphi$, as $1/r$ is not square integrable on any neighborhood of the origin. Denote $\partial_{r}=\partial_{\hat{e}_{r}}$. 
\begin{corollary} \label{cor:chars-meet-at-point} Suppose $\varphi$ admits some $V$ satisfying
\prettyref{eq:V-meet-at-point}, and let $\mu\in\mathcal{M}_{+}(\Omega;\emph{Sym}_{2})$
solve \prettyref{eq:wrinkling-PDEs}. Then
\[
\mu=\lambda\hat{e}_{\theta}\otimes\hat{e}_{\theta}\,dx\quad\text{on }V
\]
where $\lambda:V\to[0,\infty)$ is determined as the unique weak
solution of the Cauchy problem 
\[
\begin{cases}
-\frac{1}{2r}\partial_{r}^{2}(r\lambda)=\det\nabla\nabla p & \text{on }\ell_{s}\\
r\lambda=\partial_{r}(r\lambda)=0 & \text{at }\partial\ell_{s}\cap\{x_{0}\}
\end{cases}
\]
upon restriction to $\mathcal{H}^{1}$-a.e.\ $\ell_{s}$. 
\end{corollary}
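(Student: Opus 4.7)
The plan is to mirror the proof of \prettyref{cor:chars-meet-along-curve}, with the role of the interior jump curve replaced by the singular point $x_0$. First, \prettyref{lem:structure} applied to $V\setminus\{x_0\}\subset O$ (where the vector field $\hat{\eta}$ of \prettyref{lem:stablelines} equals $\hat{e}_\theta$) gives $\mu = \tilde\lambda\,\hat{e}_\theta\otimes\hat{e}_\theta$ there for some non-negative scalar measure $\tilde\lambda$. I would then rule out atoms of $\mu$ at $x_0$ by rescaling the complementary slackness condition $\int_V|\langle\mu_\delta,\nabla^\perp\nabla^\perp\varphi\rangle|\,dx\to 0$: under $x = x_0 + \delta z$, an atom $c\,\delta_{x_0}$ with $c\in\text{Sym}_2$, $c\geq 0$, would contribute $\tfrac{1}{\delta}\int_{B_1}\rho(z)\tfrac{\zeta(\theta(z))\langle c,\hat{e}_r(z)\otimes\hat{e}_r(z)\rangle}{|z|}\,dz$, which diverges unless $c = 0$, since $\rho>0$ near zero and $\zeta$ is locally uniformly positive a.e. Applying \prettyref{lem:disintegration} with $\Gamma$ a small circle around $x_0$ then yields
\[
\tilde\lambda = \lambda_{\text{a.c.}}\,dx + \int_\Gamma \lambda_{\text{sing}}\,\mathcal{H}^1\lfloor\ell_s\,d\vartheta(s)
\]
together with the associated ODEs, which in polar coordinates read $-\tfrac{1}{2r}\partial_r^2(r\lambda_{\text{a.c.}}) = \det\nabla\nabla p$ and $\partial_r^2\lambda_{\text{sing}} = 0$.

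For the zeroth-order Cauchy conditions, set $u := r\lambda_{\text{a.c.}}$. I would prove $u(0,\theta) = 0$ and $\lambda_{\text{sing}}(0,s) = 0$ simultaneously via a blow-up of the complementary slackness integral restricted to $\{|x|<\delta\}$. After using Fubini and rescaling both $x = \delta\sigma$ and $y = \delta\tilde y$, the scaled contributions from the AC and singular parts reduce (by dominated convergence) to
\[
\int_0^{2\pi}u(0,\theta)\int K(\tilde r\,\hat{e}_r(\theta))\,d\tilde r\,d\theta + \int_\Gamma \lambda_{\text{sing}}(0,s)\int K(\tilde r\,\hat{e}_r(s))\,d\tilde r\,d\vartheta(s),
\]
where $K(\tilde y) := \int\rho(z)\tfrac{\zeta(\theta(\tilde y+z))}{|\tilde y+z|}\sin^2(\theta(\tilde y+z)-\theta(\tilde y))\,dz$ is strictly positive on the relevant set (thanks to $\rho>0$ near zero and $\zeta$ locally uniformly positive). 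Since the full integral vanishes and both integrands are non-negative, $u(0,\theta) = 0$ for a.e.\ $\theta$ and $\lambda_{\text{sing}}(0,s) = 0$ for $\vartheta$-a.e.\ $s$.

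For the first-order conditions $u'(0,\theta) = 0$ and $\partial_r\lambda_{\text{sing}}(0,s) = 0$, I would combine the distributional form of \prettyref{eq:PDE_lambda} on all of $V$ with the non-negativity of $\tilde\lambda$. Testing against $\chi\in C_c^\infty(V)$ with $\chi(x_0)\neq 0$ and $\nabla\chi(x_0)=0$, and integrating by parts twice in polar coordinates using both ODEs together with the vanishing traces from the previous step, yields
\[
0 = \int_0^{2\pi}u'(0,\theta)\,d\theta + \int_\Gamma \partial_r\lambda_{\text{sing}}(0,s)\,d\vartheta(s).
\]
Both integrands are non-negative---since $u\geq 0$ with $u(0,\theta)=0$ forces $u'(0,\theta)\geq 0$, and analogously for $\lambda_{\text{sing}}$---so each vanishes a.e. The ODE $-\tfrac{1}{2r}\partial_r^2(r\lambda_{\text{a.c.}}) = \det\nabla\nabla p$ with zero Cauchy data then uniquely determines $\lambda = \lambda_{\text{a.c.}}$ by direct integration, while the singular part (affine in $r$ with vanishing value and derivative at $r=0$) is identically zero, giving $\mu = \lambda\,\hat{e}_\theta\otimes\hat{e}_\theta\,dx$ as claimed.

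The main obstacle lies in making the blow-up analysis of the complementary slackness integral rigorous: one must verify that the contributions from $|y|\gg\delta$ are genuinely negligible, and produce a dominating function justifying dominated convergence in the rescaled variables. The strict positivity of $K$ on the relevant set hinges on our freedom to take the kernel $\rho$ strictly positive on a neighborhood of zero, as already exploited in \prettyref{cor:non-intersecting-chars} and \prettyref{cor:chars-meet-along-curve}.
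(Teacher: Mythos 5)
Your proposal is correct and follows essentially the same route as the paper's proof: rule out an atom at $x_0$ via the $1/\delta$ divergence of the complementary slackness integral, disintegrate on $V\setminus\{x_0\}$, obtain the zeroth-order Cauchy data from a blow-up of that same integral near $x_0$, and get the first-order data from non-negativity combined with the matching identity produced by testing the PDE against $\psi$ with $\psi(x_0)=1$. The obstacle you flag is avoided in the paper by using Fatou's lemma in place of dominated convergence (only non-negativity of the integrands and the vanishing of the limit are needed, so no dominating function is required); the only other wrinkle is that $\Gamma\simeq S^1$ is not literally of the form \prettyref{eq:setup-for-stable-lines}, which the paper handles by patching two overlapping charts $\cup_{s\in\Gamma_i}\ell_s$ via the unique-continuation observation in \prettyref{rem:unique-continuation-stable-lines}.
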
 

\begin{proof} We follow the same outline as the proof of \prettyref{cor:chars-meet-along-curve},
with the details being modified slightly to make up for the fact that $\nabla\nabla\varphi$ lacks a singular part. The first step will be to show that
\begin{equation}
\mu(\{x_{0}\})=0.\label{eq:invisible_core}
\end{equation}
At the same time, \prettyref{lem:structure} and \prettyref{lem:disintegration} yield the disintegration formula 
\begin{equation}
\mu=\lambda_{\text{a.c.}}\hat{e}_{\theta}\otimes\hat{e}_{\theta}\,dx+\int_{\Gamma}\lambda_{\text{sing}}\hat{e}_{\theta}\otimes\hat{e}_{\theta}\mathcal{H}^{1}\lfloor\ell_{s}\,d\vartheta(s)\quad\text{on }V\backslash\left\{ x_{0}\right\} \label{eq:disintegration_3}
\end{equation}
where $\lambda_{\text{a.c.}}$ and $\lambda_{\text{sing}}$ solve the ODEs \prettyref{eq:ODE_ac} and \prettyref{eq:ODE_sing}. 
It is here that we use the unique continuation-type result from \prettyref{rem:unique-continuation-stable-lines}. This allows to prove \prettyref{eq:disintegration_3} by finding two neighborhoods $V_i=\cup_{\Gamma_i}\ell_s$ of the form \prettyref{eq:setup-for-stable-lines} with $V\backslash\{x_0\} = V_1\cup V_2$ and $\Gamma\cap V_i = \Gamma_i$, and by applying \prettyref{lem:disintegration} to each $V_i$. That the individual disintegration formulas agree on $V_1\cap V_2$ is assured by the remark. 
The final step of the proof is to establish the initial conditions 
\begin{align}
r\lambda_{\text{a.c.}}|_{\partial\ell_{s}}(x_{0}) & =\partial_{r}(r\lambda_{\text{a.c.}})|_{\partial\ell_{s}}(x_{0})=0\quad\text{for }\mathcal{H}^{1}\text{-a.e. }s,\label{eq:zeroth_order_trace_ac-1-1}\\
\lambda_{\text{sing}}|_{\partial\ell_{s}}(x_{0}) & =\partial_{r}\lambda_{\text{sing}}|_{\partial\ell_{s}}(x_{0})=0\quad\text{for }\vartheta\text{-a.e. }s.\label{eq:zeroth_order_trace_sing-1-1}
\end{align}
We show these via the first complementary slackness
condition in \prettyref{eq:wrinkling-PDEs}, which implies that 
\begin{equation}
0=\lim_{\delta\to0}\,\int_{V}\left|\left\langle \frac{\zeta(\theta)}{r}\hat{e}_{r}\otimes\hat{e}_{r},\mu_{\delta}\right\rangle \right|\,dx.\label{eq:vanishing_mass}
\end{equation}
Note $\{\mu_{\delta}\}_{\delta>0}$ are given by \prettyref{eq:mollified_measure_defn-intro} where the mollifying kernel $\rho \in C_c^\infty(B_1)$ is non-negative and integrates to one. As in the previous two proofs, we take $\rho >0$ nearby zero (that this is possible was noted in \prettyref{rem:complementary-slackness-freedom}).

We start with the proof of \prettyref{eq:invisible_core}. Given any $x\in V$ we can write that
\[
\mu_{\delta}(x)\geq\rho\left(\frac{x-x_{0}}{\delta}\right)\mu(\{x_{0}\})
\]
since $\rho$ and $\mu$ are non-negative. It follows that
\[
\int_{V}\left\langle \frac{\zeta}{r}\hat{e}_{r}\otimes\hat{e}_{r},\mu_{\delta}\right\rangle \,dx\geq\frac{1}{\delta}\left\langle \int_{B_\delta(x_0)} \zeta \frac{\delta}{r}  \hat{e}_{r}\otimes\hat{e}_{r}(x)\rho\left(\frac{x-x_{0}}{\delta}\right)\,\frac{dx}{\delta^{2}},\mu(\{x_{0}\})\right\rangle 
\]
for all sufficiently small $\delta>0$. The integral on the lefthand side tends to zero by \prettyref{eq:vanishing_mass}. Recall $\zeta$ is locally uniformly positive a.e., i.e., every $\theta \in (0,2\pi)$ admits a neighborhood on which $\zeta \geq c(\theta)>0$ a.e. Recall also that we chose to take $\rho >0$ nearby zero. Multiplying by $\delta$ and sending $\delta \to 0$ proves that
\[
\left\langle \hat{x}\otimes\hat{x},\mu(\{x_{0}\})\right\rangle =0\quad\forall\,\hat{x}\in S^{1}.
\]
It follows that $\mu(\{x_{0}\})=0$ as claimed in \prettyref{eq:invisible_core}.

We proceed to determine $\mu$ on $V\backslash\{x_0\}$. The formula \prettyref{eq:disintegration_3} follows as indicated above. We proceed to show the initial conditions \prettyref{eq:zeroth_order_trace_ac-1-1}
and \prettyref{eq:zeroth_order_trace_sing-1-1}. First, we consider the traces of $r\lambda_{\text{a.c.}}$ and $\lambda_{\text{sing}}$.
Given $x\in V$, 
\begin{align*}
\mu_{\delta}(x)\geq\int_{V\backslash\left\{ x_{0}\right\} }\frac{1}{\delta^{2}}\rho\left(\frac{x-y}{\delta}\right)\,d\mu(y)=\int_{\Gamma}\left[\int_{y\in\ell_{s}}\frac{1}{\delta^{2}}\rho\left(\frac{x-y}{\delta}\right)\hat{e}_{\theta}\otimes\hat{e}_{\theta}(y)r\lambda_{\text{a.c.}}(y)\,d\mathcal{H}^{1}(y)\right]\,d\mathcal{H}^{1}(s)\\
+\int_{\Gamma}\left[\int_{y\in\ell_{s}}\frac{1}{\delta^{2}}\rho\left(\frac{x-y}{\delta}\right)\hat{e}_{\theta}\otimes\hat{e}_{\theta}(y)\lambda_{\text{sing}}(y)\,d\mathcal{H}^{1}(y)\right]\,d\vartheta(s)
\end{align*}
by \prettyref{eq:Leb_disintegration} and \prettyref{eq:disintegration_3}. 
It follows from Fubini's theorem that
\begin{align*}
\int_{V} & \left\langle \frac{\zeta}{r}\hat{e}_{r}\otimes\hat{e}_{r},\mu_{\delta}\right\rangle \,dx\\
 & \geq\int_{\Gamma}\left[\int_{y\in\ell_{s}}\int_{x\in B_\delta(x_0) }\zeta\frac{\delta}{r}(x)|\hat{e}_{r}(x)\cdot\hat{e}_{\theta}(y)|^{2}r\lambda_{\text{a.c.}}(y)\rho\left(\frac{x-y}{\delta}\right)\,\frac{dxd\mathcal{H}^{1}(y)}{\delta^{3}}\right]\,d\mathcal{H}^{1}(s)\\
 & \qquad+\int_{\Gamma}\left[\int_{y\in\ell_{s}}\int_{x\in B_\delta(x_0) }\zeta\frac{\delta}{r}(x)|\hat{e}_{r}(x)\cdot\hat{e}_{\theta}(y)|^{2}\lambda_{\text{sing}}(y)\rho\left(\frac{x-y}{\delta}\right)\,\frac{dxd\mathcal{H}^{1}(y)}{\delta^{3}}\right]\,d\vartheta(s)
\end{align*}
for all small enough $\delta >0$. Again, the lefthand side limits to zero by
\prettyref{eq:vanishing_mass}. Consider the terms on the right. The function $\zeta$ is locally uniformly positive a.e. Even though $\hat{e}_{r}(x)\cdot\hat{e}_{\theta}(y)=0$ when $x=y$, the
typical value of $|\hat{e}_{r}(x)\cdot\hat{e}_{\theta}(y)|$ is bounded away from zero. The kernel $\rho >0$ nearby zero by choice. Passing to the limit via Fatou's lemma proves that
\[
0=\int_{\Gamma}r\lambda_{\text{a.c.}}|_{\partial\ell_{s}}(x_{0})\,d\mathcal{H}^{1}(s)=\int_{\Gamma}\lambda_{\text{sing}}|_{\partial\ell_{s}}(x_{0})\,d\vartheta(s).
\]
As the integrands are non-negative they must vanish a.e. The first parts of \prettyref{eq:zeroth_order_trace_ac-1-1}
and \prettyref{eq:zeroth_order_trace_sing-1-1} are proved.

Finally, we must check that $\partial_{r}(r\lambda_{\text{a.c.}})$ and
$\partial_{r}\lambda_{\text{sing}}$ vanish at $x_{0}$. This last
part of the proof is directly analogous to that of \prettyref{cor:chars-meet-along-curve}.
In fact, it is so similar that we omit most of the details for brevity's
sake. First, observe that 
\begin{equation}
0\leq\partial_{r}(r\lambda_{\text{a.c.}})|_{\partial\ell_{s}}(x_{0})\quad\text{for }\mathcal{H}^{1}\text{-a.e. }s\in \Gamma\quad\text{and}\quad0\leq\partial_{r}\lambda_{\text{sing}}|_{\partial\ell_{s}}(x_{0})\quad\text{for }\vartheta\text{-a.e. }s\in \Gamma.\label{eq:non-negative_firstordertraces-1}
\end{equation}
These hold since the functions $r\lambda_{\text{a.c.}}$ and $\lambda_{\text{sing}}$ are non-negative, while their traces at $x_0$ were shown to vanish in the previous step. Second, note the matching conditions
\begin{equation}
0=\int_{\Gamma}\partial_{r}(r\lambda_{\text{a.c.}})|_{\partial\ell_{s}}(x_{0})\,d\mathcal{H}^{1}(s)=\int_{\Gamma}\partial_{r}\lambda_{\text{sing}}|_{\partial\ell_{s}}(x_{0})\,d\vartheta(s)\label{eq:matching_condition_circle}
\end{equation}
which arise from the first part of \prettyref{eq:wrinkling-PDEs} as $\hat{e}_{\theta}$ is discontinuous at $x_{0}$. Together, \prettyref{eq:non-negative_firstordertraces-1} and \prettyref{eq:matching_condition_circle} yield the remaining parts of \prettyref{eq:zeroth_order_trace_ac-1-1}
and \prettyref{eq:zeroth_order_trace_sing-1-1}. 

Let us briefly explain the proof of the matching conditions. Suppose $\psi\in C_{c}^{\infty}(V)$
has $\psi(x_{0})=1$. Combining the first part of \prettyref{eq:wrinkling-PDEs} with \prettyref{eq:Leb_disintegration} and \prettyref{eq:disintegration_3} yields that
\[
\int_{V}\psi\det\nabla\nabla p\,dx 
  =\int_{\Gamma}\left[\int_{\ell_{s}}-\frac{1}{2}\partial_{r}^{2}\psi r\lambda_{\text{a.c.}}\,d\mathcal{H}^{1}\right]\,d\mathcal{H}^{1}(s)+\int_{\Gamma}\left[\int_{\ell_{s}}-\frac{1}{2}\partial_{r}^{2}\psi\lambda_{\text{sing}}\,d\mathcal{H}^{1}\right]\,d\vartheta(s).
\]
Integrating by parts with the ODEs \prettyref{eq:ODE_ac} and \prettyref{eq:ODE_sing}
proves the identities 
\begin{align*}
\int_{\Gamma}\left[\int_{\ell_{s}}\partial_{r}^{2}\psi r\lambda_{\text{a.c.}}\,d\mathcal{H}^{1}\right]\,d\mathcal{H}^{1}(s) & =\int_{\Gamma}\partial_{r}(r\lambda_{\text{a.c.}})|_{\partial\ell_{s}}(x_{0})\,d\mathcal{H}^{1}(s)-2\int_{V}\psi\det\nabla\nabla p\,dx,\\
\int_{\Gamma}\left[\int_{\ell_{s}}\partial_{r}^{2}\psi\lambda_{\text{sing}}\,d\mathcal{H}^{1}\right]\,d\vartheta(s) & =\int_{\Gamma}\partial_{r}\lambda_{\text{sing}}|_{\partial\ell_{s}}(x_{0})\,d\vartheta(s).
\end{align*}
The desired conditions \prettyref{eq:matching_condition_circle}
 follow. \qed\end{proof}

\section{Application to shells with curvature of known sign\label{sec:examples}}

This final section combines all of our previous results to deduce the patterns seen in weakly curved, floating shells. 
In particular, we shall derive the diagrams in \prettyref{fig:library_of_patterns} and use them to demonstrate our method of stable lines. 
This should serve to complement the general presentation of the method in \prettyref{sec:method_of_characteristics}.
In addition to the basic assumptions in \prettyref{eq:A1a}, each of the examples we discuss will be subject to the simplifying hypotheses that 
\[
\Omega\text{ is simply connected}\quad\text{and}\quad\det\nabla\nabla p\text{ is a.e.\ of one sign}.
\]
Note when we refer to ``optimal'' $\mu$ we mean solutions of the primal problem(s) in \prettyref{eq:primal-1}. Equivalently, these are non-negative solutions of the boundary value problem \prettyref{eq:wrinkling-PDEs} where $\varphi$ solves the dual. Our earlier results show that, under the assumptions \vpageref{par:Assumptions}, such $\mu$ are nothing other than the defect measures of the almost minimizers of $E_{b,k,\gamma}$. As usual, any reference to almost minimizers is contingent upon the $\Gamma$-convergence in \prettyref{thm:gamma-lim}.

This section is organized as follows. We begin in \prettyref{subsec:Solution-formulas-optimal-Airys} with the general task of solving the dual problem
\begin{equation}
\max_{\substack{\varphi:\mathbb{R}^{2}\to\mathbb{R}\\
\varphi\text{ is convex}\\
\varphi=\frac{1}{2}|x|^{2}\text{ on }\mathbb{R}^{2}\backslash\Omega
}
}\,\int_{\Omega}(\varphi-\frac{1}{2}|x|^{2})\det\nabla\nabla p\,dx\label{eq:dual_pblm_apps}
\end{equation}
under the assumption that the shell is initially positively or negatively curved. We show how this boils down to finding either the largest or the smallest convex extension $\varphi_+$ or $\varphi_-$ of $\frac{1}{2}|x|^2$ into $\Omega$, and obtain two more or less explicit geometric optimization procedures for doing so. Thus, we prove \prettyref{prop:explicit_formulas}.

We then go on to the examples.  
\prettyref{subsec:Positively-curved-shells} treats various positively curved shells, including the ones depicted in Panel (a) of \prettyref{fig:library_of_patterns}. Applying \prettyref{cor:non-intersecting-chars}, we learn that optimal $\mu$ are uniquely determined on the ordered set $O$ of $\varphi_+$, and furthermore that they satisfy
\begin{equation}
\mu=\lambda\hat{\eta}\otimes\hat{\eta}\,dx\quad\text{on }O,\quad\text{where}\quad\begin{cases}
-\frac{1}{2\varrho}\partial_{\hat{\eta}^{\perp}}^{2}(\varrho\lambda)=\det\nabla\nabla p & \text{on }O\\
\varrho\lambda=0 & \text{at }\partial O\cap\partial\Omega
\end{cases}.\label{eq:conclusion_pos}
\end{equation}
Here, $\hat{\eta}$ is a suitable choice of normal to the stable lines of $\varphi_+$. These will turn out to extend between points on $\partial\Omega$ throughout the set $O$ where they are defined.

A parallel discussion of negatively curved shells is in \prettyref{subsec:Negatively-curved-shells}. 
We show how the stable lines of $\varphi_-$ follow the paths of quickest exit from $\Omega$, as indicated in Panel (b) of \prettyref{fig:library_of_patterns}. Such paths meet at the \emph{medial axis} 
\begin{equation}
M=\left\{ x\in\Omega:d_{\partial\Omega}(x)=|x-y|\text{ for multiple }y\in\partial\Omega\right\} \label{eq:medialaxis}
\end{equation}
shown in bold. Apparently, our negatively curved examples are such that their stable lines fill out the given shells. Applying \prettyref{cor:chars-meet-along-curve} or \prettyref{cor:chars-meet-at-point}, we consequently show that optimal $\mu$ are unique and that they satisfy
\begin{equation}
\mu=\lambda\nabla^\perp d_{\partial\Omega}\otimes\nabla^\perp d_{\partial\Omega}\,dx\quad\text{on }\Omega,\quad\text{where}\quad\begin{cases}
-\frac{1}{2\varrho}\partial_{\nabla d_{\partial\Omega}}^{2}(\varrho\lambda)=\det\nabla\nabla p & \text{on }\Omega\backslash\overline{M}\\
\varrho\lambda=\partial_{\nabla d_{\partial\Omega}}(\varrho\lambda)=0 & \text{at }\overline{M}
\end{cases}.\label{eq:conclusion_neg}
\end{equation} 
We close with a general conjecture on the (conditional) uniqueness of optimal $\mu$.

To be clear, the systems in \prettyref{eq:conclusion_pos} and \prettyref{eq:conclusion_neg}, and that we derive in the examples below, are only abbreviated versions of the ones implied by \prettyref{cor:non-intersecting-chars}-\prettyref{cor:chars-meet-at-point}: 
they indicate a situation where $\varrho\lambda$ restricts to $\mathcal{H}^{1}$-a.e.\ stable
line $\ell_{s}$ as the unique weak solution of an appropriate two-point
boundary value or Cauchy problem (obtained by restoring the $s$-dependences as in the corollaries). We refer to such abbreviated systems throughout.

\subsection{Optimal Airy potentials and their stable lines\label{subsec:Solution-formulas-optimal-Airys}}

We begin by proving \prettyref{prop:explicit_formulas}. Accordingly, we let $\Omega\subset \mathbb{R}^2$ be a bounded, Lipschitz domain that is simply connected, and let $p\in W^{2,2}(\Omega)$ be such that $\det \nabla \nabla p$ is a.e.\ of one sign. 
Define the functions $\varphi_\pm:\mathbb{R}^2\to\mathbb{R}$ by 
\begin{align*}
\varphi_{+}(x) & =\sup\left\{ \varphi(x):\varphi\text{ is convex on }\mathbb{R}^{2}\text{ and equals }\frac{1}{2}|\cdot|^{2}\text{ on }\mathbb{R}^{2}\backslash\Omega\right\} ,\\
\varphi_{-}(x) & =\inf\left\{ \varphi(x):\varphi\text{ is convex on }\mathbb{R}^{2}\text{ and equals }\frac{1}{2}|\cdot|^{2}\text{ on }\mathbb{R}^{2}\backslash\Omega\right\} 
\end{align*}
for all $x\in \mathbb{R}^2$. Clearly, if $\varphi$ is admissible in the dual problem \prettyref{eq:dual_pblm_apps} then 
\begin{equation}
\varphi_{-}(x)\leq\varphi(x)\leq\varphi_{+}(x)\quad\forall\,x. \label{eq:lower-upper-bds}
\end{equation}

\begin{lemma} \label{lem:explicit_formulas-1} The functions $\varphi_{+}$ and $\varphi_{-}$
are convex, and are equal to $\frac{1}{2}|x|^{2}$
on $\mathbb{R}^{2}\backslash\Omega$. Therefore, they are the largest and smallest convex extensions of $\frac{1}{2}|x|^2$ into $\Omega$. Furthermore, the formulas
\prettyref{eq:convexroof_inf-1} and \prettyref{eq:squareddistance}
hold: given $x\in\Omega$, 
\begin{equation}
\varphi_{+}(x)=\min_{\{y_{i}\}\subset\partial\Omega}\,\sum_{i=1}^{3}\theta_{i}\frac{1}{2}|y_{i}|^{2}\label{eq:convexroof_inf}
\end{equation}
where the minimization is taken over all pairs and triples $\{y_{i}\}\subset\partial\Omega$
satisfying 
\[
x=\sum_{i}\theta_{i}y_{i}\quad\text{where }\{\theta_{i}\}\subset(0,1)\text{ satisfies }\sum_{i}\theta_{i}=1;
\]
also 
\begin{equation}
\varphi_{-}(x)=\frac{1}{2}|x|^{2}-\frac{1}{2}d_{\partial\Omega}^{2}(x)\quad\text{where}\quad d_{\partial\Omega}(x)=\min_{y\in\partial\Omega}\,|x-y|.\label{eq:min-exit-time}
\end{equation}
\end{lemma}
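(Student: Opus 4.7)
The lemma has four parts: convexity, the outer boundary values, the extremality as extensions, and the two explicit formulas. Items other than the formulas are nearly automatic. The function $\varphi_{+}$ is convex as a pointwise supremum of convex functions and is bounded (hence well defined) by the Carath\'eodory argument below. Both $\varphi_{\pm}=\tfrac{1}{2}|x|^{2}$ on $\mathbb{R}^{2}\backslash\Omega$ because $\tfrac{1}{2}|x|^{2}$ is itself an admissible convex extension. Once \eqref{eq:squareddistance} is established, convexity of $\varphi_{-}$ will follow. The extremality as convex extensions is then immediate, since $\varphi_{\pm}$ are themselves admissible in the respective definitions.

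For \eqref{eq:squareddistance} my plan is to introduce $\psi(x):=\tfrac{1}{2}|x|^{2}-\tfrac{1}{2}d_{\partial\Omega}^{2}(x)$ on $\overline{\Omega}$, extend by $\tfrac{1}{2}|x|^{2}$ outside, and prove $\varphi_{-}=\psi$. Completing the square gives
\begin{equation*}
\psi(x)=\sup_{y\in\partial\Omega}\bigl[x\cdot y-\tfrac{1}{2}|y|^{2}\bigr]\quad\text{for }x\in\overline{\Omega},
\end{equation*}
so $\psi$ is convex on $\overline{\Omega}$ as a supremum of affine functions. To deduce that $\psi$ is convex on all of $\mathbb{R}^{2}$ I would construct a subgradient at every point. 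At $x_{0}\in\Omega$ take any closest boundary point $y_{0}$: on $\overline{\Omega}$ the required bound $\psi(x)\geq\psi(x_{0})+y_{0}\cdot(x-x_{0})$ is a one-term estimate from the sup representation, and on $\mathbb{R}^{2}\backslash\overline{\Omega}$ it reduces to $\tfrac{1}{2}|x-y_{0}|^{2}\geq 0$. At $x_{0}\in\mathbb{R}^{2}\backslash\overline{\Omega}$ take the subgradient $x_{0}$: on $\mathbb{R}^{2}\backslash\Omega$ this is immediate, and for $x\in\Omega$ the bound $d_{\partial\Omega}(x)\leq|x-x_{0}|$ holds because the segment $[x,x_{0}]$ must cross $\partial\Omega$. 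So $\psi$ is admissible and $\varphi_{-}\leq\psi$. For the matching inequality, fix any admissible $\varphi$ and any $y'\in\mathbb{R}^{2}\backslash\overline{\Omega}$; there $\varphi$ agrees with $\tfrac{1}{2}|x|^{2}$ in a neighborhood and is smooth with $\nabla\varphi(y')=y'$, so convexity gives $\varphi(x)\geq y'\cdot x-\tfrac{1}{2}|y'|^{2}$. Letting $y'\to y\in\partial\Omega$ by continuity and then taking the supremum over $y\in\partial\Omega$ yields $\varphi\geq\psi$, whence $\varphi_{-}\geq\psi$.

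For \eqref{eq:convexroof_inf} my plan is to identify $\varphi_{+}$ with the biconjugate $f^{**}$ of the lower semi-continuous function $f$ equal to $\tfrac{1}{2}|x|^{2}$ on $\mathbb{R}^{2}\backslash\Omega$ and $+\infty$ on $\Omega$. Since $\tfrac{1}{2}|x|^{2}$ is a convex minorant of $f$ that equals $f$ on $\mathbb{R}^{2}\backslash\Omega$, the biconjugate $f^{**}$ is itself an admissible convex extension of $\tfrac{1}{2}|\cdot|^{2}$, and any admissible $\varphi$ satisfies $\varphi\leq f^{**}$ as the largest convex minorant of $f$; hence $\varphi_{+}=f^{**}$. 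By the standard Carath\'eodory formula for biconjugates in $\mathbb{R}^{2}$, on $\Omega$ one has $f^{**}(x)=\min\{\sum_{i=1}^{3}\theta_{i}\tfrac{1}{2}|y_{i}|^{2}:x=\sum\theta_{i}y_{i},\ y_{i}\in\mathbb{R}^{2}\backslash\Omega,\ \theta_{i}\geq 0,\ \sum\theta_{i}=1\}$, the minimum being attained by compactness. To match \eqref{eq:convexroof_inf} I must push the $y_{i}$'s onto $\partial\Omega$ and ensure $\theta_{i}\in(0,1)$; the latter is automatic since $x\in\Omega$ rules out $\theta_{i}=1$. For the former, given any $y_{i}\notin\overline{\Omega}$ in an optimal triple, the segment $[x,y_{i}]$ meets $\partial\Omega$ at some $z=(1-s)y_{i}+sx$ with $s\in(0,1)$, whence $y_{i}=\tfrac{1}{1-s}(z-sx)$. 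Substituting and renormalizing produces a new decomposition using $z\in\partial\Omega$ in place of $y_{i}$, and the convexity estimate $\tfrac{1}{2}|z|^{2}\leq(1-s)\tfrac{1}{2}|y_{i}|^{2}+s\tfrac{1}{2}|x|^{2}$ together with a direct calculation shows its cost does not exceed the original; iterating produces a minimizer with all $y_{i}\in\partial\Omega$.

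The main obstacle is the convexity check across $\partial\Omega$ for both candidate functions, which is what turns a supremum of affine functions on the interior into a function convex on all of $\mathbb{R}^{2}$. It is resolved by the subgradient construction outlined above, which relies on the geometric observation that any segment joining $\Omega$ to $\mathbb{R}^{2}\backslash\overline{\Omega}$ must cross $\partial\Omega$.
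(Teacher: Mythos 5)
Your proposal is correct and follows essentially the same route as the paper: for $\varphi_{+}$ you identify the supremum with the convex envelope (biconjugate) of the function equal to $+\infty$ on $\Omega$ and $\tfrac{1}{2}|x|^{2}$ outside, then invoke the Carath\'eodory representation; for $\varphi_{-}$ you use the completed-square representation as a supremum of affine functions together with the closest-boundary-point argument for the lower bound $\varphi\geq\psi$. Two small remarks on execution. First, the paper gets global convexity of $\psi$ in one line by writing $\psi(x)=\max_{y\notin\Omega}\,x\cdot y-\tfrac{1}{2}|y|^{2}$ with the supremum over \emph{all} exterior points rather than just $\partial\Omega$ (for $x\notin\Omega$ the choice $y=x$ makes the distance term vanish), which avoids your case-by-case subgradient patching entirely; if you keep your version, note that you skipped the case $x_{0}\in\partial\Omega$, though the same subgradient $x_{0}$ works there. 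Second, in pushing the points $y_{i}$ onto $\partial\Omega$, the paper simply appeals to strict convexity of $\tfrac{1}{2}|y|^{2}$, whereas your replacement-and-renormalization computation, once carried out, shows the new cost is a convex combination $\lambda C+(1-\lambda)\tfrac{1}{2}|x|^{2}$ of the old cost $C$ and $\tfrac{1}{2}|x|^{2}$; to conclude it does not exceed $C$ you need the observation that $C\geq\tfrac{1}{2}|x|^{2}$, which holds because $\tfrac{1}{2}|x|^{2}$ is itself an admissible convex extension so every admissible cost is at least $\varphi_{+}(x)\geq\tfrac{1}{2}|x|^{2}$. With those two points made explicit the argument is complete.
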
 \begin{proof} The convexity of $\varphi_{+}$ is clear,
as the pointwise supremum of convex functions is convex. It is also clear that $\varphi_+ = \frac{1}{2}|x|^2$ outside of $\Omega$. The formula
\prettyref{eq:convexroof_inf} now follows from the dual characterization
of the convex envelope of a function as the infimum of convex combinations
of its graph (see, e.g., \cite[Theorem 2.35]{dacorogna2008direct}).
Indeed, we recognize from its definition that $\varphi_{+}$ is the largest convex function bounding the function $\mathscr{U}:\mathbb{R}^{2}\to\mathbb{R}$
equal to $\infty$ on $\Omega$ and $\frac{1}{2}|x|^{2}$ on $\mathbb{R}^{2}\backslash\Omega$ from below. That is, it is the convex envelope of $\mathscr{U}$. Applying the dual characterization, we get that 
\begin{align*}
\varphi_{+}(x) & =\inf\left\{ \sum_{i=1}^{3}\theta_{i}\mathscr{U}(y_{i}):y_{i}\in\mathbb{R}^{2}\text{ and }\theta_{i}\in[0,1]\text{ for }i=1,2,3,\ x=\sum_{i=1}^{3}\theta_{i}y_{i},\ \sum_{i=1}^{3}\theta_{i}=1\right\} \\
 & =\inf\left\{ \sum_{i=1}^{3}\theta_{i}\frac{1}{2}|y_{i}|^{2}:y_{i}\in\mathbb{R}^{2}\backslash\Omega\text{ and }\theta_{i}\in[0,1]\text{ for }i=1,2,3,\ x=\sum_{i=1}^{3}\theta_{i}y_{i},\ \sum_{i=1}^{3}\theta_{i}=1\right\}.
\end{align*}
The minimization can be parameterized by $\{y_{i}\}$, as once these have been chosen  $\{\theta_{i}\}$ are determined. And, as $\frac{1}{2}|x|^{2}$
is strictly convex, the minimizing $\{y_{i}\}\subset\partial\Omega$ whenever $x\in \Omega$. 
This proves \prettyref{eq:convexroof_inf}.

Now we discuss $\varphi_{-}$. We proceed in the opposite order, showing first that the function $\mathscr{L}:\mathbb{R}^{2}\to\mathbb{R}$
equal to $\frac{1}{2}|x|^{2}-\frac{1}{2}d_{\partial\Omega}^{2}$ on
$\Omega$ and $\frac{1}{2}|x|^{2}$ on $\mathbb{R}^{2}\backslash\Omega$
is convex, and then checking that it equals to $\varphi_-$.  
Note by its definition that
\[
\mathscr{L}(x)=\frac{1}{2}|x|^{2}-\frac{1}{2}\min_{y\notin \Omega}\, |x-y|^{2}=\max_{y\notin \Omega}\,x\cdot y-\frac{1}{2}|y|^{2}.
\]
Thus, $\mathscr{L}$ is convex as it is the pointwise supremum of affine functions. To finish, we must show that $\varphi\geq \mathscr{L}$ whenever $\varphi$ is
a convex extension of $\frac{1}{2}|x|^{2}$ into $\Omega$. Clearly this holds for $x\notin \Omega$, so fix some $x\in\Omega$. Let $y \in \partial\Omega$ be a point closest to $x$, and let $z$ be on the line segment from $x$ to $y$. Calling $t=|z-x|$, we obtain the lower bound
\[
\varphi\left(x+t\frac{y-x}{d_{\partial\Omega}(x)}\right)\geq\frac{(y-x)\cdot y}{d_{\partial\Omega}(x)}\left(t-d_{\partial\Omega}(x)\right)+\frac{1}{2}|y|^{2}=\frac{(y-x)\cdot y}{d_{\partial\Omega}(x)}t+\mathscr{L}(x).
\]
Note in the last step we used that $\mathscr{L}(x)=x\cdot y-\frac{1}{2}|y|^{2}$.
Setting $t=0$ yields the desired inequality $\varphi(x) \geq\mathcal{L}(x)$.
It follows from its definition that $\varphi_- = \mathcal{L}$. 
\qed\end{proof} 

\begin{corollary} The functions $\varphi_{+}$ and $\varphi_{-}$
solve the dual problem \prettyref{eq:dual_pblm_apps} respectively when $\det\nabla\nabla p\geq0$
and $\leq0$ a.e. Furthermore, if either of these inequalities is strict a.e.,
then \prettyref{eq:dual_pblm_apps} is only solved by the corresponding
$\varphi_{+}$ or $\varphi_{-}$. \end{corollary}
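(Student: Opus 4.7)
My approach is a direct application of the envelope lemma just established. The plan is to leverage the pointwise inequalities $\varphi_-(x) \leq \varphi(x) \leq \varphi_+(x)$ valid for every admissible $\varphi$, and pair them against the assumed one-sidedness of $\det\nabla\nabla p$.

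First I would note that $\varphi_+$ and $\varphi_-$ are themselves admissible in \prettyref{eq:dual_pblm_apps}, since \prettyref{lem:explicit_formulas-1} asserts they are convex on $\mathbb{R}^2$ and equal to $\frac{1}{2}|x|^2$ on $\mathbb{R}^2\backslash\Omega$. Then, assuming $\det\nabla\nabla p \geq 0$ a.e.\ on $\Omega$, multiplying the inequality $\varphi \leq \varphi_+$ by this non-negative density yields
\[
(\varphi - \tfrac{1}{2}|x|^2)\det\nabla\nabla p \;\leq\; (\varphi_+ - \tfrac{1}{2}|x|^2)\det\nabla\nabla p \quad \text{a.e.\ on }\Omega,
\]
and integration over $\Omega$ shows that $\varphi_+$ attains the supremum. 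The case $\det\nabla\nabla p \leq 0$ is symmetric, with the lower envelope $\varphi_-$ playing the role of $\varphi_+$ via the inequality $\varphi_- \leq \varphi$.

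For the uniqueness claim, suppose $\det\nabla\nabla p > 0$ strictly a.e.\ and let $\varphi$ be any optimizer. The integrand $(\varphi_+ - \varphi)\det\nabla\nabla p$ is non-negative a.e., and by optimality its integral over $\Omega$ vanishes; hence it vanishes a.e. Since the density is strictly positive a.e., this forces $\varphi = \varphi_+$ Lebesgue-a.e.\ on $\Omega$. Both functions are convex on $\mathbb{R}^2$ and therefore continuous there (as convex functions with effective domain all of $\mathbb{R}^2$ are continuous), so they must agree everywhere. The negatively curved case is identical, with $\varphi_-$ in place of $\varphi_+$.

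No step constitutes a real obstacle: the construction and admissibility of $\varphi_\pm$ in \prettyref{lem:explicit_formulas-1} does all the heavy lifting, and what remains here is a one-line sign argument combined with the continuity of convex functions.
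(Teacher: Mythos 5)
Your proposal is correct and follows essentially the same argument as the paper: admissibility of $\varphi_\pm$ from the envelope lemma, the pointwise bounds $\varphi_-\leq\varphi\leq\varphi_+$ paired with the sign of $\det\nabla\nabla p$ to get optimality, and vanishing of the non-negative integrand $(\varphi_+-\varphi)\det\nabla\nabla p$ for uniqueness. The only addition is your explicit remark that a.e.\ equality of convex functions upgrades to everywhere equality, which the paper leaves implicit.
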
 
\begin{proof}
We give the proof in the case that $\det \nabla \nabla p \geq 0$, as the other case is the same. Rearranging \prettyref{eq:lower-upper-bds} shows that $\varphi_+ - \varphi \geq 0$ whenever $\varphi$ is admissible for the dual. It follows that
\[
\int_\Omega (\varphi_+ - \frac{1}{2}|x|^2)\det\nabla\nabla p \geq \int_\Omega (\varphi - \frac{1}{2}|x|^2)\det\nabla\nabla p.
\]
As $\varphi_+$ is admissible by \prettyref{lem:explicit_formulas-1}, it is a maximizer. On the other hand, if $\varphi$ is a maximizer  it must be that
\[
\int_\Omega (\varphi_+ - \varphi)\det\nabla\nabla p = 0.
\] 
If in addition $\det \nabla \nabla p >0$ a.e., we see that $\varphi = \varphi_+$ so that no other maximizer exists.
\qed\end{proof}
\prettyref{prop:explicit_formulas} is proved. Note we also established the uniqueness in \prettyref{rem:uniqueness}.

\subsection{Positively curved shells\label{subsec:Positively-curved-shells}}

We are ready to solve for the patterns in
\prettyref{fig:library_of_patterns}. Here, we treat the positively curved shells shown in Panel (a), i.e., we let $p\in W^{2,2}(\Omega)$ satisfy
\[
\det\nabla\nabla p\geq0\quad\text{a.e.}
\]
We begin each example by producing 
the largest convex extension $\varphi_{+}$ of $\frac{1}{2}|x|^2$ into the given $\Omega$. 
We identify its singular, flattened, ordered, and unconstrained sets $\Sigma$, $F$, $O$, and $U$ as well as its stable lines $\{\ell_x\}$ following the definitions in \prettyref{subsec:Lines-of-wrinkling-classification}.  
Then, we show how to apply \prettyref{cor:non-intersecting-chars} to characterize optimal $\mu$. The end result is a proof that optimal $\mu$ are unique upon restriction to $O$, and that they satisfy a version of \prettyref{eq:conclusion_pos}.

Our first example is the positively curved ellipse in Panel (a) of \prettyref{fig:library_of_patterns}.
\begin{example} \label{ex:positively-curved-ellipse}(positively
curved ellipse) Let $0<b<a$ and take as the domain the ellipse 
\[
E=\left\{ (x_{1},x_{2}):\frac{x_{1}^{2}}{a^{2}}+\frac{x_{2}^{2}}{b^{2}}<1\right\} .
\]
We claim that 
\begin{equation}
\varphi_{+}(x)=\frac{1}{2}\left(b^{2}+(1-\frac{b^{2}}{a^{2}})x_{1}^{2}\right),\quad x\in E.\label{eq:Airystress_ellipse}
\end{equation}
It is straightforward to check using \prettyref{lem:restriction}
that \prettyref{eq:Airystress_ellipse} defines a convex extension
of $\frac{1}{2}|x|^{2}$ into $E$. It satisfies 
\[
\nabla\nabla\varphi_{+}=(1-\frac{b^{2}}{a^{2}})\hat{e}_{1}\otimes\hat{e}_{1}\,dx \quad\text{on }E
\]
so that its Hessian is non-negative and uniformly bounded, and it
equals to $\frac{1}{2}|x|^{2}$ at $\partial E$. Since 
\begin{equation}
\hat{\nu}\cdot[\nabla\varphi_{+}]=\frac{\left(\frac{x_{1}}{a^{2}},\frac{x_{2}}{b^{2}}\right)}{\sqrt{\frac{x_{1}^{2}}{a^{4}}+\frac{x_{2}^{2}}{b^{4}}}}\cdot\left(x_{1}-(1-\frac{b^{2}}{a^{2}})x_{1},x_{2}\right)=b^{2}\sqrt{\frac{x_{1}^{2}}{a^{4}}+\frac{x_{2}^{2}}{b^{4}}}>0\quad\text{at }\partial E\label{eq:extension_signed_ellipse}
\end{equation}
we conclude that \prettyref{eq:Airystress_ellipse}
is admissible. Now, we must verify it is the largest
convex extension. Given $x\in E$ there is a unique line containing it
and parallel to $\hat{e}_{2}$. That line intersects $\partial E$
at two points $\{(x_{1},\pm x_{2}(x_{1}))\}$, and
\[
x=\theta(x_{1},x_{2}(x_{1}))+(1-\theta)(x_{1},-x_{2}(x_{1}))\quad\text{for some }\theta\in(0,1).
\]
If $\varphi$ is any convex extension of $\frac{1}{2}|x|^{2}$ into
$E$ it follows that 
\[
\varphi(x)\leq\theta\frac{1}{2}|(x_{1},x_{2}(x_{1}))|^{2}+(1-\theta)\frac{1}{2}|(x_{1},-x_{2}(x_{1}))|^{2}=\frac{1}{2}\left(x_{1}^{2}+b^{2}(1-\frac{x_{1}^{2}}{a^{2}})\right).
\]
Thus \prettyref{eq:Airystress_ellipse} is indeed the formula for $\varphi_+$ on $E$. 

Having obtained $\varphi_{+}$, we note
 it partitions $E$ according to 
\[
O=E\quad\text{and}\quad \Sigma=F=U=\emptyset.
\]
In particular, the entire ellipse is ordered. Its stable lines $\{\ell_{x}\}$
are the lines referred to above or, rather, the portion of them within $E$. We finish by applying \prettyref{cor:non-intersecting-chars}
to characterize optimal $\mu$. 
Given any sufficiently small $\delta>0$, we claim the hypotheses \prettyref{eq:extend-to-bdry-hyp1}-\prettyref{eq:extend-to-bdry-hyp3}
 hold with $\varphi=\varphi_{+}$ and 
\[
V=\{x\in E:-a+\delta<x_{1}<a-\delta\},\quad\zeta=1-\frac{b^{2}}{a^{2}},\quad\text{and}\quad\hat{\eta}=\hat{e}_{1}.
\]
That $\hat{\nu}\cdot[\nabla\varphi_{+}]\geq c>0$ follows from \prettyref{eq:extension_signed_ellipse}.
The uniform transversality condition $|\hat{\tau}\cdot\hat{e}_1|\geq c(\delta)>0$
is clear. Note we introduced the cutoff length $\delta$ to deal with the fact that $|\hat{\tau}\cdot\hat{e}_1|\to 0$ as $x_{1}\to\pm a$. 
As the stable lines are parallel, the change of measure factor $\varrho$ from \prettyref{eq:Leb_disintegration} remains constant and non-zero along each of them; it is a function of $x_1$ whose exact form is immaterial and depends on the choice of the indexing curve $\Gamma$.
Applying
\prettyref{cor:non-intersecting-chars} and taking $\delta\to0$ we conclude that optimal $\mu$ are unique, and that they satisfy
\[
\mu=\lambda\hat{e}_{1}\otimes\hat{e}_{1}\,dx\quad\text{on }E,\quad\text{where}\quad\begin{cases}
-\frac{1}{2}\partial_{2}^{2}\lambda=\det\nabla\nabla p & \text{on }E\\
\lambda=0 & \text{at }\partial E
\end{cases}.
\]
This completes our discussion of the positively curved ellipse. 
\end{example}

Next, we consider the positively curved disc. The corresponding entry
in \prettyref{fig:library_of_patterns} is blank, indicating a complete
lack of stable lines. As a result, we will be able to construct infinitely
many optimal $\mu$. \begin{example} \label{ex:positively-curved-disc}(positively
curved disc) Let $a>0$ and take as the domain the disc 
\[
D=\left\{ (x_{1},x_{2}):x_{1}^{2}+x_{2}^{2}<a^{2}\right\} .
\]
Since $\partial D$ is a level set of $\frac{1}{2}|x|^{2}$, it follows
immediately from \prettyref{eq:convexroof_inf} that 
\[
\varphi_{+}(x)=\frac{1}{2}a^{2},\quad x\in D.
\]
Hence 
\[
\nabla\nabla\varphi_{+}=0\quad\text{on }D
\]
and the corresponding partition of $D$ degenerates into 
\[
U=D\quad\text{and}\quad\Sigma=F=O=\emptyset.
\]
The entire disc is unconstrained. The ordered set is empty and there are no stable lines.  Nevertheless, optimal $\mu$ can still be characterized as non-negative solutions of \prettyref{eq:wrinkling-PDEs}, which degenerates into the system 
\begin{equation}
\begin{cases}
-\frac{1}{2}\text{curl}\text{curl}\,\mu=\det\nabla\nabla p & \text{on }D\\
\left\langle \hat{\tau}\otimes\hat{\tau},\mu\right\rangle =0 & \text{at }\partial D
\end{cases}. \label{eq:degenerate_system_disc}
\end{equation}
Note the PDE holds in the sense of distributions, while the boundary conditions hold in the regularized sense, i.e., 
\begin{equation}\label{eq:needed-b.c.s-example-disc}
0=\lim_{\delta\to0}\,\int_{\partial D}|\left\langle \hat{\tau}\otimes\hat{\tau},\mu_{\delta}\right\rangle |\,ds
\end{equation}
where $\{\mu_\delta\}_{\delta>0}$ are the mollifications from \prettyref{eq:mollified_measure_defn-intro}. We used here  that $\hat{\nu}\cdot [\nabla \varphi_+] = a >0$ at $\partial D$.

As a boundary value problem, \prettyref{eq:degenerate_system_disc} is severely underdetermined. Here is an example of the non-uniqueness it permits: given any decomposition of $D$ into a disjoint family of open line segments with boundary points on $\partial D$, and letting $\hat{\eta}\in\text{Lip}_\text{loc}(D;S^1)$ be constant along and perpendicular to the segments, we claim that the measure 
\begin{equation}\label{eq:constructed-nonunique-solutions}
\mu=\lambda\hat{\eta}\otimes\hat{\eta}\,dx\quad\text{on }D,\quad\text{where}\quad\begin{cases}
-\frac{1}{2\varrho}\partial_{\hat{\eta}^{\perp}}^{2}(\varrho\lambda)=\det\nabla\nabla p & \text{in }D\\
\varrho\lambda=0 & \text{at }\partial D
\end{cases}
\end{equation}
is a solution of \prettyref{eq:degenerate_system_disc}. Hence, it is optimal. 
Note $\varrho:D\to(0,\infty)$ is defined via \prettyref{eq:Leb_disintegration}. 
That $\mu$ is indeed a solution can be checked using the methods of \prettyref{sec:method_of_characteristics}. Recall the system in \prettyref{eq:constructed-nonunique-solutions} stands for a family of two-point boundary value problems indexed by the given segments (just as in \prettyref{cor:non-intersecting-chars}).
The PDE in \prettyref{eq:degenerate_system_disc} now follows more or less directly from the notation introduced in \prettyref{subsec:wrinkles-as-chars} above  \prettyref{lem:disintegration}. 
Establishing the boundary conditions takes a bit more work --- our plan is to reverse the proof of \prettyref{cor:non-intersecting-chars}. 
Start by indexing the segments, which we still call $\{\ell_s\}$, with a curve $\Gamma\subset D$ such that $D = \cup_{s\in\Gamma} \ell_s$ where $s\mapsto \ell_s$ is one-to-one and $s\in \ell_s$.  It will suffice to take $\Gamma$ to be Lipschitz. 
Rewriting \prettyref{eq:needed-b.c.s-example-disc} using disintegration of measure, we must check that
\[
0=\lim_{\delta\to0}\,\int_{s\in\Gamma}\left[\int_{x\in\partial D}\int_{y\in \ell_s}\left|\hat{\tau}(x)\cdot\hat{\eta}(y)\right|^{2}\varrho\lambda (y)\rho\left(\frac{x-y}{\delta}\right)\,\frac{d\mathcal{H}^{1}(x)d\mathcal{H}^{1}(y)}{\delta^{2}}\right]d\mathcal{H}^{1}(s)
\]
where $\rho \in C_c^\infty(B_1)$ is non-negative and integrates to one. We do so via the dominated convergence theorem.

The bracketed integrals tend to zero $\mathcal{H}^{1}$-a.e.\ due to the boundary conditions in \prettyref{eq:constructed-nonunique-solutions}. 
Indeed,  $\varrho \lambda \in W^{2,1} (\ell_s)$ upon restriction to a.e.\ $\ell_s$, and the corresponding traces at $\partial \ell_s$ vanish. 
We proceed to dominate. 
Integrating the ODEs from \prettyref{eq:constructed-nonunique-solutions} along a.e.\ $\ell_s$ yields that $|\varrho\lambda(y)|\lesssim d(y,\partial \ell_s)||\varrho\det\nabla\nabla p||_{L^{1}(\ell_s,\mathcal{H}^{1})}$ for $y\in \ell_s$. 
If  $(x,y)\in\partial D \times \ell_s$ satisfies $|x-y|<\delta$, then  $d(y,\partial \ell_s)\lesssim_a (\delta/\mathcal{H}^{1}(\ell_{s}))\wedge\mathcal{H}^{1}(\ell_{s})$ and  $\left|\hat{\tau}(x)\cdot\hat{\eta}(y)\right|\lesssim_a  \mathcal{H}^{1}(\ell_{s})\vee \delta$. Hence,
\[
\int_{\substack{(x,y)\in\partial D\times\ell_{s}\\
|x-y|<\delta
}
}\left|\hat{\tau}(x)\cdot\hat{\eta}(y)\right|^{2}\varrho\lambda(y)\,\frac{d\mathcal{H}^{1}(x)d\mathcal{H}^{1}(y)}{\delta^{2}}
\lesssim_a ||\varrho\det\nabla\nabla p||_{L^{1}(\ell_{s},\mathcal{H}^{1})}\quad\text{for a.e.\ } s.
\]
The righthand side is integrable by \prettyref{eq:Leb_disintegration} since $p\in W^{2,2}$. Sending $\delta \to 0$ completes the proof.

Even though the  segments used above may remind of stable lines, we prefer not to call them as such. For one, the decomposition $D = \cup_{s\in\Gamma} \ell_s$ is not unique. Each such decomposition gives rise to a different optimal $\mu$ and, correspondingly, to a different sequence of almost minimizers of $E_{b,k,\gamma}$ under the assumptions \vpageref{par:Assumptions}. In some asymptotic sense, this is the opposite of stability.
Also, these are not the only optimal $\mu$. In particular, the set of solutions of \prettyref{eq:degenerate_system_disc} is convex. 
Taking convex combinations of the measures in \prettyref{eq:constructed-nonunique-solutions}, we deduce the existence of optimal $\mu$ that are everywhere rank two. The corresponding almost minimizers feature two-dimensional patterns instead of one-dimensional, wrinkling-like ones. 
We wonder if the disordered positively curved discs from \cite{tobasco2020principles}
can be understood using suitable solutions of \prettyref{eq:degenerate_system_disc}. If no such $\mu$ represents the observed patterns, it would neccessarily follow that they cannot be modeled as almost minimizers of $E_{b,k,\gamma}$, as least in the parameter regime \prettyref{eq:asymptoticregime}.  
\end{example}

The previous examples set the extremes: whereas a positively curved ellipse is totally
ordered, a positively curved disc is totally unconstrained (save for boundary data).
Our next two examples sit somewhere in-between. They address the triangle, square, and rectangle from Panel (a) of \prettyref{fig:library_of_patterns}.
What distinguishes the former shapes from the latter is the fact that
triangles and squares admit inscribed circles, whereas rectangles
do not. \begin{example} \label{ex:positivelycurvedtangentialpoly}(positively
curved tangential polygons) A \emph{tangential polygon} is a convex
polygon that admits an inscribed circle, known as its \emph{incircle}.
Every regular polygon is tangential; more generally, a convex polygon
is tangential if and only if its angle bisectors intersect at a distinguished
point. This point is called the \emph{incenter}, being the center
of the incircle just defined.  Given a tangential polygon $P$, we
call its \emph{contact polygon} $P'$ the convex polygon whose vertices are the points
of contact of the incircle with $P$. Thus, $P$ decomposes into a disjoint union of its contact polygon and finitely many leftover isosceles triangles, one for each vertex. 

Now let $P$ be a tangential polygon with vertices $a_{1},\dots,a_{n}\in\mathbb{R}^{2}$
and interior angles $\alpha_{1},\dots,\alpha_{n}$. 
Let its incenter be at the origin, and call the radius of its incircle $a$. Let $P'$ be the contact polygon  defined
above. The remainder $P\backslash P'$ divides into $n$ isosceles triangles, which
we label as $T_{i}$ for $i=1,\dots,n$. The labeling is such that
the $i$th vertex of the original polygon $a_{i}$ is a vertex of
the $i$th triangle $T_{i}$. For use in what follows,
we take $P'$ to be closed (relative to $P$) and
let each $T_{i}$ be open. After a fairly straightforward but somewhat
lengthy argument, one finds that 
\[
\varphi_{+}(x)=\begin{cases}
\frac{1}{2}a^{2} & x\in P'\\
\frac{1}{2}\left(\left(x\cdot\hat{a}_{i}\right)^{2}+\tan^{2}(\frac{\alpha_{i}}{2})\left(|a_{i}|-x\cdot\hat{a}_{i}\right)^{2}\right) & x\in T_{i},\ i=1,\dots,n
\end{cases}
\]
for $x\in P$. From this it follows that 
\[
\nabla\nabla\varphi_{+}=\sum_{i=1}^{n}\left(1+\tan^{2}(\frac{\alpha_{i}}{2})\right)\hat{a}_{i}\otimes\hat{a}_{i}\indicator{T_{i}}\,dx\quad\text{on }P.
\]
The stated absolute continuity follows from the fact that $P'$ is the contact polygon of $P$.
Regarding the partition implied by $\varphi_{+}$, we find that 
\[
O=\cup_{i}T_{i},\quad U=P',\quad\text{and}\quad \Sigma=F=\emptyset.
\]
The triangles $T_{i}$ are ordered, whereas the contact polygon $P'$
is unconstrained. The stable lines belonging to the $i$th triangle $T_{i}$ lie perpendicular
to $\hat{a}_{i}$ and extend from $\partial P$ to $\partial P$.

Having identified $\varphi_{+}$, we proceed to apply
\prettyref{cor:non-intersecting-chars} to characterize optimal $\mu$.
Note \prettyref{eq:extend-to-bdry-hyp1}-\prettyref{eq:extend-to-bdry-Liphyp}
holds with $\varphi=\varphi_{+}$ and 
\[
V=T_{i},\quad\zeta=1+\tan^{2}(\frac{\alpha_{i}}{2}),\quad\text{and}\quad\hat{\eta}=\hat{a}_{i}
\]
for $i=1,\dots,n$. The uniform transversality condition from \prettyref{eq:extend-to-bdry-hyp3}
is easily checked, as  $|\hat{\tau}\cdot\hat{a}_{i}|\geq c>0$.
Since 
\[
\nabla\varphi_{+}=x\cdot\hat{a}_{i}\hat{a}_{i}-\tan^{2}(\frac{\alpha_{i}}{2})\left(|a_{i}|-x\cdot\hat{a}_{i}\right)\hat{a}_{i},\quad x\in T_{i}
\]
we see that 
\[
\hat{\nu}\cdot[\nabla\varphi_{+}]=x\cdot\hat{a}_{i}^{\perp}\hat{\nu}\cdot\hat{a}_{i}^{\perp}+\tan^{2}(\frac{\alpha_{i}}{2})\left(|a_{i}|-x\cdot\hat{a}_{i}\right)\hat{\nu}\cdot\hat{a}_{i}\geq0\quad\text{at }\partial T_{i}\cap\partial P.
\]
Evidently, the first part of \prettyref{eq:extend-to-bdry-hyp3}
fails for $V$ as it allows for $x\to a_{i}$.
However, this is easy to fix: as in \prettyref{ex:positively-curved-ellipse},
we can introduce a small cutoff length $\delta>0$ and modify $V$ such that
$\hat{\nu}\cdot[\nabla\varphi_{+}]\geq c(\delta)>0$. Note $\varrho$ is constant along the stable lines as they remain parallel within each $T_i$.
Applying \prettyref{cor:non-intersecting-chars} and taking $\delta\to0$ yields that 
\[
\mu=\lambda_{i}\hat{a}_{i}\otimes\hat{a}_{i}\,dx\quad\text{on }T_{i},\quad\text{where}\quad\begin{cases}
-\frac{1}{2}\partial_{\hat{a}_{i}^{\perp}}^{2}\lambda_{i}=\det\nabla\nabla p & \text{on }T_{i}\\
\lambda=0 & \text{at }\partial T_{i}\cap\partial P
\end{cases}\quad\text{for }i=1,\dots,n.
\]
In particular, any two optimal $\mu$  agree upon restriction
to $\cup_{i}T_{i}$. Much less is known at present regarding $\mu$ on $P'$. \end{example} Most polygons
do not admit an inscribed circle, i.e., they fail to be tangential.
Nevertheless, the arguments appearing in the previous example can
be adapted to handle a more general case. \begin{example} \label{ex:positively-curved-rectangle}(positively
curved rectangle) Let $0<b<a$ and consider the rectangle 
\[
R=\left\{ (x_{1},x_{2}):-a<x_{1}<a,-b<x_{2}<b\right\} .
\]
Although $R$ does not admit an inscribed circle, there does exist
a one-parameter family of maximally contained circles given by 
\[
C_{t}=\left\{ (x_{1},x_{2}):(x_{1}-t)^{2}+x_{2}^{2}=b^{2}\right\} ,\quad-(b-a)\leq t\leq b-a.
\]
The left and rightmost circles 
\[
C_{\text{l}}=C_{-(b-a)}\quad\text{and}\quad C_{\text{r}}=C_{b-a}
\]
play a role analogous to that of the incircle above. Both $C_{\text{l}}$
and $C_{\text{r}}$ touch $R$ at exactly three points: call the triangles
formed by these points $T_{\text{l}}$ and $T_{\text{r}}$. The remainder
is made up of four $45-45-90$ triangles $T_{\text{nw}}$, $T_{\text{sw}}$,
$T_{\text{se}}$, and $T_{\text{ne}}$ and one sub-rectangle $R_{\text{c}}$.
The subscripts $nw$ and so on indicate location as on a compass.
It will probably be helpful to look at \prettyref{fig:library_of_patterns}.
There, $T_{\text{l}}$ and $T_{\text{r}}$ are indicated in blank,
whereas the remaining triangles $T_{\text{nw}}$, $T_{\text{sw}}$,
$T_{\text{se}}$, $T_{\text{ne}}$ and sub-rectangle $R_{\text{c}}$
are drawn with stripes. For use with what follows,
we take each of these to be open with the exception of $T_{\text{l}}$ and
$T_{\text{r}}$, which we take to be closed relative to $R$.

All this being said, we claim that the largest convex extension $\varphi_{+}$
of $\frac{1}{2}|x|^{2}$ into $R$ is given by solving 
\begin{equation}
\partial_{(1,-1)}^{2}\varphi_{+}=0\quad\text{on }T_{\text{ne}}\cup T_{\text{sw}},\quad\partial_{(1,1)}^{2}\varphi_{+}=0\quad\text{on }T_{\text{se}}\cup T_{\text{nw}},\quad\partial_{2}^{2}\varphi_{+}=0\quad\text{on }R_{\text{c}},\quad\nabla\nabla\varphi_{+}=0\quad\text{on }T_{\text{l}}^{\circ}\cup T_{\text{r}}^{\circ}
\label{eq:PDEconvexext}
\end{equation}
with the boundary data 
\begin{equation}
\varphi_{+}=\frac{1}{2}|x|^{2}\quad\text{at }\partial R.\label{eq:PDEconvexext-bdrydata}
\end{equation}
The function defined by \prettyref{eq:PDEconvexext} and \prettyref{eq:PDEconvexext-bdrydata}
is piecewise quadratic, and is affine upon restriction to $T_{\text{l}}$
and $T_{\text{r}}$. Clearly, it is a convex extension of $\frac{1}{2}|x|^{2}$
into $R$. To see it is the largest one, we must show it yields
an upper bound on any other convex extension $\varphi$. Evidently
$\varphi\leq\varphi_{+}$ on $T_{\text{l}}$ and $T_{\text{r}}$ since they are equal at their vertices. 
Now let $x\in R\backslash(T_{\text{l}}\cup T_{\text{r}})$ and consider the largest open line segment containing $x$ on which $\varphi_{+}$
is affine (see the corresponding entry
in \prettyref{fig:library_of_patterns}). The boundary of each
such segment consists of two points $y_{1},y_{2}\in\partial R$. By
convexity, 
\[
\varphi(x)\leq\theta\frac{1}{2}|y_{1}|^{2}+(1-\theta)\frac{1}{2}|y_{2}|^{2}=\varphi_{+}(x)
\]
where $x=\theta y_{1}+(1-\theta)y_{2}$. Hence, \prettyref{eq:PDEconvexext}
and \prettyref{eq:PDEconvexext-bdrydata} indeed define the largest
convex extension of $\frac{1}{2}|x|^{2}$ into $R$.

Moving on, we see that 
\[
\nabla\nabla\varphi_{+}=(1,1)\otimes(1,1)\indicator{T_{\text{ne}}\cup T_{\text{sw}}}\,dx+(1,-1)\otimes(1,-1)\indicator{T_{\text{se}}\cup T_{\text{nw}}}\,dx+(1,0)\otimes(1,0)\indicator{R_{\text{c}}}\,dx\quad\text{on }R.
\]
Hence, $R$ is partitioned by $\varphi_{+}$ according as 
\[
O=\left(\cup_{\substack{\alpha \in \{n,s\}\\ \beta \in \{e,w\}}}T_{\alpha\beta}\right)\cup R_{\text{c}},\quad U=T_{\text{l}}\cup T_{\text{r}},\quad\text{and}\quad \Sigma=F=\emptyset.
\]
The stable lines $\{\ell_{x}\}$ are parallel to $(1,-1)$ on $T_{\text{ne}}\cup T_{\text{sw}}$,
$(1,1)$ on $T_{\text{se}}\cup T_{\text{nw}}$, and $(0,1)$ on $R_{\text{c}}$, 
and they extend between pairs of boundary points. 
So, we can apply \prettyref{cor:non-intersecting-chars} to characterize optimal
$\mu$ on $O$. We leave the details to the reader this time, and simply point out that $\hat{\nu}\cdot[\nabla\varphi_{+}]\geq0$
at $\partial R$, and that this bound degenerates only at the vertices of $R$. Uniform transversality is clear; also, $\varrho$ is constant along each stable line as they are parallel within the connected components of $O$. The conclusion of \prettyref{cor:non-intersecting-chars} is 
that 
\begin{align*}
 & \mu=\lambda_{\text{nesw}}(\frac{1}{\sqrt{2}},\frac{1}{\sqrt{2}})\otimes(\frac{1}{\sqrt{2}},\frac{1}{\sqrt{2}})\,dx\quad\text{on }T_{\text{ne}}\cup T_{\text{sw}},\quad\ \ \mu=\lambda_{\text{senw}}(\frac{1}{\sqrt{2}},\frac{-1}{\sqrt{2}})\otimes(\frac{1}{\sqrt{2}},\frac{-1}{\sqrt{2}})\,dx\quad\text{on }T_{\text{se}}\cup T_{\text{nw}},\\
 & \text{and}\quad\ \ \mu=\lambda_{\text{c}}(1,0)\otimes(1,0)\,dx\quad\text{on }R_{\text{c}},
\end{align*}
where
\begin{align*}
 & \begin{cases}
-\frac{1}{2}\partial_{(\frac{1}{\sqrt{2}},-\frac{1}{\sqrt{2}})}^{2}\lambda_{\text{nesw}}=\det\nabla\nabla p & \text{in }T_{\text{ne}}\cup T_{\text{sw}}\\
\lambda_{\text{nesw}}=0 & \text{at }\partial(T_{\text{ne}}\cup T_{\text{sw}})\cap\partial R
\end{cases},\quad\ \ \begin{cases}
-\frac{1}{2}\partial_{(\frac{1}{\sqrt{2}},\frac{1}{\sqrt{2}})}^{2}\lambda_{\text{senw}}=\det\nabla\nabla p & \text{in }T_{\text{se}}\cup T_{\text{nw}}\\
\lambda_{\text{senw}}=0 & \text{at }\partial(T_{\text{se}}\cup T_{\text{nw}})\cap\partial R
\end{cases},\\
 & \text{and}\quad\ \ \begin{cases}
-\frac{1}{2}\partial_{2}^{2}\lambda_{c}=\det\nabla\nabla p & \text{in }R_{\text{c}}\\
\lambda_{\text{c}}=0 & \text{at }\partial R_{\text{c}}\cap\partial R
\end{cases}.
\end{align*}
Optimal $\mu$ are uniquely determined on $T_{\text{ne}}\cup T_{\text{sw}}$,
$T_{\text{se}}\cup T_{\text{nw}}$, and $R_{\text{c}}$. \end{example}

In the previous examples, stable lines ended up being
parallel within each connected component of $O$. Our last positively curved example
exhibits non-parallel stable lines. It is the half-disc from Panel (a) of \prettyref{fig:library_of_patterns}. \begin{example} \label{ex:positively-curved-half-disc}(positively
curved half-disc) Consider a disc with radius $a>0$ and center $(0,a)$,
and let 
\[
D_{+}=\left\{ (x_{1},x_{2}):x_{1}^{2}+(x_{2}-a)^{2}<a^{2},x_{2}>a\right\} .
\]
Let $(r,\theta)$ denote polar coordinates about $0$. Given $x\in D_{+}$,
consider the ray parallel to $\hat{e}_r(x)$ that passes through $x$ and begins at the origin.
This ray intersects $\partial D_{+}$ at two points, which we label as $p(x)$ and
$q(x)$ where $|p|<|q|$.  
Note there always holds $|p||q|=2a^{2}$. 
Using this, it is easy to check the identities 
\begin{equation}
|p|=\frac{a}{\sin\theta}\quad\text{and}\quad|q|=2a\sin\theta\label{eq:usebelow}
\end{equation}
which will come in handy below.

We now claim that 
\begin{equation}
\varphi_{+}(r,\theta)=\frac{1}{2}(|p|+|q|)r-\frac{1}{2}|p||q|\quad\text{on }D_{+}.\label{eq:convexroof_halfdisc}
\end{equation}
As we show at the end of this example, 
\[
\nabla\nabla\varphi_{+}=\frac{a}{r\sin^{3}\theta}\hat{e}_{\theta}\otimes\hat{e}_{\theta}\,dx \quad\text{on }D_{+}
\]
so that the partition of $D_{+}$ is given simply by 
\[
O=D_{+}\quad\text{and}\quad \Sigma=F=U=\emptyset.
\]
That is, the half-disc is totally ordered. Its stable lines  $\{\ell_{x}\}$ are
the portions of the rays described above within $D_+$. To identify $\mu$ we apply \prettyref{cor:non-intersecting-chars}
with $\varphi=\varphi_{+}$ and with 
\[
V=D_{+},\quad\zeta=\frac{a}{r\sin^{3}\theta},\quad\text{and}\quad\hat{\eta}=\hat{e}_{\theta}.
\]
The hypotheses \prettyref{eq:extend-to-bdry-hyp1}-\prettyref{eq:extend-to-bdry-Liphyp} are not difficult to check, but again we must be careful about \prettyref{eq:extend-to-bdry-hyp3}.
Note that 
\[
\nabla\varphi_{+}=\frac{1}{2}(|p|+|q|)\hat{e}_{r}.
\]
Hence, 
\[
\hat{\nu}\cdot[\nabla\varphi_{+}]=-\hat{e}_{2}\cdot((x_{1},a)-\frac{1}{2}(|p|+|q|)\hat{e}_{r})=\frac{1}{2}(|q|-|p|)\sin\theta\geq0
\]
at the bottom part of $\partial D_{+}$, while at the top part there holds 
\[
\hat{\nu}\cdot[\nabla\varphi_{+}]=(\frac{x_{1}}{a},\frac{x_{2}}{a}-1)\cdot\left(r-\frac{1}{2}(|p|+|q|)\right)(\cos\theta,\sin\theta)=\frac{1}{2}\left(|q|-|p|\right)\sin\theta\geq0.
\]
Both inequalities are strict away from the corners where $|p|=|q|$. Thus, we can  introduce a cutoff
length $\delta>0$ to be sent to zero as in the other examples. We get that $\hat{\nu}\cdot[\nabla\varphi_{+}]\geq c(\delta)>0$.
The uniform transversality condition $|\hat{\tau}\cdot\hat{e}_{\theta}|\geq c>0$
holds. This time, the change of measure factor from \prettyref{eq:Leb_disintegration} satisfies $\varrho=c_\Gamma(\theta)r$ where the exact form of $c_\Gamma$ depends on the choice of indexing curve $\Gamma$. This is because the stable lines follow rays. 
Applying \prettyref{cor:non-intersecting-chars} and taking $\delta\to0$ proves that optimal $\mu$ are unique, and  that they satisfy
\[
\mu=\lambda\hat{e}_{\theta}\otimes\hat{e}_{\theta}\,dx\quad\text{on }D_{+},\quad\text{where}\quad\begin{cases}
-\frac{1}{2r}\partial_{r}^{2}(r\lambda)=\det\nabla\nabla p & \text{in }D_{+}\\
\lambda=0 & \text{at }\partial D_{+}
\end{cases}
\]
for the positively curved half-disc.

We finish by proving that \prettyref{eq:convexroof_halfdisc}
is indeed the largest convex extension of $\frac{1}{2}|x|^{2}$ into
$D_{+}$. Making use of convexity along the rays, one easily concludes
that $\varphi\leq\varphi_{+}$ as in the other examples. Here, we
focus on checking that the given $\varphi_{+}$ is a convex extension after all. We apply \prettyref{lem:restriction}.
First, note that $\varphi_{+}=\frac{1}{2}|x|^{2}$ for $x\in\partial D_{+}$.
In particular, when $r=|p|$ it follows from \prettyref{eq:convexroof_halfdisc}
that 
\[
\varphi_{+}(r,\theta)=\frac{1}{2}(|p|+|q|)|p|-\frac{1}{2}|p||q|=\frac{1}{2}|p|^{2}=\frac{1}{2}r^{2},
\]
and similarly for $r=|q|$. That $\hat{\nu}\cdot[\nabla\varphi_{+}]\geq0$
at $\partial D_{+}$ was shown above. Finally, we check that
$\nabla\nabla\varphi_{+}\geq0$ on $D^{+}$. Note that 
\[
\varphi_{+}(r,\theta)=b_{1}(\theta)r+b_{0},\quad\text{where}\quad b_{0}=-\frac{1}{2}|p||q|=-a^{2}\quad\text{and}\quad b_{1}=\frac{1}{2}(|p|+|q|)=\frac{a}{2}(2\sin\theta+\frac{1}{\sin\theta})
\]
due to \prettyref{eq:usebelow}. Differentiating twice yields that
\[
\nabla\nabla\varphi_{+}=\frac{1}{r}(b_{1}''+b_{1})\hat{e}_{\theta}\otimes\hat{e}_{\theta}=\frac{1}{r}\frac{a}{\sin^{3}\theta}\hat{e}_{\theta}\otimes\hat{e}_{\theta}\geq0.
\]
The lemma now implies that $\varphi_{+}$ is admissible
and the example is complete. \end{example}

Before moving on to the negatively curved examples, we pause to reflect on the fact that the singular set $\Sigma$ turned out to be empty in each of the examples above.
Of course, this is related to the regularity of $\varphi_+$ and, ultimately, to the shape of $\partial\Omega$. As an example of what can be proved, we note that if $\Omega$ is uniformly convex and $\partial\Omega \in C^{1,1}$, then the results of \cite{rauch1977dirichlet,trudinger1984second} on Alexandrov solutions to the Dirichlet problem $\det\nabla\nabla \varphi=0$ in $\Omega$ and $\varphi = \frac{1}{2}|x|^2$ at $\partial\Omega$ imply that $\varphi_{+}\in C^{1,1}_\text{loc}(\Omega)$. Of course,  $\Sigma = \emptyset$ in such a case.

\subsection{Negatively curved shells\label{subsec:Negatively-curved-shells}}

We turn to the patterns in Panel (b) of \prettyref{fig:library_of_patterns}.
These were drawn assuming that $p\in W^{2,2}(\Omega)$ satisfies
\[
\det\nabla\nabla p\leq0\quad\text{a.e.}
\]
For each specified shell we obtain the smallest convex extension
$\varphi_{-}$ of $\frac{1}{2}|x|^2$ into $\Omega$, which requires solving for the boundary distance function $d_{\partial\Omega}$ from \prettyref{eq:min-exit-time}. Its singular, flattened, ordered, and unconstrained sets $\Sigma$, $F$, $O$, and $U$  follow, as do  its stable lines $\{\ell_x\}$. Again, we refer to \prettyref{subsec:Lines-of-wrinkling-classification} for the relevant definitions.

Let us comment briefly on the role that the medial axis $M$ from \prettyref{eq:medialaxis} plays.
As our examples will show, the stable lines of $\varphi_-$ follow the paths of quickest exit from $\Omega$, i.e., they lie parallel to $\nabla d_{\partial\Omega}$ where they exist. 
Based on this and the formula for $\varphi_-$ in \prettyref{eq:min-exit-time}, it is reasonable to expect that
\begin{equation}
\nabla\nabla\varphi_{-}=\left(1-d_{\partial\Omega}\Delta d_{\partial\Omega}\right)\nabla^{\perp}d_{\partial\Omega}\otimes\nabla^{\perp}d_{\partial\Omega}\,dx+d_{\partial\Omega}\left|[\nabla d_{\partial\Omega}]\right|\hat{\nu}_{M}\otimes\hat{\nu}_{M}\,\mathcal{H}^{1}\lfloor M\quad\text{on }\Omega\label{eq:negative_Hessian_formula}
\end{equation}
so long as $M$ is regular enough. (For a general discussion on the regularity of the medial axis, see \cite{li2005distance}.)
We show a version of \prettyref{eq:negative_Hessian_formula} in each example below, and use it to identify the aforementioned partition and the stable lines. Finally, we prove that optimal $\mu$ are unique and that they satisfy \prettyref{eq:conclusion_neg} by applying \prettyref{cor:chars-meet-along-curve} or \prettyref{cor:chars-meet-at-point}.

We start with the negatively curved disc from Panel (b) of \prettyref{fig:library_of_patterns}. \begin{example}
\label{ex:negatively-curved-disc}(negatively curved disc) Let 
\[
D=\left\{ (x_{1},x_{2}):x_{1}^{2}+x_{2}^{2}<a^{2}\right\} 
\]
and observe its medial axis is the singleton 
\[
M=\{0\}.
\]
Using polar coordinates $(r,\theta)$, the boundary distance
function is simply $d_{\partial D}=a-r$ and hence 
\[
\varphi_{-}=ar-\frac{1}{2}a^{2}\quad\text{on }D.
\]
Differentiating yields that 
\[
\nabla\nabla\varphi_{-}=\frac{a}{r}\hat{e}_{\theta}\otimes\hat{e}_{\theta}\,dx\quad\text{on }D.
\]
Even though $\nabla\nabla\varphi_{-}\ll dx$, its density
is not square integrable on any neighborhood of $M$. Therefore, $D$ is partitioned by $\varphi_-$ according as
\[
\Sigma=\{0\},\quad O=D\backslash \{0\},\quad\text{and}\quad F=U=\emptyset.
\]
The stable lines form rays parallel to $\nabla d_{\partial D}=-\hat{e}_{r}$. 

Next, we apply \prettyref{cor:chars-meet-at-point} to identify optimal
$\mu$. Its hypothesis \prettyref{eq:V-meet-at-point} holds with 
\[
x_{0} = 0\quad\text{and}\quad V=D.
\]
We immediately conclude that the unique optimal $\mu$ for the negatively curved disc satisfies
\[
\mu=\lambda\hat{e}_{\theta}\otimes\hat{e}_{\theta}\,dx\quad\text{on }D,\quad\text{where}\quad\begin{cases}
-\frac{1}{2r}\partial_{r}^{2}(r\lambda)=\det\nabla\nabla p & \text{on }D\backslash\{0\}\\
r\lambda=\partial_{r}(r\lambda)=0 & \text{at }0
\end{cases}.
\]
In particular, the almost minimizers of $E_{b,k,\gamma}$ must exhibit an (approximately) azimuthally symmetric response, of course subject to the conditions  \vpageref{par:Assumptions} under which our $\Gamma$-convergence results hold.

Before moving on, we note that a similar result can be proved for the case of a flat disc
attached to a weakly curved spherical substrate --- a model problem
that has been the focus of much previous research, including at least
\cite{bella2017wrinkling,davidovitch2019geometrically,hohlfeld2015sheet}.
The conclusion is that optimal $\mu$ are uniquely
determined, absolutely continuous, and parallel to $\hat{e}_{\theta}\otimes\hat{e}_{\theta}$ with a density as above.
As far as we know, this yields the first mathematically rigorous proof that azimuthal wrinkling
is energetically preferred in a problem absent tensile loads. That
azimuthal wrinkling should be preferred has often been explained as a consequence of
symmetry (with the notable exception of \cite{bella2017wrinkling}
where it was derived via minimization, albeit for
a problem on the borderline between tension- and curvature-driven).
Just because a shell is naturally symmetric does
not mean that it should remain so when pressed onto a substrate, even if it has the same symmetries as the shell: indeed, \prettyref{ex:positively-curved-disc} shows that
a positively curved disc confined to the plane admits infinitely
many non-symmetric optimal $\mu$ and, correspondingly, infinitely
many non-symmetric almost minimizers. Whether or not global minimizers must
exhibit symmetry remains unknown. \end{example}

Our next example concerns the ellipse in Panel (b) of \prettyref{fig:library_of_patterns}.
In lieu of producing an exact formula for $d_{\partial\Omega}$, we
will make use of the following fact: $d_{\partial\Omega}$ is concave if
and only if $\Omega$ is convex \cite{armitage1985convexity}. Note
this is an example where the medial axis is strictly smaller than
the singular set. 
\begin{example} \label{ex:negatively-curved-ellipse}(negatively
curved ellipse) Let 
\[
E=\left\{ (x_{1},x_{2}):\frac{x_{1}^{2}}{a^{2}}+\frac{x_{2}^{2}}{b^{2}}<1\right\} 
\]
where $0<b<a$. The boundary distance function $d_{\partial E}$ is
smooth off of the closure of the medial axis 
\[
\overline{M}=\left\{ (x_{1},0):|x_{1}|\leq a(1-\frac{b^{2}}{a^{2}})\right\} ,
\]
and we find that 
\[
\nabla\nabla\varphi_{-}=\left(1-d_{\partial E}\Delta d_{\partial E}\right)\nabla^{\perp}d_{\partial E}\otimes\nabla^{\perp}d_{\partial E}\,dx+d_{\partial E}\left|[\nabla d_{\partial E}]\right|\hat{\nu}_{M}\otimes\hat{\nu}_{M}\,\mathcal{H}^{1}\lfloor M\quad\text{on }E
\]
as anticipated in \prettyref{eq:negative_Hessian_formula}. The function $\varphi_-$ partitions $E$ according as 
\[
\Sigma=\overline{M},\quad O=E\backslash\overline{M},\quad\text{and}\quad F=U=\emptyset
\]
and the stable lines $\{\ell_x\}$ run parallel to $\nabla d_{\partial E}$. We
now claim that there is a single optimal $\mu$ for the negatively
curved ellipse, and that it satisfies 
\begin{equation}
\mu=\lambda\nabla^{\perp}d_{\partial E}\otimes\nabla^{\perp}d_{\partial E}\,dx\quad\text{on }E,\quad\text{where}\quad\begin{cases}
-\frac{1}{2\varrho}\partial_{\nabla d_{\partial E}}^{2}(\varrho\lambda)=\det\nabla\nabla p & \text{on }E\backslash\overline{M}\\
\varrho\lambda=\partial_{\nabla d_{\partial E}}(\varrho\lambda)=0 & \text{at }\overline{M}
\end{cases}\label{eq:optimaldefect_neg_ellipse}
\end{equation}
and where $\varrho:E\backslash\overline{M}\to(0,\infty)$ is given
by \prettyref{eq:Leb_disintegration}. We proceed 
in two steps: first, we use \prettyref{cor:chars-meet-along-curve}
to identify $\mu$ off of the half-open line segments 
\[
L_{\pm}=\left\{ (x_{1},0)\in E:\pm x_{1}\geq a(1-\frac{b^{2}}{a^{2}})\right\} ,
\]
and then we check that $\mu=0$ on $L_{\pm}$ by a separate argument.

Recall $\ell_x$ denotes the stable line through $x$, which is given here by $\ell_x = \{\theta z + (1-\theta) y:\theta\in(0,1)\}$ for $z\in\overline{M}$ and $y\in\partial E$. Given any small enough $\delta > 0$, let
\[
M_\delta = \left\{(x_1,0):|x_1| < a(1-\frac{b^2}{a^2})-\delta\right\}
\]
and observe the assumptions \prettyref{eq:along-sing-curve-hyp1}-\prettyref{eq:along-sing-curve-hyp4}
of \prettyref{cor:chars-meet-along-curve} hold with $\varphi = \varphi_-$ and
\[
V=M_\delta \cup \{x\in E\backslash\overline{M} : \partial\ell_x \cap M_\delta \neq \emptyset \},\quad\zeta=1-d_{\partial E}\Delta d_{\partial E},\quad\hat{\eta}=\nabla^{\perp}d_{\partial E}.
\]
In particular as $E$ is convex,  $d_{\partial E}$ is
concave so that $\zeta\geq1$. Both $|[\nabla d_{\partial E}]|$ and $|\hat{\tau}_{M}\cdot\nabla^{\perp}d_{\partial E}|_{M_{\pm}}|$ are bounded below by some $c(\delta)>0$ uniformly on $M_\delta$. 
Applying \prettyref{cor:chars-meet-along-curve}
and sending $\delta\to0$ yields that 
\[
\mu=\lambda\nabla^{\perp}d_{\partial E}\otimes\nabla^{\perp}d_{\partial E}\,dx\quad\text{on }E\backslash\left\{ L_{+}\cup L_{-}\right\}, \quad\text{where}\quad\begin{cases}
-\frac{1}{2\varrho}\partial_{\nabla d_{\partial E}}^{2}(\varrho\lambda)=\det\nabla\nabla p & \text{on }E\backslash\left\{ L_{+}\cup L_{-}\right\} \\
\varrho\lambda=\partial_{\nabla d_{\partial E}}(\varrho\lambda)=0 & \text{at }M
\end{cases}.
\]
This identifies $\mu$ uniquely off of the segments $L_\pm$.

All that remains is to prove that $\mu=0$ on $L_{\pm}$. These segments are the closure, relative to $E$, of the stable lines passing between $z_\pm=(\pm a(1-\frac{b^{2}}{a^{2}}),0)$ and $y_\pm = (\pm a,0)$. 
Going back to \prettyref{lem:structure} and  \prettyref{lem:disintegration}, we see that
\[
\mu\lfloor L_{\pm}=A_{\pm}\delta_{z_{\pm}}+\lambda_{\pm}\nabla^{\perp}d_{\partial E}\otimes\nabla^{\perp}d_{\partial E}\,\mathcal{H}^{1}\lfloor L_{\pm},\quad\text{where}\quad \lambda_{\pm}(x) = c_{\pm}+\tilde{c}_{\pm}(x-z_{\pm})\cdot\nabla d_{\partial E}
\]
and where $A_{\pm}\in\text{Sym}_{2}$ and $c_{\pm},\tilde{c}_{\pm}\in\mathbb{R}$.
The measures $\mu\lfloor L_{\pm}$ are curlcurl-free in the sense
of distributions on $E$, i.e.,
\begin{align*}
0 & =\int_{E}\left\langle \nabla^{\perp}\nabla^{\perp}\psi,\mu\lfloor L_{\pm}\right\rangle =\left\langle A_{\pm},\nabla^{\perp}\nabla^{\perp}\psi(z_{\pm})\right\rangle +\int_{L_{\pm}}\lambda_{\pm}\partial_{\nabla d_{\partial E}}^{2}\psi\,d\mathcal{H}^{1}\\
 & =\left\langle A_{\pm},\nabla^{\perp}\nabla^{\perp}\psi(z_{\pm})\right\rangle +c_{\pm}\partial_{\nabla d_{\partial E}}\psi(z_{\pm})-\tilde{c}_{\pm}\psi(z_{\pm})
\end{align*}
for all $\psi\in C_{c}^{\infty}(E)$. It follows that $A_{\pm}=0$
and $c_{\pm}=\tilde{c}_{\pm}=0$. Thus, $\mu\lfloor L_{\pm}=0$ and this completes the proof of \prettyref{eq:optimaldefect_neg_ellipse}. \end{example}

Several negatively curved convex polygons appear in Panel (b) of  \prettyref{fig:library_of_patterns}.
We consider these next. As in the previous example,
the solution formulas from \prettyref{subsec:three-solution-formulas}
must  be supplemented by a separate argument  showing that $\mu$ is not supported on certain leftover stable
lines. This time, the argument involves the complementary slackness conditions from \prettyref{eq:wrinkling-PDEs}. 
\begin{example} \label{ex:negatively-curved-convex}(negatively
curved convex polygons) Let $P$ be a convex polygon with vertices
$a_{1},\dots,a_{n}\in\mathbb{R}^{2}$ labeled in counterclockwise
order and sides $S_{1}=[a_{1},a_{2}],\dots,S_{n}=[a_{n},a_{n+1}]$
where $a_{n+1}=a_{1}$. The outwards-pointing unit normal vector to
$\partial P$ takes on $n$ distinct values, which we label as
\[
\hat{\nu}_{i}=\hat{\nu}|_{S_{i}}=\frac{(a_{i}-a_{i+1})^{\perp}}{|a_{i}-a_{i+1}|},\quad i=1,\dots,n.
\]
The medial axis $M$ is a tree made up of finitely many line segments. Its complement $P\backslash M$ is the disjoint
union of $n$ (open) sub-polygons $P_{1},\dots,P_{n}$. The labels
are such that $S_{i}\subset\partial P_{i}$. Let $\hat{\tau}_{M}$
and $\hat{\nu}_{M}$ denote unit tangent and unit normal vectors to $M$. 
We take them to be locally constant off of its internal vertices.

With the notation set, we can describe $\varphi_{-}$. Observe that $d_{\partial P}=d_{S_{i}}=d(\cdot,S_{i})$
on the $i$th sub-polygon $P_{i}$. Hence, 
\[
\varphi_{-}=\frac{1}{2}|x|^{2}-\frac{1}{2}d_{S_{i}}^{2}\quad\text{on }P_{i}
\]
for $i=1,\dots,n$. As each side $S_{i}$ is a line segment, it follows
easily that 
\[
\nabla\nabla\varphi_{-}=\sum_{i=1}^{n}\hat{\nu}_{i}^{\perp}\otimes\hat{\nu}_{i}^{\perp}\indicator{P_{i}}\,dx+\sum_{1\leq i<j\leq n} d_{\partial P}|\hat{\nu}_{i}-\hat{\nu}_{j}|\hat{\nu}_{M}\otimes\hat{\nu}_{M}\,\mathcal{H}^{1}\lfloor\partial P_{i}\cap\partial P_{j}\quad\text{on }P.
\]
Thus, the stable lines run parallel to $\nabla d_{\partial P}=\hat{\nu}_{i}$
on $P_{i}$, and the original polygon $P$ is partitioned by $\varphi_{-}$ into the sets 
\[
\Sigma=M,\quad O=P\backslash M,\quad\text{and}\quad F=U=\emptyset.
\]
All this being said, we now claim that the unique
optimal $\mu$ is given by 
\begin{equation}
\mu=\sum_{i=1}^{n}\lambda_{i}\hat{\nu}_{i}^{\perp}\otimes\hat{\nu}_{i}^{\perp}\indicator{P_{i}}\,dx\quad\text{on }P,\quad\text{where}\quad\begin{cases}
-\frac{1}{2}\partial_{\hat{\nu}_{i}}^{2}\lambda_{i}=\det\nabla\nabla p & \text{on }P_{i}\\
\lambda_{i}=\partial_{\hat{\nu}_{i}}\lambda_{i}=0 & \text{at }\partial P_{i}\cap M
\end{cases}\quad\text{for }i=1,\dots,n.\label{eq:optimaldefect_neg_polygon}
\end{equation}
As in the previous example, our plan is as follows: first we apply
\prettyref{cor:chars-meet-along-curve} to identify $\mu$ away from
an exceptional one-dimensional set $L$, and then we verify separately
that $\mu=0$ on $L$.

\prettyref{cor:chars-meet-along-curve} is built to handle situations
where stable lines meet along a curve. Here, stable lines meet along
a tree --- the medial axis $M$. Its edges are line segments, its
external vertices are given by $\{a_{i}\}_{i=1}^{n}\subset\partial P$,
and we label its internal vertices as $\{z_{k}\}_{k=1}^{N}\subset P$.
It will probably be useful to look back at the medial
axis (in bold) of the triangle, square, or rectangle in  \prettyref{fig:library_of_patterns}. 
Each internal vertex $z_{k}$ belongs to the boundary of finitely many stable lines. Let $L_{k}$
be the union of $z_{k}$ and its associated stable lines, and let
$L=\cup_{k=1}^{N}L_{k}$. Now in the same manner as was done for \prettyref{ex:negatively-curved-ellipse}
(and as will be done for \prettyref{ex:negatively-curved-half-disc}
below), we can apply \prettyref{cor:chars-meet-along-curve} to deduce
that 
\[
\mu=\lambda\nabla^{\perp}d_{\partial P}\otimes\nabla^{\perp}d_{\partial P}\,dx\quad\text{on }P\backslash L,\quad\text{where}\quad\begin{cases}
-\frac{1}{2}\partial_{\nabla d_{\partial P}}^{2}\lambda=\det\nabla\nabla p & \text{on }P\backslash L\\
\lambda=\partial_{\nabla d_{\partial P}}\lambda=0 & \text{at }\partial(P\backslash L)\cap M
\end{cases}
\]
thus showing it is uniquely determined there. For brevity's
sake we leave the details of this to the reader, and simply note the relevant hypotheses can be checked using that $|\hat{\nu}_{i}-\hat{\nu}_{j}|$, $\hat{\tau}_{M}\cdot\hat{\nu}_{i}^{\perp}$,
and $\hat{\tau}_{M}\cdot\hat{\nu}_{j}^{\perp}$ are all non-zero 
at $\partial P_{i}\cap\partial P_{j}$.

It remains to show that $\mu=0$ on $L$. Consider the restriction
of $\mu$ to $L_{k}$, which we recall consists of $z_{k}$ along with the finitely many stable lines
$\{\ell_{s_{i}}\}$ having $z_{k}\in\partial\ell_{s_{i}}$.
The labeling is such that $\ell_{s_{i}}\subset P_i$, 
and we think of it as running from $z_{k}$ to the $i$th side
$S_{i}$. In particular, $\ell_{s_{i}}$
is parallel to $\hat{\nu}_{i}$. \prettyref{lem:structure}
and \prettyref{lem:disintegration} prove that
\begin{equation}
\mu\lfloor L_{k}=A\delta_{z_{k}}+\sum_{i}\lambda_{i}\hat{\nu}_{i}^{\perp}\otimes\hat{\nu}_{i}^{\perp}\,\mathcal{H}^{1}\lfloor\ell_{s_{i}},\quad\text{where}\quad\lambda_{i}(x)=c_{i}+\tilde{c}_{i}(x-z_{k})\cdot\hat{\nu}_{i}\label{eq:mu_Lk}
\end{equation}
for $A\in\text{Sym}_{2}$ and $c_{i},\tilde{c}_{i}\in\mathbb{R}$.
Since $\mu\lfloor L_{k}$ is curlcurl-free in the sense of distributions
on $P$, there holds
\[
0  =\int_{P}\left\langle \nabla^{\perp}\nabla^{\perp}\psi,\mu\lfloor L_{k}\right\rangle 
 =\left\langle A,\nabla^{\perp}\nabla^{\perp}\psi(z_{k})\right\rangle -\sum_{i}c_{i}\partial_{\hat{\nu}_{i}}\psi(z_{k})+\sum_{i}\tilde{c}_{i}\psi(z_{k})
\]
for all $\psi\in C_{c}^{\infty}(P)$. It follows immediately that
\begin{equation}
A=0,\quad \sum_{i}c_{i}\hat{\nu}_{i}=0,\quad\text{and}\quad\sum_{i}\tilde{c}_{i}=0.\label{eq:discrete-matching}
\end{equation}
So far, the argument has been more or less the same as in the previous example, and indeed we can already conclude that $\mu(\{z_{k}\})=0$. However, we cannot conclude that $\mu\lfloor L_{k}=0$ at this point. 
The trouble is that stable lines belonging to different $P_{i}$ may be parallel (e.g., for a negatively-curved rectangle). In such a case, at least two of the vectors $\hat{\nu}_i$ appearing in \prettyref{eq:discrete-matching} will be parallel, and the desired conclusion that $c_{i}=0$ will not follow.

Instead, the key is to go back to the first complementary slackness conditions in \prettyref{eq:wrinkling-PDEs}, which state here that $\left\langle\nabla^{\perp}\nabla^{\perp}\varphi_{-},\mu\right\rangle=0$ in the regularized sense. In particular, 
\begin{equation}
0=\lim_{\delta\to0}\int_{M}|d_{\partial P}[\nabla d_{\partial P}]\left\langle\hat{\tau}_{M}\otimes\hat{\tau}_{M},\mu_{\delta}\right\rangle|\,d\mathcal{H}^{1}\label{eq:vanishing-integral-useful}
\end{equation}
where $\{\mu_{\delta}\}_{\delta>0}$ are the mollifications
in \prettyref{eq:mollified_measure_defn-intro}. As noted in \prettyref{rem:complementary-slackness-freedom}, we may take the kernel
 $\rho>0$ nearby zero.
Given $x\in M$, observe using the non-negativity of $\mu$, the formula
\prettyref{eq:mu_Lk}, and Fubini's theorem that
\begin{multline*}
\int_{M}d_{\partial P}|[\nabla d_{\partial P}]|\left\langle\hat{\tau}_{M}\otimes\hat{\tau}_{M},\mu_{\delta}\right\rangle\,d\mathcal{H}^{1}\\\geq  \int_{y\in\ell_{s_{i}}}\int_{x\in M\cap\partial P_{i}}d_{\partial P}|[\nabla d_{\partial P}](x)||\hat{\tau}_{M}(x)\cdot\hat{\nu}_{i}^{\perp}|^{2}\lambda_{i}(y)\rho\left(\frac{x-y}{\delta}\right)\,\frac{d\mathcal{H}^{1}(x)d\mathcal{H}^{1}(y)}{\delta^{2}}
\end{multline*}
for each $i$. According to \prettyref{eq:vanishing-integral-useful}, the lefthand side tends to zero as $\delta\to0$.
Regarding the righthand side, note that $d_{\partial P}|[\nabla d_{\partial P}]|$
and $|\hat{\tau}_{M}\cdot\hat{\nu}_{i}^{\perp}|$ are bounded away from zero within the integral.  Also, $\lambda_{i}(y)\to c_{i}$ as $y\to z_{k}$ along $\ell_{s_{i}}$.
Using that $\rho>0$ nearby zero, we find upon sending $\delta \to 0$ that
\begin{equation}
c_{i}=0\quad\forall\, i.\label{eq:vanishing-c_is}
\end{equation}
With this, we can easily control the remaining coefficients $\{\tilde{c}_{i}\}$.  
Since $\mu\lfloor\ell_{s_{i}}\geq0$ there holds $\lambda_i \geq 0$, and then using that $\lambda_i(z_k) = c_i = 0$ we see that $\partial_{\hat{\nu}_i}\lambda_i(z_k) = \tilde{c}_{i}\geq0$.
It now follows from the third part of \prettyref{eq:discrete-matching} that
\begin{equation}
\tilde{c}_{i}=0\quad\forall\, i.\label{eq:vanishing-c_is_tildes}
\end{equation}
Looking back to \prettyref{eq:mu_Lk} once more, we see that $\mu\lfloor L_{k}=0$ for each $k$. Hence, $\mu\lfloor L=0$ and  \prettyref{eq:optimaldefect_neg_polygon} is proved.

It is no accident that these last few steps followed along the same
lines as the proofs of  \prettyref{cor:chars-meet-along-curve} and \prettyref{cor:chars-meet-at-point}. Our task was, once again, to show that $\mu$ vanishes on certain leftover stable lines. The constraints \prettyref{eq:vanishing-c_is} and \prettyref{eq:vanishing-c_is_tildes}
entered as Cauchy data analogous to, e.g., \prettyref{eq:zeroth_order_trace_sing-1} in \prettyref{cor:chars-meet-along-curve}. We imagine a similar approach may be used to control $\mu$ in various other circumstances where the formulas from \prettyref{subsec:three-solution-formulas} do not directly apply.
\end{example}

Our final example is the half-disc in Panel (b) of \prettyref{fig:library_of_patterns}.
It is the only one of our examples in which $M$ ends up being curved. 
\begin{example} \label{ex:negatively-curved-half-disc}(negatively
curved half-disc) Consider a disc of radius $a$ centered at the origin
and let 
\[
D_{+}=\left\{ (x_{1},x_{2}):x_{1}^{2}+x_{2}^{2}<a^{2},x_{2}>0\right\} .
\]
Note $d_{\partial D_{+}}$ is smooth away from the medial axis 
\[
M=\left\{ (x_{1},x_{2}):2ax_{2}=a^{2}-x_{1}^{2}, x_2 > 0\right\} 
\]
which is the unique parabolic arc passing through the corners $(\pm a,0)$
and $(0,\frac{a}{2})$. Denote the part of $D_{+}$ below $M$ by
$D_{+\text{S}}$ and the part above $M$ by $D_{+\text{N}}$. Then,
\[
\nabla\nabla\varphi_{-}=\hat{e}_{1}\otimes\hat{e}_{1}\indicator{D_{+\text{S}}}\,dx+\frac{a}{r}\hat{e}_{\theta}\otimes\hat{e}_{\theta}\indicator{D_{+\text{N}}}\,dx+d_{\partial D_{+}}\left|\hat{e}_{2}+\hat{e}_{r}\right|\hat{\nu}_{M}\otimes\hat{\nu}_{M}\,\mathcal{H}^{1}\lfloor M\quad\text{on }D_{+}.
\]
We see that $\varphi_-$ partitions $D_+$ according as
\[
\Sigma=M,\quad O=D_{+}\backslash M,\quad\text{and}\quad F=U=\emptyset.
\]
The stable lines are parallel to $\nabla d_{\partial D_{+}}=\hat{e}_{2}$
in $D_{+\text{S}}$ and $\nabla d_{\partial D_{+}}=-\hat{e}_{r}$ in
$D_{+\text{N}}$.

A straightforward application of \prettyref{cor:chars-meet-along-curve} proves
that optimal $\mu$ are unique. The conditions \prettyref{eq:along-sing-curve-hyp1}-\prettyref{eq:along-sing-curve-hyp4}
hold for $\varphi=\varphi_{-}$ and 
\[
V=D_{+},\quad\zeta=\begin{cases}
1 & x\in D_{+\text{S}}\\
\frac{a}{r} & x\in D_{+\text{N}}
\end{cases},\quad\text{and}\quad\hat{\eta}=\begin{cases}
\hat{e}_{1} & x\in D_{+\text{S}}\\
\hat{e}_{\theta} & x\in D_{+\text{N}}
\end{cases}.
\]
In particular, the medial axis is smooth and the quantities $|\hat{e}_{2}+\hat{e}_{r}|$,
$|\hat{\tau}_{M}\cdot\hat{e}_{1}|$, and $|\hat{\tau}_{M}\cdot\hat{e}_{\theta}|$
are uniformly positive there. Also, $\zeta>0$ uniformly
on $D_{+}$. Finally, we see that $\varrho=c_{S}(x_1)$ in $D_{+\text{S}}$
and $\varrho=c_{N}(\theta)r$ in $D_{+\text{N}}$ due to the fact that the stable lines describe either parallel lines or rays (the functions $c_S$ and $c_N$ depend on the index set $\Gamma$). Applying \prettyref{cor:chars-meet-along-curve}, 
we conclude that optimal $\mu$ satisfy 
\[
\mu=\lambda_{\text{S}}\hat{e}_{2}\otimes\hat{e}_{2}\indicator{D_{+\text{S}}}\,dx+\lambda_{\text{N}}\hat{e}_{\theta}\otimes\hat{e}_{\theta}\indicator{D_{+\text{N}}}\,dx\quad\text{on }D_{+}
\]
where 
\[
\begin{cases}
-\frac{1}{2}\partial_{2}^{2}\lambda_{\text{S}}=\det\nabla\nabla p & \text{in }D_{+\text{S}}\\
\lambda_{\text{S}}=\partial_{2}\lambda_{\text{S}}=0 & \text{at }M
\end{cases}\quad\text{and}\quad\begin{cases}
-\frac{1}{2r}\partial_{r}^{2}(r\lambda_{\text{N}})=\det\nabla\nabla p & \text{in }D_{+\text{N}}\\
\lambda_{\text{N}}=\partial_{r}\lambda_{\text{N}}=0 & \text{at }M
\end{cases}.
\]
These systems determine optimal $\mu$ uniquely for the negatively curved half-disc.
\end{example}

We close with a general conjecture on the uniqueness of optimal $\mu$. 
Although optimal $\mu$ turned out to be unique in each of the negatively curved examples above, the reader may yet wonder whether the convexity of $\Omega$ is crucial for this, or if it is simply an artifact of our  examples. We believe the latter is true. In fact, based on our method of stable lines, we expect optimal $\mu$ will be unique whenever there exists an optimal $\varphi$ that is nowhere locally affine (regardless of the curvature). Here is a more concrete version of this conjecture specialized to simply connected, negatively curved shells. 
Recall the medial axis $M$ from \prettyref{eq:medialaxis} and the change of measure factor $\varrho$ from \prettyref{eq:Leb_disintegration}. 
\begin{conjecture}\label{conj:uniqueness_neg} 
Suppose that $\Omega$ is simply connected and let $\det\nabla\nabla p\leq0$ a.e. Assume the paths of quickest exit from $\Omega$ do not meet at $\partial\Omega$. 
Then optimal $\mu$ are unique, and moreover satisfy
\[
\mu=\lambda\nabla^{\perp}d_{\partial\Omega}\otimes\nabla^{\perp}d_{\partial\Omega}\,dx\quad\text{on }\Omega,\quad\text{where}\quad\begin{cases}
-\frac{1}{2\varrho}\partial_{\nabla d_{\partial\Omega}}^{2}\left(\varrho\lambda\right)=\det\nabla\nabla p & \text{on }\Omega\backslash\overline{M}\\
\varrho\lambda=\partial_{\nabla d_{\partial\Omega}}\left(\varrho\lambda\right)=0 & \text{at } \overline{M}
\end{cases}.
\]
\end{conjecture}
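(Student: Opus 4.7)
The plan is to take $\varphi = \varphi_-$, the smallest convex extension of $\frac{1}{2}|x|^2$ into $\Omega$, which by \prettyref{prop:explicit_formulas} solves the dual problem \prettyref{eq:dual-1}. By \prettyref{eq:min-exit-time} one has $\varphi_- = \frac{1}{2}|x|^2 - \frac{1}{2}d_{\partial\Omega}^2$, and on $\Omega\setminus\overline{M}$ the distance function is semiconcave with bounded second derivatives off the medial axis, so that $\varphi_- \in W_{\text{loc}}^{2,2}(\Omega\setminus\overline{M})$ and
\[
\nabla\nabla\varphi_- = (1-d_{\partial\Omega}\Delta d_{\partial\Omega})\nabla^\perp d_{\partial\Omega} \otimes \nabla^\perp d_{\partial\Omega}\,dx + d_{\partial\Omega}|[\nabla d_{\partial\Omega}]|\,\hat{\nu}_M \otimes \hat{\nu}_M\,\mathcal{H}^1\lfloor M.
\]
Since $\partial\Omega$ is Lipschitz, the coefficient $1 - d_{\partial\Omega}\Delta d_{\partial\Omega}$ is locally uniformly positive on $\Omega\setminus\overline{M}$, so the partition from \prettyref{eq:partition} reads $O = \Omega\setminus\overline{M}$, $\Sigma = \overline{M}$, $F = U = \emptyset$. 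The stable lines of $\varphi_-$ are precisely the paths of quickest exit, and by the standing hypothesis each has one endpoint on $\overline{M}$ and the other on $\partial\Omega$. \prettyref{lem:structure} then forces $\mu = \lambda\,\nabla^\perp d_{\partial\Omega}\otimes\nabla^\perp d_{\partial\Omega}\,dx$ on $O$ for some $\lambda\in\mathcal{M}_+(O)$.

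To pin down $\lambda$ I would exhaust the regular part of $\overline{M}$ by Lipschitz subcurves on which the non-degeneracy conditions of \prettyref{cor:chars-meet-along-curve} hold, namely a uniform lower bound on the jump $|[\nabla d_{\partial\Omega}]|$ together with a uniform transversality bound $|\hat{\tau}_M\cdot \nabla^\perp d_{\partial\Omega}|_{M_\pm}|\geq c > 0$. The corollary then produces the stated Cauchy problem for $\varrho\lambda$ along $\mathcal{H}^1$-a.e.\ incident stable line, determining $\mu$ uniquely on the union of these stable lines. The complement within $O$ is a one-dimensional leftover set $L$ consisting of stable lines meeting $\overline{M}$ at branch points, interior endpoints, or other singularities of the medial axis.

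To show $\mu\lfloor L = 0$ I would adapt the argument in \prettyref{ex:negatively-curved-convex}: disintegrating $\mu\lfloor L$ gives, at each singular vertex $z_k$ with incident stable lines $\{\ell_{k,i}\}$ parallel to unit vectors $\{\hat{\nu}_{k,i}\}$, a representation $A_k\delta_{z_k} + \sum_i (c_{k,i} + \tilde c_{k,i}\,t_{k,i})\hat{\nu}_{k,i}^\perp\otimes \hat{\nu}_{k,i}^\perp\,\mathcal{H}^1\lfloor \ell_{k,i}$, where $t_{k,i}$ is the signed distance from $z_k$ along $\ell_{k,i}$. Testing the distributional identity $-\tfrac{1}{2}\text{curl}\text{curl}\,\mu = \det\nabla\nabla p$ against cutoffs supported near $z_k$ annihilates the Dirac mass ($A_k = 0$) and yields the matching relations $\sum_i c_{k,i}\hat{\nu}_{k,i} = 0$ and $\sum_i \tilde c_{k,i} = 0$. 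The regularized complementary slackness condition paired against the singular part of $\nabla\nabla\varphi_-$, namely $d_{\partial\Omega}|[\nabla d_{\partial\Omega}]|\,\hat{\nu}_M\otimes\hat{\nu}_M\,\mathcal{H}^1\lfloor M$, then forces $c_{k,i} = 0$ for each $i$ (the coefficient is uniformly positive near $z_k$ since the non-meeting-at-boundary hypothesis keeps $\overline{M}$ a positive distance from $\partial\Omega$ in a neighborhood of $z_k$). Non-negativity of $\lambda$ combined with $\sum_i \tilde c_{k,i} = 0$ closes the argument, giving $\mu\lfloor L = 0$ and the uniqueness asserted by the conjecture.

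The main obstacle is the structural analysis of $\overline{M}$ in the generality of a bounded Lipschitz domain: the medial axis can fail to be locally a finite union of $C^1$ arcs, and its singular set need not be rectifiable, so the ``exhaustion by regular subcurves plus singular skeleton'' used above requires additional ambient regularity (e.g.\ a piecewise $C^2$ boundary, for which the structural theory in \cite{li2005distance} applies). A secondary obstacle is verifying the displayed formula for $\nabla\nabla\varphi_-$ in the distributional sense, together with $\varphi_- \in W_{\text{loc}}^{2,2}(\Omega\setminus\overline{M})$, without imposing smoothness on $\partial\Omega$. These are precisely the regularity gaps that prevent the statement from being upgraded from conjecture to theorem at present, though the method of stable lines reduces the whole problem to resolving them.
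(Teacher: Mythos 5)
The statement you are attempting is labeled a \emph{conjecture} in the paper, and the paper offers no proof of it: Section 1.2.5 and the closing paragraph of Section 6 explicitly state that a general theorem of this kind is not available, and that the assertion has only been verified for specific shells (the negatively curved disc, ellipse, convex polygons, and half-disc in Examples 6.6--6.9). So there is no proof of record to compare against; what you have written is a strategy outline, and it is essentially the same strategy the paper deploys in those examples --- compute $\varphi_-$ via \prettyref{eq:min-exit-time}, read off the partition and the stable lines, invoke \prettyref{cor:chars-meet-along-curve} on well-prepared pieces of the medial axis, and kill the measure on the leftover stable lines by the branch-point argument of \prettyref{ex:negatively-curved-convex}.

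The genuine gap is the one you yourself flag, and it is not a technicality that can be waved away: for a general bounded, Lipschitz, simply connected $\Omega$ there is no structure theorem guaranteeing that $\overline{M}$ decomposes into countably many Lipschitz arcs satisfying the non-degeneracy hypotheses \prettyref{eq:along-sing-curve-hyp1}--\prettyref{eq:along-sing-curve-hyp4}, plus a ``small'' singular skeleton at which only finitely many stable lines meet. The branch-point argument in \prettyref{ex:negatively-curved-convex} is genuinely combinatorial --- it uses that the incident stable lines at each internal vertex are finite in number and that the coefficients $d_{\partial\Omega}|[\nabla d_{\partial\Omega}]|$ and $|\hat{\tau}_M\cdot\nabla^\perp d_{\partial\Omega}|$ are uniformly positive nearby --- and none of this is automatic in the stated generality. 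Two further unverified steps: the claim that $1-d_{\partial\Omega}\Delta d_{\partial\Omega}$ is locally uniformly positive off $\overline{M}$ for a merely Lipschitz boundary (where $\Delta d_{\partial\Omega}$ need not even be a locally bounded function), and the distributional Hessian formula for $\varphi_-$ together with $\varphi_-\in W^{2,2}_{\mathrm{loc}}(\Omega\backslash\overline{M})$. Each of the paper's examples verifies these by hand from an explicit formula for $d_{\partial\Omega}$; absent such formulas your argument does not close. In short, your proposal correctly identifies the method and the obstructions, but it does not prove the conjecture --- which is consistent with the paper's own assessment that the statement remains open.
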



\begin{acknowledgements}
We thank Eleni Katifori, Joseph D.\ Paulsen, Yousra Timounay, and
Desislava V.\ Todorova for sharing their experimental and numerical
results on thin and ultrathin floating shells in advance of their publication. We thank Benny Davidovitch, Charles R.\
Doering, and Robert V.\ Kohn for helpful discussions. 
\end{acknowledgements}

%
 \section*{Conflict of interest}

 The author declares that they have no conflict of interest.

\bibliographystyle{spmpsci}      
\bibliography{CurvatureDrivenWrinkling}   

\begin{thebibliography}{10}
\providecommand{\url}[1]{{#1}}
\providecommand{\urlprefix}{URL }
\expandafter\ifx\csname urlstyle\endcsname\relax
  \providecommand{\doi}[1]{DOI~\discretionary{}{}{}#1}\else
  \providecommand{\doi}{DOI~\discretionary{}{}{}\begingroup
  \urlstyle{rm}\Url}\fi

\bibitem{aharoni2017smectic}
Aharoni, H., Todorova, D.V., Albarr{\'a}n, O., Goehring, L., Kamien, R.D.,
  Katifori, E.: The smectic order of wrinkles.
\newblock Nat. Commun. \textbf{8}, 15809 (2017)

\bibitem{albarran2018curvature}
{Albarr{\'a}n}, O., {Todorova}, D.V., {Katifori}, E., {Goehring}, L.:
  {Curvature controlled pattern formation in floating shells.} ArXiv e-print
  1806.03718

\bibitem{armitage1985convexity}
Armitage, D.H., Kuran, U.: The convexity of a domain and the superharmonicity
  of the signed distance function.
\newblock Proc. Amer. Math. Soc. \textbf{93}(4), 598--600 (1985)

\bibitem{arroyo-rabasa2017relaxation}
Arroyo-Rabasa, A.: Relaxation and optimization for linear-growth convex
  integral functionals under {PDE} constraints.
\newblock J. Funct. Anal. \textbf{273}(7), 2388--2427 (2017)

\bibitem{audoly2008buckling_b}
Audoly, B., Boudaoud, A.: Buckling of a stiff film bound to a compliant
  substrate---part ii:: {A} global scenario for the formation of herringbone
  pattern.
\newblock J. Mech. Phys. Solids. \textbf{56}(7), 2422--2443 (2008)

\bibitem{audoly2008buckling_c}
Audoly, B., Boudaoud, A.: Buckling of a stiff film bound to a compliant
  substrate---part iii: {H}erringbone solutions at large buckling parameter.
\newblock J. Mech. Phys. Solids. \textbf{56}(7), 2444--2458 (2008)

\bibitem{ball2017mathematics}
Ball, J.M.: Mathematics and liquid crystals.
\newblock Molecular Crystals and Liquid Crystals \textbf{647}(1), 1--27 (2017)

\bibitem{bella2015transition}
Bella, P.: The transition between planar and wrinkled regions in a uniaxially
  stretched thin elastic film.
\newblock Arch. Ration. Mech. Anal. \textbf{216}(2), 623--672 (2015)

\bibitem{bella2014wrinkles}
Bella, P., Kohn, R.V.: Wrinkles as the result of compressive stresses in an
  annular thin film.
\newblock Comm. Pure Appl. Math. \textbf{67}(5), 693--747 (2014)

\bibitem{bella2017coarsening}
Bella, P., Kohn, R.V.: Coarsening of folds in hanging drapes.
\newblock Comm. Pure Appl. Math. \textbf{70}(5), 978--1021 (2017)

\bibitem{bella2017wrinkling}
Bella, P., Kohn, R.V.: Wrinkling of a thin circular sheet bounded to a
  spherical substrate.
\newblock Philos. Trans. Roy. Soc. A \textbf{375}(2093), 20160157, 20 (2017)

\bibitem{bhattacharya2003microstructure}
Bhattacharya, K.: Microstructure of martensite: why it forms and how it gives
  rise to the shape-memory effect.
\newblock Oxford Series on Materials Modelling. Oxford University Press, Oxford
  (2003)

\bibitem{brau2013wrinkle}
Brau, F., Damman, P., Diamant, H., Witten, T.A.: Wrinkle to fold transition:
  influence of the substrate response.
\newblock Soft Matter \textbf{9}, 8177--8186 (2013)

\bibitem{brezis2011functional}
Brezis, H.: Functional analysis, {S}obolev spaces and partial differential
  equations.
\newblock Universitext. Springer, New York (2011)

\bibitem{cai2011periodic}
Cai, S., Breid, D., Crosby, A., Suo, Z., Hutchinson, J.: Periodic patterns and
  energy states of buckled films on compliant substrates.
\newblock J. Mech. Phys. Solids \textbf{59}(5), 1094--1114 (2011)

\bibitem{cerda2003geometry}
Cerda, E., Mahadevan, L.: Geometry and physics of wrinkling.
\newblock Phys. Rev. Lett. \textbf{90}, 074302 (2003)

\bibitem{chen2004family}
Chen, X., Hutchinson, J.W.: A family of herringbone patterns in thin films.
\newblock Scripta Mater. \textbf{50}(6), 797--801 (2004)

\bibitem{clarke1976inverse}
Clarke, F.H.: On the inverse function theorem.
\newblock Pacific J. Math. \textbf{64}(1), 97--102 (1976)

\bibitem{conti2008confining}
Conti, S., Maggi, F.: Confining thin elastic sheets and folding paper.
\newblock Arch. Ration. Mech. Anal. \textbf{187}(1), 1--48 (2008)

\bibitem{conti2006rigorous}
Conti, S., Maggi, F., M\"uller, S.: Rigorous derivation of {F}\"oppl's theory
  for clamped elastic membranes leads to relaxation.
\newblock SIAM J. Math. Anal. \textbf{38}(2), 657--680 (2006)

\bibitem{dacorogna2008direct}
Dacorogna, B.: Direct methods in the calculus of variations, \emph{Applied
  Mathematical Sciences}, vol.~78, second edn.
\newblock Springer, New York (2008)

\bibitem{dalmaso1993introduction}
Dal~Maso, G.: An introduction to {$\Gamma$}-convergence.
\newblock Progress in Nonlinear Differential Equations and their Applications,
  8. Birkh\"auser Boston, Inc., Boston, MA (1993)

\bibitem{davidovitch2011prototypical}
Davidovitch, B., Schroll, R.D., Vella, D., Adda-Bedia, M., Cerda, E.A.:
  Prototypical model for tensional wrinkling in thin sheets.
\newblock Proc. Natl. Acad. Sci. \textbf{108}(45), 18227--18232 (2011)

\bibitem{davidovitch2019geometrically}
Davidovitch, B., Sun, Y., Grason, G.M.: Geometrically incompatible confinement
  of solids.
\newblock Proc. Natl. Acad. Sci. \textbf{116}(5), 1483--1488 (2019)

\bibitem{degiorgi1975gammaconv}
De~Giorgi, E., Franzoni, T.: Su un tipo di convergenza variazionale.
\newblock Atti Accad. Naz. Lincei Rend. Cl. Sci. Fis. Mat. Natur. (8)
  \textbf{58}(6), 842--850 (1975)

\bibitem{demengel1989compactness}
Demengel, F.: Compactness theorems for spaces of functions with bounded
  derivatives and applications to limit analysis problems in plasticity.
\newblock Arch. Rational Mech. Anal. \textbf{105}(2), 123--161 (1989)

\bibitem{desimone2006recent}
DeSimone, A., Kohn, R.V., M{\"u}ller, S., Otto, F.: Recent analytical
  developments in micromagnetics.
\newblock In: G.~Bertotti, I.~Mayergoyz (eds.) The Science of Hysterisis II:
  {P}hysical Modeling, Micromagnetics, and Magnetization Dynamics, vol.~2, pp.
  269--381. Elsevier (2006)

\bibitem{efrati2009elastic}
Efrati, E., Sharon, E., Kupferman, R.: Elastic theory of unconstrained
  non-euclidean plates.
\newblock J. Mech. Phys. Solids \textbf{57}(4), 762--775 (2009)

\bibitem{ekeland1999convex}
Ekeland, I., T\'{e}mam, R.: Convex analysis and variational problems,
  \emph{Classics in Applied Mathematics}, vol.~28.
\newblock Society for Industrial and Applied Mathematics (SIAM), Philadelphia,
  PA (1999)

\bibitem{fonseca1995degree}
Fonseca, I., Gangbo, W.: Degree theory in analysis and applications,
  \emph{Oxford Lecture Series in Mathematics and its Applications}, vol.~2.
\newblock The Clarendon Press, Oxford University Press, New York (1995)

\bibitem{gilbarg2001elliptic}
Gilbarg, D., Trudinger, N.S.: Elliptic partial differential equations of second
  order.
\newblock Classics in Mathematics. Springer-Verlag, Berlin (2001)

\bibitem{gottesman2018state}
Gottesman, O., Andrejevic, J., Rycroft, C.H., Rubinstein, S.M.: A state
  variable for crumpled thin sheets.
\newblock Commun. Phys. \textbf{1}(1), 70 (2018)

\bibitem{graf1989classification}
Graf, S., Mauldin, R.D.: A classification of disintegrations of measures.
\newblock In: Measure and measurable dynamics ({R}ochester, {NY}, 1987),
  \emph{Contemp. Math.}, vol.~94, pp. 147--158. Amer. Math. Soc., Providence,
  RI (1989)

\bibitem{hohlfeld2015sheet}
Hohlfeld, E., Davidovitch, B.: Sheet on a deformable sphere: Wrinkle patterns
  suppress curvature-induced delamination.
\newblock Phys. Rev. E \textbf{91}, 012407 (2015)

\bibitem{hornung2011approximation}
Hornung, P.: Approximation of flat {$W^{2,2}$} isometric immersions by smooth
  ones.
\newblock Arch. Ration. Mech. Anal. \textbf{199}(3), 1015--1067 (2011)

\bibitem{hornung2011fine}
Hornung, P.: Fine level set structure of flat isometric immersions.
\newblock Arch. Ration. Mech. Anal. \textbf{199}(3), 943--1014 (2011)

\bibitem{howell2009applied}
Howell, P., Kozyreff, G., Ockendon, J.: Applied solid mechanics.
\newblock Cambridge Texts in Applied Mathematics. Cambridge University Press,
  Cambridge (2009)

\bibitem{huang2010smooth}
Huang, J., Davidovitch, B., Santangelo, C.D., Russell, T.P., Menon, N.: Smooth
  cascade of wrinkles at the edge of a floating elastic film.
\newblock Phys. Rev. Lett. \textbf{105}, 038302 (2010)

\bibitem{huang2004evolution}
Huang, Z., Hong, W., Suo, Z.: Evolution of wrinkles in hard films on soft
  substrates.
\newblock Phys. Rev. E \textbf{70}, 030601 (2004)

\bibitem{huang2005nonlinear}
Huang, Z., Hong, W., Suo, Z.: Nonlinear analyses of wrinkles in a film bonded
  to a compliant substrate.
\newblock J. Mech. Phys. Solids \textbf{53}(9), 2101--2118 (2005)

\bibitem{hure2012stamping}
Hure, J., Roman, B., Bico, J.: Stamping and wrinkling of elastic plates.
\newblock Phys. Rev. Lett. \textbf{109}, 054302 (2012)

\bibitem{iwaniec2001concept}
Iwaniec, T.: On the concept of the weak {J}acobian and {H}essian.
\newblock In: Papers on analysis, \emph{Rep. Univ. Jyv\"{a}skyl\"{a} Dep. Math.
  Stat.}, vol.~83, pp. 181--205. Univ. Jyv\"{a}skyl\"{a}, Jyv\"{a}skyl\"{a}
  (2001)

\bibitem{king2012elastic}
King, H., Schroll, R.D., Davidovitch, B., Menon, N.: Elastic sheet on a liquid
  drop reveals wrinkling and crumpling as distinct symmetry-breaking
  instabilities.
\newblock Proc. Natl. Acad. Sci. \textbf{109}(25), 9716--9720 (2012)

\bibitem{kirchheim2001geometry}
Kirchheim, B.: Geometry and rigidity of microstructures.
\newblock Habilitation thesis, University of Leipzig, Leipzig (2001)

\bibitem{kohn1983dual}
Kohn, R., Temam, R.: Dual spaces of stresses and strains, with applications to
  {H}encky plasticity.
\newblock Appl. Math. Optim. \textbf{10}(1), 1--35 (1983)

\bibitem{kohn2007energy}
Kohn, R.V.: Energy-driven pattern formation.
\newblock In: International {C}ongress of {M}athematicians. {V}ol. {I}, pp.
  359--383. Eur. Math. Soc., Z\"urich (2007)

\bibitem{kohn2013analysis}
Kohn, R.V., Nguyen, H.M.: Analysis of a compressed thin film bonded to a
  compliant substrate: The energy scaling law.
\newblock J. Nonlinear Sci. \textbf{23}(3), 343--362 (2013)

\bibitem{kuiper1955C1}
Kuiper, N.H.: On {$C^1$}-isometric imbeddings. {I}, {II}.
\newblock Nederl. Akad. Wetensch. Proc. Ser. A. {\bf 58} = Indag. Math.
  \textbf{17}, 545--556, 683--689 (1955)

\bibitem{lewicka2017convex}
Lewicka, M., Pakzad, M.R.: Convex integration for the {M}onge-{A}mp\`ere
  equation in two dimensions.
\newblock Anal. PDE \textbf{10}(3), 695--727 (2017)

\bibitem{li2005distance}
Li, Y., Nirenberg, L.: The distance function to the boundary, {F}insler
  geometry, and the singular set of viscosity solutions of some
  {H}amilton-{J}acobi equations.
\newblock Comm. Pure Appl. Math. \textbf{58}(1), 85--146 (2005)

\bibitem{lions1985concetration-I}
Lions, P.L.: The concentration-compactness principle in the calculus of
  variations. {T}he limit case. {I}.
\newblock Rev. Mat. Iberoamericana \textbf{1}(1), 145--201 (1985)

\bibitem{lions1985concetration-II}
Lions, P.L.: The concentration-compactness principle in the calculus of
  variations. {T}he limit case. {II}.
\newblock Rev. Mat. Iberoamericana \textbf{1}(2), 45--121 (1985)

\bibitem{lobkovsky1997properties}
Lobkovsky, A.E., Witten, T.A.: Properties of ridges in elastic membranes.
\newblock Phys. Rev. E \textbf{55}, 1577--1589 (1997)

\bibitem{love1944treatise}
Love, A.E.H.: A treatise on the mathematical theory of elasticity, fourth edn.
\newblock Dover Publications, New York (1944)

\bibitem{maggi2012sets}
Maggi, F.: Sets of finite perimeter and geometric variational problems: {A}n
  introduction to geometric measure theory, \emph{Cambridge Studies in Advanced
  Mathematics}, vol. 135.
\newblock Cambridge University Press, Cambridge (2012)

\bibitem{muller2017mathematical}
M\"{u}ller, S.: Mathematical problems in thin elastic sheets: {S}caling limits,
  packing, crumpling and singularities.
\newblock In: Vector-valued partial differential equations and applications,
  \emph{Lecture Notes in Math.}, vol. 2179, pp. 125--193. Springer, Cham (2017)

\bibitem{nash1954C1}
Nash, J.: {$C^1$} isometric imbeddings.
\newblock Ann. of Math. (2) \textbf{60}, 383--396 (1954)

\bibitem{niordson1985shell}
Niordson, F.I.: Shell theory, \emph{North-Holland Series in Applied Mathematics
  and Mechanics}, vol.~29.
\newblock North-Holland Publishing Co., Amsterdam (1985)

\bibitem{pakzad2004sobolev}
Pakzad, M.R.: On the {S}obolev space of isometric immersions.
\newblock J. Differential Geom. \textbf{66}(1), 47--69 (2004)

\bibitem{paulsen2018optimal}
Paulsen, J.D., D{\'e}mery, V., Santangelo, C.D., Russell, T.P., Davidovitch,
  B., Menon, N.: Optimal wrapping of liquid droplets with ultrathin sheets.
\newblock Nat. Mater. \textbf{14}, 1206 (2015)

\bibitem{paulsen2017geometry}
Paulsen, J.D., D\'emery, V., Toga, K.B., Qiu, Z., Russell, T.P., Davidovitch,
  B., Menon, N.: Geometry-driven folding of a floating annular sheet.
\newblock Phys. Rev. Lett. \textbf{118}, 048004 (2017)

\bibitem{pipkin1986relaxed}
Pipkin, A.C.: The relaxed energy density for isotropic elastic membranes.
\newblock IMA J. Appl. Math. \textbf{36}(1), 85--99 (1986)

\bibitem{pipkin1993relaxed}
Pipkin, A.C.: Relaxed energy densities for small deformations of membranes.
\newblock IMA J. Appl. Math. \textbf{50}(3), 225--237 (1993)

\bibitem{pipkin1994relaxed}
Pipkin, A.C.: Relaxed energy densities for large deformations of membranes.
\newblock IMA J. Appl. Math. \textbf{52}(3), 297--308 (1994)

\bibitem{pocivavsek2008stress}
Pocivavsek, L., Dellsy, R., Kern, A., Johnson, S., Lin, B., Lee, K.Y.C., Cerda,
  E.: Stress and fold localization in thin elastic membranes.
\newblock Science \textbf{320}(5878), 912--916 (2008)

\bibitem{rauch1977dirichlet}
Rauch, J., Taylor, B.A.: The {D}irichlet problem for the multidimensional
  {M}onge-{A}mp\`ere equation.
\newblock Rocky Mountain J. Math. \textbf{7}(2), 345--364 (1977)

\bibitem{reissner1938tension}
Reissner, E.: On tension field theory.
\newblock Proc. Fifth Int. Cong. on Appl. Mech. pp. 88--92 (1938)

\bibitem{sanders1963nonlinear}
Sanders Jr., J.L.: Nonlinear theories for thin shells.
\newblock Quart. Appl. Math. \textbf{21}, 21--36 (1963)

\bibitem{schymura2014upper}
Schymura, D.: An upper bound on the volume of the symmetric difference of a
  body and a congruent copy.
\newblock Adv. Geom. \textbf{14}(2), 287--298 (2014)

\bibitem{steigmann1990tension}
Steigmann, D.J.: Tension-field theory.
\newblock Proc. Roy. Soc. London Ser. A \textbf{429}(1876), 141--173 (1990)

\bibitem{stoker1989differential}
Stoker, J.J.: Differential geometry.
\newblock Wiley Classics Library. John Wiley \& Sons, Inc., New York (1989)

\bibitem{stoop2015curvature}
Stoop, N., Lagrange, R., Terwagne, D., Reis, P.M., Dunkel, J.:
  Curvature-induced symmetry breaking determines elastic surface patterns.
\newblock Nat. Mater. \textbf{14}(3), 337 (2015)

\bibitem{struik1988lectures}
Struik, D.J.: Lectures on classical differential geometry, second edn.
\newblock Dover Publications, Inc., New York (1988)

\bibitem{taffetani2017regimes}
Taffetani, M., Vella, D.: Regimes of wrinkling in pressurized elastic shells.
\newblock Philos. Trans. Roy. Soc. A \textbf{375}(2093), 20160330, 20 (2017)

\bibitem{temam2018mathematical}
Temam, R.: Mathematical problems in plasticity.
\newblock Courier Dover Publications (2018).
\newblock Dover republication of the edition originally published by
  Gauthier-Villars, Paris, 1983

\bibitem{temam1980functions}
Temam, R., Strang, G.: Functions of bounded deformation.
\newblock Arch. Rational Mech. Anal. \textbf{75}(1), 7--21 (1980/81)

\bibitem{terwagne2014smart}
Terwagne, D., Brojan, M., Reis, P.M.: Smart morphable surfaces for aerodynamic
  drag control.
\newblock Adv. Mater. \textbf{26}(38), 6608--6611 (2014)

\bibitem{tobasco2020principles}
Tobasco, I., Timounay, Y., Todorova, D., Leggat, G.C., Paulsen, J.D., Katifori,
  E.: Principles of pattern selection for confined elastic shells. ArXiv
  e-print 2004.02839

\bibitem{trudinger1984second}
Trudinger, N.S., Urbas, J.I.E.: On second derivative estimates for equations of
  {M}onge-{A}mp\`ere type.
\newblock Bull. Austral. Math. Soc. \textbf{30}(3), 321--334 (1984)

\bibitem{venkataramani2004lower}
Venkataramani, S.C.: Lower bounds for the energy in a crumpled elastic
  sheet---a minimal ridge.
\newblock Nonlinearity \textbf{17}(1), 301 (2004)

\bibitem{ventsel2001thin}
Ventsel, E., Krauthammer, T.: Thin plates and shells: theory, analysis, and
  applications.
\newblock CRC press (2001)

\bibitem{wagner1929ebene}
Wagner, H.: Ebene blechwandtr\"{a}ger mit sehr d\"{u}nnem stegblech.
\newblock Z. Flugtech. Motorluftshiffahrt \textbf{20}(8--12), 200 (1929).
\newblock Translation appeared as Flat sheet metal girders with very thin metal
  webs. NACA TM 604, 605 and 606, 1931

\bibitem{witten2007stress}
Witten, T.A.: Stress focusing in elastic sheets.
\newblock Rev. Mod. Phys. \textbf{79}, 643--675 (2007)

\bibitem{yao2013planar}
Yao, Z., Bowick, M., Ma, X., Sknepnek, R.: Planar sheets meet
  negative-curvature liquid interfaces.
\newblock EPL \textbf{101}(4), 44007 (2013)

\end{thebibliography}

\end{document}